\numberwithin{equation}{section}
\newtheorem{thm}{Theorem}[section]
\newtheorem{prop}[thm]{Proposition}
\newtheorem{cor}[thm]{Corollary}
\newtheorem{defi}[thm]{Definition}
\newtheorem{lem}[thm]{Lemma}
\theoremstyle{remark}
\newtheorem{remark}[thm]{Remark}
\newcommand{\Heis}{{\mathbb{H}}}
\title{}
\begin{document}

 \baselineskip 16.6pt
\hfuzz=6pt

\newtheorem{cl}{Claim}

\newtheorem{assum}{Assumption}[section]
\newtheorem{ex}[thm]{Example}
\newtheorem{rem}[thm]{Remark}
\renewcommand{\theequation}
{\thesection.\arabic{equation}}

\def\SL{\sqrt H}
\newcommand{\cent}{\operatorname{cent}}

\newcommand{\wid}{\operatorname{width}}
\newcommand{\heit}{\operatorname{height}}

\newcommand{\cdim}{n}

\newcommand{\set}[1]{\mathfrak{#1}}
\newcommand{\vrect}{\set{V}}
\newcommand{\Stack}{\set{S}}
\newcommand{\tile}{\set{T}}

\newcommand{\mar}[1]{{\marginpar{\sffamily{\scriptsize
        #1}}}}

\newcommand{\as}[1]{{\mar{AS:#1}}}
\newcommand\C{\mathbb{C}}
\newcommand\Z{\mathbb{Z}}
\newcommand\R{\mathbb{R}}
\newcommand\RR{\mathbb{R}}
\newcommand\CC{\mathbb{C}}
\newcommand\NN{\mathbb{N}}
\newcommand\ZZ{\mathbb{Z}}
\newcommand\HH{\mathbb{H}}
\def\RN {\mathbb{R}^n}
\renewcommand\Re{\operatorname{Re}}
\renewcommand\Im{\operatorname{Im}}

\newcommand{\mc}{\mathcal}
\newcommand\D{\mathcal{D}}
\def\hs{\hspace{0.33cm}}
\newcommand{\la}{\alpha}
\def \l {\alpha}
\newcommand{\eps}{\tau}
\newcommand{\pl}{\partial}
\newcommand{\supp}{{\rm supp}{\hspace{.05cm}}}
\newcommand{\x}{\times}
\newcommand{\lag}{\langle}
\newcommand{\rag}{\rangle}

\newcommand{\Bc}{\mathcal{B}}
\newcommand{\sgn}{\operatorname{sgn}}
\newcommand{\red}{{\mathrm{red}}}

\newcommand{\lset}{\left\lbrace}
\newcommand{\rset}{\right\rbrace}

\newcommand\wrt{\,{\rm d}}

\title[]{Endpoint weak Schatten class estimates and trace formula for commutators\\
 of Riesz transforms with multipliers on Heisenberg groups}

\author{Zhijie Fan}
\address{Zhijie Fan, School of Mathematics and Statistics, Wuhan University, Wuhan 430072, China}
\email{ZhijieFan@whu.edu.cn}

\author{Ji Li}
\address{Ji Li, Department of Mathematics, Macquarie University, Sydney}
\email{ji.li@mq.edu.au}

\author{Edward McDonald}
\address{Edward McDonald, School of Mathematics and Statistics, UNSW, Kensington, NSW 2052, Australia}
\email{edward.mcdonald@unsw.edu.au}

\author{Fedor Sukochev}
\address{Fedor Sukochev, School of Mathematics and Statistics, UNSW, Kensington, NSW 2052, Australia}
\email{f.sukochev@unsw.edu.au}

\author{Dmitriy Zanin}
\address{Dmitriy Zanin, School of Mathematics and Statistics, UNSW, Kensington, NSW 2052, Australia}
\email{d.zanin@unsw.edu.au}

  \date{\today}

 \subjclass[2010]{47B10, 42B20, 43A85}
\keywords{Schatten class, commutator, Riesz transform, Heisenberg group, homogeneous Sobolev space}

\begin{abstract}
Along the line of singular value estimates for commutators by Rochberg--Semmes, Lord--McDonald--Sukochev--Zanin and Fan--Lacey--Li,  we establish the endpoint weak Schatten class estimate for commutators
 of Riesz transforms with multiplication operator $M_f$ on Heisenberg groups via homogeneous Sobolev norm of the symbol $f$. The new tool we exploit is the construction of a singular trace formula on Heisenberg groups, which, together with the use of double operator integrals, allows us to bypass the use of Fourier analysis and provides a solid foundation to investigate the singular values estimates for similar commutators in general stratified Lie groups.
\end{abstract}

\maketitle

\section{Introduction and statement of main results}

This paper concerns the classical problems in Harmonic Analysis on non-abelian Heisenberg groups where the use of Fourier analysis is limited. The methods which we employ here are inspired by techniques imported from noncommutative geometry.

\subsection{Background}\label{S1.1}
Originated from Nehari \cite{Ne} and Calder\'on \cite{Cal}, the commutator $[A, M_f] $,  defined by $[A, M_f] :=AM_f- M_f A$ for a singular integral operator $A$ and a multiplication operator $M_f$  with symbol $f$, plays an important role in harmonic analysis. Moreover, it has
tight connections and applications to complex analysis, non-commutative analysis and operator theory, see for example \cite{CLMS,CRW,HLW,Hy,TaylorPAMS2015}.

As illustration, let  $H$ be the Hilbert transform on $\mathbb R$ and $\mathcal{L}_{p}$ be the Schatten class (precise definition will be provided in Section \ref{Spdef}). Peller \cite{P} showed that the commutator $[H,M_f]$ is in the Schatten class $\mathcal{L}_p $, $0<p<\infty$, if and only if the symbol $f$ is in the Besov space ${\rm B}_{p,p}^{1\over p}(\mathbb R)$. A higher dimensional analogy of Peller's result was obtained by Janson--Wolff \cite{JW} %who proved that if n\geq 2 then %However, the result is not parallel in higher dimensions.
concerning the $\ell$-th Riesz transform $R_\ell$ on $\mathbb R^n$, $n\geq2$.
They proved that: for $n<p<\infty$, $[R_\ell,M_f]\in \mathcal{L}_p $ {\rm\ if\ and\ only\ if\ } $f\in {\rm B}_{p,p}^{n\over p}(\mathbb R^n)$; for $0<p\leq n$, $[R_\ell,M_f]\in \mathcal{L}_p $ {\rm\ if\ and\ only\ if\ } $f$ is a constant. Unlike in the one-dimensional case, at $p=n$ there is a ``cut-off" in the sense that the function space (for which the equivalence holds) collapses to constant.  Rochberg--Semmes \cite{RS,RS1988} further studied  the Schatten--Lorentz class $\mathcal{L}_{p,q}$ estimates for $[R_\ell,M_f]$ and investigated the endpoint estimate for  $[R_\ell,M_f]\in\mathcal{L}_{n,\infty}$. In a paper which links these commutators to noncommutative geometry,
Connes--Sullivan--Teleman \cite{CST} further announced the full characterisation of the endpoint for $[R_\ell,M_f]\in\mathcal{L}_{n,\infty}$ via a homogeneous Sobolev space $\dot W^{1,n}(\mathbb R^n)$.

Recently, Lord--McDonald--Sukochev--Zanin \cite{LMSZ} provided a new proof for Connes--Sullivan--Teleman \cite{CST}
%by using Fourier analysis and double operator integrals.
by proving that
quantised derivatives ${d\hspace*{-0.08em}\bar{}\hspace*{0.1em}}f$ of Alain Connes (introduced in \cite[IV]{Connes} on $\mathbb{R}^n,$ $n>1$) %of essentially bounded functions
is in $\mathcal{L}_{n,\infty}$ if and only if $f$ is in the homogeneous Sobolev space $\dot {W}^{1,n}(\mathbb R^n)$, since ${d\hspace*{-0.08em}\bar{}\hspace*{0.1em}}f$ is linked to $[R_\ell,M_f]$.
The main method in \cite{LMSZ} is via creating a formula for Dixmier trace of $|{d\hspace*{-0.08em}\bar{}\hspace*{0.1em}}f|^n$ and using double operator integrals, which made extensive use of pseudodifferential calculus and the Fourier transform -- in this way the proof was dependent on Fourier theory. Very recently, a different approach to a similar problem was announced 
by Frank \cite{Frank2022}.

%, this gives a new proof to the classical results \cite{RS,CTS}  by a different method via creating a formula for Dixmier trace of $|{d\hspace*{-0.08em}\bar{}\hspace*{0.1em}}f|^n$ and using double operator integrals. This method made extensive use of pseudodifferential calculus and the Fourier transform -- in this way the proof was dependent on Fourier theory.

%and linked it to the commutator $[R_\ell,M_f]$ . Moreover, as a fundamental tool, { is established.}

Another important development of commutator $[A,M_f]$ is along the direction of Heisenberg groups $\mathbb H^n$ %(or more general version: stratified Lie groups), %
 where the Fourier analysis is not as effective as that in $\mathbb R^n$. For the background of Heisenberg groups we refer to Folland--Stein \cite{FoSt}.
To be more explicit, let $\mathbb{H}^{n}$ be a Heisenberg group, which is a nilpotent Lie group with underlying manifold $\mathbb{C}^{n}\times \mathbb{R}=\{[z,t]:z=(z_{1},\cdots,z_{n})\in\mathbb{C}^{n}, t\in \mathbb{R}\}$, anisotropic dilation $\delta_r ([ z, t] ) = [rz_1, ..., rz_n, r^2 t]$, the Kor\'anyi metric $ d_K([z,t]) = (|z|^4+|t|^2)^{1\over 4}$, and the multiplication law
\begin{align*}%\label{Hn multi law}
[z,t][z^{\prime},t^{\prime}]=[z_{1},\cdots,z_{n},t][z_{1}^{\prime},\cdots,z_{n}^{\prime},t^{\prime}]:=\Big[z_{1}+z_{1}^{\prime},\cdots,z_{n}+z_{n}^{\prime},t+t^{\prime}+{\rm Im}\Big(\sum_{j=1}^{n}z_{j}\overline{z'}_{j}\Big)\Big].
\end{align*}
The $2n+1$ vector fields
$$X_{\ell}:=\frac{\partial}{\partial x_{\ell}}-y_{\ell}\frac{\partial}{\partial t},\ \ Y_{\ell}:=\frac{\partial}{\partial y_{\ell}}+x_{\ell}\frac{\partial}{\partial t},\ \  T:=\frac{\partial}{\partial t},\ \ \ell=1,2,\cdots,n$$
form a natural basic for the Lie algebra of left-invariant vector field on $\mathbb{H}^{n}$. For convenience, we set $X_{n+\ell}:=Y_\ell, \ell=1,2,\cdots,n$ and set $X_{2n+1}:=T$. The standard sub-Laplacian $\Delta_{\mathbb{H}}$ on the Heisenberg group is defined by $\Delta_{\mathbb{H}}:=\sum_{\ell=1}^{2n}X_{\ell}^{2}.$
%
%{\color{red}
   % Anisotropic dilation $\delta$ should be defined here.
%}
%For any multi-index $I=(i_{1},\cdots,i_{2n+1})\in\mathbb{N}^{2n+1}$, we set $X^{I}:=X_{1}^{i_{1}}X_{2}^{i_{2}}\cdots X_{2n+1}^{i_{2n+1}}$ and further set
%$$|I|:=i_{1}+\cdots+i_{2n+1}\ {\rm and} \ d(I):=i_{1}+\cdots+i_{2n}+2i_{2n+1}.$$
%The $I$ and $d(I)$ are said to be the topology degree and homogeneous degree of the differential $X^{I}$, respectively.
%
For any $\ell=1,2,\ldots,2n$,   the Riesz transform $R_{\ell}$ on Heisenberg groups $\mathbb{H}^{n}$ is defined by
$ R_{\ell} = X_\ell (-\Delta_{\mathbb H})^{-{1\over2}} $,
 which is
%
%The Heisenberg group plays an important role in several branches of mathematics, such as representation theory, partial differential equations, number theory, several complex variables and quantum mechanics (see for example \cite{FischerRuzhansky2016,FoSt,MRS,Stein-harmonic-1993,Thangavelu}). The Schatten class characterisation of the commutator of Cauchy--Szeg\H o projection with $M_f$ on the Heisenberg group was first obtained in \cite{FR}, where the tools via Hankel operator and Fourier analysis were essential. Recent developments for the Schatten class estimates for the commutator of pseudo-differential operators includes \cite{Gigo} (see also \cite{GiGo2} for Calder\'on's commutator of pseudo-differential operators with $M_f$), where certain version of Fourier analysis was used. %, where they obtain the sufficiency argument. We point out that the dependence of the  in \cite{FR} and in \cite{Gigo}.
%
a well-known  Calder\'on--Zygmund operator on $\mathbb H^n$. It lies at the centre of harmonic analysis on $\mathbb H^n$, which connects to harmonic function theory, and the characterisation of one-parameter and multi-parameter Hardy and BMO spaces   \cite{CG,FoSt,  CCLLO}.
The characterisations of the boundedness and compactness of $[R_\ell,M_f]$  were obtained  recently in \cite{DLLW} and \cite{CDLW}, respectively.
% The key step in \cite{DLLW} is the establishment of the non-degenerated pointwise kernel lower bound for the Riesz transform and a decomposition on the oscillation of the symbol $f$ over the ball $B$ via the median value $f$ on $B$. The compactness was obtained later
%in \cite{CDLW} with a refinement on the kernel lower bound.
%Both results bypassed the use of Fourier transform which is essential in the classical result \cite{CRW} but is no longer effective on general stratified Lie groups.

Just recently, by launching a new method to  bypass the Fourier analysis, Fan--Lacey--Li \cite{FLLarxiv} obtained the Schatten class characterisation of $[R_\ell, M_f]$ on $\mathbb H^n$, which is analogous to Janson--Wolff \cite{JW} in $\mathbb R^n$. That is: for $\ell=1,2,\ldots,2n$,

(1) when $2n+2<p<\infty$, $[R_\ell,M_f]\in \mathcal{L}_p $ {\rm\ if\ and\ only\ if\ } $f$ is in the Besov space ${\rm B}_{p,p}^{2n+2\over p}(\mathbb H^n)$;

(2) when $0<p\leq 2n+2$, $[R_\ell,M_f]\in \mathcal{L}_p $ {\rm\ if\ and\ only\ if\ } $f$ is a constant.

We point out that $p=2n+2$ (the homogeneous dimension of $\HH^n$) is the endpoint in the setting of $\mathbb H^n$.
% in the sense that whether the function spaces (for which the equivalence holds) collapse to constants.
%This endpoint arises since
 %since the pointwise upper bound of Riesz transform kernel $K(y^{-1}x)$ is of the form $d_K(x,y)^{-2n-2}$ with $d_K$ is the Kor\'anyi metric (\cite{FoSt}),  and the non-degenerate pointwise lower bound is of the same form for certain choices of $x$ and $y$ (\cite{DLLW}).
 %This singularity of the kernel $K$ on the diagonal matches the endpoint index.
  See also the remarks in \cite[p. 266--276]{RS} for the endpoint.
  Moreover, recall from  (\cite[p. 254]{P2}) that for any $0<p<2n+2$, one has
$\mathcal{L}_{p,\infty}\subset \mathcal{L}_{2n+2}$. This, together with (2) above, implies that for any $0<p< 2n+2$,
$[R_{\ell},M_f]\in \mathcal{L}_{p,\infty}$ if and only if $f$ is a constant.

Thus, along the line of \cite{JW,RS}, \cite{LMSZ} and \cite{FLLarxiv}, a natural question arises:

 {\bf Q:} ``How to characterise the endpoint weak Schatten class $\mathcal{L}_{2n+2,\infty}$ estimate for $[R_{\ell},M_f]$ on $\HH^n$ where one cannot apply Fourier analysis as effectively as in the Euclidean setting?''

\subsection{Statement of main results}
In this paper, we give a confirmative answer to the question in Section \ref{S1.1} in terms of the homogeneous Sobolev space on $\HH^n$ by developing a new approach based on Connes' trace theorem.

%That is
%$$ \dot{W}^{1,p}(\HH^n):=\Bigg\{f\in (\mathcal S(\HH^n))':\ \sum_{j=1}^{2n} \|X_j f\|_{L_p(\HH^n)}<\infty\Bigg\}. $$

%To be more precise, our main theorem is the following.
\begin{thm}\label{main}
Suppose that $f\in  L_{\infty}(\Heis^n)$. Then for any $\ell\in \{1,2,\cdots,2n\}$, the commutator $[R_{\ell},M_f]\in \mathcal{L}_{2n+2,\infty}$ if and only if $f\in \dot{W}^{1,2n+2}(\HH^n)$. Moreover, there exist positive constants $C$ and $c$ such that
\begin{align}\label{main eq}
c\|f\|_{\dot{W}^{1,2n+2}(\HH^n)}\leq \|[R_{\ell},M_f]\|_{\mathcal{L}_{2n+2,\infty}}\leq C\|f\|_{\dot{W}^{1,2n+2}(\HH^n)}.
\end{align}
\end{thm}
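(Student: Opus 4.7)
The plan is to prove the theorem by establishing the two inequalities in~\eqref{main eq} separately, relying on a combination of double operator integral techniques and a new singular trace formula on $\HH^n$.

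For the upper bound $\|[R_\ell,M_f]\|_{\mathcal{L}_{2n+2,\infty}} \le C\|f\|_{\dot W^{1,2n+2}}$, I would first apply the Leibniz identity to split
\[
[R_\ell,M_f]=M_{X_\ell f}(-\Delta_{\HH})^{-1/2}+X_\ell[(-\Delta_{\HH})^{-1/2},M_f].
\]
The first summand falls under a Cwikel-type estimate $\|M_g(-\Delta_{\HH})^{-1/2}\|_{\mathcal{L}_{2n+2,\infty}}\lesssim\|g\|_{L_{2n+2}(\HH^n)}$, applied with $g=X_\ell f$; this estimate can be derived directly from the pointwise upper bound on the heat kernel of $-\Delta_{\HH}$ and the standard weak-Schatten variant of Cwikel's inequality, without Fourier inputs. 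For the second summand I would use the spectral identity $(-\Delta_{\HH})^{-1/2}=\pi^{-1/2}\int_0^\infty t^{-1/2}e^{t\Delta_{\HH}}\,dt$ together with Duhamel's formula and the computation $[\Delta_{\HH},M_f]=M_{\Delta_{\HH} f}+2\sum_j M_{X_j f}X_j$ to recast the commutator as a double operator integral whose integrand involves the building blocks $M_{X_j f}$ multiplied by bounded functions of $-\Delta_{\HH}$. The boundedness of the relevant DOI transformer on $\mathcal{L}_{2n+2,\infty}$ then closes the estimate.

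For the lower bound, the central ingredient is the singular trace formula advertised in the abstract. I would aim to exhibit a Dixmier-type trace $\tau$ on $\mathcal{L}_{1,\infty}$ together with an explicit constant $c_n>0$ such that
\[
\sum_{\ell=1}^{2n}\tau\bigl(|[R_\ell,M_f]|^{2n+2}\bigr)=c_n\int_{\HH^n}|\nabla_{\HH} f(x)|^{2n+2}\,dx
\]
for $f$ in a dense convenient subclass of $\dot W^{1,2n+2}(\HH^n)$. Since $\|T\|_{\mathcal{L}_{2n+2,\infty}}^{2n+2}\gtrsim\tau(|T|^{2n+2})$ for any positive $T\in\mathcal{L}_{2n+2,\infty}$, applying this inequality term by term and combining with a standard density argument would then yield the left-hand inequality of~\eqref{main eq}.

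The main obstacle is precisely the construction of this trace formula. On $\R^n$, Lord--McDonald--Sukochev--Zanin \cite{LMSZ} reached the analogous identity by exploiting the full pseudodifferential calculus and the Fourier transform; neither tool transfers cleanly to $\HH^n$, since the sub-Laplacian is only hypoelliptic and the group non-commutativity makes symbolic calculations on the Fourier side awkward. To circumvent this, I would develop a principal-symbol machinery intrinsic to $\HH^n$ --- for example, based on group convolutions and the Rockland-type calculus for stratified Lie groups --- and prove a version of Connes' trace theorem adapted to this calculus, computing the Dixmier trace of $|[R_\ell,M_f]|^{2n+2}$ as an explicit integral. A further delicate point is verifying the coercivity of the resulting integrand, so that the full horizontal gradient $\nabla_{\HH} f$ is genuinely detected rather than some degenerate combination of its components; this likely requires a careful joint analysis of the principal symbols of the $R_\ell$ together with the bracket relations $[X_i,X_{n+i}]=2T$ of the Heisenberg algebra.
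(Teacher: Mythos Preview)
Your upper-bound strategy is essentially the paper's: Leibniz, then Cwikel for $M_{X_\ell f}(-\Delta_\HH)^{-1/2}$, then a DOI reduction of the remaining piece to $(-\Delta_\HH)^{-1/2}[\Delta_\HH,M_f](-\Delta_\HH)^{-1/2}$ and another application of Cwikel. The paper packages the DOI step as a lemma (boundedness of the transformer with symbol $\lambda/(\lambda+\mu)$ on $\mathcal{L}_{p,\infty}$), but your heat-kernel/Duhamel variant would also work.

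For the lower bound there is a genuine gap. The formula you aim at,
\[
\sum_{\ell=1}^{2n}\tau\bigl(|[R_\ell,M_f]|^{2n+2}\bigr)=c_n\int_{\HH^n}|\nabla_\HH f|^{2n+2},
\]
even if established, only yields $\sum_\ell\|[R_\ell,M_f]\|_{\mathcal{L}_{2n+2,\infty}}^{2n+2}\gtrsim\|f\|_{\dot W^{1,2n+2}}^{2n+2}$, i.e.\ the lower bound under the hypothesis that \emph{all} commutators lie in $\mathcal{L}_{2n+2,\infty}$. The theorem asserts this for each single $\ell$, and the paper really proves the single-$\ell$ version. Its trace formula is
\[
\varphi\bigl(|[R_\ell,M_f]|^{2n+2}\bigr)=\Bigl\|\sum_{k=1}^{2n}X_kf\otimes x_k^\ell\Bigr\|_{L_{2n+2}(L_\infty(\HH^n)\bar\otimes\mathcal{B}(L_2(\R^n))\bar\otimes\C^2)}^{2n+2},
\]
with the $x_k^\ell$ lying in a \emph{noncommutative} $L_{2n+2}$ space and shown to be linearly independent; this linear independence is exactly the coercivity you flag as a ``delicate point'', and it is what forces the right-hand side to dominate $\sum_k\|X_kf\|_{2n+2}$. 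Your proposal does not supply a mechanism for this.

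The second gap is in the construction itself. ``Rockland-type calculus'' and ``principal-symbol machinery intrinsic to $\HH^n$'' are placeholders, not methods. The paper's substitute for the Euclidean Fourier side is concrete: it builds a $*$-homomorphism $\pi_{\rm red}:\mathcal{B}(L_2(\R^n))\bar\otimes\C^2\to\mathrm{VN}(\HH^n)$ via the Schr\"odinger representations, whose image is precisely the dilation-invariant part of $\mathrm{VN}(\HH^n)$, and under which $R_j=\pi_{\rm red}(ip_jH^{-1/2}\otimes\mathbf{1})$, $R_{n+j}=\pi_{\rm red}(iq_jH^{-1/2}\otimes z)$. The trace theorem is then proved for operators of the form $M_f\,\pi_{\rm red}(x)\,|T|^{-1/2}$ by a symmetry/continuity argument (translation invariance plus a multilinear Riesz representation), and $[R_\ell,M_f]$ is shown to agree with such a sum modulo $(\mathcal{L}_{2n+2,\infty})_0$ via an explicit DOI approximation. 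The linear independence of the symbols $x_k^\ell$ is obtained by a rotation argument in the $(x_k,y_k)$-planes together with the $\C^2$ grading coming from $\mathrm{sgn}(s)$. None of this is visible in your outline, and it is the substance of the lower bound.
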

 {
%For $1\leq p<\infty$, let
Here, $\dot{W}^{1,2n+2}(\HH^n)$ is the homogeneous Sobolev space on $\HH^n$ defined by $\dot{W}^{1,2n+2}(\HH^n):=\big\{f\in(\mathcal S(\HH^n))': X_1f, X_2f,\cdots, \\ X_{2n}f \in L_{2n+2}(\HH^n)\big\}$ with the seminorm
$\| f\|_{\dot{W}^{1,2n+2}(\HH^n)} := \sum_{j=1}^{2n} \|X_j f\|_{L_{2n+2}(\HH^n)}$.
 $\mathcal S(\HH^n)$ denotes the test function space on $\HH^n$ and $(\mathcal S(\HH^n))'$ is the space of distributions (see for example \cite{FoSt}).

\smallskip
The proof of Theorem \ref{main} is a combination of Proposition \ref{sufficiency theorem} and Proposition \ref{necessary theorem}, which shows the second inequality in \eqref{main eq} and the first inequality in \eqref{main eq}, respectively. We explain the idea and techniques below.

%To show the sufficient direction, we will use the double operator integral techniques such that the endpoint weak-Schatten bound of Riesz transform commutator can be reduced to Cwikel-type estimate on Heisenberg group, which has been deeper discussed in \cite{MSZ_cwikel}. However, the proof of necessity is far more difficult since one cannot apply Fourier transform as effectively as in the Euclidean space. To overcome this difficulty, we divide our proof into two parts, where details of the notation will be given in Sections  \ref{sufficiencysection}% \ref{tracelemmasection}
%--\ref{Proof of necessity direction}:

{$\underline{ Part\ 1}$: the second inequality in \eqref{main eq}}.

We use the double operator integral techniques so that the endpoint weak-Schatten bound of Riesz transform commutator can be reduced to Cwikel-type estimate on Heisenberg group, which has been deeper discussed in \cite{MSZ_cwikel}.

{$\underline{ Part\ 2}$: the first inequality in \eqref{main eq}}.

The proof of this direction is far more difficult since one cannot apply Fourier transform as effectively as in the Euclidean space.

 As a main tool,  we prove a formula for the Dixmier trace of $|[R_{\ell},M_f]|^{2n+2}$ by
% studying the algebra of all dilation-invariant elements of the group von Neumann algebra
%of $\HH^n$, denoted by ${\rm VN}(\HH^n).$ We
representing the  elements of ${\rm VN}(\HH^n)$ (the group von Neumann algebra
of $\HH^n$) invariant under the group dilation $\delta_r$  via a construction of the homomorphism
\[
    \pi_{\red}:\mathcal{B}(L_2(\mathbb{R}^n))\overline{\otimes} \mathbb{C}^2\to {\rm VN}(\Heis^n),
\]
where the image of $\pi_{\red}$ is precisely dilation-invariant subalgebra operators of ${\rm VN}(\HH^n).$  This $\pi_{\red}$ is a noncommutative substitute
for the representation of dilation invariant operators as Fourier multipliers on $\mathbb R^n$.

%In order to obtain the lower bound in Theorem \ref{main}, we compute the value of a continuous normalised trace $\varphi$ on $\mathcal L_{1,\infty}$, when evaluated on $|[R_l,M_f]|^{2n+2}$.  The key tool is to create a non-trivial variant of Dixmier trace formula as below, where details of the notation will be given in Sections  \ref{tracelemmasection}--\ref{Proof of necessity direction}

\begin{thm}\label{keytraceformula1}
For every $ \ell=1,\ldots, 2n$,  there exists a linearly independent subset $\{x_{k}^\ell\}_{k=1}^{2n}$ of $L_{2n+2}(\mathcal B(L_2(\mathbb{R}^n))\overline{\otimes} \mathbb C^2)$ such that for any $f\in L_{\infty}(\Heis^n)\cap \dot{W}^{1,2n+2}(\HH^n)$, and for every normalised continuous trace $\varphi$ on $\mathcal{L}_{1,\infty}$,
%$\ell\in\{1,2,\cdots,2n\}$ and normalised continuous trace $\varphi$ on $\mathcal{L}_{1,\infty}$,  we have
\begin{align}\label{traceformula000000}
%\varphi(|[R_\ell,M_f]|^{2n+2})=c_n\bigg\|\sum_{k=1}^{2n} X_kf\otimes x_k^{\ell}\bigg\|_{L_{2n+2}(L_{\infty}(\mathbb{H}^n)\bar{\otimes}\mathcal{B}(l_2(\mathbb{Z}^n_+))\otimes\mathbb{C}^2)}^{2n+2},
\varphi(|[R_\ell,M_f]|^{2n+2}) = \bigg\| \sum_{k=1}^{2n} X_k f \otimes x_{k}^{\ell} \bigg\|_{L_{2n+2}(L_{\infty}(\HH^n)\overline{\otimes} \mathcal{B}(L_2(\mathbb{R}^n))\overline{\otimes} \mathbb{C}^2)}^{2n+2}.
\end{align}
%where {\color{red}$\{f_k^{\ell}\}_{k=0}^{2n}\in L_{2n+2}(\mathbb{H}^n)$ and $\{x_k^{\ell}\}_{k=0}^{2n}\in L_{2n+2}(\mathcal{B}(l_2(\mathbb{Z}^n_+))\otimes\mathbb{C}^2,{\rm Tr}\otimes\Sigma)$}  satisfying
%$$ f_k^{\ell}=\left\{\begin{array}{ll}\overline{X_\ell f}, &k=0\\ \overline{X_kf}, &k=1,2,\cdots,2n\end{array}\right.\ {\rm and}\ \ \pi_{\rm red}(x_k^{\ell})=\left\{\begin{array}{ll}(-\Delta_{\HH})^{-\frac{1}{2}}|T|^{\frac{1}{2}}, &k=0 \\(-\Delta_{\HH})^{-\frac{1}{2}}|T|^{\frac{1}{2}}(R_\ell a_k)^*, &k=1,2,\cdots,2n\end{array}\right.$$
\end{thm}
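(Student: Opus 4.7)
The plan proceeds in two phases: first a double-operator-integral (DOI) reduction of $[R_\ell,M_f]$ modulo an error invisible to Dixmier traces on $\mathcal L_{1,\infty}$; second the evaluation of the resulting Dixmier trace via a Heisenberg analogue of Connes' integration formula, realised concretely through the homomorphism $\pi_\red$.

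For the reduction, I would factor $R_\ell = X_\ell\, |\nabla_{\mathbb H}|^{-1}$ with $|\nabla_{\mathbb H}|:=(-\Delta_{\mathbb H})^{1/2}$ and use the Leibniz rule $[X_\ell,M_f]=M_{X_\ell f}$ to write
\begin{equation*}
[R_\ell,M_f] \;=\; M_{X_\ell f}\,|\nabla_{\mathbb H}|^{-1} + X_\ell\,[|\nabla_{\mathbb H}|^{-1},M_f].
\end{equation*}
The inner commutator $[|\nabla_{\mathbb H}|^{-1},M_f]$ is then expanded as a DOI applied to $[-\Delta_{\mathbb H},M_f] = -\sum_{k=1}^{2n}(X_k M_{X_k f}+M_{X_k f}X_k)$. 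After moving derivatives to the right, one arrives at an expansion
\begin{equation*}
[R_\ell,M_f] \;=\; \sum_{k=1}^{2n} M_{X_k f}\cdot A_k^\ell \;+\; E,
\end{equation*}
where each $A_k^\ell\in\mathrm{VN}(\Heis^n)$ is dilation-invariant of homogeneous order $0$, and $E$ lies in an ideal strictly contained in $\mathcal L_{2n+2,\infty}$ so that $|E|^{2n+2}$ is annihilated by every normalised continuous trace on $\mathcal L_{1,\infty}$. The Cwikel-type estimates on $\Heis^n$ mentioned in the introduction supply the mapping properties needed to control $E$.

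Next, the homomorphism $\pi_\red$ identifies each dilation-invariant operator $A_k^\ell$ with an element $x_k^\ell\in\mathcal B(L_2(\mathbb R^n))\overline{\otimes}\mathbb C^2$; these are the sought-after symbols. Linear independence of $\{x_k^\ell\}_{k=1}^{2n}$ should follow from an explicit inspection of the leading symbols on the Heisenberg group, reflecting the fact that the Riesz symbols together with the generators $X_k$ are generically non-degenerate. The formula \eqref{traceformula000000} then reduces to a Connes-type identity: for $g_1,\dots,g_{2n}\in L_{2n+2}(\Heis^n)$ and dilation-invariant $A_1,\dots,A_{2n}\in\mathrm{VN}(\Heis^n)$ with $\pi_\red(A_k)=y_k$, the Dixmier trace $\varphi\bigl(\bigl|\sum_k M_{g_k} A_k\bigr|^{2n+2}\bigr)$ coincides with $\bigl\|\sum_k g_k\otimes y_k\bigr\|_{L_{2n+2}(L_\infty(\Heis^n)\overline{\otimes}\mathcal B(L_2(\mathbb R^n))\overline{\otimes}\mathbb C^2)}^{2n+2}$. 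This identity is the substitute for the Euclidean statement that such Dixmier traces integrate the principal symbol over the cosphere bundle.

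The main technical obstacle is the error analysis in the DOI expansion: one must verify that every non-leading contribution vanishes under the trace once raised to the $(2n+2)$-th power, which demands sharp ideal estimates for DOI remainders in the absence of the Fourier transform. A secondary but essential issue is the formulation of the Connes-type trace identity for operators of the form $\sum_k M_{g_k}A_k$ on $\Heis^n$; here $\pi_\red$ does the work that Fourier multipliers do in \cite{LMSZ}, converting the singular trace into a concrete noncommutative $L_{2n+2}$ norm, and the linear independence of the resulting $\{x_k^\ell\}$ guarantees that the right-hand side of \eqref{traceformula000000} is a genuine seminorm on $\dot W^{1,2n+2}(\Heis^n)$ rather than a degenerate quantity.
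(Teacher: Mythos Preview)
Your overall strategy is that of the paper: Leibniz decomposition of $[R_\ell,M_f]$, DOI expansion of $[(-\Delta_{\HH})^{\pm 1/2},M_f]$ to extract the leading terms, identification of dilation-invariant coefficients through $\pi_{\red}$, and a Connes-type trace identity (the paper's Theorem~\ref{connes product theorem} and Corollary~\ref{connes product corollary}). One technical point in your formulation does not work as written, however, and it is precisely the point where the Heisenberg case differs from the Euclidean one.

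You write $[R_\ell,M_f]=\sum_k M_{X_k f}\,A_k^\ell+E$ with $A_k^\ell\in\mathrm{VN}(\Heis^n)$ ``dilation-invariant of homogeneous order $0$'', identify $A_k^\ell$ with $x_k^\ell$ via $\pi_{\red}$, and then assert a trace identity $\varphi\bigl(|\sum_k M_{g_k}A_k|^{2n+2}\bigr)=\|\sum_k g_k\otimes y_k\|^{2n+2}$. But a bounded dilation-invariant element of $\mathrm{VN}(\Heis^n)$ (i.e.\ something in the image of $\pi_{\red}$) carries no decay: there is no Cwikel-type estimate placing $M_{g_k}A_k$ in $\mathcal L_{2n+2,\infty}$ for $g_k\in L_{2n+2}$, so the left-hand side of your identity is not even defined. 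The paper's resolution is to separate each leading term into a bounded dilation-invariant symbol \emph{and} an explicit degree-$(-1)$ factor: the basic building block is $A(x,f)=M_f\,\pi_{\red}(x)\,|T|^{-1/2}$, where $|T|^{-1/2}$ supplies the $\mathcal L_{2n+2,\infty}$ decay via Cwikel (Lemma~\ref{homogeneous cwikel lemma}) while the $L_{2n+2}$-integrability of $x\in\mathcal B(L_2(\mathbb R^n))\overline{\otimes}\mathbb C^2$ produces the finite noncommutative $L_{2n+2}$ norm on the right. The commutator is then shown (Proposition~\ref{approximation theorem}, Proposition~\ref{keytraceformula}) to equal $\sum_k A(y_k^\ell,\overline{X_kf})$ modulo $(\mathcal L_{2n+2,\infty})_0$ after passing to the adjoint. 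Your version collapses the two distinct roles---bounded symbol versus decay factor---into one object, and that is where it breaks.

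A secondary remark: the linear independence of $\{x_k^\ell\}$ is not obtained in the paper by ``explicit inspection of leading symbols'' but by a symmetry argument (Lemma~\ref{independent}), using the rotations $U_{k,\theta}$ in each $(x_k,y_k)$-plane together with the parity under $[x+iy,t]\mapsto[x-iy,-t]$ to decouple the components; the non-existence of a bounded right inverse for $R_\ell$ is also needed to rule out the constant term.
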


Due to the inequality $|\varphi(X)| \lesssim \|X\|_{\mathcal{L}_{1,\infty}}$, Theorem
\ref{keytraceformula1} supplies the lower bound in Theorem \ref{main} for sufficiently regular symbol $f$ of the multiplication operator $M_f$. The explicit forms of $x_{k}^\ell$ will be given in Proposition \ref{keytraceformula}, and the full proof of Theorem
\ref{keytraceformula1} is given in sections \ref{auxiliarysection}%{tracelemmasection}
--\ref{Proof of necessity direction}.

We remark that Theorem \ref{keytraceformula1} is a non-trivial variant of Dixmier trace formula established in \cite[Theorem 2]{LMSZ} and \cite[Theorem 3(iii)]{Connes1998}. In \cite{Connes1998}, the commutator of
$M_f$ with
the sign of the Dirac operator on a compact Riemannian spin $d$-manifold is termed a ``quantised differential", which links to $[R_{\ell},M_f]$. One can express the Dixmier trace of the $d$th power of a quantised derivative
as the integral of the $d$th power of the gradient of $f$ \cite[Theorem 3(iii)]{Connes1998}. In the words of Connes, by such a formula one can ``pass from quantized $1$-forms to ordinary forms, not by a classical limit, but by a direct application of the Dixmier trace".
Extensions and analogies of this formula \cite[Theorem 3(iii)]{Connes1998} in various non-compact and noncommutative settings have been obtained more recently, see for example the results \cite{LMSZ}, \cite{MSX2019} and \cite{MSX2018} on the non-compact manifold $\mathbb{R}^d$, the quantum Euclidean space and the quantum tori, respectively.

%which also extend the class of traces to the much larger class of all continuous normalized traces on $\mathcal{L}_{1,\infty}$.

Unlike in \cite{Connes1998} where the quantised derivative is linked to $[R_\ell,M_f]$,  in the setting of Heisenberg groups, the relationship between $[R_\ell,M_f]$ and a ``quantised derivative" remains speculative. We are uncertain
at this stage whether our trace formula \eqref{traceformula000000} %Proposition \ref{keytraceformula}
is relevant to noncommutative geometry, however
there are direct applications in harmonic analysis. For example, following from  \cite[Corollary 1.5]{MSX2019}, one can apply the trace formula \eqref{traceformula000000} to give an alternative proof of \cite[(2) in Theorem 1.1]{FLLarxiv}.

We highlight that the new approach that we develop opens the door for further study of the endpoint Schatten--Lorentz class estimates of commutators $[A,M_f]$ in various settings where Fourier analysis is not available (or not as effective as that in $\mathbb R^n$). Potential developments includes the endpoint Schatten--Lorentz class estimates for commutators $[R_\ell, M_f]$, where $R_\ell$ is the $\ell-$th Riesz transforms on H-type groups or stratified Lie groups, which will be addressed in our subsequent papers.

This paper is organized as follows. In Section \ref{Preliminaries} we provide some definitions and properties of several basic concepts, including Schatten class, traces, non-commutative $L_{p}$ spaces and $L_{p,\infty}$ spaces, group von Neumann algebra of $\mathbb{H}^n$ and Cwikel-type estimate on $\HH^n$. In Section \ref{approsection}, we provide an approximation lemma in the homogeneous Sobolev spaces, which allows us to concentrate on $f\in C_c^\infty(\HH^n)$ in the subsequent calculations for $[R_\ell,M_f]$.
%such that the priori assumption imposed on the commutator function of Riesz transform commutator can be relaxed.
In Section \ref{sufficiencysection}, we give the proof of the second inequality in \eqref{main eq}. In Sections \ref{auxiliarysection}%{tracelemmasection}
--\ref{Proof of necessity direction}, we present a complete proof of the first inequality in \eqref{main eq}.
%by implementing the steps 1--3 mentioned above, respectively.

Throughout the paper, we use $A\lesssim B$ to denote the statement that $A\leq CB$ for some constant $C>0$, and $A\approx B$ to denote the statement that $A\lesssim B$ and $B\lesssim A$.
}

\section{Preliminaries}\label{Preliminaries}
\setcounter{equation}{0}

\subsection{Schatten class, ideals and traces}\label{Spdef}
We recall the definitions of the Schatten classes $\mathcal{L}_{p}(H)$ and $\mathcal{L}_{p,\infty}(H)$. Let $H$ be any Hilbert space and $\mathcal{B}(H)$ be a set consisting of all bounded operators on $H$.  Note that if $A$ is any compact operator on $H$, then $A^{*}A$ is compact, positive and therefore diagonalizable. We define the singular values $\{\mu(k;A)\}_{k=0}^{\infty}$ be the sequence of square roots of eigenvalues of $A^{*}A$ (counted according to multiplicity). Equivalently, $\mu(k;A)$ can be characterized by
\begin{align*}
\mu(k;A)=\inf\{\|A-F\|:{\rm rank}(F)\leq k\}.
\end{align*}

For $0<p<\infty$, a compact operator $A$ on $H$ is said to belong to the Schatten class $\mathcal{L}_{p}(H)$ if $\{\mu(k;A)\}_{k=0}^{\infty}$ is $p$-summable, i.e. in the sequence space $\ell_{p}$. If $p\geq 1$, then the $\mathcal{L}_{p}(H)$ norm is defined as
\begin{align*}
\|A\|_{\mathcal{L}_{p}(H)}:=\left(\sum_{k=0}^{\infty}\mu(k;A)^{p}\right)^{1/p}.
\end{align*}
With this norm $\mathcal{L}_p(H)$ is a Banach space, and an ideal of $\mathcal{B}(H)$.
%$\mathcal{L}_{p,q}(H)$, $0<p<\infty$, $0<q\leq \infty$, if
%$$ \|A\|_{\mathcal{L}_{p,q}(H)}=\left\{\begin{array}{ll}\Big(\sum\limits_{k=1}^{\infty}\big(k^{1/p-1/q}s_{k}(A)\big)^{q}\Big)^{1/q}, &q<\infty,\\ [12pt]\sup\limits_{k\in\mathbb{N}}\Big\{k^{1/p}s_{k}(A)\Big\}, &q=\infty\end{array}\right.$$
%is finite. Ahe Schatten-Lorentz classes $\mathcal{L}_{p,q}(H)$ are quasi-Banach spaces endowed with quasi-norms $\|\cdot\|_{\mathcal{L}_{p,q}(H)}$. For $p=q$, one recover the Schatten classes $(\mathcal{L}_{p}(H),\|\cdot\|_{\mathcal{L}_{p}(H)})=(\mathcal{L}_{p,p}(H),\|\cdot\|_{\mathcal{L}_{p,p}(H)})$.

For $0<p<\infty$, the weak Schatten class $\mathcal{L}_{p,\infty}(H)$ is the set consisting of operators $A$ on $H$ such that $\{\mu(k;A)\}_{k=0}^{\infty}$ is in $\ell_{p,\infty}$, with quasi-norm:
\begin{align*}
\|A\|_{\mathcal{L}_{p,\infty}(H)}=\sup\limits_{k\geq0}(k+1)^{1/p}\mu(k;A)<\infty.
\end{align*}
For convenience, we shall use the abbreviations $\mathcal{L}_{p}$ and $\mathcal{L}_{p,\infty}$ to denote $\mathcal{L}_{p}(H)$ and $\mathcal{L}_{p,\infty}(H)$ respectively, whenever the Hilbert space $H$ is clear from context. As with $\mathcal{L}_{p}$ spaces, $\mathcal{L}_{p,\infty}$ is an ideal of $\mathcal{B}(H)$. More details about Schatten class can be found in e.g. \cite{LSZ1,LSZ2}.

In this article, the ideal $\mathcal{L}_{1,\infty}$ and the traces on this ideal play crucial roles in the proof. Recall that a trace on $\mathcal{L}_{1,\infty}$ is a linear functional $\varphi:\mathcal{L}_{1,\infty}\rightarrow \mathbb{C}$ such that for any bounded operators $A$ and for any $B\in\mathcal{L}_{1,\infty}$, we have $\varphi(AB)=\varphi(BA)$. The trace $\varphi$ is called continuous when it is continuous with respect to the $\mathcal{L}_{1,\infty}$ quasinorm. Given an orthonormal basis $\{e_n\}_{n=0}^{\infty}$ on $H$, we define the operator $A:={\rm diag}\left\{\frac{1}{n+1}\right\}_{n=0}^{\infty}$ by $\langle e_n,Ae_m\rangle=\delta_{n,m}\frac{1}{n+1}$.

We say that $\varphi$ is normalized if
\begin{align*}
\varphi\left({\rm diag}\left\{\frac{1}{n+1}\right\}_{n=0}^{\infty}\right)=1.
\end{align*}
Note that for any unitary operators $U$ and all bounded operators $B$ we have $\varphi(UBU^*)=\varphi(B)$, which means that the property that $\varphi$ is normalized is independent of the choice of orthonormal basis.

To continue, for $1\leq p <\infty$, we define an ideal $(\mathcal{L}_{p,\infty})_{0}$ by setting
\begin{align*}
(\mathcal{L}_{p,\infty})_{0}=\{A \in \mathcal{L}_{p,\infty}   :  \lim_{k\to +\infty} k \mu(k;A)^{p} = 0 \}.
\end{align*}
This is a closed subspace of $\mathcal{L}_{p,\infty}$ and coincides with the closure of ideal of all finite rank operators in $\mathcal{L}_{p,\infty}$ and is commonly called the separable part of $\mathcal{L}_{p,\infty}$ (more details about separable part can be found in \cite{LSZ1,LSZ2}).
 An important fact for us is that all continuous traces on $\mathcal{L}_{1,\infty}$ vanish on $(\mathcal{L}_{1,\infty})_0;$ this is a consequence of the singularity of all traces
 on $\mathcal{L}_{1,\infty},$ see for example \cite[Corollary 5.7.7]{LSZ1}. In this paper, we shall consider only continuous normalised traces.

 The following ideal property of $(\mathcal{L}_{p,\infty})_0$ is an easy consequence of H\"{o}lder's inequality, but would be frequently used in the context.
\begin{lem}
Suppose that $1\leq p,q,r<\infty$ satisfying $1/r=1/p+1/q$. If $A\in \mathcal{L}_{p,\infty}$, $B\in (\mathcal{L}_{q,\infty})_0$, then $AB\in (\mathcal{L}_{r,\infty})_{0}$.
\end{lem}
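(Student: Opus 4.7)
The plan is to reduce the statement to a direct computation with singular values, using the standard submultiplicative inequality $\mu(2k;AB)\leq \mu(k;A)\mu(k;B)$ that holds for any bounded $A,B$. The membership $A\in\mathcal{L}_{p,\infty}$ gives the uniform bound $k^{1/p}\mu(k;A)\lesssim \|A\|_{\mathcal{L}_{p,\infty}}$, while the hypothesis $B\in(\mathcal{L}_{q,\infty})_0$ translates directly into the decay $k\,\mu(k;B)^{q}\to 0$ as $k\to\infty$. These are the only two facts I intend to use.

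Armed with these inputs, I would estimate
\begin{align*}
(2k)\,\mu(2k;AB)^{r} &\leq 2k\,\mu(k;A)^{r}\mu(k;B)^{r}\\
 &= 2\bigl(k\,\mu(k;A)^{p}\bigr)^{r/p}\bigl(k\,\mu(k;B)^{q}\bigr)^{r/q}\cdot k^{\,1-r/p-r/q}.
\end{align*}
The relation $1/r=1/p+1/q$ forces $r/p+r/q=1$, so the leftover factor $k^{\,1-r/p-r/q}$ collapses to $1$. The first bracketed quantity remains bounded by $\|A\|_{\mathcal{L}_{p,\infty}}^{\,r}$, and the second tends to $0$ by the hypothesis on $B$. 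Hence $(2k)\,\mu(2k;AB)^{r}\to 0$; the odd-indexed terms $(2k+1)\,\mu(2k+1;AB)^{r}$ are controlled by the same quantity since singular values are non-increasing, i.e.\ $\mu(2k+1;AB)\leq\mu(2k;AB)$.

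I would also note in passing that the same submultiplicative inequality applied without the stronger decay of $B$, using only $B\in\mathcal{L}_{q,\infty}$, yields $AB\in\mathcal{L}_{r,\infty}$ via the usual weak H\"older inequality; this is what justifies speaking of $AB$ as an element of $\mathcal{L}_{r,\infty}$ to begin with, before upgrading to the ideal $(\mathcal{L}_{r,\infty})_{0}$.

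I do not anticipate any genuine obstacle here: the entire argument is one application of $\mu(2k;AB)\leq\mu(k;A)\mu(k;B)$ combined with the scaling identity $r/p+r/q=1$. The only bookkeeping point is to rewrite $1/r=1/p+1/q$ in the multiplicative form needed to cancel the power of $k$, so that the decay of $k\,\mu(k;B)^q$ transfers cleanly to decay of $k\,\mu(k;AB)^r$.
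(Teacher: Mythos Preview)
Your proof is correct and follows essentially the same route as the paper: both arguments apply the submultiplicative singular-value inequality $\mu(2k;AB)\le\mu(k;A)\mu(k;B)$ (equivalently $\mu(k;AB)\le\mu([k/2];A)\mu([k/2];B)$, which the paper cites as \cite[Corollary 2.3.16]{LSZ1}), then split the power of $k$ via $1/r=1/p+1/q$ so that the $A$-factor stays bounded by $\|A\|_{\mathcal{L}_{p,\infty}}$ and the $B$-factor tends to zero.
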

\begin{proof}
It suffices to note that by \cite[Corollary 2.3.16]{LSZ1},
\begin{align*}
(k+1)^{1/r}\mu(k;AB)\leq (k+1)^{1/p}\mu([k/2];A)(k+1)^{1/q}\mu([k/2];B)\leq \|A\|_{\mathcal{L}_{p,\infty}}(k+1)^{1/q}\mu([k/2];B)\rightarrow 0,
\end{align*}
as $k\rightarrow +\infty$. Therefore $AB\in (\mathcal{L}_{r,\infty})_{0}$.
\end{proof}

%Besides, we have the following form of H\"{o}lder's inequality:
%\begin{align*}
%\|TG\|_{\mathcal{L}_{r,\infty}}\leq C_{p,q}\|T\|_{p,\infty}\|G\|_{q,\infty}.
%\end{align*}

\subsection{Non-commutative $L_{p}$ spaces and $L_{p,\infty}$ spaces}
For the convenience of those readers who are not familiar with non-commutative analysis, we recall definitions of the classical non-commutative $L_p$ spaces and the weak $L_p$ spaces, which generalize the notion of Schatten classes $\mathcal{L}_p$ and $\mathcal{L}_{p,\infty}$ defined in the preceding subsection. More details can be found in \cite{LSZ1,PX}.

Let $\mathcal{M}\subset \mathcal{B}(H)$ be a von Neumann algebra equipped with a faithful normal semifinite trace $\tau$. A closed and densely defined operator $A:D(A)\rightarrow H$ is said to be affiliated with $\mathcal{M}$ if it commutes with every unitary operator from the commutant of $\mathcal{M}$; Besides, we say that $A$ is $\tau$-measurable if it is affiliated with $\mathcal{M}$ and for every $\varepsilon>0$, there exists a projection $p\in\mathcal{M}$ such that $(1-p)H$ is contained in the domain $D(A)$ of $A$ and $\tau(p)<\varepsilon$ (\cite[Definition 1.2]{FK}). For simplicity, we denote the set of all $\tau$-measurable opeartors by $\mathcal{S}(\mathcal{M},\tau)$.

Let $\mu(t;X)$ denote the $t$-th generalized $s$-number (\cite[Definition 2.1]{FK}) of $X\in \mathcal{S}(\mathcal{M},\tau)$. By \cite[Proposition 2.2]{FK},
\begin{align*}
\mu(t;X)=\inf\left\{s\geq 0:\tau(\chi_{(s,\infty)}(|X|))\leq t\right\}.
\end{align*}
For $1\leq p<\infty$,  the non-commutative $L_p$ space associated with $(\mathcal{M},\tau)$ is defined by
\begin{align*}
L_p(\mathcal{M},\tau):=\{X \ {\rm affiliated}\  {\rm with}\  \mathcal{M}:\tau(|X|^{p})^{1/p}<\infty\}.
\end{align*}
For the endpoint $p=\infty$, we denote $L_\infty(\mathcal{M},\tau):=\mathcal{M}$ and $\|\cdot\|_{L_\infty(\mathcal{M},\tau)}=\|\cdot\|_{H\rightarrow H}$ stand for the operator norm.
Besides, for $0 < p<\infty$, the weak non-commutative $L_p$ spaces associated with $(\mathcal{M},\tau)$ is defined by
\begin{align*}
L_{p,\infty}(\mathcal{M},\tau):=\{X \ {\rm affiliated}\  {\rm with}\  \mathcal{M}:\sup\limits_{t\geq 0}t^{1/p}\mu(t;X)<+\infty\}.
\end{align*}
If $0 < p<\infty$, then the space $L_{p,\infty}(\mathcal{M},\tau)$ equipped with the quasi-norm $\|\cdot\|_{L_{p,\infty}(\mathcal{M},\tau)}:=\sup\limits_{t\geq 0}t^{1/p}\mu(t;X)$ becomes a quasi-Banach space.

%We denote the semifinite von Neumann algebra $({\rm VN}(\HH^n),\tau)$ by $L_{\infty}(\widehat{\HH^n}).$ Note that $L_{\infty}(\widehat{\HH}^n)$ is a noncommutative algebra, and is not an algebra
%of functions on the set $\widehat{\HH^n}.$ Similarly, we denote $L_p(\widehat{\HH^n})$ for the noncommutative $L_p$-space associated to $L_{\infty}(\widehat{\HH^n})$
%and the trace $\tau.$

\subsection{Group von Neumann algebra of $\mathbb{H}^n$}\label{VNGdef}
At the beginning of this subsection, we recall the definition of the group von Neumann algebra on Heisenberg group from \cite[Section 7.2.1]{Pedersen2018}, \cite[Chapter VII, Section 3]{Takesaki_2}. Let $\lambda:\mathbb{H}^{n}\rightarrow \mathcal{B}(L_2(\mathbb{H}^{n}))$ be the left regular representation. That is, for any $\gamma\in\mathbb{H}^{n}$, $\lambda(\gamma)$ is the unitary operator on $L_2(\mathbb{H}^{n})$ defined by
\begin{align*}
(\lambda(\gamma) \xi)(h)=\xi(\gamma^{-1}h),\quad  h,\gamma\in\mathbb{H}^{n},\  \xi\in L_2(\mathbb{H}^n).
\end{align*}
Given $f \in C_c(\HH^n)$, write $\lambda(f)$ for the operator on $L_2(\HH^n)$ defined by
\[
(\lambda(f)\xi)(h) = \int_{\HH^n} f(h \gamma^{-1})\xi(\gamma)\,d\gamma,\quad \xi \in L_2(\HH^n),\; h \in \HH^n.
\]
%where $\nu$ be the Haar measure on $\HH$.
Observe that $\lambda(f)\lambda(g)=\lambda(f\ast g),$ where $f\ast g \in C_c(\HH^n)$ is the convolution
\[
(f\ast g)(h) := \int_{\HH^n} f(h \gamma^{-1})g(\gamma)\,d\gamma,\quad f,g \in C_c(\HH^n),\;h \in \HH^n.
\]
Formally, $\lambda(f)\varphi = f\ast \varphi$ so that $\lambda(f)$ is rightfully called a ``convolution operator".

\begin{defi}\label{VNG_defi}
	The group von Neumann algebra ${\rm VN}(\HH^n)$ is defined to be closure of $\lambda(C_c(\HH^n))$
	in the weak operator topology of $\mathcal{B}(L_2(\HH^n)).$ Equivalently, ${\rm VN}(\HH^n)$
	is the weak operator topology closure of the linear span of the family $\{\lambda(\gamma)\}_{\gamma \in \HH^n}$. %where
%	\[
%	(\lambda(\gamma)u)(\eta) := u(\gamma^{-1}\eta),\quad u \in L_2(\HH^n),\; \gamma, \eta \in \HH^n.
%	\]
	That is,
	\[
	{\rm VN}(\HH^n) = \lambda(\HH^n)'' \subseteq \mathcal{B}(L_2(\HH^n)),
	\]
	where $''$ denotes the double commutant.
\end{defi}

\begin{defi}
	On the algebra $\lambda(C_c(\HH^n)),$ we define a functional
	\[
            \tau(\lambda(f)) = f(0),\quad f \in C_c(\HH^n).
	\]
\end{defi}
It is not obvious that $\tau$ is well-defined, indeed in principle it is possible that $\lambda$ is not injective. The fact that $\lambda$ is injective follows from \cite[Chapter VII, Theorem 3.4]{Takesaki_2}, and hence $\tau$ is well-defined on the algebra $\lambda(C_c(\HH^n)).$

Note that the unimodularity of $\HH^n$ implies that $\tau$ is a trace such that
\[
\tau(\lambda(f)\lambda(g)) = \tau(\lambda(g)\lambda(f)),\quad f,g \in C_c(\HH^n).
\]
%Note also that
%\begin{equation}\label{plancherel_identity}
%\tau(\lambda(f)^*\lambda(g)) = \int_{\HH^n} \overline{f(\gamma)}g(\gamma)\,d\gamma,\quad f,g \in C_c(\HH^n).
%\end{equation}

%
% \begin{rem}
% 	The elements of ${\rm VN}(\HH^n)$ are ``generalised convolution operators"
% 	in the sense that elements of ${\rm VN}(\HH^n)$ are weak limits of operators $\lambda(f)$
% 	for $f \in C_c(\HH^n).$ The elements of ${\rm VN}(\HH^n)$ have distribution kernels of convolution type. Moreover, ${\rm VN}(\HH^n)$ contains all operators of the form $\lambda(\mu),$ where
% 	$\mu$ is a finite Borel measure on $\HH^n$ and
% 	\[
% 	(\lambda(\mu)\varphi)(h) = \int_{\HH^n} \varphi(\gamma^{-1}h)\,d\mu(\gamma),\quad \varphi\in L_2(\HH^n),\; h \in \HH^n.
% 	\]
% %	In particular, if $\delta_\gamma$ is the point mass at $\gamma\in \HH^n,$ then $\lambda(\delta_{\gamma})$ corresponds
% %	to the operator $\lambda(\gamma)$ in Definition \ref{VNG_defi}.
% \end{rem}

\begin{lem}\cite[Proposition 7.2.8]{Pedersen2018}
	The functional $\tau$ uniquely extends to a semifinite normal trace on ${\rm VN}(\HH^n).$
\end{lem}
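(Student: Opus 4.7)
The plan is to identify $(\lambda(C_c(\HH^n)), \tau)$ with a full Hilbert algebra inside $L_2(\HH^n)$, and then invoke the classical Dixmier--Takesaki theorem asserting that every such Hilbert algebra generates a von Neumann algebra carrying a canonical faithful normal semifinite trace. The starting point is the explicit computation: for $f, g \in C_c(\HH^n)$, using the convolution formula and unimodularity of $\HH^n$,
\begin{align*}
\tau(\lambda(g)^*\lambda(f)) = \tau(\lambda(g^* \ast f)) = (g^* \ast f)(0) = \int_{\HH^n} \overline{g(\gamma)}\, f(\gamma)\, d\gamma = \langle f, g\rangle_{L_2(\HH^n)},
\end{align*}
where $g^*(h) := \overline{g(h^{-1})}$. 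This simultaneously gives positivity ($\tau(\lambda(f)^*\lambda(f)) = \|f\|_{L_2}^2 \geq 0$), faithfulness on $\mathcal{A} := \lambda(C_c(\HH^n))$, and an isometric identification of the GNS pre-Hilbert space of $(\mathcal{A}, \tau)$ with $C_c(\HH^n) \subset L_2(\HH^n)$ via $\lambda(f) \mapsto f$.

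Next I would verify that $C_c(\HH^n) \hookrightarrow L_2(\HH^n)$ is a Hilbert algebra in the sense of Dixmier: the identities $\langle f \ast g, h\rangle = \langle g, f^* \ast h\rangle$ and $\langle f, g\rangle = \langle g^*, f^*\rangle$ are immediate from the integral formula, the density of $C_c \ast C_c$ in $L_2(\HH^n)$ follows from an approximate-identity argument, and the boundedness of left multiplication by $f \in C_c$ is precisely the statement $\lambda(f) \in \mathcal{B}(L_2(\HH^n))$. By Definition~\ref{VNG_defi}, the left von Neumann algebra of this Hilbert algebra coincides with ${\rm VN}(\HH^n)$. The Dixmier--Takesaki construction (cf.\ \cite[\S 7.2]{Pedersen2018}) then produces a faithful normal semifinite trace $\widetilde{\tau}$ on ${\rm VN}(\HH^n)$, characterised on positive elements by
\[
\widetilde{\tau}(x) = \|\xi\|_{L_2(\HH^n)}^2 \quad \text{if } x^{1/2} \text{ is the bounded closure of left convolution by some } \xi \in L_2(\HH^n),
\]
and $\widetilde{\tau}(x) = +\infty$ otherwise. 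The first-paragraph computation shows $\widetilde{\tau}|_{\mathcal{A}} = \tau$ by polarisation.

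For uniqueness, any normal semifinite trace $\tau'$ on ${\rm VN}(\HH^n)$ with $\tau'|_{\mathcal{A}} = \tau$ agrees with $\widetilde{\tau}$ on the $*$-ideal generated by $\mathcal{A}$ by the trace property, and then on all of ${\rm VN}(\HH^n)_+$ by normality together with the fact that spectral projections of elements of $\mathcal{A}_+$ are SOT-dense in the projection lattice of ${\rm VN}(\HH^n)$. The main technical obstacle is verifying that the Hilbert algebra $C_c(\HH^n)$ is \emph{achieved}, i.e.\ that the bounded left-multiplication operators coming from $L_2$-elements exhaust the trace-finite part of ${\rm VN}(\HH^n)$; this relies crucially on unimodularity of $\HH^n$, and on non-unimodular groups one would instead obtain the Plancherel weight rather than a trace.
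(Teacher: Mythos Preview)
The paper does not supply its own proof of this lemma; it simply cites \cite[Proposition 7.2.8]{Pedersen2018}. Your proposal follows exactly the Hilbert-algebra route that Pedersen uses (identifying $C_c(\HH^n)$ with a full Hilbert algebra in $L_2(\HH^n)$ and invoking the Dixmier--Takesaki construction of the canonical trace on the left von Neumann algebra), so there is nothing to compare: your argument is correct and essentially reproduces the cited reference.
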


Now we recall from \cite[Section 6.2.3]{FischerRuzhansky2016} that if $G$ is a unimodular group, then
$$L_2(G) = \int_{\widehat{G}}^{\oplus} H_{s} \,d\mu(s),$$
where $\widehat{G}$ is the space of irreducible unitary representations $(\rho_{s},H_{s})$ of $G$, equipped with a Plancherel measure $\mu.$ Corresponding to this is the direct integral decomposition
$${\rm VN}(G) = \int_{\widehat{G}}^{\oplus} \rho_{s}({\rm VN}(G))\,d\mu(s).$$
That is, given $x \in {\rm VN}(G)$ and $\xi\in L_2(G)$ we have
$$x\xi = \int_{\widehat{G}}^{\oplus} \rho_{s}(x)\xi_{s}\,d\mu(s),$$
where $\xi_{s}$ is the component of $\xi$ in the representation $\xi = \int_{\widehat{G}}^{\oplus} \xi_{s} \,d\mu(s).$ For more information about direct integral decomposition, we refer the readers to \cite[Appendix B]{FischerRuzhansky2016} and the reference
therein.

To adapt the above direct integral decomposition to Heisenberg group and its group von Neumann algebra, we first recall Schr\"{o}dinger representations of Heisenberg group (see for example \cite{FischerRuzhansky2016}), which are the infinite dimensional unitary representations of $\HH^n$. For $s\in\mathbb{R}^{*}=\mathbb{R}\backslash\{0\}$, consider the Schr\"{o}dinger representations of $\mathbb{H}^{n}$ acting on $L_2(\mathbb{R}^{n})$ defined by
%\begin{align*}
%\pi_{\lambda}(g)\varphi(\xi)=e^{i\lambda t}e^{i\lambda (x\cdot \xi+\frac{1}{2}x\cdot y)}\varphi(\xi+y),
%\end{align*}
\begin{align*}
\rho_{s}(g)\varphi(u)=e^{is(t+\frac{1}{2}x\cdot y)}e^{i{\rm sgn}(s)\sqrt{|s|} y \cdot u}\varphi\big(u+\sqrt{|s|}x\big),
\end{align*}
where $\cdot$ is the usual inner product in $\mathbb{R}^{n}$, $g=(z,t)$, $z=x+iy$, $x,y,u\in\mathbb{R}^{n}$ and $\varphi\in L_2(\mathbb{R}^{n})$ (See \cite[Formula (6.8)]{FischerRuzhansky2016}).

Now let $G=\Heis^n$ be Heisenberg group with Lie algebra $\mathfrak{g}$, then the unitary dual $(\widehat{\Heis^n},\mu)$ is summarised in \cite[Equation (6.29)]{FischerRuzhansky2016},
$$\widehat{\HH^n} = \mathbb{R}\setminus\{0\},\quad H_{s} := L_2(\mathbb{R}^n)$$
up to a set of zero Plancherel measure.
The Plancherel measure $d\mu(s)$ is
$$d\mu(s) = c_n|s|^n\,ds,$$
where $c_n>0$ is some constant.
The representation at $s \in \widehat{\HH^n}$, denoted $\rho_{s}$, satisfies the relations \cite[Section 6.3.3, p.453]{FischerRuzhansky2016}
$$\rho_{s}(X_j)=i|s|^{\frac12}p_j,\quad \rho_{s}(Y_j)= i\sgn(s)|s|^{\frac12}q_j,\quad \rho_{s}(T) = is,\quad 1\leq j\leq n.$$
Here $p_j := -i\partial_{x_j}$ and $q_j := M_{x_j}$ denote the momentum and position operators on $L_2(\mathbb{R}^n)$, respectively, and we identify $\rho_s$ with the corresponding representation of the Lie algebra of $\HH^n$ on $L_2(\mathbb{R}^n).$
% The representation $\rho_{s}:\HH^n\to \mathcal{B}(L_2(\mathbb{R}^n))$ is extended to ${\rm VN}(\HH^n)$ in the following way. For $f \in C_c(\HH^n),$ we write
% \begin{equation}\label{irrep_description}
%     \rho_{s}(\lambda(f))\xi = \int_{\HH^n} f(\gamma)\rho_{s}(\gamma)\xi\,d\gamma,\quad \xi \in L_2(\mathbb{R}^n).
% \end{equation}
% It can be proved {\color{red} [add reference]} that $\rho_{s}$ can be extended from $\lambda(C_c(\HH^n))$ to ${\rm VN}(\HH^n)$ by continuity, so that
The representation $\rho_s$ can be extended to a representation of ${\rm VN}(\HH^n)$ on $L_2(\mathbb{R}^n),$ and by the Stone-von Neumann theorem
\[
    \rho_s({\rm VN}(\Heis^n))  = \mathcal{B}(H_{s}) = \mathcal{B}(L_2(\mathbb{R}^n)),\quad s \in \mathbb{R}\setminus \{0\}.
\]

Since every $\mathcal{B}(H_{s})$ is the same,  we have
$${\rm VN}(\Heis^n) \cong \mathcal{B}(L_2(\mathbb{R}^n))\overline{\otimes} L_{\infty}(\mathbb{R}).$$
An explicit $*$-isomorphism is provided by the following proposition.
\begin{prop}\label{pi rigorous prop}
There exists a unique isomorphism of von Neumann algebras
$$\pi:\mathcal{B}(L_2(\mathbb{R}^n))\overline{\otimes} L_{\infty}(\mathbb{R})\to {\rm VN}(\Heis^n)$$
such that
$$\exp(itp_j\otimes |s|^{\frac12})\mapsto\lambda(\exp(tX_j)),\quad \exp(itq_j\otimes \sgn(s)|s|^{\frac12})\mapsto\lambda(\exp(tY_j)).$$
Here, we use the notation $s$ for the coordinate function $s\mapsto s.$
The isomorphism $\pi$ is trace-preserving, in the sense that if $x \in L_1(\mathcal{B}(L_2(\mathbb{R}^n))\overline{\otimes} L_{\infty}(\mathbb{R})),$ then we have $\pi(x) \in L_1(\mathrm{VN}(\HH^n))$
and
\[
    \tau(\pi(x)) = c_n\int_{-\infty}^\infty {\rm Tr}(x(s))\,s^nds.
\]
Here, $x$ is regarded as an integrable function from $(\mathbb{R},s^n\,ds)$ to $\mathcal{L}_1(L_2(\mathbb{R}^n)).$
\end{prop}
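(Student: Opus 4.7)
The plan is to construct $\pi$ as the inverse of the Plancherel/Fourier map assembled from the direct integral decomposition recalled in the paragraph above. My approach has four steps.

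First, I would build $\Phi := \pi^{-1}$ fibre-by-fibre. For each $s \in \mathbb{R}^{\ast}$, the Schr\"odinger representation $\rho_{s}$ extends to a normal $\ast$-homomorphism $\mathrm{VN}(\HH^n) \to \mathcal{B}(L_2(\mathbb{R}^n))$, and by the Stone--von Neumann theorem this extension is surjective (as already noted in the excerpt). Assembling the $\rho_{s}$ via the decomposition $L_2(\HH^n) = \int^{\oplus}_{\widehat{\HH^n}} L_2(\mathbb{R}^n)\, d\mu(s)$ produces a normal $\ast$-homomorphism
\[
\Phi \colon \mathrm{VN}(\HH^n) \longrightarrow \int_{\mathbb{R}^{\ast}}^{\oplus} \mathcal{B}(L_2(\mathbb{R}^n))\, d\mu(s) \;\cong\; \mathcal{B}(L_2(\mathbb{R}^n)) \overline{\otimes} L_{\infty}(\mathbb{R}),
\]
where the last identification uses that a direct integral of a constant field of type I factors over a standard measure space is the spatial tensor product with the corresponding $L_{\infty}$. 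That $\Phi$ is a $\ast$-isomorphism is precisely the Plancherel theorem for $\HH^n$ as cited from \cite{FischerRuzhansky2016}. Set $\pi := \Phi^{-1}$.

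Second, I would verify the claimed action on one-parameter subgroups. From $\rho_{s}(X_j) = i|s|^{1/2} p_j$ and $\rho_{s}(Y_j) = i\sgn(s)|s|^{1/2} q_j$, Stone's theorem gives
\[
\rho_{s}\bigl(\lambda(\exp(tX_j))\bigr) = \exp(it|s|^{1/2} p_j),\qquad \rho_{s}\bigl(\lambda(\exp(tY_j))\bigr) = \exp(it\sgn(s)|s|^{1/2} q_j),
\]
which under the tensor-product identification are exactly $\exp(itp_j \otimes |s|^{1/2})$ and $\exp(itq_j \otimes \sgn(s)|s|^{1/2})$. For uniqueness, observe that $\HH^n$ is generated as a Lie group by the one-parameter subgroups $\{\exp(tX_j), \exp(tY_j)\}_{j,t}$, since $[X_j,Y_j]$ is a nonzero multiple of $T$ and Baker--Campbell--Hausdorff recovers $\exp(tT)$ from the given exponentials. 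Thus the linear span of $\{\lambda(\exp(tX_j)), \lambda(\exp(tY_j))\}$ generates a weakly dense $\ast$-subalgebra of $\mathrm{VN}(\HH^n)$, and normality forces any candidate isomorphism satisfying the displayed relations to coincide with $\pi$ on all of $\mathrm{VN}(\HH^n)$.

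Third, for the trace-preserving property, I would argue by density starting from $f\in C_{c}(\HH^n)$. The abstract Plancherel identity for $\HH^n$ asserts
\[
\tau(\lambda(f)) \;=\; f(0) \;=\; \int_{\widehat{\HH^n}} \mathrm{Tr}(\rho_{s}(f))\, d\mu(s) \;=\; c_n \int_{-\infty}^{\infty} \mathrm{Tr}\bigl(\pi^{-1}(\lambda(f))(s)\bigr)\, |s|^n\, ds,
\]
which is exactly the asserted identity for $x = \pi^{-1}(\lambda(f))$ (modulo the standard $|s|^n$ versus $s^n$ convention for the Plancherel measure). One then extends from $\lambda(C_c(\HH^n))$ to arbitrary $x \in L_1(\mathcal{B}(L_2(\mathbb{R}^n)) \overline{\otimes} L_{\infty}(\mathbb{R}))$ by standard normality and density arguments, using that $\tau$ is semifinite normal. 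The main obstacle is not any single computation but the clean operator-algebraic packaging of the Plancherel theorem: translating the classical $L_2$ Plancherel statement into the von Neumann-algebraic isomorphism of Step 1, and matching the Plancherel density $c_n|s|^n$ to the normalisation of $\tau$; once this is in place, Steps 2--3 are essentially formal.
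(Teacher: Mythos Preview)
Your proposal is correct and takes essentially the same approach as the paper: construct $\rho = \int^{\oplus}\rho_s\,d\mu(s)$ via the direct integral decomposition of $L_2(\HH^n)$, identify this with the tensor product, check the generator relations via the explicit form of $\rho_s$, deduce uniqueness from the fact that the one-parameter subgroups generate $\HH^n$, and set $\pi = \rho^{-1}$. The only difference is cosmetic: for the trace-preserving assertion the paper simply cites \cite[p.~368]{DixmierCStar1977}, whereas you spell out the Plancherel-identity argument and the density extension; your observation about the $|s|^n$ versus $s^n$ convention is also apt, since the Plancherel density is $c_n|s|^n$.
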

\begin{proof}
We decompose $L_2(\Heis^n)$ as
$$L_2(\Heis^n) \equiv \int_{\mathbb{R}}^{\oplus} H_{s}s^n\,ds \equiv L_2(\mathbb{R}^n){\otimes} L_2(\mathbb{R},s^n\,ds),$$
where ${\otimes}$ is the Hilbert space tensor product \cite[Chapter IV, Section 1, Definition 1.2]{Takesaki_1}.
The action $\rho$ of ${\rm VN}(\Heis^n)$ on $L_2(\HH^n)$ in terms of this decomposition is given by
$$\rho = \int_{\widehat{\HH^n}}^{\oplus} \rho_{s}\,d\mu(s)=\int_{\mathbb{R}}^{\oplus}\rho_{s}\cdot s^nds.$$
	
Using the notation $s$ for the coordinate function $s\mapsto s,$ the explicit form of the representation $\rho_s$ yields
$$\rho(\lambda(\exp(tX_j))) = \exp(itp_j\otimes \sgn(s)|s|^{1/2}),\quad \rho(\lambda(\exp(tY_j))) = \exp(itq_j\otimes |s|^{1/2}).$$
In fact $\rho$ is uniquely specified by these identities, since the ${\rm VN}(\HH^n)$ is generated by $\lambda(\HH^n),$ and $\HH^n$ is generated by the semigroups $\exp(tX_j),$ $\exp(tY_j),$
for $j=1,\ldots,n.$

The map $\rho$ is an isomorphism, because it coincides with the direct integral decomposition of ${\rm VN}(\HH^n)$. Hence the inverse of $\rho,$ $\pi:= \rho^{-1}$ is also an isomorphism.

The fact that $\pi$ is trace-preserving follows from the corresponding assertion about $\rho,$ which is \cite[p.368]{DixmierCStar1977}.
\end{proof}

\begin{cor}\label{pi_computations} We have
$$\pi(H\otimes|s|)=-\Delta_{\mathbb{H}},\quad\pi(ip_jH^{-\frac12}\otimes 1) = R_j,\quad \pi(iq_jH^{-\frac12}\otimes {\rm sgn}(s)) = R_{j+n},\quad 1\leq j\leq n.$$
Here, $H$ is the $n$-dimensional harmonic oscillator defined as the operator on $L_2(\mathbb{R}^n)$ by the usual formula
$$H=\sum_{j=1}^np_j^2+q_j^2.$$ Since $-H\otimes |s|$ is an unbounded operator affiliated to the von Neumann algebra $\mathcal{B}(L_2(\mathbb{R}^n))\overline{\otimes} L_{\infty}(\mathbb{R})$ and not contained in the domain of $\pi,$ the first identity above should be intepreted as
\[
    \pi(e^{itH\otimes |s|}) = e^{-it\Delta_{\mathbb{H}}},\quad t \in \mathbb{R}.
\]
\end{cor}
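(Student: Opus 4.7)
The plan is to differentiate the defining identities of $\pi$ from Proposition \ref{pi rigorous prop} at $t=0$ to recover the representation of the generators $X_j$, $Y_j$ of the Lie algebra of $\HH^n$, and then assemble each of the stated identities by taking products, sums and functional calculus on the disjoint tensor factors.

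Concretely, the infinitesimal form of the two generating identities
\[
\pi\bigl(e^{itp_j \otimes |s|^{1/2}}\bigr) = \lambda(e^{tX_j}), \qquad \pi\bigl(e^{itq_j \otimes \sgn(s)|s|^{1/2}}\bigr) = \lambda(e^{tY_j})
\]
is exactly $\rho_s(X_j) = i|s|^{1/2}p_j$ and $\rho_s(Y_j) = i\sgn(s)|s|^{1/2}q_j$, which is already recorded in the proof of Proposition \ref{pi rigorous prop}. Since $p_j$ and $q_j$ act on a tensor factor disjoint from the scalar multipliers, squaring and summing gives
\[
\sum_{j=1}^{n} \rho_s(X_j)^2 + \rho_s(Y_j)^2 = -|s|\sum_{j=1}^{n}(p_j^2 + q_j^2) = -|s|H.
\]
Hence the positive self-adjoint operator $-\Delta_{\mathbb{H}}$ on $L_2(\HH^n)$ corresponds through $\pi$ to $H \otimes |s|$; because both sides are unbounded, the proper statement is the equality of bounded spectral functions, and Stone's theorem applied to the one-parameter unitary group $t \mapsto \pi\bigl(e^{itH \otimes |s|}\bigr)$ yields $\pi\bigl(e^{itH\otimes|s|}\bigr) = e^{-it\Delta_{\mathbb{H}}}$, the interpretation recorded in the statement.

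For the Riesz transforms $R_j = X_j(-\Delta_{\mathbb{H}})^{-1/2}$ and $R_{j+n} = Y_j(-\Delta_{\mathbb{H}})^{-1/2}$, the same representation-theoretic computation gives
\[
\rho_s(R_j) = i|s|^{1/2}p_j \cdot (|s|H)^{-1/2} = ip_jH^{-1/2},
\]
\[
\rho_s(R_{j+n}) = i\sgn(s)|s|^{1/2}q_j \cdot (|s|H)^{-1/2} = \sgn(s)\, iq_jH^{-1/2},
\]
where the $|s|^{1/2}$ factors cancel and only the $\sgn(s)$ survives in the second line. Both expressions are bounded (since $p_jH^{-1/2}$ and $q_jH^{-1/2}$ are bounded on $L_2(\mathbb{R}^n)$ by spectral calculus for the harmonic oscillator) and hence lie in $\mathcal{B}(L_2(\mathbb{R}^n))\overline{\otimes} L_\infty(\mathbb{R})$, so $\pi$ may be applied directly to produce the claimed equalities. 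The only obstacle is routine bookkeeping for unbounded operators affiliated with a von Neumann algebra: all the above manipulations must first be carried out on a common invariant core (for example Schwartz space in $L_2(\mathbb{R}^n)$, on which $p_j$, $q_j$ and fractional powers of $H$ are simultaneously defined), and then closed using self-adjointness and positivity of $H$ and $-\Delta_{\mathbb{H}}$ together with the fact that $\pi$ is a $\ast$-isomorphism of von Neumann algebras.
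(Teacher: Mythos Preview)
Your proposal is correct and follows exactly the approach implicit in the paper: the paper does not supply an explicit proof of this corollary, treating it as immediate from Proposition~\ref{pi rigorous prop} together with the Schr\"odinger-representation relations $\rho_s(X_j)=i|s|^{1/2}p_j$, $\rho_s(Y_j)=i\sgn(s)|s|^{1/2}q_j$ already recorded in Section~\ref{VNGdef}. Your write-up simply spells out the squaring-and-summing step for $-\Delta_{\mathbb{H}}$ and the cancellation of the $|s|^{1/2}$ factors for the Riesz transforms, which is precisely the computation the paper leaves to the reader.
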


%
% We consider the subalgebra of ${\rm VN}(\Heis^n)$ generated by the Riesz transforms:
% $${\rm VN}(\{R_j\}_{j=1}^{2n}) = \left(\{R_j\}_{j=1}^{2n}\right)'',$$
% where $''$ denotes double commutant or weak closure. Observe that as each $R_j$ is invariant under the action of $\mathbb{R}_+$ on ${ VN}(\mathbb{H}^n)$ by anisotropic dilations $r\mapsto \delta_r,$ ${\rm VN}(\{R_j\}_{j=1}^{2n})$ is contained in the subalgebra of ${\rm VN}(\mathbb{H}^n)$ invariant under $\delta.$ In fact, we have the following
% \begin{lem}
%     ${\rm VN}(\{R_j\}_{j=1}^{2n})$ is the von Neumann subalgebra of all dilation-invariant operators in ${\rm VN}(\mathbb{H}^n).$
% \end{lem}
% \begin{proof}
%     {\color{red}
%         [if we can think of an easy proof of this lemma, then it should go here. Otherwise, we simply define $\mathcal{D}\subset {\rm VN}(\mathbb{H}^n)$ as the dilation invariant subalgebra,
%         and simply use it as the codomain of $\pi_{\mathrm{red}}$ instead of ${\rm VN}(\mathbb{H}^n).$]
%     }
% \end{proof}

Besides the algebra ${\rm VN}(\HH^n),$ and its representation as $\mathcal{B}(L_2(\mathbb{R}^n))\overline{\otimes} L_{\infty}(\mathbb{R})$ via $\pi,$ we are also concerned with the subalgebra of all
\emph{dilation invariant} elements of ${\rm VN}(\HH^n),$ where in this context an operator $A \in {\rm VN}(\HH^n)$ is dilation invariant if for all $r>0$ we have
\begin{equation}\label{unitary_dilation_definition}
    \sigma_{r^{-1}}\circ A\circ \sigma_r = A,
\end{equation}
where $r\mapsto \sigma_r$ is the unitary action of $\mathbb{R}_+$ induced by the anisotropic dilation $r\mapsto \delta_r,$
\[
    \sigma_ru(\gamma) = r^{2n+2}u(\delta_r \gamma),\quad u \in L_2(\HH^n),\; \gamma \in \HH^n.
\]

Under the isomorphism $\pi,$ the dilation $\sigma_r$ acts as dilation on the second tensor factor. That is,
\[
    \sigma_r\pi(x)\sigma_{r^{-1}} = \pi((1\otimes \alpha_r)(x)),\quad x \in \mathcal{B}(L_2(\mathbb{R}^n))\overline{\otimes} L_{\infty}(\mathbb{R}),
\]
where $\alpha_r:L_{\infty}(\mathbb{R})\to L_{\infty}(\mathbb{R})$ is the automorphism
\[
    \alpha_r f(t) = f(r^{2}t),\quad t \in \mathbb{R},\; f \in L_{\infty}(\mathbb{R}).
\]

Let $\iota$ denote the embedding
$$\iota:\mathcal{B}(L_2(\mathbb{R}^n)){\overline{\otimes}} \mathbb{C}^2\hookrightarrow \mathcal{B}(L_2(\mathbb{R}^n))\overline{\otimes} L_{\infty}(\mathbb{R}),$$
where $\mathbb{C}^2$ is considered as a subalgebra of $L_{\infty}(\mathbb{R})$, according to the embedding
$$(z_1,z_2) \mapsto z_1\chi_{(-\infty,0)}+z_2\chi_{(0,\infty)} \in L_{\infty}(\mathbb{R}),\quad (z_1,z_2)\in\mathbb{C}^2.$$
From hereon, we understand $\mathbb{C}^2$ as being identified with the two-dimensional von Neumann algebra $\mathbb{C}\oplus \mathbb{C}.$ That is,
\[
    (z_1,z_2)(w_1,w_2) = (z_1w_1,z_2w_2),\quad (z_1,z_2),(w_1,w_2)\in \mathbb{C}^2
\]
and
\[
    (z_1,z_2)^* = \overline{(z_1,z_2)} = (\overline{z_1},\overline{z_2}),\quad (z_1,z_2)\in \mathbb{C}^2.
\]

We define a $*$-homomorphism
$$\pi_{\red}:\mathcal{B}(L_2(\mathbb{R}^n))\overline{\otimes} \mathbb{C}^2\to {\rm VN}(\mathbb{H}^n)$$
by setting
$$\pi_{\red} := \pi\circ \iota.$$
Observing that $\iota\begin{pmatrix} -1 \\ 1 \end{pmatrix} = \sgn(s),$ it follows from Corollary \ref{pi_computations} that
\begin{equation}\label{Riesz_in_terms_of_pired}
\pi_{\red}\left(ip_jH^{-\frac12}\otimes \begin{pmatrix} 1\\ 1\end{pmatrix}\right) = R_j,\quad \pi_{\red}\left(iq_jH^{-\frac12}\otimes \begin{pmatrix} -1\\ 1\end{pmatrix}\right) = R_{j+n},\quad 1\leq j\leq n.
\end{equation}
Similarly, \eqref{Riesz_in_terms_of_pired} implies that
\begin{equation}\label{T_in_terms_of_pired}
    \pi_{\red}\left(H^{-1}\otimes \begin{pmatrix} 1 \\ 1 \end{pmatrix}\right) = |T|(-\Delta_{\HH})^{-1}.
\end{equation}

Observe that $\iota(\mathcal{B}(L_2(\mathbb{R}^n))\otimes \mathbb{C}^2)$ is precisely the $r\mapsto 1\otimes \alpha_r$-invariant subalgebra
of $\mathcal{B}(L_2(\mathbb{R}^n))\otimes L_{\infty}(\mathbb{R}),$ and hence the image of $\pi_{\red}$ consists of the dilation-invariant subalgebra of ${\rm VN}(\HH^n).$
%A priori, the image of $\pi_{\red}$ is some von Neumann subalgebra in ${\rm VN}(\mathbb{H}^n).$ However, the most important feature of $\pi_{\red}$ is that its image in contained in the subalgebra of all dilation invariant operators.
\subsection{Cwikel-type estimate on Heisenberg group}

Cwikel-type estimates on $L_2(\HH^n)$ concern operators of the form
\[
(M_{f}\lambda(g))u(\gamma) = \int_{\HH^n} f(\gamma)g(\gamma \eta^{-1})u(\eta)\,d\nu(\eta),\quad u\in L_2(\HH^n),\; \gamma \in \HH^n.
\]
More generally, we may consider operators of the form $M_fx$ where $x$ is affiliated to the von Neumann algebra ${\rm VN}(\mathbb{H}^n).$ The basic Cwikel-type estimate we refer to is the following:

\begin{thm}\cite{MSZ_cwikel}\label{L_2_L_infty_cwikel_estimate}
	If $2 < p < \infty$, then there are constants $C_p$ and $C_p'$ such that
	\[
	\|M_f x\|_{\mathcal{L}_p} \leq C_p\|f\|_{L_p(\HH^n)}\|x\|_{L_p(VN(\HH^n),\tau)},\quad \|M_f x\|_{\mathcal{L}_{p,\infty}} \leq C_{p}'\|f\|_{L_p(\HH^n)}\|x\|_{L_{p,\infty}(VN(\HH^n),\tau)}.
	\]
\end{thm}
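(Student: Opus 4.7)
The plan is to derive both inequalities by bilinear interpolation from two transparent endpoints: an exact $\mathcal{L}_2$ identity at $p=2$ and the trivial operator-norm bound at $p=\infty$. By density of $\lambda(C_c(\HH^n))$ in $L_2(\mathrm{VN}(\HH^n),\tau)$, I would first take $x = \lambda(k)$ with $k \in C_c(\HH^n)$, in which case $M_f x$ is the integral operator on $L_2(\HH^n)$ with kernel $K(\gamma,\eta) = f(\gamma)\,k(\gamma\eta^{-1})$. The Hilbert--Schmidt computation, combined with the unimodular change of variables $\zeta = \gamma\eta^{-1}$, gives
\[
\|M_f x\|_{\mathcal{L}_2}^2 = \iint_{\HH^n \times \HH^n} |f(\gamma)|^2\,|k(\gamma\eta^{-1})|^2\,d\gamma\,d\eta = \|f\|_{L_2(\HH^n)}^2 \,\|k\|_{L_2(\HH^n)}^2,
\]
and the Plancherel identity $\tau(|\lambda(k)|^2) = (k^\ast\ast k)(0) = \|k\|_{L_2(\HH^n)}^2$ inherent in the definition of $\tau$ reinterprets this as $\|M_f x\|_{\mathcal{L}_2} = \|f\|_{L_2(\HH^n)}\|x\|_{L_2(\mathrm{VN}(\HH^n))}$. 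At the opposite end, $\|M_f x\|_{\mathcal{L}_\infty} \leq \|f\|_{L_\infty(\HH^n)}\|x\|_{L_\infty(\mathrm{VN}(\HH^n))}$ is immediate from submultiplicativity of the operator norm.

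With both endpoints in hand, I would invoke bilinear Calder\'on complex interpolation for the map $(f,x) \mapsto M_f x$. Using the three standard scale identifications
\[
[L_2(\HH^n), L_\infty(\HH^n)]_\theta = L_p(\HH^n), \quad [L_2(\mathrm{VN}), L_\infty(\mathrm{VN})]_\theta = L_p(\mathrm{VN}), \quad [\mathcal{L}_2, \mathcal{L}_\infty]_\theta = \mathcal{L}_p,
\]
with $1/p = (1-\theta)/2$ (the middle one being the Pisier--Xu complex interpolation theorem for non-commutative $L_p$ spaces over a semifinite von Neumann algebra), bilinear interpolation produces the strong estimate $\|M_f x\|_{\mathcal{L}_p} \leq C_p\|f\|_{L_p(\HH^n)}\|x\|_{L_p(\mathrm{VN}(\HH^n))}$ for every $2 < p < \infty$.

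For the asymmetric weak bound I would decompose $x$ via its dyadic spectral projections $e_k := \chi_{[2^k,2^{k+1})}(|x|)$, $k\in\ZZ$. The hypothesis $\|x\|_{L_{p,\infty}(\mathrm{VN})} \leq A$ forces $\tau(e_k) \lesssim A^p 2^{-kp}$, hence for any fixed $q \in (p,\infty)$ one has $\|x e_k\|_{L_q(\mathrm{VN})} \lesssim A^{p/q}\,2^{k(1-p/q)}$. Applying the strong $\mathcal{L}_q$ estimate to each piece $M_f x e_k$ and summing dyadically via the quasi-triangle inequality in $\mathcal{L}_{p,\infty}$ (in the spirit of the Marcinkiewicz interpolation theorem) yields the desired weak bound, with the $\|f\|_{L_q}$ factors reassembled into $\|f\|_{L_p(\HH^n)}$ through a Hardy-type summation once $q$ is optimized. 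The principal obstacle is technical rather than conceptual: one must verify that the analytic family arising in Calder\'on's complex method is admissible in the non-commutative Schatten setting (a holomorphy/domination condition in the operator-valued scale), and then execute the weak-type assembly with genuinely asymmetric norms, namely strong $L_p(\HH^n)$ on $f$ but weak $L_{p,\infty}(\mathrm{VN})$ on $x$, for which no off-the-shelf bilinear Marcinkiewicz theorem applies, so the hands-on dyadic argument above seems the cleanest route.
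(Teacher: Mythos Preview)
The paper does not itself prove this theorem; it is quoted from the companion paper \cite{MSZ_cwikel}, with only the remark that the argument there rests on Christ's Plancherel-type formula (Lemma~\ref{Christlm}). Your strong-type argument is correct and is the standard route: the $p=2$ Hilbert--Schmidt identity you wrote down is precisely the Plancherel theorem for ${\rm VN}(\HH^n)$, the $p=\infty$ bound is trivial, and bilinear complex interpolation between them (with the noncommutative $L_p$ interpolation scale on the second factor) yields the $\mathcal{L}_p$ estimate for all $2<p<\infty$.

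Your weak-type sketch, however, has a genuine gap. Fixing a single $q\in(p,\infty)$ and applying the strong $\mathcal{L}_q$ bound to each $M_f\,xe_k$ produces a factor $\|f\|_{L_q(\HH^n)}$, but $f$ is only assumed to lie in $L_p(\HH^n)$, and on the infinite-measure space $\HH^n$ there is no inclusion $L_p\subset L_q$; no ``Hardy-type summation'' can manufacture $\|f\|_{L_p}$ out of $\|f\|_{L_q}$ for a fixed larger $q$. Worse, for $q>p$ one has $\mathcal{L}_q\not\subset\mathcal{L}_{p,\infty}$, so an $\mathcal{L}_q$ bound on each piece does not even give $\mathcal{L}_{p,\infty}$ control. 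Taking $q\in(2,p)$ repairs the inclusion but not the norm on $f$, since $L_p(\HH^n)\not\subset L_q(\HH^n)$ either. The argument that actually works (and is the one carried out in \cite{MSZ_cwikel}, following the abstract Cwikel scheme) dyadically decomposes \emph{both} factors simultaneously: with $f=\sum_j f\chi_{\{2^j\le|f|<2^{j+1}\}}$ and $x=\sum_k xe_k$ as you wrote, one uses only the exact $\mathcal{L}_2$ identity and the $\mathcal{L}_\infty$ bound on each product $M_{f_j}\,xe_k$ and then assembles the resulting singular-value estimates over the double index $(j,k)$. It is this two-parameter splitting that allows the $L_p$ norm of $f$ and the $L_{p,\infty}$ norm of $x$ to emerge independently; a one-sided decomposition cannot decouple them.
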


Theorem \ref{L_2_L_infty_cwikel_estimate} fundamentally uses the following formula due to M.~Christ:
\begin{lem}\cite[Proposition 3]{Christ1991}\label{Christlm}
Let $m\in L_{\infty}([0,\infty))$, then $m(-\Delta_{\mathbb{H}})$ is a convolution operator. Denote its convolution kernel by $\check{m}$. Then $\check{m}\in L_{2}(\mathbb{H})$ if and only if
\begin{align*}
\int_{0}^{\infty}|m(s)|^{2}s^{n}ds<\infty.
\end{align*}
Moreover, there exists a constant $c$ such that for all such $m$,
\begin{align*}
\|\check{m}\|_{2}^{2}=c\int_{0}^{\infty}|m(s)|^{2}s^{n}ds.
\end{align*}
\end{lem}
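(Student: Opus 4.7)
My plan is to read off this Plancherel-type identity from the abstract Plancherel decomposition of $\mathbb{H}^n$, which has already been packaged into the trace-preserving isomorphism $\pi$ of Proposition \ref{pi rigorous prop} together with the functional-calculus identifications in Corollary \ref{pi_computations}. The first step is to recognise the left-hand side as a von Neumann trace. Since $-\Delta_{\mathbb{H}}$ commutes with the left regular representation, $m(-\Delta_{\mathbb{H}})$ is a (right) convolution operator with a uniquely determined distributional kernel $\check m$, and the classical identity $\tau(\lambda(f)^*\lambda(f))=\|f\|_{L_2(\mathbb{H}^n)}^2$, which follows directly from $\tau(\lambda(g))=g(0)$ and unimodularity, shows that $\check m\in L_2(\mathbb{H}^n)$ if and only if $m(-\Delta_{\mathbb{H}})\in L_2(\mathrm{VN}(\mathbb{H}^n),\tau)$, and that in this case $\|\check m\|_{L_2(\mathbb{H}^n)}^2=\tau\bigl(|m(-\Delta_{\mathbb{H}})|^2\bigr)$.

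Second, I would use that $\pi(e^{itH\otimes|s|})=e^{-it\Delta_{\mathbb{H}}}$ from Corollary \ref{pi_computations} extends, by the normal $*$-homomorphism calculus, to $m(-\Delta_{\mathbb{H}})=\pi(m(H\otimes|s|))$ for every bounded Borel $m$ on $[0,\infty)$; at each fibre $s\in\mathbb{R}\setminus\{0\}$ the element $m(H\otimes|s|)$ acts on $L_2(\mathbb{R}^n)$ as $m(|s|H)$. Applying the trace formula in Proposition \ref{pi rigorous prop} then yields
\begin{equation*}
\tau\bigl(|m(-\Delta_{\mathbb{H}})|^2\bigr)
=c_n\int_{-\infty}^{\infty}\operatorname{Tr}_{L_2(\mathbb{R}^n)}\bigl(|m|^2(|s|H)\bigr)\,|s|^n\,ds.
\end{equation*}

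Third, I would expand the harmonic oscillator as $H=\sum_{k\ge 0}(n+2k)P_k$, where $P_k$ is the spectral projection onto the $k$th eigenspace of multiplicity $d_k=\binom{n+k-1}{k}$, so that $\operatorname{Tr}(|m|^2(|s|H))=\sum_{k\ge 0}d_k|m(|s|(n+2k))|^2$. Using parity of the integrand in $s$ and substituting $t=s(n+2k)$ in each summand gives
\begin{equation*}
\tau\bigl(|m(-\Delta_{\mathbb{H}})|^2\bigr)
=2c_n\!\left(\sum_{k\ge 0}\frac{d_k}{(n+2k)^{n+1}}\right)\!\int_0^{\infty}|m(t)|^2t^n\,dt,
\end{equation*}
and the series converges since $d_k=O(k^{n-1})$. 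This gives the claimed formula with $c=2c_n\sum_{k\ge 0}d_k/(n+2k)^{n+1}$, and since all quantities are nonnegative the stated equivalence $\check m\in L_2(\mathbb{H}^n)\iff \int_0^{\infty}|m(s)|^2 s^n\,ds<\infty$ is read off immediately.

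\textbf{Main obstacle.} The most delicate point I anticipate is justifying the functional-calculus step $m(-\Delta_{\mathbb{H}})=\pi(m(H\otimes|s|))$ rigorously, since $H\otimes|s|$ is unbounded and only affiliated to $\mathcal{B}(L_2(\mathbb{R}^n))\overline{\otimes}L_{\infty}(\mathbb{R})$, so Corollary \ref{pi_computations} only supplies the identification at the level of unitary one-parameter groups; the extension to a general bounded Borel $m$ should be obtained by approximating $m$ with linear combinations of complex exponentials and invoking the normality of $\pi$. The Tonelli swap of the $k$-sum with the $s$-integral is unproblematic because everything is nonnegative, so once the functional calculus is in place the remaining computation is mechanical.
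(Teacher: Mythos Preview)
Your argument is correct, but note that the paper does not itself prove this lemma: it is quoted verbatim from \cite[Proposition~3]{Christ1991} and used as a black box (in particular to deduce Corollary~\ref{tau of gDelta}). So there is no ``paper's own proof'' to compare against.

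That said, your approach is genuinely interesting in the context of this paper because it runs in the opposite direction to how the paper is organised. The paper imports Christ's formula as an external input and uses it to compute $\tau(g(-\Delta_{\HH}))$; you instead derive Christ's formula from the trace-preserving isomorphism $\pi$ of Proposition~\ref{pi rigorous prop}, which the paper established independently via the abstract Plancherel theorem and \cite{DixmierCStar1977}. In effect you are showing that Lemma~\ref{Christlm} is a corollary of Proposition~\ref{pi rigorous prop} and Corollary~\ref{pi_computations}, so the citation to \cite{Christ1991} could be dispensed with. The functional-calculus step $m(-\Delta_{\HH})=\pi(m(H\otimes|s|))$ that you flag as the main obstacle is indeed routine once one has the identity of unitary groups in Corollary~\ref{pi_computations}: both sides define normal $*$-homomorphisms from $L_\infty(\sigma(-\Delta_{\HH}))$ into ${\rm VN}(\HH^n)$ which agree on exponentials, hence agree on all bounded Borel $m$. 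One small cosmetic point: the trace formula in Proposition~\ref{pi rigorous prop} is written with $s^n\,ds$ rather than $|s|^n\,ds$, but the Plancherel measure stated just above it is $c_n|s|^n\,ds$, so your use of $|s|^n$ is the intended reading.
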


\begin{cor}\label{tau of gDelta} For any $g\in L_1(\mathbb{R}_{+},s^n ds)$, we have
	$$\tau(g(-\Delta_{\HH}))=\int_0^{\infty}g(s)s^nds.$$
\end{cor}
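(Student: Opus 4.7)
The plan is to deduce the corollary from Christ's $L_2$-identity (Lemma \ref{Christlm}) combined with the Plancherel identity $\tau(\lambda(f)^{*}\lambda(f))=\|f\|_{L_2(\HH^n)}^{2}$ for the canonical trace on ${\rm VN}(\HH^n)$, which in turn is immediate from the defining property $\tau(\lambda(f))=f(0)$.

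By linearity it suffices to treat $g\geq 0$; set $h:=g^{1/2}\in L_2(\mathbb{R}_+,s^n\,ds)$. By a standard approximation using Lemma \ref{Christlm} (approximating $h$ by bounded truncations $h_k:=h\cdot\chi_{\{h\leq k\}}$), $h(-\Delta_{\HH})$ is a convolution operator $\lambda(\check h)$ with $\check h\in L_2(\HH^n)$. The Borel functional calculus then gives $g(-\Delta_{\HH})=h(-\Delta_{\HH})^{*}h(-\Delta_{\HH})=\lambda(\check h)^{*}\lambda(\check h)$. For the Plancherel identity: if $f\in C_c(\HH^n)$, then $\lambda(f)^{*}\lambda(f)=\lambda(\tilde f\ast f)$ with $\tilde f(\gamma):=\overline{f(\gamma^{-1})}$, so $\tau(\lambda(\tilde f\ast f))=(\tilde f\ast f)(0)=\|f\|_{L_2(\HH^n)}^{2}$; density and normality of $\tau$ extend this to $f\in L_2(\HH^n)$. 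Applying it with $f=\check h$, together with a second invocation of Lemma \ref{Christlm}, we obtain
$$\tau(g(-\Delta_{\HH}))=\|\check h\|_{L_2(\HH^n)}^{2}=c\int_0^{\infty}g(s)\,s^n\,ds,$$
where $c$ is the constant of Lemma \ref{Christlm}.

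The only remaining task is to show $c=1$, a consistency statement about the mutual normalizations of Haar measure on $\HH^n$ (hence of $\tau$), of the Plancherel measure $c_n|s|^n\,ds$, and of Christ's formula. It can be settled by rerunning the computation through Proposition \ref{pi rigorous prop}: Corollary \ref{pi_computations} and functional calculus give $g(-\Delta_{\HH})=\pi(g(H\otimes|s|))$, and then the trace formula of Proposition \ref{pi rigorous prop}, combined with the spectral decomposition of the harmonic oscillator $H$ (eigenvalues $2k+n$ with multiplicity $\binom{k+n-1}{n-1}$) and the rescaling $u=(2k+n)|s|$, recasts $\tau(g(-\Delta_{\HH}))$ as an explicit positive multiple of $\int_0^{\infty}g(u)\,u^n\,du$. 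Matching this with the previous computation pins down both $c=1$ and the value of $c_n$. The main (mild) obstacle is thus constant bookkeeping rather than any structural difficulty.
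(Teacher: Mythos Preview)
Your argument is essentially the same as the paper's: reduce to $g\geq 0$, factor $g(-\Delta_{\HH})=|g^{1/2}(-\Delta_{\HH})|^2$, use the Plancherel identity $\|\lambda(f)\|_{L_2({\rm VN}(\HH^n),\tau)}=\|f\|_{L_2(\HH^n)}$, and apply Lemma~\ref{Christlm}; the paper simply writes this chain of equalities in one line and does not track the constant $c$ from Christ's formula.

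One caveat on your extra paragraph about determining $c$: comparing the Christ-based computation with the computation via Proposition~\ref{pi rigorous prop} yields \emph{one} relation between the two normalization constants $c$ and $c_n$, so it cannot pin down both simultaneously. To conclude $c=1$ you need an independent input (e.g.\ the explicit Plancherel constant for $\HH^n$, or a direct evaluation of Christ's constant). This is purely a bookkeeping point and does not affect any application in the paper, where all such constants are absorbed into the dimensional constant.
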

\begin{proof} If $g\geq0,$ then by Lemma \ref{Christlm},
	$$\tau(g(-\Delta_{\HH}))=\|g^{\frac12}(-\Delta_{\HH})\|_{L_2({\rm VN}(\mathbb{H}^n),\tau)}^2=\|g^{\frac12}\|_{L_2(\mathbb{R}_+,t^ndt)}^2=\int_0^{\infty}g(s)s^nds.$$
	The assertion follows now by linearity.
\end{proof}
%
%
%
% \begin{cor}\label{L_2_cwikel_estimate}
%     For $f \in L_2(\Heis^n)$ and $g \in L_2(\mathbb{R}_+,t^n\,dt)$ the operator $M_fg(-\Delta)$ is Hilbert-Schmidt, and
%     \[
%         \|M_f g(-\Delta)\|_{\mathcal{L}_2} = c_n\|f\|_{L_2(\Heis^n)}\|g\|_{L_2(\mathbb{R}_+,t^n\,dt)}.
%     \]
% \end{cor}
% \begin{proof}
% It follows directly from Lemma \ref{Christlm} and the fact that the Hilbert-Schmidt norm of an integral operator is the $L_2$ norm of its kernel.
% \end{proof}
% \begin{remark}
%     For someone unfamiliar with this area Corollary \ref{L_2_cwikel_estimate} might look strange, because
%     \[
%         \|M_f\|_{L_2(\Heis^n)\to L_2(\Heis^n)} = \|f\|_{L_{\infty}(\Heis^n)},\quad \|g(-\Delta)\|_{L_2(\Heis^n)\to L_2(\Heis^n)} = \|g\|_{L_{\infty}(\mathbb{R}_+)}.
%     \]
%     Hence, for $f \in L_2(\Heis^n)$ and $\check{g} \in L_2(\Heis^n)$ the operators $M_f$ and $g(-\Delta)$ are potentially unbounded on $L_2(\Heis^n)$, so it is surprising
%     that their composition $M_fg(-\Delta)$ is not only bounded but compact.
%
%     Here is why the boundedness of $M_fg(-\Delta)$ is not surprising: Young's convolution inequality implies that if $\check{g}\in L_2(\Heis^n)$ then $g(-\Delta)$ is a bounded map
%     \[
%         g(-\Delta):L_2(\Heis^n)\to L_{\infty}(\Heis^n)
%     \]
%     and if $f \in L_2(\Heis^n)$ then $M_f$ is a bounded map
%     \[
%         M_f:L_\infty(\Heis^n)\to L_2(\Heis^n)
%     \]
%     and therefore the composition $M_fg(-\Delta)$ is bounded from $L_2(\Heis^n)$ to $L_2(\Heis^n).$
% \end{remark}

We record some useful consequences of Theorem \ref{L_2_L_infty_cwikel_estimate}. The following two results are proved in \cite{MSZ_cwikel}.
\begin{lem}\label{L_p_cwikel} \cite[Theorem 1.1]{MSZ_cwikel}
    Let $p > 2,$ $f \in L_p(\Heis^n)$ and $g\in L_p(\mathbb{R}_+,t^n\,dt)$. We have
    \[
        \|M_fg(-\Delta_{\HH})\|_{\mathcal{L}_p} \leq C_p\|f\|_{L_p(\Heis^n)}\|g\|_{L_p(\mathbb{R}_+,t^n\,dt)}.
    \]
    If $g \in L_{p,\infty}(\mathbb{R}_+,t^n\,dt),$ then
    \[
        \|M_fg(-\Delta_{\HH})\|_{\mathcal{L}_{p,\infty}} \leq C_p\|f\|_{L_p(\Heis^n)}\|g\|_{L_{p,\infty}(\mathbb{R}_+,t^n\,dt)}.
    \]
\end{lem}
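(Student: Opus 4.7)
The plan is to derive this lemma as a direct consequence of Theorem \ref{L_2_L_infty_cwikel_estimate}, by taking $x = g(-\Delta_{\HH})$. Since $-\Delta_{\HH}$ is a positive self-adjoint operator affiliated to $\mathrm{VN}(\HH^n)$, Borel functional calculus ensures that $g(-\Delta_{\HH})$ is affiliated to $\mathrm{VN}(\HH^n)$, so Theorem \ref{L_2_L_infty_cwikel_estimate} applies. Everything then reduces to identifying the $L_p(\mathrm{VN}(\HH^n),\tau)$ and $L_{p,\infty}(\mathrm{VN}(\HH^n),\tau)$ quasinorms of $g(-\Delta_{\HH})$ with the corresponding quasinorms of the scalar function $g$ on $(\mathbb{R}_+, s^n\,ds)$.

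For the strong $L_p$ case I would apply Corollary \ref{tau of gDelta} to the nonnegative function $|g|^p \in L_1(\mathbb{R}_+, s^n\,ds)$, obtaining
\[
\|g(-\Delta_{\HH})\|_{L_p(\mathrm{VN}(\HH^n),\tau)}^p \;=\; \tau\bigl(|g|^p(-\Delta_{\HH})\bigr) \;=\; \int_0^\infty |g(s)|^p \, s^n\,ds \;=\; \|g\|_{L_p(\mathbb{R}_+, s^n\,ds)}^p.
\]
Plugging this into the first inequality of Theorem \ref{L_2_L_infty_cwikel_estimate} yields the desired strong-type bound.

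For the weak $L_p$ case I would compute the generalized singular value function $\mu(t; g(-\Delta_{\HH}))$ directly from the definition. Since the spectral projection of $|g(-\Delta_{\HH})|$ at level $s$ is $\chi_{\{|g|>s\}}(-\Delta_{\HH})$, and since $\chi_{\{|g|>s\}}$ lies in $L_1(\mathbb{R}_+, u^n\,du)$ whenever $g \in L_{p,\infty}(\mathbb{R}_+, u^n\,du)$, another application of Corollary \ref{tau of gDelta} gives
\[
\tau\bigl(\chi_{(s,\infty)}(|g(-\Delta_{\HH})|)\bigr) \;=\; \int_0^\infty \chi_{\{|g(u)|>s\}} \, u^n\,du.
\]
This identifies $\mu(t; g(-\Delta_{\HH}))$ with the decreasing rearrangement of $g$ on $(\mathbb{R}_+, u^n\,du)$, so the two weak-$L_p$ quasinorms coincide. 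The weak-type inequality of Theorem \ref{L_2_L_infty_cwikel_estimate} then finishes the proof.

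There is no real obstacle here; the whole argument is a short bookkeeping exercise, and the only point meriting any care is confirming that the functional calculus identities above are legitimate applications of Corollary \ref{tau of gDelta}, which requires that the scalar functions to which one applies it ($|g|^p$ and $\chi_{\{|g|>s\}}$) lie in $L_1(\mathbb{R}_+, s^n\,ds)$ — which they do under the respective hypotheses $g\in L_p$ and $g\in L_{p,\infty}$.
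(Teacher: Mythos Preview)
Your proposal is correct. The paper does not actually supply a proof of this lemma; it is simply quoted from \cite[Theorem 1.1]{MSZ_cwikel}, so there is nothing in the paper to compare your argument against. Your derivation from Theorem~\ref{L_2_L_infty_cwikel_estimate} together with Corollary~\ref{tau of gDelta} is the natural one: the functional-calculus identities $\tau(|g|^p(-\Delta_{\HH}))=\int_0^\infty |g(s)|^p s^n\,ds$ and $\tau(\chi_{\{|g|>s\}}(-\Delta_{\HH}))=\int_0^\infty \chi_{\{|g|>s\}}(u)\,u^n\,du$ are exactly what is needed to identify the noncommutative $L_p$ and $L_{p,\infty}$ norms of $g(-\Delta_{\HH})$ with the corresponding scalar norms of $g$, and both applications of Corollary~\ref{tau of gDelta} are legitimate under the stated hypotheses on $g$.
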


\begin{cor}\label{specific_cwikel} \cite[Section 3]{MSZ_cwikel}
    For any $0 < \beta < n+1$, if $f \in L_{\frac{2n+2}{\beta}}(\Heis^n)$, then the operator
    \[
        M_f(-\Delta_{\HH})^{-\frac{\beta}{2}}
    \]
    is compact, and belongs to the ideal $\mathcal{L}_{\frac{2n+2}{\beta},\infty}.$ Moreover, there exists a constant $C_{n,\beta}>0$ such that
    \[
        \|M_f(-\Delta_{\HH})^{-\frac{\beta}{2}}\|_{\mathcal{L}_{\frac{2n+2}{\beta},\infty}} \leq C_{n,\beta}\|f\|_{L_{\frac{2n+2}{\beta}}(\Heis^n)}.
    \]
\end{cor}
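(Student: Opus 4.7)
The plan is to reduce the claim to a direct application of Lemma \ref{L_p_cwikel} with the specific choice $p = (2n+2)/\beta$ and $g(t) = t^{-\beta/2}$, viewed via the functional calculus so that $g(-\Delta_{\HH}) = (-\Delta_{\HH})^{-\beta/2}$. The hypothesis $0 < \beta < n+1$ immediately translates into $p > 2$, which places us in the regime covered by Lemma \ref{L_p_cwikel}.

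Next, I would verify that $g$ lies in the weak $L_p$ space with respect to the measure $t^n\,dt$ on $\mathbb{R}_+$. A direct computation of the distribution function yields
\[
\int_{\{t>0\,:\,t^{-\beta/2} > \lambda\}} t^n\,dt = \int_0^{\lambda^{-2/\beta}} t^n\,dt = \frac{1}{n+1}\,\lambda^{-(2n+2)/\beta},
\]
so $g \in L_{p,\infty}(\mathbb{R}_+, t^n\,dt)$ with quasi-norm $(n+1)^{-1/p}$. Invoking Lemma \ref{L_p_cwikel} with this choice of $g$ produces
\[
\| M_f (-\Delta_{\HH})^{-\beta/2} \|_{\mathcal{L}_{p,\infty}} \lesssim \|f\|_{L_p(\HH^n)} \|g\|_{L_{p,\infty}(\mathbb{R}_+,t^n\,dt)},
\]
which is exactly the desired estimate with $C_{n,\beta} := C_p(n+1)^{-1/p}$.

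Finally, compactness of $M_f(-\Delta_{\HH})^{-\beta/2}$ comes for free: since $p = (2n+2)/\beta < \infty$, membership in $\mathcal{L}_{p,\infty}$ forces the singular values to satisfy $\mu(k;\cdot) = O((k+1)^{-1/p})$, which tends to $0$, and hence the operator is compact.

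There is no real obstacle here; the substantive analytical content (the non-commutative Cwikel-type estimate on the Heisenberg group) has already been absorbed into Lemma \ref{L_p_cwikel}. The only points requiring care are the arithmetic of the exponents (ensuring $\beta < n+1$ corresponds precisely to $p > 2$ and that the distribution-function calculation produces the correct $p$), and the routine identification of $t^{-\beta/2}$ with the spectral representation of $(-\Delta_{\HH})^{-\beta/2}$ via Borel functional calculus.
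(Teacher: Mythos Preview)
Your proposal is correct and is precisely the natural derivation of the corollary from Lemma~\ref{L_p_cwikel}: the paper itself does not spell out a proof but simply cites \cite[Section~3]{MSZ_cwikel}, and the argument there is exactly the specialization you give, namely taking $p=(2n+2)/\beta$ and $g(t)=t^{-\beta/2}$ and checking that $g\in L_{p,\infty}(\mathbb{R}_+,t^n\,dt)$. Your arithmetic on the exponents and the distribution-function computation are both accurate, and the compactness conclusion is immediate from finiteness of the $\mathcal{L}_{p,\infty}$ quasi-norm.
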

\begin{defi}
   Let ${\rm Tr}\otimes \Sigma$ denote the trace on the von Neumann algebra $\mathcal{B}(L_2(\mathbb{R}^n))\overline{\otimes} \mathbb{C}^2$ given by the tensor product
    of the standard operator trace on $\mathcal{B}(L_2(\mathbb{R}^n))$ and the ``sum" functional
    \[
        \Sigma:\mathbb{C}^2\to \mathbb{C},\quad \Sigma(\xi) = \xi_1+\xi_2,\quad \xi\in \mathbb{C}^2.
    \]
\end{defi}

Note that the operator $T = \frac{\partial}{\partial t}$ on $L_2(\HH^n)$ is not invertible, its spectrum consists of the entire real line $\mathbb{R}.$ However, as the next lemma shows, we can define the operator
\[
    \pi_{\red}(x)|T|^{-\frac12}
\]
for all $x \in L_{2n+2,\infty}(\mathrm{VN}(\HH^n),\tau).$ This should be interpreted as the operator norm limit of
\[
    \pi_{\red}(x)\chi_{(\varepsilon,\infty)}(|T|)|T|^{-\frac12}
\]
as $\varepsilon\downarrow 0.$ In fact, the operator $\pi_{\red}(x)|T|^{-\frac12}$ is compact as the next lemma shows.

\begin{lem} If $x\in L_p(\mathcal{B}(L_2(\mathbb{R}^n))\overline{\otimes}\mathbb{C}^2,{\rm Tr}\otimes\Sigma),$ then
	$$\|\pi_{{\rm red}}(x)|T|^{-\frac12}\|_{L_{2n+2,\infty}(\mathrm{VN}(\HH^n),\tau)}=c_n\|x\|_{L_{2n+2}(\mathcal{B}(L_2(\mathbb{R}^n))\overline{\otimes}\mathbb{C}^2,{\rm Tr}\otimes\Sigma)}.$$
	Here, $c_n>0$ is a constant.
\end{lem}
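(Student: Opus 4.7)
The plan is to transport the statement to the tensor product side via the $\ast$-iso\-mor\-phism $\pi$ of Proposition \ref{pi rigorous prop} and compute the distribution function of $|\pi_{\red}(x)|T|^{-1/2}|$ with respect to $\tau$ directly. The starting observation is that $-iT$ corresponds under $\pi^{-1}$ to the coordinate function $s\mapsto s,$ so $|T|$ corresponds to $1\otimes |s|$; consequently the functional calculus gives $\pi^{-1}\bigl(\chi_{(\varepsilon,\infty)}(|T|)|T|^{-1/2}\bigr)=1\otimes\chi_{\{|s|>\varepsilon\}}|s|^{-1/2},$ and the operator $\pi_{\red}(x)|T|^{-1/2}$, defined as the norm limit of $\pi_{\red}(x)\chi_{(\varepsilon,\infty)}(|T|)|T|^{-1/2}$ as $\varepsilon\downarrow 0$ as in the paragraph preceding the lemma, is affiliated with ${\rm VN}(\HH^n)$. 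Since $\iota(x)$ and $1\otimes|s|^{-1/2}$ commute as affiliated operators in $\mathcal{B}(L_2(\mathbb{R}^n))\overline{\otimes}L_{\infty}(\mathbb{R}),$ one obtains
\[
\bigl|\pi_{\red}(x)|T|^{-1/2}\bigr|^{2}=\pi\bigl(\iota(x^{*}x)\cdot(1\otimes|s|^{-1})\bigr).
\]

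Writing $x=(x_1,x_2)$ under the identification $\mathcal{B}(L_2(\mathbb{R}^n))\overline{\otimes}\mathbb{C}^2\cong\mathcal{B}(L_2(\mathbb{R}^n))\oplus\mathcal{B}(L_2(\mathbb{R}^n))$ and using the spectral identity
\[
\chi_{(\lambda^{2},\infty)}\bigl(|x_i|^{2}|s|^{-1}\bigr)=\chi_{(\lambda|s|^{1/2},\infty)}(|x_i|)
\]
together with the trace formula of Proposition \ref{pi rigorous prop} (with Plancherel density $|s|^n\,ds$), I would compute
\begin{align*}
\tau\bigl(\chi_{(\lambda,\infty)}(|\pi_{\red}(x)|T|^{-1/2}|)\bigr)
&=c_n\sum_{i=1}^{2}\int_{0}^{\infty}{\rm Tr}\bigl(\chi_{(\lambda s^{1/2},\infty)}(|x_i|)\bigr)\,s^{n}\,ds.
\end{align*}
The change of variable $u=\lambda s^{1/2}$ followed by the layer-cake identity
\[
\int_{0}^{\infty}{\rm Tr}\bigl(\chi_{(u,\infty)}(|x_i|)\bigr)(2n+2)u^{2n+1}\,du={\rm Tr}(|x_i|^{2n+2})
\]
then collapses this to the exact power law
\[
\tau\bigl(\chi_{(\lambda,\infty)}(|\pi_{\red}(x)|T|^{-1/2}|)\bigr)=\frac{c_n}{(n+1)\lambda^{2n+2}}\bigl({\rm Tr}\otimes\Sigma\bigr)(|x|^{2n+2}).
\]

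Since the distribution function is a pure power $\lambda\mapsto C\lambda^{-(2n+2)}$ with $C=\frac{c_n}{n+1}({\rm Tr}\otimes\Sigma)(|x|^{2n+2}),$ every generalised singular number equals $\mu(t;\pi_{\red}(x)|T|^{-1/2})=C^{1/(2n+2)}t^{-1/(2n+2)},$ whence
\[
\bigl\|\pi_{\red}(x)|T|^{-1/2}\bigr\|_{L_{2n+2,\infty}({\rm VN}(\HH^n),\tau)}=\bigl(\tfrac{c_n}{n+1}\bigr)^{1/(2n+2)}\|x\|_{L_{2n+2}(\mathcal{B}(L_2(\mathbb{R}^n))\overline{\otimes}\mathbb{C}^2,{\rm Tr}\otimes\Sigma)},
\]
which is the claimed identity after relabelling the constant. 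The main obstacle I anticipate is the careful justification of the functional calculus near $s=0$: one must confirm that the net $\pi_{\red}(x)\chi_{(\varepsilon,\infty)}(|T|)|T|^{-1/2}$ converges in the measure topology on $\mathcal{S}({\rm VN}(\HH^n),\tau)$ to a $\tau$-measurable operator whose distribution function is precisely the one displayed above, and verify that the singularity of $|s|^{-1}$ at the origin is tame against $|s|^n\,ds$ after the change of variable (which is exactly what forces the hypothesis $x\in L_{2n+2}$). Once this technical point is in place, the proof reduces to the displayed layer-cake computation.
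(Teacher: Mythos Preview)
Your proposal is correct and follows essentially the same route as the paper: both transport the operator through $\pi$ to identify $\pi_{\red}(x)|T|^{-1/2}$ with (the image of) the tensor $\iota(x)\cdot(1\otimes|s|^{-1/2})$, then compute its $L_{2n+2,\infty}$ norm. The only difference is presentational: the paper cites an external result (\cite[Lemma~3.7]{CSZ_AIF}) for the identity $\|x\otimes|s|^{-1/2}\|_{L_{2n+2,\infty}}=c_n\|x\|_{L_{2n+2}}$, whereas you write out the layer-cake and change-of-variables computation explicitly, which makes your version more self-contained.
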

\begin{proof}
    We identify $L_{\infty}(\mathbb{R})$ with $\mathbb{C}^2\overline{\otimes} L_{\infty}(0,\infty).$ Hence, $\mathcal{B}(L_2(\mathbb{R}^n))\overline{\otimes}L_{\infty}(\mathbb{R})$ is identified with $\mathcal{B}(L_2(\mathbb{R}^n))\overline{\otimes}\mathbb{C}^2\overline{\otimes}L_{\infty}(0,\infty).$ The measure  $|t|^{2n+1}dt$ on $\mathbb{R}$ is identified with $\Sigma\otimes t^{2n+1}dt.$

    Observe that $x\otimes s^{-1} \in L_{p,\infty}(\mathcal{B}(L_2(\mathbb{R}^n))\overline{\otimes} L_{\infty}(\mathbb{R})),$ where the second factor is equipped with the Plancherel measure $d\mu(s) = c_ns^{n}\,ds.$
    Hence,
    $$\pi_{{\rm red}}(x)|T|^{-\frac12}=\pi(x\otimes s^{-1}).$$
    Since $\pi$ preserves $L_{p,\infty}$-norm, it follows that
    $$\|\pi_{{\rm red}}(x)|T|^{-\frac12}\|_{\mathcal{L}_{2n+2,\infty}}=\|x\otimes s^{-1}\|_{L_{p,\infty}(\mathcal{B}(L_2(\mathbb{R}^n))\overline{\otimes} L_{\infty}(\mathbb{R}))}=c_n\|x\|_{L_{2n+2}(\mathcal{B}(L_2(\mathbb{R}^n))\bar{\otimes}\mathbb{C}^2,{\rm Tr}\otimes\Sigma)}.$$
    Here, the last equality follows from the claim in the proof of \cite[Lemma 3.7]{CSZ_AIF}.
\end{proof}

\begin{lem}\label{homogeneous cwikel lemma} For all $x \in L_{2n+2}(\mathcal{B}(L_2(\mathbb{R}^n))\overline{\otimes} \mathbb{C}^2)$ and $f \in L_{2n+2}(\HH^n),$ we have
	$$\|M_f\pi_{{\rm red}}(x)|T|^{-\frac12}\|_{\mathcal{L}_{2n+2,\infty}}\leq c_n\|f\|_{L_{2n+2}(\HH^n)}\|x\|_{L_{2n+2}(\mathcal{B}(L_2(\mathbb{R}^n))\bar{\otimes}\mathbb{C}^2,{\rm Tr}\otimes\Sigma)}.$$
\end{lem}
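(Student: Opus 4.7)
The plan is to combine the preceding unlabeled lemma (which evaluates the quasinorm of $\pi_{\red}(x)|T|^{-1/2}$ in $L_{2n+2,\infty}(\mathrm{VN}(\HH^n),\tau)$) with the Cwikel-type estimate of Theorem \ref{L_2_L_infty_cwikel_estimate}. Set
\[
y := \pi_{\red}(x)|T|^{-\frac12}.
\]
The preceding lemma tells us that $y$ is affiliated with $\mathrm{VN}(\HH^n)$ and lies in $L_{2n+2,\infty}(\mathrm{VN}(\HH^n),\tau)$, with
\[
\|y\|_{L_{2n+2,\infty}(\mathrm{VN}(\HH^n),\tau)} = c_n\|x\|_{L_{2n+2}(\mathcal{B}(L_2(\mathbb{R}^n))\overline{\otimes}\mathbb{C}^2,\,{\rm Tr}\otimes\Sigma)}.
\]

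Next, since $n\geq 1$ we have $p := 2n+2 > 2$, so we are in the range covered by the weak-type half of Theorem \ref{L_2_L_infty_cwikel_estimate}. Applying that theorem to the pair $(f,y)$ yields
\[
\|M_f y\|_{\mathcal{L}_{2n+2,\infty}} \;\leq\; C_{2n+2}'\,\|f\|_{L_{2n+2}(\HH^n)}\,\|y\|_{L_{2n+2,\infty}(\mathrm{VN}(\HH^n),\tau)}.
\]
Substituting the identity for $\|y\|_{L_{2n+2,\infty}}$ from the previous step gives the desired inequality, with the constant $c_n$ absorbing the product $C_{2n+2}'\cdot c_n$ (we keep the same symbol $c_n$ since only its existence matters).

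There is no real obstacle: the statement is a direct composition of two results already established in the paper. The only point worth being careful about is that the operator $\pi_{\red}(x)|T|^{-1/2}$ must be interpreted in the sense discussed in the paragraph preceding the previous lemma (as an operator-norm limit of $\pi_{\red}(x)\chi_{(\varepsilon,\infty)}(|T|)|T|^{-1/2}$ as $\varepsilon\downarrow 0$), so that it genuinely defines an element of $L_{2n+2,\infty}(\mathrm{VN}(\HH^n),\tau)$ to which Theorem \ref{L_2_L_infty_cwikel_estimate} applies; this, however, is exactly what the preceding lemma provides.
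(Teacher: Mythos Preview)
Your proof is correct and follows essentially the same route as the paper: combine the preceding lemma with the Cwikel-type estimate for $p=2n+2$. The paper's one-line proof cites Lemma~\ref{L_p_cwikel} rather than Theorem~\ref{L_2_L_infty_cwikel_estimate}; your choice of the latter is in fact the more appropriate reference, since $\pi_{\red}(x)|T|^{-1/2}$ is a general element of $L_{2n+2,\infty}(\mathrm{VN}(\HH^n),\tau)$ rather than an operator of the form $g(-\Delta_{\HH})$.
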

\begin{proof} It follows from the preceding lemma and Lemma \ref{L_p_cwikel} with $p=2n+2.$
\end{proof}

\begin{lem}\label{cwikel-like commutator lemma}
    For all $f \in C^\infty_c(\HH^n),$ we have
	$$[M_f,(-\Delta_{\HH})^{-\frac12}]\in(\mathcal{L}_{2n+2,\infty})_0.$$
\end{lem}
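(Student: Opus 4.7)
I will prove the stronger statement $[M_f,(-\Delta_{\HH})^{-1/2}]\in\mathcal{L}_{n+1,\infty}$, which yields the lemma via the embeddings $\mathcal{L}_{n+1,\infty}\subseteq\mathcal{L}_{2n+2}\subseteq(\mathcal{L}_{2n+2,\infty})_0$: the first because $\mu(k;A)\lesssim(k+1)^{-1/(n+1)}$ forces $\sum_k\mu(k;A)^{2n+2}<\infty$, and the second because any $A\in\mathcal{L}_p$ satisfies $k\mu(k;A)^p\to 0$. Heuristically, $(-\Delta_{\HH})^{-1/2}$ has Heisenberg-homogeneous order $-1$, and commuting with a smooth multiplier improves the order by one to $-2$; such an operator should live in $\mathcal{L}_{(2n+2)/2,\infty}=\mathcal{L}_{n+1,\infty}$ by the Cwikel estimate (Corollary \ref{specific_cwikel} with $\beta=2$).

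\textbf{Key identity.} Starting from the resolvent representation $(-\Delta_{\HH})^{-1/2}=\pi^{-1}\int_0^\infty\lambda^{-1/2}(\lambda+(-\Delta_{\HH}))^{-1}\,d\lambda$, the commutator identity $[M_f,(\lambda+A)^{-1}]=-(\lambda+A)^{-1}[M_f,A](\lambda+A)^{-1}$, and the Leibniz expansion $[M_f,-\Delta_{\HH}]=M_{\Delta_{\HH}f}+2\sum_{\ell=1}^{2n}M_{X_\ell f}X_\ell$, I obtain
\[
[M_f,(-\Delta_{\HH})^{-1/2}]=-\tfrac{1}{\pi}\!\int_0^\infty\!\lambda^{-1/2}(\lambda+(-\Delta_{\HH}))^{-1}\Bigl(M_{\Delta_{\HH}f}+2\sum_\ell M_{X_\ell f}X_\ell\Bigr)(\lambda+(-\Delta_{\HH}))^{-1}\,d\lambda.
\]
Evaluating $\int_0^\infty\lambda^{-1/2}((\lambda+a)(\lambda+b))^{-1}\,d\lambda=\pi(\sqrt{ab}(\sqrt a+\sqrt b))^{-1}$ identifies this as a double operator integral $T_\phi^{-\Delta_{\HH},-\Delta_{\HH}}$ with symbol $\phi(a,b)=-(\sqrt{ab}(\sqrt a+\sqrt b))^{-1}=a^{-1/2}\psi(a,b)b^{-1/2}$, where $\psi(a,b)=-(\sqrt a+\sqrt b)^{-1}$ is a bounded Schur multiplier.

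\textbf{Schatten estimates.} The $M_{\Delta_{\HH}f}$ contribution equals $(-\Delta_{\HH})^{-1/2}T_\psi(M_{\Delta_{\HH}f})(-\Delta_{\HH})^{-1/2}$; polar-decomposing $M_{\Delta_{\HH}f}=U\lvert M_{\Delta_{\HH}f}\rvert$ and invoking the DOI manipulation factors this as a product of two operators each of the form $M_{\lvert\Delta_{\HH}f\rvert^{1/2}}(-\Delta_{\HH})^{-1/2}\in\mathcal{L}_{2n+2,\infty}$ (Corollary \ref{specific_cwikel} with $\beta=1$, since $\lvert\Delta_{\HH}f\rvert^{1/2}\in L_{2n+2}(\HH^n)$ for $f\in C_c^\infty(\HH^n)$); Holder in weak Schatten ideals then gives the product in $\mathcal{L}_{n+1,\infty}$. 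The $M_{X_\ell f}X_\ell$ contribution reduces to the previous form after commuting $X_\ell$ past one resolvent via $[X_\ell,-\Delta_{\HH}]=-2T\sum_k c_{\ell k}X_k$ (a short Heisenberg computation using $[X_\ell,X_k]\in\mathbb{R}\cdot T$ and $[T,X_k]=0$) and then using the identity $X_\ell(-\Delta_{\HH})^{-1/2}=R_\ell$ to convert each unbounded $X_\ell$ into a bounded Riesz transform, possibly with an additional bounded factor such as $\lvert T\rvert(-\Delta_{\HH})^{-1}=\pi_{\red}(H^{-1}\otimes\binom{1}{1})$ from \eqref{T_in_terms_of_pired}.

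\textbf{Main obstacle.} The chief subtlety is that $0$ lies in the continuous spectrum of $-\Delta_{\HH}$, so $(-\Delta_{\HH})^{-1/2}$ is unbounded and the composition $(-\Delta_{\HH})^{-1/2}T_\psi(\cdot)(-\Delta_{\HH})^{-1/2}$ must be interpreted carefully. This is handled via the direct integral decomposition of ${\rm VN}(\HH^n)$ from Section \ref{VNGdef}, in which the Plancherel density $c_n s^n\,ds$ at $s=0$ absorbs the singularity of $\psi$ at the origin. Convergence of the $\lambda$-integral in the quasi-Banach ideal $\mathcal{L}_{n+1,\infty}$ uses the uniform-in-$\lambda$ bounds $\|(\lambda+\cdot)^{-\alpha}\|_{L_{p,\infty}(\mathbb{R}_+,t^n\,dt)}\le C$ for appropriate $p,\alpha$ combined with Lemma \ref{L_p_cwikel}.
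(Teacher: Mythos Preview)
Your strategy---proving the stronger inclusion $[M_f,(-\Delta_{\HH})^{-1/2}]\in\mathcal{L}_{n+1,\infty}$ directly via the resolvent representation---is different from the paper's, and the heuristic is sound. However, the execution has a genuine gap at the low-frequency end.

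The central problem is your claim that $\psi(a,b)=-(\sqrt a+\sqrt b)^{-1}$ is a bounded Schur multiplier. It is not: since the spectrum of $-\Delta_{\HH}$ is $[0,\infty)$, the symbol $\psi$ is unbounded near the origin $(0,0)$, and the double operator integral $T_\psi^{-\Delta_{\HH},-\Delta_{\HH}}$ is not a bounded map on any $\mathcal{L}_p$. Consequently the factorisation $(-\Delta_{\HH})^{-1/2}T_\psi(M_{\Delta_{\HH}f})(-\Delta_{\HH})^{-1/2}$ cannot be used as written. Your ``Main obstacle'' paragraph acknowledges a difficulty at $0$, but the proposed fix via the Plancherel density $c_n s^n\,ds$ is not to the point: that density governs the trace on ${\rm VN}(\HH^n)$, not the spectral measure of $-\Delta_{\HH}$ on $[0,\infty)$, and does nothing to make $\psi$ bounded. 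Equally, if one tries to estimate the $\lambda$-integral directly in $\mathcal{L}_{n+1,\infty}$, the naive bound on the integrand is $O(\lambda^{-1})$, so $\int_0^\infty\lambda^{-1/2}\cdot\lambda^{-1}\,d\lambda$ diverges at $0$; the sketch gives no mechanism to recover integrability there. The $M_{X_\ell f}X_\ell$ piece inherits the same issue.

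For comparison, the paper avoids this obstacle entirely: it cites \cite[Theorem 4.1(iv)]{MSZ_cwikel} for the inclusion $[M_f,(1-\Delta_{\HH})^{-1/2}]\in\mathcal{L}_{n+1,\infty}$ (where the spectrum of $1-\Delta_{\HH}$ is contained in $[1,\infty)$, so no small-eigenvalue singularity arises), and then shows that $M_f\bigl((-\Delta_{\HH})^{-1/2}-(1-\Delta_{\HH})^{-1/2}\bigr)$ and its transpose lie in $\mathcal{L}_{p,\infty}\subset(\mathcal{L}_{2n+2,\infty})_0$ for any $n+1<p<2n+2$ via Lemma~\ref{L_p_cwikel}. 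If you wish to salvage a direct argument, the natural route is to replace $-\Delta_{\HH}$ by $1-\Delta_{\HH}$ throughout (so that the relevant Schur symbol becomes genuinely bounded, cf.\ Lemma~\ref{doilemma}), and then treat the difference $(-\Delta_{\HH})^{-1/2}-(1-\Delta_{\HH})^{-1/2}$ separately---which is exactly the paper's perturbation step.
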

\begin{proof}
    It is proved as \cite[Theorem 4.1(iv)]{MSZ_cwikel} that
    \[
        [M_f,(1-\Delta_{\HH})^{-\frac12}] \in \mathcal{L}_{n+1,\infty}\subset (\mathcal{L}_{2n+2,\infty})_0.
    \]
    It follows from Lemma \ref{L_p_cwikel} that
    \[
        ((-\Delta_{\HH})^{-\frac12}-(1-\Delta_{\HH})^{-\frac12})M_f,\quad M_f((-\Delta_{\HH})^{-\frac12}-(1-\Delta_{\HH})^{-\frac12}) \in \mathcal{L}_{p,\infty}\subset (\mathcal{L}_{2n+2,\infty})_0
    \]
    for every $n+1 < p < 2n+2.$
    Combining these assertions yields the desired inclusion.
\end{proof}

%
% \begin{lem}\label{trace of M_fgDelta} If $M_fg(-\Delta_{\HH})\in\mathcal{L}_1,$ then
% 	$${\rm Tr}(M_fg(-\Delta_{\HH}))=\int f\cdot\tau(g(-\Delta_{\HH})).$$
% 	Here, $\tau$ is a Plancherel weight.
% \end{lem}

%
% {\color{red}
% 	\begin{lem}\label{citationlemma}
% 		Suppose that $2\leq p<\infty$. Let $x\in L_{p,\infty}({\rm VN}(\HH^{n}))$ and $f\in L_{p}(\HH^{n})$, then $xM_f$ is in $\mathcal{L}_{p,\infty}(L_2(\HH^{n}))$ and there exists a constant $C>0$ such that
% 		\begin{align*}
% 		\|xM_f\|_{\mathcal{L}_{p,\infty}(L_2(\HH^{n}))}\leq C \|x\|_{L_{p,\infty}({\rm VN}(\HH^{n}),\tau)}\|f\|_{L_{p}(\HH^{n})}.
% 		\end{align*}
% 	\end{lem}
% 	\begin{proof}
% 		The proof was given in ?????
% 	\end{proof}
% }

\bigskip

\section{Approximation lemmas in the homogeneous Sobolev spaces}\label{approsection}

We start with Poincar\'{e}'s inequality on Heisenberg groups.
\begin{lem}\label{Poin} Let $1<p<\infty.$ Then there exists a constant $C_{n,p}>0$ such that for any $f\in \dot{W}^{1,p}(\HH^n)$,
\begin{align}\label{Poin ine for ball}
\bigg( \int_{B(0,{r\over2})}|f(g)-\langle f\rangle_{B(0,{r\over2})}|^pdg\bigg)^{1\over p}
&\leq C_{n,p}\, r\, \bigg(\int_{B(0,2r)}|\nabla f(g)|^p \ dg\bigg)^{1\over p},
\end{align}
where for any $0<r<\infty$, $B(0,r)$ is a ball in $\HH^n$ with respect to the Kor\'{a}nyi metric and $\langle f\rangle_{B(0,r)}$ denotes the average of $f$ over $B(0,r).$
\end{lem}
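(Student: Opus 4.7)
The plan is to prove Lemma \ref{Poin} in two steps: first reduce to the unit-scale case $r=1$ via the homogeneity of $\HH^n$, and then establish the unit-scale Poincar\'e inequality through a Jerison-type pointwise representation formula combined with $L^p$-boundedness of the Riesz potential of order $1$ on $\HH^n$.

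For the reduction, I exploit the anisotropic dilations $\delta_r$. Setting $\tilde f(h) := f(\delta_r h)$, one has $X_j \tilde f(h) = r\,(X_j f)(\delta_r h)$ for $j=1,\ldots,2n$ and $|B(0,\rho)| = \rho^{2n+2}|B(0,1)|$. A change of variables $g = \delta_r h$ converts
\begin{align*}
\int_{B(0,r/2)} |f(g) - \langle f\rangle_{B(0,r/2)}|^p\,dg &= r^{2n+2}\int_{B(0,1/2)} |\tilde f - \langle\tilde f\rangle_{B(0,1/2)}|^p\,dh,\\
\int_{B(0,2r)}|\nabla f(g)|^p\,dg &= r^{2n+2-p}\int_{B(0,2)}|\nabla \tilde f|^p\,dh,
\end{align*}
so both sides of \eqref{Poin ine for ball} scale as $r^{(2n+2)/p}$, the $r$ prefactor on the right precisely compensating the $r^{-1}$ produced by the horizontal gradient of $\tilde f$. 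Hence it suffices to prove the inequality at $r=1$.

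For the unit-scale case, the standard route is to establish, for almost every $g\in B(0,1/2)$, the pointwise representation
$$|f(g) - \langle f\rangle_{B(0,1/2)}|\leq C\int_{B(0,2)} \frac{|\nabla f(h)|}{d_K(g^{-1}h)^{2n+1}}\,dh.$$
This is obtained by joining $g$ and an auxiliary point $h\in B(0,1/2)$ by a horizontal curve contained in $B(0,2)$, writing $f(g)-f(h)$ as the line integral of $\nabla f$ along that curve, bounding the integrand pointwise by the Kor\'anyi kernel $d_K^{-(2n+1)}$, and averaging over $h$. Minkowski's inequality together with $L^p\to L^p$ boundedness of the truncated Riesz potential of order $1$ on $\HH^n$ (a consequence of the Hardy--Littlewood--Sobolev inequality on the Heisenberg group) then yields the desired estimate at scale $r=1$.

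The main technical ingredient is the representation formula itself, which relies on a careful chain-of-balls construction of horizontal curves joining pairs of points in a Kor\'anyi ball with controlled length staying inside the enlarged ball $B(0,2)$. This is classical in subelliptic analysis and goes back to Jerison; alternatively, the lemma can be deduced from the abstract framework of Haj\l asz--Koskela for $(1,p)$-Poincar\'e inequalities on doubling metric measure spaces supporting a weak $(1,1)$-Poincar\'e inequality, both of which hold for $(\HH^n, d_K, dg)$.
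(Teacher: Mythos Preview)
Your proposal is correct and follows essentially the same route as the paper: reduce to unit scale via the anisotropic dilations, then invoke the subelliptic Poincar\'e inequality at unit scale due to Jerison. The paper simply cites Jerison's result (stated with an unspecified constant $\gamma$ in place of the average, then replaced by $\langle f\rangle_{B(0,r/2)}$ via a triangle-inequality step) and remarks that the $L^p$ version follows by the same technique; you instead sketch Jerison's argument itself via the pointwise representation formula and fractional integration. One small correction: the $L^p\to L^p$ bound for the \emph{truncated} order-$1$ Riesz potential on a bounded ball is not a Hardy--Littlewood--Sobolev statement (which gives $L^p\to L^{q}$ with $q>p$) but follows from Young's inequality, since the kernel $d_K(\cdot)^{-(2n+1)}\chi_{\{d_K\le 4\}}$ is in $L^1(\HH^n)$.
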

\begin{proof} Note that it was shown in \cite[Page 507]{Je} that there is a constant $\gamma$ such that
\begin{align}\label{Poin ine for ball pre}
\int_{B(0,{1\over2})}|f(g)-\gamma|^2 dg
\leq C  \int_{B(0,2)}|\nabla f(g)|^2 \ dg,
\end{align}
which,  together with dilation (as commented in line 1 on p.507 in \cite{Je}), gives
$$
\bigg(\int_{B(0,{r\over2})}|f(g)-\gamma|^2 dg\bigg)^{1\over 2}
\leq C r \bigg(\int_{B(0,2r)}|\nabla f(g)|^2 \ dg\bigg)^{1\over 2}.
$$
Next, we note that
\begin{align*}
\bigg(\int_{B(0,{r\over2})}|f(g)-\langle f\rangle_{B(0,{r\over2})}|^2 dg\bigg)^{1\over 2}
&\leq  \bigg(\int_{B(0,{r\over2})}|f(g)-\gamma|^2 dg\bigg)^{1\over 2}+\bigg(\int_{B(0,{r\over2})}|\gamma-\langle f\rangle_{B(0,{r\over2})}|^2 dg\bigg)^{1\over 2}\\
&\leq C r\, \bigg(  \int_{B(0,2r)}|\nabla f(g)|^2 \ dg\bigg)^{1\over 2},
\end{align*}
which shows that \eqref{Poin ine for ball} holds for $p=2$.

We point out that the argument for general $1<p<\infty$ follows by the same technique as that for $p=2$ since as stated in \cite[Section 6]{Je}, %, %by using the argument as in p.507 in \cite{Je} one can deduce
the inequality \eqref{Poin ine for ball pre} also holds for $1<p<\infty$. The proof of Lemma \ref{Poin} is complete.
%
%
%
%
%{\it mutatis mutandi.}
\end{proof}

We point out that based on Lemma \ref{Poin}, by following the argument in \cite[Section 5]{Je} via Whitney decomposition, one can deduce that for every $g \in \HH^n$ and $r>0$,
$$\bigg( \int_{B(g,r)}|f(h)-\langle f\rangle_{B(g,{r})}|^pdh\bigg)^{1\over p}
\leq C_{n,p}\, r\, \bigg(\int_{B( g,r)}|\nabla f(h)|^p \ dh\bigg)^{1\over p},
$$
It further implies that  (\cite[Section 6]{Je}, see also \cite{FGW})
 for any $R>0$,
\begin{align}\label{Poincare on annuli}
\bigg\| f-  {1\over | B(0,2R)\backslash B(0,R) |}\int_{ B(0,2R)\backslash B(0,R)}f(h)dh \bigg\|_{L^p(B(0,2R)\backslash B(0,R))}
\leq CR \|\nabla f \|_{L^p(B(0,2R)\backslash B(0,R))},
\end{align}
where the constant $C$ depends only on $n$ and $p$.

\begin{lem}\label{predensity}
$C^\infty(\HH^n)\cap\dot{W}^{1,p}(\HH^n)$ is dense in $\dot{W}^{1,p}(\HH^n)$.
\end{lem}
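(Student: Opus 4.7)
The plan is to approximate by mollification, using a smooth compactly supported approximate identity adapted to the Heisenberg dilation structure, and exploiting the left-invariance of the horizontal vector fields $X_j$.

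First, I would fix a nonnegative $\phi \in C_c^\infty(\HH^n)$ with $\int_{\HH^n}\phi\,d\gamma = 1$ and introduce the rescaled family
\[
\phi_\varepsilon(\gamma) := \varepsilon^{-(2n+2)}\phi(\delta_{\varepsilon^{-1}}\gamma),\quad \varepsilon>0,
\]
which is a standard approximation of the identity on $\HH^n$ with respect to the homogeneous dimension $2n+2$. For $f \in \dot{W}^{1,p}(\HH^n) \subset (\mathcal{S}(\HH^n))'$, I would set $f_\varepsilon := f * \phi_\varepsilon$, which is automatically in $C^\infty(\HH^n)$ since the convolution of a tempered distribution with a test function is smooth.

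The next step would be to observe that, because each $X_j$ is left-invariant and convolution in the paper's convention $(f*g)(h) = \int f(h\gamma^{-1})g(\gamma)\,d\gamma$ acts on the right argument, one has the distributional identity
\[
X_j(f * \phi_\varepsilon) = (X_j f) * \phi_\varepsilon, \quad 1 \le j \le 2n,
\]
which one verifies by differentiating under the integral in the definition of $X_j$ as a one-parameter family of right translations. Since $X_j f \in L_p(\HH^n)$ by hypothesis, the right-hand side is a genuine $L_p$ function, so $f_\varepsilon \in C^\infty(\HH^n) \cap \dot{W}^{1,p}(\HH^n)$.

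To conclude, I would invoke the standard approximate-identity property on $L_p(\HH^n)$: for any $g \in L_p(\HH^n)$, $g * \phi_\varepsilon \to g$ in $L_p(\HH^n)$ as $\varepsilon \downarrow 0$, which follows from $L_p$-continuity of right translation together with density of $C_c(\HH^n)$ in $L_p(\HH^n)$. Applying this to each $g = X_j f$ yields
\[
\|f_\varepsilon - f\|_{\dot{W}^{1,p}(\HH^n)} = \sum_{j=1}^{2n}\|(X_j f)*\phi_\varepsilon - X_j f\|_{L_p(\HH^n)} \longrightarrow 0.
\]
I do not anticipate a serious obstacle; the only subtlety is that $f$ is a priori merely a distribution rather than an $L_p$ function, but this is precisely bypassed by moving the derivative onto $f$ via the commutation identity, so that only the $L_p$ theory of approximate identities is needed in the final estimate.
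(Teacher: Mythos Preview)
Your approach is exactly the paper's---mollify, pass the horizontal derivative through the convolution, then invoke the $L_p$ approximate-identity property---but the commutation identity you wrote down is on the wrong side. With the paper's convolution $(f*g)(h)=\int f(h\gamma^{-1})g(\gamma)\,d\gamma=\int f(\eta)g(\eta^{-1}h)\,d\eta$, a left-invariant vector field $X_j$ (which generates the one-parameter group of \emph{right} translations $\rho(\exp(tX_j))$) commutes with the left translations $\lambda(\eta)$ appearing in the second form, giving
\[
X_j(f*g)=f*(X_jg),
\]
not $(X_jf)*g$. The identity you claim, $X_j(f*\phi_\varepsilon)=(X_jf)*\phi_\varepsilon$, would require $X_j$ to commute with right translations $\rho(\gamma^{-1})$, which it does not on a non-abelian group. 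Consequently $f*\phi_\varepsilon$ does not obviously lie in $\dot W^{1,p}$, and the $L_p$ convergence step breaks down as written.

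The fix is simply to convolve on the other side: set $f_\varepsilon:=\phi_\varepsilon * f$. Then $X_j(\phi_\varepsilon*f)=\phi_\varepsilon*(X_jf)$ holds (this is \cite[p.~38]{FoSt}, cited in the paper's own proof), $\phi_\varepsilon*f\in C^\infty(\HH^n)$ because $\phi_\varepsilon\in C_c^\infty$ and $f\in(\mathcal S(\HH^n))'$, and the $L_p$ approximate-identity argument goes through via continuity of left translation. With this correction your proof is identical to the paper's.
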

\begin{proof}
	To begin with, fix $f\in \dot{W}^{1,p}(\HH^n)$ and choose $\phi\in C_c^\infty(\HH^n)$ with
	$ \int_{\HH^n} \phi(g)\,dg=1. $
	Define
	$ \phi_{m}(g) = m^{2n+2} \phi( \delta_{m}(g)), $ where $\delta_{m}(g)=\delta_{m}([z,t]):=(mz,m^2t)$.
	%where $g=[ z_1,z_2,...,z_n, t]\in \HH^n$ and
	%$\delta_r(g)=\delta_r ([ z_1,z_2,...,z_n, t] ) = (rz_1, ..., rz_n, r^2 t).$
	%\bigskip
	then we see that $\phi_{m} *f\in C^\infty(\HH^n)$.
	Moreover,
	we have that
	$ \phi_{m} *f \to f \ {\rm as}\ {m}\to +\infty $
	in the sense of $(\mathcal S(\HH^n))'$ (see for example \cite[Proposition 1.49]{FoSt}).
	Furthermore, by noting that $X_j (\phi_{m} *f) = \phi_{m} * (X_jf)$ (see for example \cite[p. 38]{FoSt}),
	we conclude that $ X_j (\phi_{m} *f)  \to X_j f$ as $m\rightarrow +\infty $ in the sense of $L_p(\HH^n)$.
	
	Hence, $f$ can be approximated by functions $\phi_{m} *f$ in $C^\infty(\HH^n)$ and
	$$ \| f\|_{\dot{W}^{1,p}(\HH^n)}  =\lim_{{m}\to +\infty}  \| \phi_{m}*f\|_{\dot{W}^{1,p}(\HH^n)}. $$
This ends the proof of Lemma \ref{predensity}.
\end{proof}

\begin{lem}\label{density2}
Let $1<p<\infty$. For any
	$f \in \dot{W}^{1,p}(\HH^n)$, one can find a sequence $\{f_m\}_{m=1}^\infty \subset C_c^\infty(\HH^n)$ such that $f_m\to f $ in $\dot{W}^{1,p}(\HH^n)$. If $f$ satisfies an extra condition $f\in L_{\infty}(\HH^n)\cap\dot{W}^{1,p}(\HH^n)$, then there exists a constant $c$ such that $\{f_m\}_{m=1}^\infty$ also satisfy

	${\rm (a)}$ $\sup_m\| f_m\|_{L_\infty(\HH^n)} <\infty$;

	${\rm (b)}$ $f_m\to f-c $ uniformly in any compact subset in $\HH^n$;

    ${\rm (c)}$ $M_{f_m}\rightarrow M_{f-c}$ in the strong operator topology.
	
\end{lem}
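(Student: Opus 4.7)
The plan is to combine the mollification supplied by Lemma \ref{predensity} with a cutoff-and-recenter truncation, with the annular Poincar\'e inequality \eqref{Poincare on annuli} as the quantitative heart of the argument. First I would invoke Lemma \ref{predensity} together with a standard diagonal argument to reduce to constructing compactly supported approximants for $f \in C^\infty(\HH^n)\cap \dot{W}^{1,p}(\HH^n)$. The mollifier convolution $\phi_m * f$ does not increase $\|f\|_{L_\infty}$, so the hypothesis $f \in L_\infty$ is preserved through this reduction, and it furnishes a representative of $f$ that is continuous on $\HH^n$.

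Concretely, fix $\eta \in C_c^\infty(\HH^n)$ with $\eta \equiv 1$ on $B(0,1)$ and $\supp \eta \subset B(0,2)$, and set $\eta_m(g) := \eta(\delta_{1/m}(g))$. The anisotropic dilation structure gives $\eta_m \equiv 1$ on $B(0,m)$, $\supp \eta_m \subset B(0,2m)$, and $|X_j \eta_m| \lesssim m^{-1}\chi_{A_m}$, where $A_m := B(0,2m) \setminus B(0,m)$ (the factor $y_j$ appearing in $X_j$ contributes $|y_j| \lesssim m$ on $A_m$, which balances the extra $m^{-2}$ from the $t$-derivative). Setting $c_m := \langle f \rangle_{A_m}$, I would define
\[
    f_m := \eta_m(f - c_m) \in C_c^\infty(\HH^n).
\]
Splitting $X_j f_m = \eta_m (X_j f) + (X_j \eta_m)(f - c_m)$, the first summand tends to $X_j f$ in $L_p(\HH^n)$ by dominated convergence, while the second is estimated via \eqref{Poincare on annuli} by
\[
    \|(X_j\eta_m)(f - c_m)\|_{L_p(\HH^n)} \lesssim m^{-1}\|f - c_m\|_{L_p(A_m)} \lesssim \|\nabla f\|_{L_p(A_m)},
\]
and the right-hand side tends to zero since $|\nabla f| \in L_p(\HH^n)$. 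Hence $f_m \to f$ in $\dot{W}^{1,p}(\HH^n)$ (the constant $c_m$ is killed by the seminorm).

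For the additional assertions assuming $f \in L_\infty \cap \dot{W}^{1,p}$, condition (a) follows at once from $|c_m| \leq \|f\|_{L_\infty}$, which gives $\|f_m\|_{L_\infty} \leq 2\|f\|_{L_\infty}$. Because $\{c_m\}$ is a bounded scalar sequence, Bolzano--Weierstrass supplies a subsequence $c_{m_k} \to c$, and I would relabel to work with this subsequence from here on. For any compact $K \subset \HH^n$ we have $K \subset B(0,m_k)$ once $k$ is large, so $f_{m_k}|_K = (f - c_{m_k})|_K$, which converges uniformly to $(f-c)|_K$ using the continuous representative obtained in the reduction step; this yields (b). Then (c) follows from (a), (b), and dominated convergence: for every $\xi \in L_2(\HH^n)$, the pointwise limit $f_{m_k} \to f - c$ together with the uniform bound $\|f_{m_k} - (f-c)\|_{L_\infty} \leq C$ majorises $|(f_{m_k} - (f-c))\xi|^2$ by the integrable function $C^2|\xi|^2$, so $\|M_{f_{m_k}}\xi - M_{f-c}\xi\|_{L_2} \to 0$.

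The main obstacle I anticipate is the identification of the limit constant $c$: a priori the annular averages $c_m$ need not converge, and the only precompactness available comes from the crude $L_\infty$ bound. This forces the statement to promise only the \emph{existence} of some constant $c$, which is absorbed into the construction after passing to a subsequence; the accompanying subtlety of reconciling (b) with a possibly discontinuous $f$ is resolved by the mollification built into the reduction step, which replaces $f$ with a continuous representative agreeing with it almost everywhere.
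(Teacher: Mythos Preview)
Your proposal is correct and follows essentially the same approach as the paper: reduce via Lemma \ref{predensity} to $f\in C^\infty(\HH^n)\cap\dot{W}^{1,p}(\HH^n)$, define $f_m=\eta_m(f-c_m)$ with $c_m$ the annular average, and use the annular Poincar\'e inequality \eqref{Poincare on annuli} to control the cross term $(X_j\eta_m)(f-c_m)$. The only cosmetic differences are the precise scaling of the cutoff (you take $\eta_m\equiv 1$ on $B(0,m)$ and $\supp\eta_m\subset B(0,2m)$, while the paper uses $B(0,m/2)$ and $B(0,m)$) and your explicit mention of a diagonal argument for the reduction step; the handling of (a)--(c), including the passage to a subsequence of $\{c_m\}$, is identical.
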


%\begin{lem}\label{Sobolev density}
%{\color{red}$C_c^\infty(\HH^n)\cap \dot{W}^1_p(\HH^n)$} is dense in $\dot{W}^1_p(\HH^n)$.
%\end{lem}

%\bigskip
%Q:  Is it true that for all $f \in \dot{W}^1_p(\HH^n)$ there exists a sequence of smooth compactly supported functions $f_n$ on $\HH^n$ such that $\|f-f_n\|_{\dot{W}^1_p(\HH^n)} \to 0$??  We only need this for $p = 2n+2$.

%\bigskip
%\bigskip

\begin{proof}

By Lemma \ref{predensity}, it suffices to show the argument holds for $f\in C^\infty(\HH^n)\cap\dot{W}^{1,p}(\HH^n)$. To this end, we consider the smooth cut-off function
	$\eta_m \in C_c^\infty(\HH^n)$ such that
	${\rm supp\ } \eta_m\subset B(0,m),\ \eta_m\equiv1\ {\rm on}\ B(0,m/2), $
	and that
	$ |\nabla\eta_m(g)| \lesssim {1\over m}.$ Let
$$c_m:={1\over | B(0,2m)\backslash B(0,m) |}\int_{ B(0,2m)\backslash B(0,m)}f(g)dg .$$

It is easy to see that
\begin{align}\label{deffm}
f_m:=(f-  c_m)\eta_m\in C_c^\infty(\HH^n).
\end{align}

	Next we claim that
	\begin{align}\label{approclaim}
	f_m \to f \quad {\rm in}\ \ \dot{W}^{1,p}(\HH^n).
	\end{align}
	To see this,
	\begin{align*}
	&\sum_{j=1}^{2n}\Bigg\| X_j\Bigg[  \bigg(f- {1\over | B(0,2m)\backslash B(0,m) |} \int_{ B(0,2m)\backslash B(0,m)}f(g)dg \bigg)\eta_m \Bigg] - X_j f\Bigg\|_{L_p(\mathbb H^n)}\\
	&=\sum_{j=1}^{2n}\Bigg\| X_j  f \cdot \eta_m -X_jf-  \bigg(f- {1\over | B(0,2m)\backslash B(0,m) |} \int_{ B(0,2m)\backslash B(0,m)}f(g)dg \bigg)X_j \eta_m \Bigg\|_{L_p(\mathbb H^n)}\\
	&\leq\sum_{j=1}^{2n}\Big\| X_j  f \cdot \eta_m -X_jf\Big\|_{L_p(\mathbb H^n)}+\sum_{j=1}^{2n}\Bigg\|   \bigg(f-  {1\over | B(0,2m)\backslash B(0,m) |}\int_{ B(0,2m)\backslash B(0,m)}f(g)dg \bigg)X_j \eta_m \Bigg\|_{L_p(\mathbb H^n)}\\
	&=: {\rm Term_1}+{\rm Term_2}.
	\end{align*}
	
	For ${\rm Term_1}$, it is clear that
	\begin{align*}
	{\rm Term_1}
	&\leq\sum_{j=1}^{2n}\Big\| X_j  f \cdot \eta_m -X_jf\Big\|_{L_p(\mathbb H^n)}
	\leq C\sum_{j=1}^{2n}\Big\| X_jf\Big\|_{L_p(B(0,m/2)^c)}\to0
	\end{align*}
	as $m\to+\infty$.
	
	To deal with the ${\rm Term_2}$, we  apply the Poincar\'e inequality \eqref{Poincare on annuli} to conclude that
	%first apply Lemma \ref{Poin} and conclude that
	%
	%Then it is clear that $f_R\to f$ in the sense of distribution (actually can be stronger) as $R\to \infty$.\\
	%Moreover,
	%\begin{align*}
	%\|X_j f_R - X_j f\|_{L_p(\HH^n)}&=\|X_j (f_R -  f)\|_{L_p(\HH^n)}\\
	%&=\bigg( \int_{B(0,R)\backslash B(0,{R\over2})}\Big| X_j\big( f(g)\cdot \eta_R(g) -f(g)\big) \Big|^p dg\bigg)^{1\over p}\\
	%&=\bigg( \int_{B(0,R)\backslash B(0,{R\over2})}\Big| X_j f(g)\cdot \eta_R(g) +f(g) X_j\eta_R(g) -X_jf(g)) \Big|^p dg\bigg)^{1\over p}\\
	%&\leq\bigg( \int_{B(0,R)\backslash B(0,{R\over2})}\Big| X_j f(g)(1- \eta_R(g))   \Big|^p dg\bigg)^{1\over p}\\
	%&\quad+\bigg( \int_{B(0,R)\backslash B(0,{R\over2})}\Big| f(g) X_j\eta_R(g)  \Big|^p dg\bigg)^{1\over p}\\
	%\end{align*}
	%
	%\begin{align}
	%\bigg\| f-  \int_{\chi_{ B(0,2)\backslash B(0,1)}}f(y)dy \bigg\|_{L_p(B(0,2)\backslash B(0,1))}
	%\leq C \|\nabla f \|_{L_p(B(0,2)\backslash B(0,1))}.
	%\end{align}
	%
	\begin{align}\label{Poincare}
	\bigg\| f- {1\over | B(0,2m)\backslash B(0,m) |} \int_{ B(0,2m)\backslash B(0,m)}f(g)dg \bigg\|_{L_p(B(0,2m)\backslash B(0,m))}
	\leq Cm \|\nabla f \|_{L_p(B(0,2m)\backslash B(0,m))}.
	\end{align}

	From the property of $\eta_m$ we see that  $X_j \eta_m $ is supported in $B(0,2m)\backslash B(0,m)$ with the upper bound $m^{-1}$ (up to a multiplicative constant). Hence, by using the Poincar\'{e}'s inequality \eqref{Poincare}, we have
	\begin{align*}
	{\rm Term_2}&\leq C {1\over m}\sum_{j=1}^{2n}  \Bigg\|   f- {1\over | B(0,2m)\backslash B(0,m) |} \int_{ B(0,2m)\backslash B(0,m)}f(g)dg \Bigg\|_{L_p( B(0,2m)\backslash B(0,m) )}
	%&\leq CR {1\over R}\|\nabla f \|_{L_p(B(0,2R)\backslash B(0,R))}\\
	\leq C\|\nabla f \|_{L_p(B(0,2m)\backslash B(0,m))}\to0
	\end{align*}
	as $m\to+\infty$.
	
	Combining the estimates for ${\rm Term_1}$ and ${\rm Term_2}$ yields the proof of claim \eqref{approclaim}. %Therefore, for any
%	$f \in \dot{W}^1_p(\HH^n)$, one can easily choose $\{f_m\}_{m=0}^{\infty}\subset \{f_m^{\epsilon}\}_{0<\epsilon<1,m\in \mathbb{N}}$ such that $f_m\to f $ in $\dot{W}^1_p(\HH^n)$, where
%	\begin{align*}
%	f_m^{\epsilon}:=\bigg(f\ast \phi_\epsilon-  {1\over | B(0,2m)\backslash B(0,m) |}\int_{ B(0,2m)\backslash B(0,m)}f\ast\phi_\epsilon(g)dg \bigg)\eta_m.
%	\end{align*}
	For the second statement, we note that $(a)$ is a direct consequence of the following inequality:
	$$ \sup\limits_{m\in \mathbb{N}}\| f_m\|_{L_\infty(\HH^n)}\leq \|f\|_{L_\infty(\HH^n)}+\frac{1}{|B(0,2m)\backslash B(0,m)|}\int_{\HH^n}|f(g)|dg\leq 2\|f\|_{L_\infty(\HH^n)}<\infty.$$
%{\color{red}	To show $(b)$, it suffices to note that for any $f\in L_{\infty}(\HH^n)\cap\dot{W}^1_p(\HH^n)$,
%	$$X_j(f\ast \phi_\epsilon) \eta_m\rightarrow X_j(f\ast\phi_\epsilon),\ {\rm uniformly\ in\ any\ compact\ set\ as}\ m\rightarrow +\infty,$$
%	$$\bigg((f\ast \phi_\epsilon)-  {1\over | B(0,2m)\backslash B(0,m) |}\int_{ B(0,2m)\backslash B(0,m)}(f\ast \phi_\epsilon)(g)dg \bigg)X_j \eta_m \rightarrow 0,\ {\rm uniformly\ in\ any\ compact\ set\ as}\ m\rightarrow +\infty.$$
%	Then by repeating the argument as the proof of the first statement, we obtain that
%	$$X_j f_m\rightarrow X_jf,\ {\rm uniformly\ in\ any\ compact\ set\ as}\ m\rightarrow +\infty.$$
%	By Taylor's formula on Heisenberg group (see for example \cite{BLU,Bonfiglioli}), we conclude that
%	$$f_m\rightarrow f-c,\ {\rm uniformly\ in\ any\ compact\ set\ as}\ m\rightarrow +\infty,$$
%	for some constant $c$.}
	%since $f\in L_\infty(\HH^n)$, we get that
	%$$ \sup_m\| f_m\|_{L_\infty(\HH^n)}\leq C \| f\|_{L_\infty(\HH^n)} <\infty.$$
	To show $(b)$, we observe from the construction of $f_m$ that $f_m+c_m=f$ on $B(0,m/2)$, which implies that
$f_m+c_m\rightarrow f$ uniformly on any compact set. Note that $\{c_m\}_{m\geq 1}$ is a bounded sequence since $f\in L_\infty(\HH^n)$. Therefore, one can assume $c_m\rightarrow c$ without loss of generality by passing to a subsequence if needed. Thus, $f_m\rightarrow f-c$ uniformly on any compact subset of $\HH^n$. $(c)$ is a direct consequence of the dominated convergence theorem combined with $(a)$ and $(b)$. This ends the proof of the second statement, and therefore Lemma \ref{density2}.
\end{proof}

\section{Proof of upper bound}\label{sufficiencysection}
\setcounter{equation}{0}
%\subsection{Double operator integral}
At the beginning of this section, we recall the definition and some properties of double operator integrals. General surveys of double operator integration include \cite{Peller-moi-2016,SkripkaTomskova}. In particular our approach is similar to that of \cite{PS-crelle} (see also \cite{PSW}).
\begin{defi}
Let $H$ be a complex separable Hilbert space. Let $D_0$ and $D_1$ be self-adjoint (potentially unbounded) operators on $H$ and let $E^0$ and $E^1$ be the associated spectral measures.
\end{defi}

Note that for any $x,y\in\mathcal{L}_2(H)$, the measure
\begin{align*}
(\lambda,\mu)\rightarrow {\rm Tr}(xdE^{0}(\lambda)ydE^{1}(\mu))
\end{align*}
is a countably additive complex-valued measure on $\mathbb{R}^{2}$. For any $\phi\in L_\infty(\mathbb{R}^2)$, $\phi$ is said to be integrable if there is an operator $T_{\phi}^{D_0,D_1}\in \mathcal{L}_\infty (\mathcal{L}_2 (H))$ such that for all $x,y\in\mathcal{L}_{2}(H)$,
\begin{align*}
{\rm Tr}(xT_{\phi}^{D_0,D_1}y)=\int_{\mathbb{R}^{2}}\phi(\lambda,\mu){\rm Tr}(xdE^{0}(\lambda)ydE^{1}(\mu)).
\end{align*}
Next for any $A\in\mathcal{L}_2(H)$, the double operator integral is defined by
\begin{align}\label{double integral}
T_{\phi}^{D_0,D_1}(A):=\int_{\mathbb{R}^{2}}\phi(\lambda,\mu)dE^0(\lambda)AdE^1(\mu).
\end{align}
The definition of double operator integral can be extended to $\mathcal{L}_{p}$ and $\mathcal{L}_{p,\infty}$ for $1<p<\infty$ in the following way: suppose that
$$\|T_{\phi}^{D_0,D_1}(A)\|_{\mathcal{L}_{p}} \leq C\|A\|_{\mathcal{L}_{p}},\ {\rm for}\ {\rm all}\ A \in \mathcal{L}_{2}\cap \mathcal{L}_p.$$
Then $T_{\phi}^{D_0,D_1}$ admits a unique extension to $\mathcal{L}_{p}$.

Recall that if $1<p_0<p<p_1< \infty$, then $\mathcal{L}_{p_0} \subset \mathcal{L}_{p,\infty} \subset \mathcal{L}_{p_1}$ and $\mathcal{L}_{p,\infty}$ is an interpolation space of the couple
$(\mathcal{L}_{p_0},\mathcal{L}_{p_1}).$ It follows that if $T_{\phi}^{D_0,D_1}$ is bounded on $\mathcal{L}_{p_0}$ and $\mathcal{L}_{p_1},$ then $T_{\phi}^{D_0,D_1}$ admits a unique extension to $\mathcal{L}_{p,\infty}$. The extension of $T_{\phi}^{D_0,D_1}$ to $\mathcal{L}_\infty$ is more difficult, for details we refer to \cite{DDSZ}.

Similarly, if $(\mathcal{M},\tau)$ is a semifinite von Neumann algebra and $D_0$ and $D_1$ are self-adjoint operators affiliated with $\mathcal{M},$
then $T^{D_0,D_1}_{\phi}$ may be defined on the noncommutative $L_p$ spaces $L_{p}(\mathcal{M},\tau)$ and weak $L_p$ space $L_{p,\infty}(\mathcal{M},\tau)$ in a similar way.
For further details, see \cite{PSW} and \cite{DDSZ} which discuss this issue.

We will use the following properties frequently:
\begin{enumerate}
\item for every $\varphi,\phi\in L_\infty(\mathbb{R}^{2}),$ we have
$$T_{\alpha \varphi+\beta\phi}^{D_0,D_1}=\alpha T_{\varphi}^{D_0,D_1}+\beta T_{\phi}^{D_0,D_1},\quad \alpha,\beta\in\mathbb{C}.$$
\item for every $\varphi,\phi\in L_\infty(\mathbb{R}^{2}),$ we have
$$T_{\varphi\phi}^{D_0,D_1}=T_{\varphi}^{D_0,D_1}T_{\phi}^{D_0,D_1}.$$
\end{enumerate}

\begin{lem}\label{fraction schur lemma}
Let $B\geq 0$ be a (possibly unbounded) linear operator on a Hilbert space $H$ with $ ker(B)=0,$ then for any $1<p<\infty,$ there exists a constant $c_p<\infty$ such that
$$\Big\|T^{B,B}_{\frac{\lambda}{\lambda+\mu}}\Big\|_{\mathcal{L}_{p,\infty}\to\mathcal{L}_{p,\infty}}\leq c_p.$$	
\end{lem}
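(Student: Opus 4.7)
The plan is to recognise the symbol $\phi(\lambda,\mu)=\lambda/(\lambda+\mu)$ as a difference-type multiplier in logarithmic coordinates and then invoke the transference and interpolation machinery that is already available in the double operator integral literature. Since $B\geq 0$ with $\ker(B)=0$, the functional calculus defines a self-adjoint (possibly unbounded) operator $C:=\log B$ on $H$, and on the positive cone the symbol factors as $\phi(\lambda,\mu)=m(\log\lambda-\log\mu)$, where $m(x)=(1+e^{-x})^{-1}$ is the logistic function. Pushing the spectral resolution of $B$ forward along the logarithm, one verifies directly from the definition \eqref{double integral} that $T^{B,B}_{\phi}$ coincides with the difference-type double operator integral $T^{C,C}_{\tilde m}$ associated with $\tilde m(s,t)=m(s-t)$.

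The key analytic input is that $m$ is a bounded $L_p(\mathbb{R})$-Fourier multiplier for every $1<p<\infty$: $m$ is smooth with $0<m<1$, and every derivative $m^{(k)}$ with $k\geq 1$ decays exponentially at $\pm\infty$, so in particular $\sup_\xi|\xi m'(\xi)|<\infty$ and Mikhlin's multiplier theorem applies. The standard transference principle for difference-type double operator integrals (see \cite{PS-crelle,PSW}) then yields the boundedness of $T^{C,C}_{\tilde m}$, and hence of $T^{B,B}_{\phi}$, on $\mathcal{L}_p$ for every $1<p<\infty$. To reach the weak ideal, for any fixed $p\in(1,\infty)$ choose $1<p_0<p<p_1<\infty$ and use that $\mathcal{L}_{p,\infty}$ is a real interpolation space of the couple $(\mathcal{L}_{p_0},\mathcal{L}_{p_1})$; the $\mathcal{L}_{p_0}$- and $\mathcal{L}_{p_1}$-boundedness just established then immediately yield $\mathcal{L}_{p,\infty}$-boundedness with a constant $c_p$ depending only on $p$.

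The main obstacle is the transference step. Since $m$ is bounded but does not decay at $\pm\infty$, its Fourier transform is not in $L^1(\mathbb{R})$, so the naive unitary representation
\[
T^{C,C}_{\tilde m}(A)=\int_{\mathbb{R}}\widehat{m}(\xi)B^{i\xi}AB^{-i\xi}\,d\xi
\]
does not converge absolutely, and one cannot simply bound $\|T^{B,B}_\phi\|_{\mathcal{L}_{p,\infty}\to\mathcal{L}_{p,\infty}}$ by $\|\widehat m\|_1$. One must instead justify the Mikhlin-type transfer either by approximating $m$ by symbols with uniformly controlled Mikhlin norm, or by invoking the distributional framework for difference-type double operator integrals developed in \cite{PSW,DDSZ}. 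Once this transfer is in place, every remaining step is soft.
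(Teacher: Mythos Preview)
Your approach is correct and takes a genuinely different route from the paper's. You observe that $\lambda/(\lambda+\mu)=m(\log\lambda-\log\mu)$ with $m(x)=(1+e^{-x})^{-1}$ the logistic function, pass to $C=\log B$, and invoke a Mikhlin-type transference for difference symbols to get $\mathcal{L}_p$-boundedness, then interpolate to $\mathcal{L}_{p,\infty}$. This is valid, but be aware that the Mikhlin transference to Schatten ideals is a substantial black box resting on the UMD property of $\mathcal{L}_p$; the $L^1$-Fourier-transform criterion of \cite[Lemmas~7,~9]{PS-crelle} does \emph{not} cover $m$ directly (as you correctly flag), so your citations would need adjusting toward the UMD/vector-valued multiplier literature rather than \cite{PS-crelle}.

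The paper instead decomposes
\[
\frac{2\lambda}{\lambda+\mu}=1+\mathrm{sgn}(\lambda-\mu)-2\,\mathrm{sgn}(\lambda-\mu)\cdot\frac{\min(\lambda,\mu)}{\lambda+\mu},
\]
so that the only ``hard'' piece is the triangular truncation $T^{B,B}_{\mathrm{sgn}(\lambda-\mu)}$, whose $\mathcal{L}_{p,\infty}$-boundedness is classical \cite{Arazy1978}, while the residual symbol $\min(\lambda,\mu)/(\lambda+\mu)$ is bounded on $\mathcal{L}_1$ and $\mathcal{L}_\infty$ by \cite{CPSZ1} and hence on $\mathcal{L}_{p,\infty}$ by interpolation. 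In log coordinates this is essentially the splitting of $m$ into a Heaviside step plus an exponentially decaying correction, so the two arguments are morally the same; the paper's version simply trades the heavy UMD/Mikhlin machinery for two concrete citable facts and lands directly on $\mathcal{L}_{p,\infty}$ without the detour through $\mathcal{L}_p$.
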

\begin{proof}
Recalling the formula $\min\{\lambda,\mu\} = \frac{1}{2}(\lambda+\mu - |\lambda-\mu|),$ we have
$$\frac{2\lambda}{\lambda+\mu}=1+{\rm sgn}(\lambda-\mu)\cdot\frac{|\lambda-\mu|}{\lambda+\mu}=1+{\rm sgn}(\lambda-\mu)-2{\rm sgn}(\lambda-\mu)\cdot\frac{{\rm min}(\lambda,\mu)}{\lambda+\mu}.$$
Therefore,
$$T^{B,B}_{\frac{\lambda}{\lambda+\mu}}=\frac12{\rm id}+\frac12 T^{B,B}_{{\rm sgn}(\lambda-\mu)}-T^{B,B}_{{\rm sgn}(\lambda-\mu)}\circ T^{B,B}_{\frac{\min(\lambda,\mu)}{\lambda+\mu}}.$$
It suffices to show that the operators
$$T^{B,B}_{{\rm sgn}(\lambda-\mu)}:\mathcal{L}_{p,\infty}\to\mathcal{L}_{p,\infty},\quad T^{B,B}_{\frac{\min(\lambda,\mu)}{\lambda+\mu}}:\mathcal{L}_{p,\infty}\to\mathcal{L}_{p,\infty}$$
are bounded by constants which depend only on $p.$ The first of those operators is a triangular truncation (known to be bounded in $\mathcal{L}_{p,\infty}$ by a constant which depends only on $p$, see \cite{Arazy1978}). The second operator is bounded in $\mathcal{L}_1$ and in $\mathcal{L}_{\infty}$ by \cite[Lemmas 2.1\& 3.1]{CPSZ1}. By interpolation, it is also bounded in $\mathcal{L}_{p,\infty}.$
\end{proof}

\begin{lem}\label{doi bounded above and below}
Let $B$ be a self-adjoint linear operator on a Hilbert space $H$ and there exists a constant $c>0$ such that $c\leq B\leq c^{-1}$. If $A\in\mathcal{L}_{p,\infty}$ for some $1<p<\infty$, then we have
$$[B,A]B^{-1}=T^{B,B}_{\frac{\lambda}{\lambda+\mu}}(B^{-1}[B^2,A]B^{-1}).$$	
\end{lem}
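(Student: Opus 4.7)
My plan is to verify the identity by spectral calculus: since $c \leq B \leq c^{-1}$, the spectrum of $B$ is contained in the compact interval $[c,c^{-1}] \subset (0,\infty)$, so every rational function in $\lambda$ and $\mu$ appearing in the argument, evaluated on the spectrum, is bounded and continuous. In particular all double operator integrals $T^{B,B}_\phi$ that I will write down are defined from $\mathcal{L}_{p,\infty}$ to $\mathcal{L}_{p,\infty}$ (the potentially delicate one, $\phi(\lambda,\mu)=\frac{\lambda}{\lambda+\mu}$, is exactly the symbol treated in Lemma \ref{fraction schur lemma}).

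The key observation is that conjugation by bounded functional calculus of $B$ on either side can be represented as a DOI. Concretely, if $dE$ is the spectral measure of $B$, then writing $A = \int\int dE(\lambda)\,A\,dE(\mu)$,
\begin{align*}
BAB^{-1} &= T^{B,B}_{\lambda/\mu}(A), \\
B^{-1}AB &= T^{B,B}_{\mu/\lambda}(A).
\end{align*}
Using linearity of $T^{B,B}_{\cdot}$ in its symbol (property (1) of DOIs), this immediately yields
\[
B^{-1}[B^2,A]B^{-1}=BAB^{-1}-B^{-1}AB=T^{B,B}_{\frac{\lambda^2-\mu^2}{\lambda\mu}}(A),\qquad
[B,A]B^{-1}=BAB^{-1}-A=T^{B,B}_{\frac{\lambda-\mu}{\mu}}(A).
\]

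With these two representations in hand, the identity to be proved reduces to an algebraic manipulation of symbols. Applying the multiplicativity of DOIs (property (2)) and using
\[
\frac{\lambda}{\lambda+\mu}\cdot\frac{\lambda^2-\mu^2}{\lambda\mu}=\frac{\lambda}{\lambda+\mu}\cdot\frac{(\lambda-\mu)(\lambda+\mu)}{\lambda\mu}=\frac{\lambda-\mu}{\mu},
\]
one obtains
\[
T^{B,B}_{\frac{\lambda}{\lambda+\mu}}\bigl(B^{-1}[B^2,A]B^{-1}\bigr)
=T^{B,B}_{\frac{\lambda}{\lambda+\mu}}\circ T^{B,B}_{\frac{\lambda^2-\mu^2}{\lambda\mu}}(A)
=T^{B,B}_{\frac{\lambda-\mu}{\mu}}(A)=[B,A]B^{-1},
\]
which is the claim.

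I do not expect any genuine obstacle here: the only subtlety is to ensure that the above symbol-level manipulations are legitimate on $\mathcal{L}_{p,\infty}$, and this is guaranteed by the spectral boundedness of $B$ together with Lemma \ref{fraction schur lemma}, which gives boundedness of $T^{B,B}_{\lambda/(\lambda+\mu)}$ on $\mathcal{L}_{p,\infty}$ for $1<p<\infty$. The remaining DOIs have bounded symbols on $[c,c^{-1}]\times[c,c^{-1}]$ and are thus bounded on $\mathcal{L}_{p,\infty}$ by the standard interpolation argument recalled earlier. So the proof is essentially a one-line computation once one records the two DOI representations and invokes multiplicativity.
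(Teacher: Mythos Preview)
Your proposal is correct and follows essentially the same route as the paper: both identify $[B,A]B^{-1}=T^{B,B}_{(\lambda-\mu)/\mu}(A)$ and $B^{-1}[B^2,A]B^{-1}=T^{B,B}_{(\lambda^2-\mu^2)/(\lambda\mu)}(A)$, note the factorisation $\frac{\lambda-\mu}{\mu}=\frac{\lambda}{\lambda+\mu}\cdot\frac{\lambda^2-\mu^2}{\lambda\mu}$, and conclude via multiplicativity of double operator integrals together with Lemma~\ref{fraction schur lemma}. Your additional remarks on why the intermediate symbols are bounded on $[c,c^{-1}]\times[c,c^{-1}]$ are a welcome clarification but do not change the argument.
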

\begin{proof} Note that
$$[B,A]B^{-1}=T^{B,B}_{\frac{\lambda-\mu}{\mu}}(A),\quad B^{-1}[B^2,A]B^{-1}=T^{B,B}_{\frac{\lambda^2-\mu^2}{\lambda\mu}}(A).$$
Clearly,
$$\frac{\lambda-\mu}{\mu}=\frac{\lambda}{\lambda+\mu}\cdot\frac{\lambda^2-\mu^2}{\lambda\mu}.$$
By Lemma \ref{fraction schur lemma}, the double operator integral with symbol $\frac{\lambda}{\lambda+\mu}$ is bounded on $\mathcal{L}_{p,\infty}$. Thus,
$$[B,A]B^{-1}=T^{B,B}_{\frac{\lambda-\mu}{\mu}}(A)=T^{B,B}_{\frac{\lambda}{\lambda+\mu}}\Big(T^{B,B}_{\frac{\lambda^2-\mu^2}{\lambda\mu}}(A)\Big)=T^{B,B}_{\frac{\lambda}{\lambda+\mu}}(B^{-1}[B^2,A]B^{-1}).$$
This ends the proof of Lemma \ref{doi bounded above and below}.
\end{proof}

\begin{lem}\label{surprisingly_technical_lemma}
Let $B\geq 0$ be a (potentially unbounded) linear operator on a Hilbert space $H$ with $ker(B)=0$, and let $A$ be a bounded operator on $H$. Suppose that $1<p<\infty$ and
\begin{enumerate}
\item $B^{-1}[B^2,A]B^{-1}\in\mathcal{L}_{p,\infty};$
\item $[B,A]B^{-1}\in\mathcal{L}_{\infty};$
\item $AB^{-1}\in\mathcal{L}_{p,\infty}$ or $B^{-1}A\in\mathcal{L}_{p,\infty}.$
% {\color{blue} this is excessively strong assumption. but it works perfectly.}
\end{enumerate}
Then we have $[B,A]B^{-1} \in \mathcal{L}_{p,\infty}$ and there exists a constant $c_p>0$ such that
$$\|[B,A]B^{-1}\|_{\mathcal{L}_{p,\infty}} \leq c_p\|B^{-1}[B^2,A]B^{-1}\|_{\mathcal{L}_{p,\infty}}.$$
\end{lem}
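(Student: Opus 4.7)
My strategy is to reduce to the bounded case handled by Lemma \ref{doi bounded above and below} via a spectral cut-off of $B,$ then pass to the limit. If one can upgrade the double operator integral identity
\[
[B,A]B^{-1} = T^{B,B}_{\lambda/(\lambda+\mu)}\bigl(B^{-1}[B^2,A]B^{-1}\bigr)
\]
from Lemma \ref{doi bounded above and below} to the present unbounded setting, then combining it with hypothesis (1) and Lemma \ref{fraction schur lemma} immediately yields $[B,A]B^{-1} \in \mathcal{L}_{p,\infty}$ with the claimed norm estimate.

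First I would introduce the spectral projections $P_\varepsilon := \chi_{[\varepsilon,\varepsilon^{-1}]}(B)$ for $\varepsilon \in (0,1).$ Since $B\geq 0$ and $\ker(B)=0,$ we have $P_\varepsilon \to 1$ in the strong operator topology as $\varepsilon \to 0^+,$ while on the range of $P_\varepsilon$ the operator $B$ is bounded between $\varepsilon$ and $\varepsilon^{-1}.$ Because $P_\varepsilon$ is a spectral projection of $B,$ it commutes with the relevant DOI in the sense that
\[
T^{B,B}_{\lambda/(\lambda+\mu)}(P_\varepsilon X P_\varepsilon) = P_\varepsilon T^{B,B}_{\lambda/(\lambda+\mu)}(X) P_\varepsilon, \qquad X \in \mathcal{L}_{p,\infty}.
\]

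Next I would apply Lemma \ref{doi bounded above and below} on $P_\varepsilon H$ with $A$ replaced by the bounded operator $P_\varepsilon A P_\varepsilon$ and $B$ replaced by the bounded, boundedly invertible operator $B|_{P_\varepsilon H}.$ Since $P_\varepsilon$ commutes with $B,$ the commutators $[B,P_\varepsilon A P_\varepsilon]$ and $[B^2,P_\varepsilon A P_\varepsilon]$ equal $P_\varepsilon[B,A]P_\varepsilon$ and $P_\varepsilon[B^2,A]P_\varepsilon$ respectively, and the lemma yields the truncated identity
\[
P_\varepsilon [B,A] B^{-1} P_\varepsilon = T^{B,B}_{\lambda/(\lambda+\mu)}\bigl(P_\varepsilon B^{-1}[B^2,A] B^{-1} P_\varepsilon\bigr).
\]
Passing to the limit $\varepsilon \to 0^+,$ hypothesis (2) makes $[B,A]B^{-1}$ a bounded operator, so the left-hand side converges strongly to $[B,A]B^{-1}.$ Using the commutation displayed above together with hypothesis (1), the right-hand side equals $P_\varepsilon T^{B,B}_{\lambda/(\lambda+\mu)}(B^{-1}[B^2,A]B^{-1}) P_\varepsilon$ and converges strongly to $T^{B,B}_{\lambda/(\lambda+\mu)}(B^{-1}[B^2,A]B^{-1}),$ an element of $\mathcal{L}_{p,\infty}$ by Lemma \ref{fraction schur lemma} with quasi-norm at most $c_p\|B^{-1}[B^2,A]B^{-1}\|_{\mathcal{L}_{p,\infty}}.$ Equating the two strong-operator limits delivers the identity, from which the claimed bound follows.

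The main obstacle will be the rigorous identification of these strong-operator limits with the intended operators. In the unbounded setting the symbolic expressions $[B,A]B^{-1}$ and $B^{-1}[B^2,A]B^{-1}$ are a priori only densely defined, and one must confirm that the bounded extensions supplied by (1) and (2) really coincide with what the truncated identity produces in the limit. This is precisely where hypothesis (3) is essential: boundedness of one of $AB^{-1}$ or $B^{-1}A$ legitimises the rearrangements $BAB^{-1} - A = [B,A]B^{-1}$ (or its adjoint counterpart) after sandwiching by $P_\varepsilon,$ and thereby ensures that the cut-off versions match the intended bounded operators in the strong-operator limit.
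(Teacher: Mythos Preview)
Your overall strategy---spectral truncation via $P_\varepsilon=\chi_{[\varepsilon,\varepsilon^{-1}]}(B)$, application of Lemma~\ref{doi bounded above and below} on $P_\varepsilon H$, and passage to the limit---is exactly what the paper does (with $p_n=\chi_{(1/n,n)}(B)$ and weak rather than strong convergence). The argument is essentially correct, but you misidentify the role of hypothesis~(3).

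The actual place where (3) is needed is \emph{at the invocation of Lemma~\ref{doi bounded above and below}}, not in the limit step. That lemma requires its operator $A$ to lie in $\mathcal{L}_{p,\infty}$, not merely to be bounded; without this the double operator integral identity there is not justified. Hypothesis~(3) supplies exactly this: if $AB^{-1}\in\mathcal{L}_{p,\infty}$ then $AP_\varepsilon=(AB^{-1})(BP_\varepsilon)\in\mathcal{L}_{p,\infty}$ since $BP_\varepsilon$ is bounded, and hence $P_\varepsilon A P_\varepsilon\in\mathcal{L}_{p,\infty}$; the case $B^{-1}A\in\mathcal{L}_{p,\infty}$ is symmetric. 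This is precisely how the paper uses~(3).

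By contrast, your stated justification---that (3) ``legitimises the rearrangement $BAB^{-1}-A=[B,A]B^{-1}$ after sandwiching by $P_\varepsilon$''---is not the genuine issue. Once everything is sandwiched by $P_\varepsilon$, the operator $BP_\varepsilon$ is bounded and boundedly invertible on $P_\varepsilon H$, so all such algebraic rearrangements are automatic and no extra hypothesis is needed there. The limit step then goes through using only hypotheses~(1) and~(2), exactly as you describe.
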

\begin{proof} Let $p_n=\chi_{(\frac1n,n)}(B).$ Consider Hilbert space $p_n(H)$ and the operators $A_n=p_nAp_n$ and $B_n=Bp_n$ on $p_n(H).$ Clearly, $\frac1n \leq B_n\leq n$ on $p_n(H).$ If $AB^{-1}\in\mathcal{L}_{p,\infty},$ then it follows that $Ap_n\in\mathcal{L}_{p,\infty}.$ Similarly, if $B^{-1}A\in\mathcal{L}_{p,\infty},$ then it follows that $p_n A\in\mathcal{L}_{p,\infty}.$ In either case, one has $A_n\in\mathcal{L}_{p,\infty}.$ Note that
$$p_n\cdot [B,A]B^{-1}\cdot p_n=[B_n^{-1},A_n]B_n^{-1},\quad p_n\cdot B^{-1}[B^2,A]B^{-1}\cdot p_n=B_n^{-1}[B_n^2,A_n]B_n^{-1}.$$
By Lemma \ref{doi bounded above and below}, we have
$$[B_n,A_n]B_n^{-1}=T^{B_n,B_n}_{\frac{\lambda}{\lambda+\mu}}(B_n^{-1}[B_n^2,A_n]B_n^{-1}).$$

Thus,
$$p_n\cdot [B,A]B^{-1}\cdot p_n=T^{B_n,B_n}_{\frac{\lambda}{\lambda+\mu}}(p_n\cdot B^{-1}[B^2,A]B^{-1}\cdot p_n)=p_n\cdot T^{B,B}_{\frac{\lambda}{\lambda+\mu}}(B^{-1}[B^2,A]B^{-1})\cdot p_n.$$
By the definition of $p_n$, we have that $p_n\uparrow 1$ and therefore, appealing to the assumption (2), we see that
$$p_n\cdot [B,A]B^{-1}\cdot p_n\to [B,A]B^{-1},\quad n\to\infty,$$
in weak operator topology. Similarly,
$$p_n\cdot T^{B,B}_{\frac{\lambda}{\lambda+\mu}}(B^{-1}[B^2,A]B^{-1})\cdot p_n\to T^{B,B}_{\frac{\lambda}{\lambda+\mu}}(B^{-1}[B^2,A]B^{-1}),\quad n\to\infty,$$
in weak operator topology. Thus,
$$[B,A]B^{-1}=T^{B,B}_{\frac{\lambda}{\lambda+\mu}}(B^{-1}[B^2,A]B^{-1}).$$
The assertion follows from Lemma \ref{fraction schur lemma}.
Observe that the assumption that $ker(B)=0$ is used in order to ensure the well-definedness of the double operator integral with symbol $\frac{\lambda}{\lambda+\mu}.$
\end{proof}

Now we give a proof of the upper bound in Theorem \ref{main}.
\begin{prop}\label{sufficiency theorem}
 Suppose that $f\in  L_{\infty}(\Heis^n)\cap \dot{W}^{1,2n+2}(\HH^n)$ and $\ell\in \{1,2,\cdots,2n\}$, then there exists a positive constant $C_n>0$ such that
\begin{align}\label{rlmf}
\|[R_{\ell},M_f]\|_{\mathcal{L}_{2n+2,\infty}}\leq C_n\|f\|_{\dot{W}^{1,2n+2}(\HH^n)}.
\end{align}
\end{prop}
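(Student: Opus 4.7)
The plan is to reduce by approximation to $f\in C_c^\infty(\HH^n)$ and then split $[R_\ell,M_f]$ into one piece directly controlled by a Cwikel estimate and another piece handled via the double operator integral machinery of Lemma~\ref{surprisingly_technical_lemma}. Write $B:=(-\Delta_{\HH})^{1/2}$ so that $R_\ell=X_\ell B^{-1}$. For the reduction, Lemma~\ref{density2} supplies $\{f_m\}\subset C_c^\infty(\HH^n)$ with $\sup_m\|f_m\|_{L_\infty}<\infty$, $\|f_m\|_{\dot{W}^{1,2n+2}(\HH^n)}\to\|f\|_{\dot{W}^{1,2n+2}(\HH^n)}$, and $M_{f_m}\to M_{f-c}$ in the strong operator topology for some constant $c$. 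Since $R_\ell$ commutes with scalar multiplication, $[R_\ell,M_{f-c}]=[R_\ell,M_f]$, and strong convergence of $M_{f_m}$ yields weak operator convergence $[R_\ell,M_{f_m}]\to[R_\ell,M_f]$. The $\mathcal{L}_{2n+2,\infty}$ quasinorm is lower semicontinuous on bounded sets in the weak operator topology (via the duality between $\mathcal{L}_{2n+2,\infty}$ and its separable predual), so it suffices to prove \eqref{rlmf} for $f\in C_c^\infty(\HH^n)$.

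For such $f$, using $[X_\ell,M_f]=M_{X_\ell f}$ and the elementary identity $[B^{-1},M_f]=-B^{-1}[B,M_f]B^{-1}$, I would write
\[
    [R_\ell,M_f]=[X_\ell,M_f]B^{-1}+X_\ell[B^{-1},M_f]=M_{X_\ell f}B^{-1}-R_\ell\cdot[B,M_f]B^{-1}.
\]
Corollary~\ref{specific_cwikel} with $\beta=1$ bounds the first summand by $C\|X_\ell f\|_{L_{2n+2}(\HH^n)}\leq C\|f\|_{\dot{W}^{1,2n+2}(\HH^n)}$, and since $R_\ell$ is bounded the task reduces to controlling $\|[B,M_f]B^{-1}\|_{\mathcal{L}_{2n+2,\infty}}$.

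I would then invoke Lemma~\ref{surprisingly_technical_lemma} with $A=M_f$ and $p=2n+2$, verifying its three hypotheses. For hypothesis~(1), the computation $[-\Delta_{\HH},M_f]=-\sum_{j=1}^{2n}(X_jM_{X_jf}+M_{X_jf}X_j)$ combined with $X_jB^{-1}=R_j$ and $B^{-1}X_j=-R_j^{*}$ (using the skew-adjointness $X_j^{*}=-X_j$) gives
\[
    B^{-1}[B^2,M_f]B^{-1}=\sum_{j=1}^{2n}\bigl(R_j^{*}M_{X_jf}B^{-1}-B^{-1}M_{X_jf}R_j\bigr),
\]
and each summand lies in $\mathcal{L}_{2n+2,\infty}$ with norm $\lesssim\|X_jf\|_{L_{2n+2}(\HH^n)}$ by Corollary~\ref{specific_cwikel}, yielding the total bound $\lesssim\|f\|_{\dot{W}^{1,2n+2}(\HH^n)}$. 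Hypothesis~(3) is immediate from Corollary~\ref{specific_cwikel}, since $f\in L_{2n+2}(\HH^n)$ ensures $M_fB^{-1}\in\mathcal{L}_{2n+2,\infty}$. Hypothesis~(2) requires $[B,M_f]B^{-1}\in\mathcal{L}_\infty$, equivalently that multiplication by $f\in C_c^\infty(\HH^n)$ maps the homogeneous Sobolev space $\dot{W}^{1,2}(\HH^n)$ boundedly to itself; this follows from the Leibniz rule $X_j(fu)=(X_jf)u+f(X_ju)$, the estimate $\|f(X_ju)\|_{L_2}\leq\|f\|_{L_\infty}\|X_ju\|_{L_2}$, and the Folland--Stein Sobolev embedding $\dot{W}^{1,2}(\HH^n)\hookrightarrow L_{(2n+2)/n}(\HH^n)$ applied to control $\|(X_jf)u\|_{L_2}$ via the compact support of $X_jf$.

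With all three hypotheses verified, Lemma~\ref{surprisingly_technical_lemma} gives $\|[B,M_f]B^{-1}\|_{\mathcal{L}_{2n+2,\infty}}\lesssim\|f\|_{\dot{W}^{1,2n+2}(\HH^n)}$, and combining with the splitting yields \eqref{rlmf} for $C_c^\infty$ symbols, whence for general $f$ by the approximation step. The principal obstacle is establishing hypothesis~(2): without the a priori bound $[B,M_f]B^{-1}\in\mathcal{L}_\infty$, the key DOI identity $[B,M_f]B^{-1}=T^{B,B}_{\lambda/(\lambda+\mu)}(B^{-1}[B^2,M_f]B^{-1})$ of Lemma~\ref{doi bounded above and below} cannot be deployed; the form-domain/Sobolev argument supplies precisely this a priori boundedness in the smooth compactly supported regime, after which the DOI calculus converts a bounded operator into a weak Schatten class estimate.
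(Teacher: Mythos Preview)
Your proof is correct and follows essentially the same approach as the paper: the identical Leibniz decomposition $[R_\ell,M_f]=M_{X_\ell f}(-\Delta_{\HH})^{-1/2}-R_\ell[(-\Delta_{\HH})^{1/2},M_f](-\Delta_{\HH})^{-1/2}$, the same Cwikel bound on the first summand, and the same invocation of Lemma~\ref{surprisingly_technical_lemma} to control the second. You are in fact more thorough than the paper in explicitly verifying hypotheses~(2) and~(3) of that lemma---the paper applies it without checking them---and your approximation step via weak-operator lower semicontinuity of the $\mathcal{L}_{2n+2,\infty}$ quasinorm is a legitimate variant of the paper's Cauchy-sequence argument (which yields the slightly stronger conclusion that $[R_\ell,M_{f_m}]\to[R_\ell,M_f]$ in $\mathcal{L}_{2n+2,\infty}$-norm).
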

\begin{proof}
We first show the conclusion holds for $f \in C^\infty_c(\Heis^n)$. To begin with, for any $\ell\in\{1,2,\cdots,2n\}$, we decompose $[R_\ell,M_f]$ by Leibniz's rule into two parts as follows.
\begin{align}\label{Rieszdecompose}
    [R_\ell,M_f]&= [X_\ell(-\Delta_{\HH})^{-\frac12},M_f]\nonumber\\
                         &= [X_\ell,M_f](-\Delta_{\HH})^{-\frac12}+X_\ell[(-\Delta_{\HH})^{-\frac12},M_f]\nonumber\\
                         &= M_{X_\ell f}(-\Delta_{\HH})^{-\frac12}-R_\ell[(-\Delta_{\HH})^{\frac12},M_f](-\Delta_{\HH})^{-\frac12},
\end{align}
where in the last step we applied the following commutator formula:
\begin{align}\label{commutator}
    [A^{-1},B] = -A^{-1}[A,B]A^{-1}.
\end{align}

To estimate the first term, we apply Corollary \ref{specific_cwikel} with $\beta=1$ to conclude that
\begin{equation}\label{specific_cwikel_application}
    \|M_{X_\ell f}(-\Delta_{\HH})^{-\frac12}\|_{\mathcal{L}_{2n+2,\infty}}\lesssim\|X_\ell f\|_{L_{2n+2}(\Heis^n)}.
\end{equation}

To estimate the second term, since Riesz transform $R_{\ell}$ is bounded on $L_{2}(\HH^{n})$, it remains to show that
\begin{align}
\|[(-\Delta_{\HH})^{\frac12},M_f](-\Delta_{\HH})^{-\frac12}\|_{\mathcal{L}_{2n+2,\infty}}\lesssim\sum_{j=1}^{\infty}\|X_{j}f\|_{L_{2n+2}(\HH^{n})}.
\end{align}
To this end, we apply Lemma \ref{surprisingly_technical_lemma} to $B = (-\Delta_{\HH})^{\frac12},$ $A= M_f$ and $p=2n+2.$ This yields
\[
    \|[(-\Delta_{\HH})^{\frac12},M_f](-\Delta_{\HH})^{-\frac12}\|_{\mathcal{L}_{2n+2,\infty}} \lesssim\|(-\Delta_{\HH})^{-\frac12}[\Delta_{\HH},M_f](-\Delta_{\HH})^{-\frac12}\|_{\mathcal{L}_{2n+2,\infty}}.
\]

To continue, we apply commutator formula to see that
\begin{align*}
    (-\Delta_{\HH})^{-\frac12}[\Delta_{\HH},M_f](-\Delta_{\HH})^{-\frac12}&= \sum_{k=1}^{2n}(-\Delta_{\HH})^{-\frac12}[X_k^2,M_f](-\Delta_{\HH})^{-\frac12}\\
                         &=\sum_{k=1}^{2n}(-\Delta_{\HH})^{-\frac12}X_k M_{X_kf}(-\Delta_{\HH})^{-\frac12}
                         +\sum_{k=1}^{2n}(-\Delta_{\HH})^{-\frac12}M_{X_kf}X_k(-\Delta_{\HH})^{-\frac12}.
\end{align*}
By the triangle inequality and the ideal property of $\mathcal{L}_{2n+2,\infty},$ it follows that
\begin{align*}
    \|(-\Delta_{\HH})^{-\frac12}&[\Delta_{\HH},M_f](-\Delta_{\HH})^{-\frac12}\|_{\mathcal{L}_{2n+2,\infty}}\\
                           &\lesssim\sum_{k=1}^{2n}\|(-\Delta_{\HH})^{-\frac12}X_k\|_{\mathcal{L}_{\infty}}\| M_{X_kf}(-\Delta_{\HH})^{-\frac12}\|_{\mathcal{L}_{2n+2,\infty}}
                            +\sum_{k=1}^{2n}\|(-\Delta_{\HH})^{-\frac12}M_{X_kf}\|_{\mathcal{L}_{2n+2,\infty}}\|X_k(-\Delta_{\HH})^{-\frac12}\|_{\mathcal{L}_{\infty}}.
\end{align*}
Applying \eqref{specific_cwikel_application} leads us to
\[
    \|(-\Delta_{\HH})^{-\frac12}[\Delta_{\HH},M_f](-\Delta_{\HH})^{-\frac12}\|_{\mathcal{L}_{2n+2,\infty}} \lesssim\sum_{k=1}^{2n} \|X_k f\|_{L_{2n+2}(\Heis^n)}.
\]

Now we apply an approximation argument to remove assumption that $f \in C^\infty_c(\Heis^n)$. To this end, we suppose $f\in  L_{\infty}(\Heis^n)\cap \dot{W}^{1,2n+2}(\HH^n)$ and let $\{f_m\}_{m\geq 1}$ be the sequence chosen in Lemma \ref{density2}, then $f_m\in C_c^\infty(\HH^n)$ for $m\geq 1$ and $\{f_m\}_{m\geq 1}$ is a Cauchy sequence on $\dot{W}^{1,2n+2}(\HH^n)$. Since we have already shown that \eqref{rlmf} holds for $f\in C_c^\infty(\HH^n)$, one has
\begin{align*}
\|[R_\ell,M_{f_{m_1}}]-[R_\ell,M_{f_{m_2}}]\|_{\mathcal{L}_{2n+2,\infty}}=\|[R_\ell,M_{f_{m_1}-f_{m_2}}]\|_{\mathcal{L}_{2n+2,\infty}}\lesssim \|f_{m_1}-f_{m_2}\|_{\dot{W}^{1,2n+2}(\HH^n)},\ \ m_1,m_2\geq 1,
\end{align*}
where the implicit constant is independent of $m_1$ and $m_2$. This implies that $\{[R_\ell,M_{f_{m}}]\}_{m}$ is a Cauchy sequence on $\mathcal{L}_{2n+2,\infty}$, and therefore it converges to some $A\in \mathcal{L}_{2n+2,\infty}$. In particular, $\{[R_\ell,M_{f_{m}}]\}_{m}\rightarrow A$ in the strong operator topology. On the other hand, by (c) in Lemma \ref{density2}, we see that $M_{f_m}\rightarrow M_{f-c}$ in the strong operator topology. Therefore, $[R_\ell,M_{f_m}]\rightarrow [R_\ell,M_f]$ in the strong operator topology. By uniqueness of the limit, $A=[R_\ell,M_f]$, which implies that  $[R_\ell,M_{f_m}]\rightarrow [R_\ell,M_f]$ in $\mathcal{L}_{2n+2,\infty}$. We now have
\begin{align*}
\|[R_\ell,M_f]\|_{\mathcal{L}_{2n+2,\infty}}=\lim_{m\rightarrow \infty}\|[R_\ell,M_{f_m}]\|_{\mathcal{L}_{2n+2,\infty}}\lesssim \limsup_{m\rightarrow \infty}\|f_m\|_{\dot{W}^{1,2n+2}(\HH^n)}=C_n\|f\|_{\dot{W}^{1,2n+2}(\HH^n)}.
\end{align*}
This completes the proof of Proposition \ref{sufficiency theorem}.
\end{proof}

\section{Auxiliary computations for trace theorem}\label{auxiliarysection}
\setcounter{equation}{0}
The Hermite basis $\{h_{\alpha}\}_{\alpha\in \mathbb{Z}_+^n}$ is a particular orthonormal basis for $L_2(\mathbb{R}^n)$ which will be useful here. Recall
that if $\alpha = (\alpha_1,\ldots,\alpha_n)\in \mathbb{Z}_+^n$ is a multi-index, then we denote
$$|\alpha| = |\alpha_1|+|\alpha_2|+\cdots+|\alpha_n|,\quad \alpha = (\alpha_1,\alpha_2,\cdots,\alpha_n).$$
The Hermite basis element $h_{\alpha}$ is defined as
$$h_\alpha(x) = \left(\frac{2^{|\alpha|}}{\pi^n \alpha!}\right)^{1/2}\prod_{j=1}^nH_{\alpha_j}(x_j)\cdot e^{-\frac12|x|^2},\quad x \in \mathbb{R}^n.$$
Here, $H_k$ is the $k$-th Hermite polynomial given by
\[
    H_k(x) = \sum_{j=0}^{\lfloor \frac{k}{2}\rfloor} (-1)^j \frac{k!}{j!(k-2j)!}(2x)^{k-2j},\quad x \in \mathbb{R}.
\]
Equivalently, the sequence $\{H_k\}_{k=0}^\infty$ can be defined by the generating function
$$\sum_{k=0}^\infty \frac{H_k(x)}{k!}t^k = \exp(2xt-t^2),\quad x \in \mathbb{R}.$$
The family $\{h_{\alpha}\}_{\alpha\in \mathbb{Z}_+^n}$ is an orthonormal basis for the Hilbert space $L_2(\mathbb{R}^n)$ and it is with respect to this specific basis that we understand the isomorphism
\[
    L_2(\mathbb{R}^n) \approx \ell_2(\mathbb{Z}_+^n).
\]
For further details we refer to \cite[Section 1.5]{GlimmJaffe}, \cite[Chapter 22]{AbramowitzStegun}, \cite[Section 6.1]{Zworski2012}.

Recall that $p_j := -i\partial_{x_j}$ and $q_j := M_{x_j}$ for $1 \leq j\leq n$ denote the momentum and position operators respectively on $L_2(\mathbb{R}^n).$
In terms of the Hermite basis, we have
\[
    p_jh_{\alpha} = -i\left(\frac{\alpha_j}{2}\right)^{\frac12}h_{\alpha-e_j}+i\left(\frac{\alpha_j+1}{2}\right)^{\frac12}h_{\alpha+e_j}
\]
and
\[
    q_jh_{\alpha} = \left(\frac{\alpha_j}{2}\right)^{\frac12}h_{\alpha-e_j}+\left(\frac{\alpha_j+1}{2}\right)^{\frac12}h_{\alpha+e_j},
\]
see e.g. \cite[Lemma 1.5.3, Proposition 1.5.4]{GlimmJaffe}, \cite[Table 22.8]{AbramowitzStegun}.
Here, $e_j\in \mathbb{Z}_+^n, 1\leq j\leq n$ is the multi-index equal to $1$ in the $j$th position and zero elsewhere. These relations are stated with the understanding that $h_{\alpha-e_j}=0$ if $\alpha_j = 0.$ It follows from these relations that $\{h_{\alpha}\}_{\alpha\in \mathbb{Z}_+^n}$ consists of eigenfunctions for the harmonic oscillator $H = \sum_{j=1}^{n} q_j^2+p_j^2.$
We have
\begin{equation}\label{landau_formula}
    Hh_{\alpha} = (2|\alpha|+n)h_{\alpha},\quad \alpha \in \mathbb{Z}_+^n.
\end{equation}
See e.g. \cite[Equation (6.1.10)]{Zworski2012}.

\begin{defi} Let $n\geq 1.$ Given $\alpha,\beta\in \mathbb{Z}_+^n,$ $E_{\alpha,\beta}$ denotes the projection
\[
    E_{\alpha,\beta}u := h_\alpha\langle u,h_\beta\rangle_{L_2(\mathbb{R}^n)},\quad u\in L_2(\mathbb{R}^n).
\]
\end{defi}
Observe that for all $\alpha,\beta \in \mathbb{Z}^n$ we have
\[
    E_{\alpha,\beta} = E_{\alpha_1,\beta_1}\otimes E_{\alpha_2,\beta_2}\otimes\cdots \otimes E_{\alpha_n,\beta_n},
\]
where the operators $E_{\alpha_1,\beta_1}, E_{\alpha_2,\beta_2},$ etc. are the projections onto the Hermite basis for $L_2(\mathbb{R}),$ and we understand
$L_2(\mathbb{R}^n) = L_2(\mathbb{R})\overline{\otimes} L_2(\mathbb{R}) \overline{\otimes} \cdots \overline{\otimes} L_2(\mathbb{R}).$ With this notation, $H$ may be written as
\[
    H = \sum_{k=0}^\infty (2k+n)\sum_{|\alpha|=k} E_{\alpha,\alpha}.
\]
Similarly,
\[
    H^{-\frac12} = \sum_{k=0}^\infty (2k+n)^{-\frac12} \sum_{|\alpha|=k} E_{\alpha,\alpha}.
\]
and the position and momentum operators are
\[
    p_j = \sum_{\alpha\in \mathbb{Z}_+^n} -i\left(\frac{\alpha_j}{2}\right)^{\frac12}E_{\alpha-e_j,\alpha}+i\left(\frac{\alpha_j+1}{2}\right)^{\frac12}E_{\alpha+e_j,\alpha},\quad 1\leq j\leq n
\]
and
\[
    q_j = \sum_{\alpha\in \mathbb{Z}_+^n} \left(\frac{\alpha_j}{2}\right)^{\frac12}E_{\alpha-e_j,\alpha}+\left(\frac{\alpha_j+1}{2}\right)^{\frac12}E_{\alpha+e_j,\alpha},\quad 1\leq j\leq n.
\]
It follows from these relations that the operators $p_j+iq_j$ and $q_j+ip_j$ are given by
\begin{equation}\label{ladder_relations}
    p_j+iq_j = i\sum_{\alpha \in \mathbb{Z}_+^n} (2\alpha_j+2)^{\frac12}E_{\alpha+e_j,\alpha},\quad  q_j+ip_j = i\sum_{\alpha \in \mathbb{Z}_+^n} (2\alpha_j)^{\frac12}E_{\alpha-e_j,\alpha}.
\end{equation}
The bounded operators $p_jH^{-\frac12}$ and $q_jH^{-\frac12}$ may be written as
\[
    p_jH^{-\frac12} = \sum_{\alpha\in \mathbb{Z}_+^n} -i\left(\frac{\alpha_j}{4|\alpha|+2n}\right)^{\frac12}E_{\alpha-e_j,\alpha}+i\left(\frac{\alpha_j+1}{4|\alpha|+2n}\right)^{\frac12}E_{\alpha+e_j,\alpha},\quad 1\leq j\leq n
\]
and
\[
    q_jH^{-\frac12} = \sum_{\alpha\in \mathbb{Z}_+^n} \left(\frac{\alpha_j}{4|\alpha|+2n}\right)^{\frac12}E_{\alpha-e_j,\alpha}+\left(\frac{\alpha_j+1}{4|\alpha|+2n}\right)^{\frac12}E_{\alpha+e_j,\alpha},\quad 1\leq j\leq n.
\]
Conversely, it is possible to extract each $E_{\alpha,\beta}$ from $\{p_jH^{-\frac12},q_jH^{-\frac12}\}_{j=1}^n,$ in a certain sense described in the next lemma.

For simplicity, we abbreviate
\[
    \textbf{1}  = \begin{pmatrix} 1 \\ 1\end{pmatrix},\quad z = \begin{pmatrix} -1\\ 1\end{pmatrix}
\]
so that $\{{\bf 1},z\}$ is a basis for $\mathbb{C}^2$ as a vector space.
\begin{lem}\label{homogeneous cast algebra}
For all $\xi \in \mathbb{C}^2$ and $\alpha,\beta \in \mathbb{Z}_+^n,$ the operator $E_{\alpha,\beta}\otimes \xi$ belongs to the $C^*$-subalgebra
of $\mathcal{B}(L_2(\mathbb{R}^n))\overline{\otimes} \mathbb{C}^2$ generated by  $\{p_kH^{-\frac12}\otimes {\bf 1},q_kH^{-\frac12} \otimes z\}_{k=1}^{n}.$
\end{lem}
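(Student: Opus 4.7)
The plan is to split $\mathbb{C}^2=\mathbb{C} e_-\oplus\mathbb{C} e_+$ via the minimal projections $e_\pm:=\tfrac12(\mathbf{1}\pm z)$, so that $\mathcal{B}(L_2(\mathbb{R}^n))\overline{\otimes}\mathbb{C}^2$ is identified with $\mathcal{B}(L_2(\mathbb{R}^n))\oplus\mathcal{B}(L_2(\mathbb{R}^n))$, and then to construct each $E_{\alpha,\beta}\otimes e_\pm$ inside the generated $C^*$-subalgebra $\mathcal{A}$ in two stages. First I will extract the vacuum projections $E_{0,0}\otimes e_\pm$ by continuous functional calculus applied to a cleverly chosen positive element of $\mathcal{A}$; second, I will propagate to arbitrary $(\alpha,\beta)$ using ladder operators. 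Linearity in the second tensor factor then recovers $E_{\alpha,\beta}\otimes\xi$ for any $\xi=\xi_-e_-+\xi_+e_+\in\mathbb{C}^2$.

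Under the above decomposition the generators act as $p_jH^{-1/2}\otimes\mathbf{1}\leftrightarrow(p_jH^{-1/2},p_jH^{-1/2})$ and $q_jH^{-1/2}\otimes z\leftrightarrow(-q_jH^{-1/2},q_jH^{-1/2})$, so the element $C_j:=p_jH^{-1/2}\otimes\mathbf{1}+i\,q_jH^{-1/2}\otimes z\in\mathcal{A}$ acts as the lowering operator $(p_j-iq_j)H^{-1/2}$ on the $e_-$-summand and as the raising operator $(p_j+iq_j)H^{-1/2}$ on the $e_+$-summand, by \eqref{ladder_relations}. A direct computation using $[p_j,q_j]=-i$ yields
\[
C_j^*C_j=H^{-1/2}(p_j^2+q_j^2)H^{-1/2}\otimes\mathbf{1}+H^{-1}\otimes z,
\]
so the positive self-adjoint element $T:=\sum_{j=1}^n C_j^*C_j\in\mathcal{A}$ simplifies to $1\otimes\mathbf{1}+nH^{-1}\otimes z$. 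By \eqref{landau_formula} it is diagonal in the Hermite basis with
\[
T(h_\alpha\otimes e_-)=\tfrac{2|\alpha|}{2|\alpha|+n}\,h_\alpha\otimes e_-,\qquad T(h_\alpha\otimes e_+)=\tfrac{2|\alpha|+2n}{2|\alpha|+n}\,h_\alpha\otimes e_+,
\]
so the spectrum of $T$ is contained in $\{0\}\cup[\tfrac{2}{n+2},2]$ and $0$ is a simple eigenvalue, realised only on $h_0\otimes e_-$. Choosing a continuous function $f$ on the spectrum with $f(0)=1$ and $f\equiv 0$ on $[\tfrac{2}{n+2},2]$, the continuous functional calculus produces $f(T)=E_{0,0}\otimes e_-\in\mathcal{A}$. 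The parallel analysis of $T':=\sum_j C_jC_j^*$, whose spectrum has $0$ as a simple isolated eigenvalue attained only on $h_0\otimes e_+$, yields $E_{0,0}\otimes e_+\in\mathcal{A}$.

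For the propagation step, $C_j^*$ acts on the $e_-$-summand as the raising operator $H^{-1/2}(p_j+iq_j)$, so iterated application to $E_{0,0}\otimes e_-$ produces a nonzero scalar multiple of $E_{\alpha,0}\otimes e_-$ for every $\alpha=e_{j_1}+\cdots+e_{j_k}\in\mathbb{Z}_+^n$. Taking adjoints (and using $e_-^*=e_-$) gives every $E_{0,\beta}\otimes e_-$, and the product identity
\[
(E_{\alpha,0}\otimes e_-)(E_{0,\beta}\otimes e_-)=E_{\alpha,\beta}\otimes e_-
\]
(using $e_-^2=e_-$) places every $E_{\alpha,\beta}\otimes e_-$ in $\mathcal{A}$. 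The mirror argument with $C_j$, which acts as the raising operator $(p_j+iq_j)H^{-1/2}$ on the $e_+$-summand, yields every $E_{\alpha,\beta}\otimes e_+$, and expanding $\xi=\xi_-e_-+\xi_+e_+$ completes the proof.

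The main obstacle will be the spectral-gap analysis of $T$ (and $T'$), where one must verify that $0$ is an isolated, simple eigenvalue. This reduces to the explicit eigenvalue formulas above combined with the elementary estimates $\tfrac{2|\alpha|}{2|\alpha|+n}\geq\tfrac{2}{n+2}$ for $|\alpha|\geq 1$ and $\tfrac{2|\alpha|+2n}{2|\alpha|+n}\geq 1$ for all $\alpha$. Once this spectral gap is established, the remainder of the argument is purely algebraic manipulation of ladder operators and minimal projections in $\mathbb{C}^2$.
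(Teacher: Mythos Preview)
Your proof is correct and takes a genuinely different route from the paper's.

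The paper works in the $\{\mathbf{1},z\}$ basis throughout. It first observes that the commutator relation $[p_k,q_k]=-i$ forces $H^{-1}\otimes z\in\mathcal{A}$, hence $H^{-2}\otimes\mathbf{1}\in\mathcal{A}$, and then applies functional calculus to $H^{-2}\otimes\mathbf{1}$ to extract the energy-level projections $P_m\otimes\mathbf{1}$. Within a fixed level $m$ it then uses the number-type operators $|(p_k+iq_k)P_m|^2\otimes\mathbf{1}=\sum_{2|\alpha|+n=m}2\alpha_k\,E_{\alpha,\alpha}\otimes\mathbf{1}$ and their spectral projections, one coordinate $k$ at a time, to isolate each $E_{\alpha,\alpha}\otimes\mathbf{1}$; only after this two-stage spectral decomposition does it invoke ladder operators to reach the off-diagonal $E_{\alpha,\beta}$.

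Your argument is more economical. Passing to the minimal projections $e_\pm$, you build the annihilation-type elements $C_j$ and observe that $T=\sum_j C_j^*C_j=1\otimes\mathbf{1}+nH^{-1}\otimes z$ has the single vacuum vector $h_0\otimes e_-$ as its kernel, so one application of continuous functional calculus delivers $E_{0,0}\otimes e_-$ directly (and symmetrically $E_{0,0}\otimes e_+$ from $T'$). The ladder propagation is then immediate. What you gain is brevity and a cleaner structural picture---the vacuum is singled out in one spectral step rather than by iterated refinement. What the paper's approach buys is that it never needs to diagonalise the $\mathbb{C}^2$ factor or analyse $T'$ separately, and the intermediate objects $P_m\otimes\mathbf{1}$, $P_m\otimes z$ may be useful elsewhere.
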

\begin{proof}
% and
% \[
%     r_j := p_jH^{-\frac12},\quad s_j := q_jH^{-\frac12}.
% \]
Denote by $A$ the $C^*$-subalgebra of $\mathcal{B}(L_2(\mathbb{R}^n))\overline{\otimes} \mathbb{C}^2$ generated by $\{p_kH^{-\frac12}\otimes {\bf 1},q_kH^{-\frac12}\otimes z\}_{k=1}^n.$ Since $\{{\bf 1},z\}$ is a basis for $\mathbb{C}^2$ as a vector space, it suffices to show that $E_{\alpha,\beta}\otimes {\bf 1} \in A$ and $E_{\alpha,\beta}\otimes z\in A$.

For all $1\leq k\leq n,$ note that the relation $[p_k,q_k] =-i$ implies that
\begin{align*}
    (H^{-\frac12}p_k\otimes {\bf 1})\cdot (q_kH^{-\frac12}\otimes z)-(H^{-\frac12}q_k\otimes {\bf 1})\cdot(p_kH^{-\frac12}\otimes z)&=H^{-\frac12}[p_k,q_k]H^{-\frac12}\otimes z\\&=-iH^{-1}\otimes z.
\end{align*}
It follows that $H^{-1}\otimes z\in A.$ Thus, $H^{-2}\otimes \textbf{1}=(H^{-1}\otimes z)\cdot(H^{-1}\otimes z)\in A.$

Let $m$ be an eigenvalue of $H,$ and denote by $P_m$ the corresponding eigen-projection of $H.$ That is,
\[
    P_m := \sum_{2|\alpha|+n=m} E_{\alpha,\alpha}.
\]
Since the spectrum of $H$ is discrete, we may choose $\phi_m\in C^{\infty}_c(\mathbb{R})$ such that $P_m=\phi_m(H^{-2}).$ Since $A$
is closed under continuous functional calculus, we have
$$P_m\otimes \textbf{1}=\phi_m(H^{-2}\otimes \textbf{1})\in A.$$
Thus, for all $1\leq k\leq n$ we have
\begin{align*}
    P_m\otimes z&=m(H^{-1}\otimes z)\cdot (P_m\otimes \textbf{1})\in A,\\
    q_kP_m\otimes z&=m^{\frac12}(q_kH^{-\frac12}\otimes z)\cdot (P_m\otimes {\bf 1})\in A,&
    p_kP_m\otimes z&=m^{\frac12}(p_kH^{-\frac12}\otimes {\bf 1})\cdot (P_m\otimes z)\in A.
\end{align*}
Therefore,
$$|(p_k+iq_k)P_m|^2\otimes \textbf{1}\in A.$$

% Since $E_{\alpha,\alpha} = E_{\alpha_1,\alpha_1}\otimes \cdots \otimes E_{\alpha_n,\alpha_n},$ we have
% $$P_m=\sum_{2(\alpha_1+\cdots+\alpha_n)+n=m}E_{\alpha_1,\alpha_1}\otimes\cdots\otimes E_{\alpha_n,\alpha_n}.$$

From \eqref{ladder_relations}, we have
% $$(q_k+ip_k)P_m=\sum_{2|\alpha|+n=m}\sqrt{2\alpha_k}E_{\alpha-e_j,\alpha}$$
% and
\[
    (p_k+iq_k)(p_k-iq_k)P_m  = \sum_{2|\alpha|+n=m} 2\alpha_k E_{\alpha,\alpha}.
\]
Therefore,
\[
    A \ni |(p_k+iq_k)P_m|^2\otimes \textbf{1} = P_m(p_k-iq_k)(p_k+iq_k)P_m\otimes \textbf{1} = \sum_{2|\alpha|+n=m} 2\alpha_k E_{\alpha,\alpha}\otimes \textbf{1}.
\]
% $$|(q_k+ip_k)P_m|^2=\sum_{2|\alpha|+n=m}2\alpha_kE_{\alpha,\alpha}.$$
% [preceding computation is false, correctly $q_k+ip_k$ should shift the projections $E_{\alpha,\alpha},$ see \eqref{ladder_relations}.]
Since each $|(p_k+iq_k)P_m|^2\otimes \textbf{1}$ belongs to the $C^*$-algebra $A,$ and has discrete spectrum it follows that the spectral projections of
$|(p_k+iq_k)P_m|^2\otimes \textbf{1}$ belong to $A.$ Hence, for all $j\geq 0$ and $1\leq k\leq n$ we have
\[
    P_{m,k,j}\otimes \textbf{1} := \sum_{2|\alpha|+n=m,\; \alpha_k=j} E_{\alpha,\alpha}\otimes \textbf{1} \in A.
\]
Therefore,
\begin{align}\label{belongA1}
    E_{\alpha,\alpha}\otimes \textbf{1} = \prod_{k=1}^n P_{2|\alpha|+n,k,\alpha_k}\otimes \textbf{1} \in A.
\end{align}
Besides, note that for all $1\leq k\leq n$  we have
\begin{align}\label{belongA2}
&q_kP_{2|\alpha|+n}\otimes {\bf 1}={(2|\alpha|+n)}^{\frac{1}{2}}(q_k H^{-\frac{1}{2}}\otimes z)(P_{2|\alpha|+n}\otimes z)\in A,
\end{align}
\begin{align}\label{belongA3}
&p_kP_{2|\alpha|+n}\otimes {\bf 1}={(2|\alpha|+n)}^{\frac{1}{2}}(p_k H^{-\frac{1}{2}}\otimes {\bf 1})(P_{2|\alpha|+n}\otimes {\bf 1})\in A.
\end{align}
%Since $P_m\otimes z\in A$ for all $m,$ we have
%$$E_{\alpha,\alpha}\otimes z=(P_{2|\alpha|+n}\otimes z)\cdot(E_{\alpha,\alpha}\otimes \textbf{1})\in A.$$
Combining the inclusions \eqref{belongA1}, \eqref{belongA2} and \eqref{belongA3}, we see that for all $1\leq k\leq n$,
$$(q_k+ip_k)E_{\alpha,\alpha}\otimes {\bf 1}=((q_k+ip_k)P_{2|\alpha|+n}\otimes \textbf{1})\cdot(E_{\alpha,\alpha}\otimes {\bf 1})\in A.$$

However, from \eqref{ladder_relations} we have
$$(q_k+ip_k)E_{\alpha,\alpha}=\sqrt{2\alpha_k}E_{\alpha+e_k,\alpha}.$$
Thus,
$E_{\alpha+e_k,\alpha}\otimes \textbf{1}\in A.$
Taking the adjoint, we obtain
$E_{\alpha-e_k,\alpha}\otimes \textbf{1}\in A.$
Proceding in this manner, for every $\alpha,\beta \in \mathbb{Z}_+^d$, we obtain
$E_{\beta,\alpha}\otimes \textbf{1}\in A$
and
$$E_{\beta,\alpha}\otimes z=(E_{\beta,\alpha}\otimes \textbf{1})\cdot (P_{2|\alpha|+n}\otimes z)\in A.$$ This completes the proof.
\end{proof}

In the following lemma, we will denote by ${\rm Alg}(S)$ the unital $*$-subalgebra generated by a family
of operators $S\subset \mathcal{B}(L_2(\mathbb{R}^n)\overline{\otimes} \mathbb{C}^2)$. In this notation, Lemma \ref{homogeneous cast algebra} states that for every $\xi\in \mathbb{C}^2$,
every $E_{\alpha,\beta}\otimes \xi$ belongs to the operator-norm closure of
\[
    {\rm Alg}\big(\{p_kH^{-\frac12}\otimes {\bf 1},\,q_kH^{-\frac12}\otimes z\}_{k=1}^n\big)\subset \mathcal{B}(L_2(\mathbb{R}^n)\overline{\otimes} \mathbb{C}^2).
\]
\begin{lem}\label{compact commutator lemma} Let $f\in C_c^\infty(\Heis^n)$.
For every $\alpha,\beta\in\mathbb{Z}^n_+$ and $\xi \in \mathbb{C}^2,$ the commutator
$$[\pi_{{\rm red}}(E_{\alpha,\beta}\otimes \xi),M_f] \in \mathcal{B}(L_2(\Heis^n))$$
is compact.
\end{lem}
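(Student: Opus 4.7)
The plan is to leverage the structural content of Lemma \ref{homogeneous cast algebra} together with the fact that $\pi_{\red}$ is a norm-continuous $*$-homomorphism, and then reduce compactness to the case of single Riesz commutators, which is already available. By Lemma \ref{homogeneous cast algebra}, the element $E_{\alpha,\beta}\otimes \xi$ lies in the operator-norm closure of the unital $*$-algebra generated by $\{p_kH^{-\frac12}\otimes \mathbf{1},\ q_kH^{-\frac12}\otimes z\}_{k=1}^{n}$. Applying $\pi_{\red}$ and using the identities \eqref{Riesz_in_terms_of_pired}, the element $\pi_{\red}(E_{\alpha,\beta}\otimes\xi)$ lies in the operator-norm closure of the unital $*$-subalgebra $\mathcal{A}\subset\mathcal{B}(L_2(\Heis^n))$ generated by $\{R_j\}_{j=1}^{2n}$.

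Next, the derivation $T\mapsto [T,M_f]$ is bounded on $\mathcal{B}(L_2(\Heis^n))$ with norm at most $2\|M_f\|$, and the ideal $\mathcal{K}$ of compact operators is norm-closed. Therefore, if $[T,M_f]\in\mathcal{K}$ for every $T\in\mathcal{A}$, the same conclusion passes to the operator-norm closure of $\mathcal{A}$, and in particular to $T=\pi_{\red}(E_{\alpha,\beta}\otimes\xi)$. By the Leibniz rule $[T_1T_2,M_f]=T_1[T_2,M_f]+[T_1,M_f]T_2$, the ideal property of $\mathcal{K}$, and linearity, compactness of $[T,M_f]$ for every $T\in\mathcal{A}$ reduces to compactness of the single commutators $[R_j,M_f]$ and $[R_j^*,M_f]$ for $j=1,\ldots,2n$.

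The first of these follows directly from Proposition \ref{sufficiency theorem}: since $f\in C_c^\infty(\Heis^n)\subset L_\infty(\Heis^n)\cap\dot{W}^{1,2n+2}(\Heis^n)$, we have $[R_j,M_f]\in\mathcal{L}_{2n+2,\infty}\subset\mathcal{K}$. For the adjoint case, we use $M_f^*=M_{\bar f}$ and compute
\[
[R_j^*,M_f] = (M_f^*R_j - R_jM_f^*)^* = [M_{\bar f},R_j]^* = -[R_j,M_{\bar f}]^*.
\]
Since $\bar f\in C_c^\infty(\Heis^n)$, Proposition \ref{sufficiency theorem} gives $[R_j,M_{\bar f}]\in\mathcal{L}_{2n+2,\infty}\subset\mathcal{K}$, hence the adjoint is also compact. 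Combining these inputs completes the argument.

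I do not foresee a substantive obstacle: all the analytic work has been absorbed into Proposition \ref{sufficiency theorem} and Lemma \ref{homogeneous cast algebra}, and the present lemma is essentially their packaging. The only small technical point is the non-self-adjointness of $R_j$, which forces the short adjoint computation above; this is harmless because taking $f\mapsto \bar f$ stays inside $C_c^\infty(\Heis^n)$.
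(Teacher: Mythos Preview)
Your proposal is correct and follows essentially the same route as the paper's proof: approximate $E_{\alpha,\beta}\otimes\xi$ in norm by elements of the $*$-algebra generated by the Riesz symbols (Lemma~\ref{homogeneous cast algebra}), push through $\pi_{\red}$, use the Leibniz rule and the ideal property of the compacts to reduce to the generators, and invoke Proposition~\ref{sufficiency theorem} for the compactness of $[R_j,M_f]$. Your explicit handling of the adjoint case via $[R_j^*,M_f]=-[R_j,M_{\bar f}]^*$ is slightly more detailed than the paper, which simply asserts compactness of $[R_j^*,M_f]$ as a consequence of Proposition~\ref{sufficiency theorem}; otherwise the arguments coincide.
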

\begin{proof}
Fix $\alpha,\beta\in\mathbb{Z}^n_+.$   and $\xi \in \mathbb{C}^2$. By Lemma \ref{homogeneous cast algebra}, there exists a sequence
$$\{x_m\}_{m\geq1}\subset {\rm Alg}\big(\{p_kH^{-\frac12}\otimes {\bf 1},\ q_kH^{-\frac12}\otimes z\}_{k=1}^n\big)$$	
such that $x_m\to E_{\alpha,\beta}\otimes \xi$ in the operator norm. Since $\pi_{{\rm red}}$ is a $*$-homomorphism, $\pi_{\red}(x_m)$ converges to $\pi_{\red}(E_{\alpha,\beta}\otimes \xi)$ in the norm of $\mathcal{B}(L_2(\HH^n)).$ This implies that
$$[\pi_{{\rm red}}(x_m),M_f]\to [\pi_{{\rm red}}(E_{\alpha,\beta}\otimes \xi),M_f]$$
in the uniform norm.

By Proposition \ref{sufficiency theorem}, for any $1\leq j\leq n$, the operators
$$[R_j,M_f],\quad [R_{n+j},M_f],\quad [R_j^{\ast},M_f],\quad [R_{n+j}^{\ast},M_f]$$
are compact on $L_2(\Heis^n).$
Recall that $\pi_{\red}$ is determined by the relations
\[
    \pi_{\red}(p_jH^{-\frac12}\otimes {\bf 1}) = R_j,\quad \pi_{\red}(q_jH^{-\frac12}\otimes z) = R_{j+n},\quad 1\leq j\leq n.
\]
Since $\pi_{\red}$ is injective, we have
$${\rm Alg}(\{R_j\}_{j=1}^{2n})=\pi_{{\rm red}}\Big({\rm Alg}\big(\{p_kH^{-\frac12}\otimes {\bf 1},\,q_kH^{-\frac12}\otimes z\}_{k=1}^n\big)\Big).$$

%From Lemma \ref{homogeneous cast algebra},
%every $E_{\alpha,\beta}\otimes \xi$ belongs to the norm closure of ${\rm Alg}\big(\{p_kH^{-\frac12}\otimes z,\,q_kH^{-\frac12}\otimes {\bf 1}\}_{k=1}^n\big)$ in $\mathcal{B}(L_2(\mathbb{R}^n))\otimes \mathbb{C}^2,$ it follows that there exists a sequence{\color{blue}
%\[
%    \{x_m\}_{m=0}^\infty\subset {\rm Alg}\big(\{p_kH^{-\frac12}\otimes z,\,q_kH^{-\frac12}\otimes {\bf 1}\}_{k=1}^n\big)
%\]}
%such that $\pi_{\red}(x_m)$ converges to $\pi_{\red}(E_{\alpha,\beta}\otimes \xi)$ in the norm of $\mathcal{B}(L_2(\HH^n)).$

By the linearity and the Leibniz's rule, for all $m\geq 0$ we see that
$[\pi_{{\rm red}}(x_m),M_f]$ is compact in $L_2(\HH^n)$.
Hence, $[\pi_{{\rm red}}(E_{\alpha,\beta}\otimes \xi),M_f]$ is a limit (in the operator norm) of a sequence of compact operators. Thus, for all $\xi \in  \mathbb{C}^2,$
\[
    [\pi_{\red}(E_{\alpha,\beta}\otimes \xi),M_f]
\]
is also compact in $L_2(\HH^n).$
\end{proof}

In the following lemma we frequently use the elementary fact that if $A$ is a compact linear operator and $B \in \mathcal{L}_{p,\infty},$ then $AB \in (\mathcal{L}_{p,\infty})_0.$ To see this, select a sequence $\{A_n\}_{n=0}^\infty$ of finite rank operators which approximate $A$ in the uniform norm. It follows that $\{A_nB\}_{n=0}^\infty$ is a sequence of finite
rank operators approximating $AB$ in the $\mathcal{L}_{p,\infty}$ quasi-norm, and hence $AB\in (\mathcal{L}_{p,\infty})_0.$

\begin{lem}\label{t and units commutator lemma}
    For every $f \in C^\infty_c(\Heis^n)$, $\xi \in \mathbb{C}^2$ and $\alpha,\beta \in \mathbb{Z}_+^n,$ we have
    \begin{equation*}
        M_f|T|^{-\frac12}\pi_{{\rm red}}(E_{\alpha,\beta}\otimes \xi)-\pi_{{\rm red}}(E_{\alpha,0}\otimes {\bf 1})\cdot M_f|T|^{-\frac12}\pi_{{\rm red}}(E_{0,0}\otimes \xi)\cdot \pi_{{\rm red}}(E_{0,\beta}\otimes {\bf 1})\in(\mathcal{L}_{2n+2,\infty})_0.
    \end{equation*}
\end{lem}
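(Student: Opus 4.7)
The plan is to reduce the claimed difference to a product of a compact operator and an element of $\mathcal{L}_{2n+2,\infty}$, and invoke the standard fact (noted before the statement) that such a product lies in the separable part $(\mathcal{L}_{2n+2,\infty})_0$. The key algebraic observation driving the reduction is that $|T|^{-1/2}$ commutes with every element in the image of $\pi_{\red}$: under $\pi$, the operator $|T|^{-1/2}$ corresponds to $1\otimes |s|^{-1/2}$, while $\pi_{\red}(y)$ corresponds to $\iota(y)$, and these two elements live in complementary tensor factors with the second factor being commutative. I will state this commutativity up front.

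Next, I would use the matrix unit identities $E_{\alpha,\beta}=E_{\alpha,0}E_{0,\beta}$ and $E_{0,0}E_{0,\beta}=E_{0,\beta}$, together with $\mathbf{1}\cdot \xi=\xi\cdot \mathbf{1}=\xi$ in $\mathbb{C}^2$, so that $\pi_{\red}$ being a $*$-homomorphism yields
\[
\pi_{\red}(E_{\alpha,\beta}\otimes \xi) = \pi_{\red}(E_{\alpha,0}\otimes \mathbf{1})\,\pi_{\red}(E_{0,\beta}\otimes \xi),\qquad \pi_{\red}(E_{0,0}\otimes \xi)\,\pi_{\red}(E_{0,\beta}\otimes \mathbf{1}) = \pi_{\red}(E_{0,\beta}\otimes \xi).
\]
Moving $|T|^{-1/2}$ past each factor $\pi_{\red}(\cdot)$ in both the original expression and in the second term of the claimed difference, I would verify that the two terms can be written respectively as
\[
M_f\,\pi_{\red}(E_{\alpha,0}\otimes \mathbf{1})\,\pi_{\red}(E_{0,\beta}\otimes \xi)\,|T|^{-\tfrac12},\qquad \pi_{\red}(E_{\alpha,0}\otimes \mathbf{1})\,M_f\,\pi_{\red}(E_{0,\beta}\otimes \xi)\,|T|^{-\tfrac12}.
\]
Their difference collapses to
\[
\bigl[M_f,\,\pi_{\red}(E_{\alpha,0}\otimes \mathbf{1})\bigr]\,\pi_{\red}(E_{0,\beta}\otimes \xi)\,|T|^{-\tfrac12}.
\]

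To conclude, I will use Lemma \ref{compact commutator lemma} (applicable since $f\in C_c^\infty(\HH^n)$) to see that $[M_f,\pi_{\red}(E_{\alpha,0}\otimes\mathbf{1})]$ is compact, and the unnumbered lemma immediately preceding Lemma \ref{homogeneous cwikel lemma} to see that $\pi_{\red}(E_{0,\beta}\otimes \xi)\,|T|^{-1/2}$ belongs to $\mathcal{L}_{2n+2,\infty}$, using that $E_{0,\beta}\otimes \xi$ has rank one (in particular lies in $L_{2n+2}(\mathcal{B}(L_2(\mathbb{R}^n))\overline{\otimes}\mathbb{C}^2,\operatorname{Tr}\otimes \Sigma)$). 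The elementary compact-times-$\mathcal{L}_{p,\infty}$ argument recalled in the paragraph before the lemma then places the product in $(\mathcal{L}_{2n+2,\infty})_0$, which is exactly the claim.

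I do not expect any serious obstacle here, since the real content of the statement is the bookkeeping: choosing the factorization of $E_{\alpha,\beta}$ so that the commutator with $M_f$ isolates to an index $(\alpha,0)$, while the $(0,\beta)$ half combines with $|T|^{-1/2}$ to form an explicit Cwikel-type operator. The only mild subtlety worth double-checking is the commutativity of $|T|^{-1/2}$ with $\pi_{\red}(\cdot)$ (and the attendant domain issues), which is why I would articulate it cleanly at the start via the $\pi$-picture from Proposition \ref{pi rigorous prop}.
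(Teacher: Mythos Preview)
Your algebraic reduction is correct and matches the paper's opening move: after using $E_{\alpha,\beta}=E_{\alpha,0}E_{0,\beta}$, $E_{0,0}E_{0,\beta}=E_{0,\beta}$, and the commutativity of $|T|^{-1/2}$ with the image of $\pi_{\red}$, the difference in question is indeed
\[
\bigl[M_f,\,\pi_{\red}(E_{\alpha,0}\otimes \mathbf{1})\bigr]\,\pi_{\red}(E_{0,\beta}\otimes \xi)\,|T|^{-\tfrac12}.
\]
The gap is in the final step. The unnumbered lemma you cite places $\pi_{\red}(E_{0,\beta}\otimes \xi)\,|T|^{-1/2}$ in $L_{2n+2,\infty}(\mathrm{VN}(\HH^n),\tau)$, \emph{not} in the Schatten ideal $\mathcal{L}_{2n+2,\infty}\subset\mathcal{B}(L_2(\HH^n))$. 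These are different objects: under $\pi$ this operator is $E_{0,\beta}\otimes M_{\iota(\xi)|s|^{-1/2}}$ on $L_2(\mathbb{R}^n)\otimes L_2(\mathbb{R},s^n\,ds)$, and the second tensor factor is a nonzero multiplication operator, hence not compact. So $\pi_{\red}(E_{0,\beta}\otimes \xi)|T|^{-1/2}$ is not compact on $L_2(\HH^n)$, and the ``compact $\times$ $\mathcal{L}_{2n+2,\infty}$'' argument does not apply.

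The paper repairs this by first trading $|T|^{-1/2}$ for $(-\Delta_{\HH})^{-1/2}$ via the eigenvalue relation $|T|^{-1/2}\pi_{\red}(E_{\gamma,\delta}\otimes\mathbf{1})=(n+2|\gamma|)^{1/2}(-\Delta_{\HH})^{-1/2}\pi_{\red}(E_{\gamma,\delta}\otimes\mathbf{1})$, which introduces the \emph{different} constants $(n+2|\alpha|)^{1/2}$ and $n^{1/2}$ on the two terms. It then writes $f=f_1f_2$ with $f_1,f_2\in C_c^\infty(\HH^n)$ so that the genuine $\mathcal{L}_{2n+2,\infty}$ factors are $M_{f_j}(-\Delta_{\HH})^{-1/2}$ (Corollary~\ref{specific_cwikel}), pairs each compact commutator $[\pi_{\red}(E_{\alpha,0}\otimes\mathbf{1}),M_{f_j}]$ with such a factor, and uses Lemma~\ref{cwikel-like commutator lemma} for the remaining $[M_{f_2},(-\Delta_{\HH})^{-1/2}]\in(\mathcal{L}_{2n+2,\infty})_0$. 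A further commutation of $(-\Delta_{\HH})^{-1/2}$ past $\pi_{\red}(E_{\alpha,0}\otimes\mathbf{1})$ produces exactly the factor $\bigl(\tfrac{n+2|\alpha|}{n}\bigr)^{1/2}$ needed to match the two terms. Your argument can be completed along the same lines, but the $f=f_1f_2$ splitting and the passage to $(-\Delta_{\HH})^{-1/2}$ are essential, not optional.
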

\begin{proof}
    By multiplying on the right by $\pi_{{\rm red}}(1\otimes \xi),$ it suffices to consider the case $\xi=\textbf{1}.$ Note that $E_{0,0}E_{0,\beta} = E_{0,\beta}$. The statement of the lemma is equivalent to
	$$M_f|T|^{-\frac12}\pi_{{\rm red}}(E_{\alpha,\beta}\otimes \textbf{1})-\pi_{{\rm red}}(E_{\alpha,0}\otimes \textbf{1})M_f|T|^{-\frac12}\pi_{{\rm red}}(E_{0,\beta}\otimes \textbf{1})\in(\mathcal{L}_{2n+2,\infty})_0.$$
	From \eqref{T_in_terms_of_pired} and \eqref{landau_formula} we have
	\begin{align*}
        (-\Delta_{\HH})^{-\frac12}\pi_{{\rm red}}(E_{\alpha,\beta}\otimes \textbf{1}) &= |T|^{-\frac12}\pi_{\red}(H^{-\frac12}\otimes \textbf{1})\pi_{{\rm red}}(E_{\alpha,\beta}\otimes \textbf{1})\\
                                                                          &= |T|^{-\frac12} \pi_{\red}(H^{-\frac12}E_{\alpha,\beta}\otimes \textbf{1})\\
                                                                          &= (n+2|\alpha|)^{-\frac12}|T|^{-\frac12}\pi_{{\rm red}}(E_{\alpha,\beta}\otimes \textbf{1}).
	\end{align*}
	In particular,
	$$|T|^{-\frac12}\pi_{{\rm red}}(E_{0,\beta}\otimes \textbf{1})=n^{\frac12}(-\Delta_{\HH})^{-\frac12}\pi_{{\rm red}}(E_{0,\beta}\otimes \textbf{1}).$$
	Hence, it suffices to verify that
	\[
        (n+2|\alpha|)^{\frac12}\cdot M_f(-\Delta_{\HH})^{-\frac12}\pi_{{\rm red}}(E_{\alpha,\beta}\otimes \textbf{1})
                               -n^{\frac12}\cdot \pi_{{\rm red}}(E_{\alpha,0}\otimes \textbf{1})M_f(-\Delta_{\HH})^{-\frac12}\pi_{{\rm red}}(E_{0,\beta}\otimes \textbf{1})\in(\mathcal{L}_{2n+2,\infty})_0.
	\]
	Select $f_1,f_2 \in C^\infty_c(\Heis^n)$ such that $f=f_1f_2.$  Clearly,
	\begin{align*}
        \pi_{{\rm red}}(E_{\alpha,0}\otimes \textbf{1})M_f(-\Delta_{\HH})^{-\frac12}&=\pi_{{\rm red}}(E_{\alpha,0}\otimes \textbf{1})M_{f_1}M_{f_2}(-\Delta_{\HH})^{-\frac12}\\
                                     &=[\pi_{{\rm red}}(E_{\alpha,0}\otimes \textbf{1}),M_{f_1}]\cdot M_{f_2}(-\Delta_{\HH})^{-\frac12}+M_{f_1}\pi_{{\rm red}}(E_{\alpha,0}\otimes \textbf{1})\cdot [M_{f_2},(-\Delta_{\HH})^{-\frac12}]\\
                                     &\quad+M_{f_1}\pi_{{\rm red}}(E_{\alpha,0}\otimes \textbf{1})\cdot (-\Delta_{\HH})^{-\frac12}M_{f_2}.
	\end{align*}
    By Lemma \ref{compact commutator lemma}, the commutator
	$$[\pi_{{\rm red}}(E_{\alpha,0}\otimes \textbf{1}),M_{f_1}]$$
	is compact, and since $f_2 \in C^\infty_c(\HH^n)$ it follows from Corollary \ref{specific_cwikel} that $M_{f_2}(-\Delta_{\HH})^{-\frac12} \in \mathcal{L}_{2n+2,\infty}$ and hence,
	$$[\pi_{{\rm red}}(E_{\alpha,0}\otimes \textbf{1}),M_{f_1}]\cdot M_{f_2}(-\Delta_{\HH})^{-\frac12}\in(\mathcal{L}_{2n+2,\infty})_0.$$
	From Lemma \ref{cwikel-like commutator lemma}, we also have
	$$[M_{f_2},(-\Delta_{\HH})^{-\frac12}]\in(\mathcal{L}_{2n+2,\infty})_0.$$
	Therefore,
	$$M_{f_1}\pi_{{\rm red}}(E_{\alpha,0}\otimes \textbf{1})\cdot[M_{f_2},(-\Delta_{\HH})^{-\frac12}]\in(\mathcal{L}_{2n+2,\infty})_0.$$
	Thus,
	$$\pi_{{\rm red}}(E_{\alpha,0}\otimes \textbf{1})M_f(-\Delta_{\HH})^{-\frac12}\in M_{f_1}\pi_{{\rm red}}(E_{\alpha,0}\otimes \textbf{1})\cdot (-\Delta_{\HH})^{-\frac12}M_{f_2}+(\mathcal{L}_{2n+2,\infty})_0.$$
	Note that
	$$\pi_{{\rm red}}(E_{\alpha,0}\otimes \textbf{1})\cdot (-\Delta_{\HH})^{-\frac12}=\left(\frac{n+2|\alpha|}{n}\right)^{\frac12}\cdot (-\Delta_{\HH})^{-\frac12}\cdot\pi_{{\rm red}}(E_{\alpha,0}\otimes \textbf{1}).$$
	Hence,
	\begin{align*}
        \pi_{{\rm red}}(E_{\alpha,0}\otimes \textbf{1})M_f(-\Delta_{\HH})^{-\frac12}\pi_{{\rm red}}(E_{0,\beta}\otimes \textbf{1})\in \left(\frac{n+2|\alpha|}{n}\right)^{\frac12}\cdot M_{f_1}(-\Delta_{\HH})^{-\frac12}\pi_{{\rm red}}(E_{\alpha,0}\otimes \textbf{1})\cdot M_{f_2}\pi_{{\rm red}}(E_{0,\beta}\otimes \textbf{1})+(\mathcal{L}_{2n+2,\infty})_0.
	\end{align*}
	
	We have
	\begin{align*}
        M_{f_1}&(-\Delta_{\HH})^{-\frac12}\pi_{{\rm red}}(E_{\alpha,0}\otimes \textbf{1})\cdot M_{f_2}\pi_{{\rm red}}(E_{0,\beta}\otimes \textbf{1})\\
               &\in M_{f_1}(-\Delta_{\HH})^{-\frac12}\cdot [\pi_{{\rm red}}(E_{\alpha,0}\otimes \textbf{1}),M_{f_2}]\cdot\pi_{{\rm red}}(E_{0,\beta}\otimes \textbf{1})+M_f(-\Delta_{\HH})^{-\frac12}\pi_{{\rm red}}(E_{\alpha,\beta}\otimes \textbf{1})+(\mathcal{L}_{2n+2,\infty})_0.
    \end{align*}
    By Lemma \ref{compact commutator lemma}, the commutator is compact, and since $M_{f_1}(-\Delta_{\HH})^{-\frac12}$ belongs to $\mathcal{L}_{2n+2,\infty},$ the result follows.
\end{proof}
	
\section{A version of trace theorem suitable for the proof of necessity direction}\label{tracelemmasection}
A key part of the proof of the trace formula in \cite{LMSZ} and the related papers \cite{MSX2018,MSX2019} was a general statement about the traces of homogeneous Fourier multipliers.
For example, a consequence of Connes' trace formula \cite{Connes1998} is that if $f \in C^\infty_c(\mathbb{R}^d),$ $d\geq 1$ and $g \in C^\infty(S^{d-1}),$ then for every continuous normalised trace
$\varphi$ on $\mathcal{L}_{1,\infty}$ we have
\begin{equation}\label{connes_trace_formula_commutative_example}
    \varphi(M_fg(-i\nabla(-\Delta)^{-\frac12})(1-\Delta)^{-\frac{d}{2}}) = \frac{1}{d(2\pi)^d}\int_{\mathbb{R}^d\times S^{d-1}} f(s)g(\xi)\,dsd\xi.
\end{equation}
In \cite{Dao1} it was explained how the above formula could be proved very easily as a consequence of its symmetry and continuity properties. Specifically,
\eqref{connes_trace_formula_commutative_example} was proved in \cite[Theorem 5.1]{Dao1} by demonstrating that if $\varphi$ is a continuous trace then the bilinear functional
\[
    (f,g)\mapsto \varphi(M_fg(-i\nabla(-\Delta)^{-\frac12})(1-\Delta)^{-\frac{d}{2}})
\]
is necessarily a constant multiple of the Lebesgue integral of $f\otimes g$ on $\mathbb{R}^d\times S^{d-1}$ as a consequence of its invariance under isometries.

We seek a similar result with an analogous proof, but now for the Heisenberg group $\Heis^n.$ In place of the operator $M_fg(-i\nabla (-\Delta)^{-1/2})(1-\Delta)^{-\frac{d}{2}}$, we consider operators of the form
\[
    A(x,f)=M_f\pi_{{\rm red}}(x)|T|^{-\frac12} \in \mathcal{B}(L_2(\Heis^n)).
\]
Here, $x \in \mathcal{B}(L_2(\mathbb{R}^n))\overline{\otimes} \mathbb{C}^2$ and $f \in L_{2n+2}(\Heis^n).$
With this notation, Lemma \ref{homogeneous cwikel lemma} states that there is a constant $C_n>0$ such that
\[
    \|A(x,f)\|_{\mathcal{L}_{2n+2,\infty}}\leq C_n\|f\|_{L_{2n+2}(\HH^n)}\|x\|_{L_{2n+2}(\mathcal{B}(L_2(\mathbb{R}^n))\bar{\otimes}\mathbb{C}^2,{\rm Tr}\otimes\Sigma)}.
\]
In the setting of the Heisenberg group, the algebra of homogeneous Fourier multipliers
\[
    \{ g(-i\nabla(-\Delta)^{-\frac12})\;:\; g \in L_{\infty}(S^{d-1})\} \subset \mathcal{B}(L_2(\mathbb{R}^d))
\]
is replaced by the \emph{noncommutative} algebra given by the image of $\pi_{\red}$ in $\mathcal{B}(L_2(\Heis^n)).$ Instead of integrating
over the ($d-1$)-sphere $S^{d-1},$ we take a trace on the algebra of ``Fourier symbols" $\mathcal{B}(L_2(\mathbb{R}^n))\overline{\otimes} \mathbb{C}^2.$

As a replacement for the formula \eqref{connes_trace_formula_commutative_example}, we have the following result.
\begin{thm}\label{connes product theorem} Let $\{f_k\}_{k=1}^{2n+2}\subset L_{2n+2}(\mathbb{H}^n)$ and $\{x_k\}_{k=1}^{2n+2}\subset L_{2n+2}(\mathcal{B}(L_2(\mathbb{R}^n))\overline{\otimes}\mathbb{C}^2,{\rm Tr}\otimes\Sigma).$ Then there exists a constant $c_n>0$ depending only on $n$ such that for every normalised continuous trace $\varphi$ on $\mathcal{L}_{1,\infty}$ we have
\begin{align*}
\varphi(A(x_1,f_1)^{\ast}&A(x_2,f_2)A(x_3,f_3)^{\ast}A(x_4,f_4)\cdots A(x_{2n+1},f_{2n+1})^{\ast}A(x_{2n+2},f_{2n+2}))\\
                         &=c_n\int_{\mathbb{H}^n}\bar{f}_1f_2\bar{f}_3f_4\cdots\bar{f}_{2n+1}f_{2n+2}\cdot \big({\rm Tr}\otimes\Sigma\big)(x_1^{\ast}x_2x_3^{\ast}x_4\cdots x_{2n+1}^{\ast}x_{2n+2}).
\end{align*}
\end{thm}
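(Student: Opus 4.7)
The plan mirrors the proof of the Euclidean analogue in \cite{Dao1}: reduce the identity by density to elementary symbols, use the sliding Lemma \ref{t and units commutator lemma} to put the operator product into a ``normal form'' modulo the separable part of $\mathcal{L}_{1,\infty}$ (on which every continuous trace vanishes), and then evaluate the resulting trace in closed form.

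First I would observe that both sides are $(4n+4)$-multilinear in $(f_1,\ldots,f_{2n+2};x_1,\ldots,x_{2n+2})$. By Lemma \ref{homogeneous cwikel lemma} each $A(x_k,f_k)\in\mathcal{L}_{2n+2,\infty}$ with norm bounded by $\|f_k\|_{L_{2n+2}(\HH^n)}\|x_k\|_{L_{2n+2}}$, so H\"older in weak Schatten ideals places the product in $\mathcal{L}_{1,\infty}$; continuity of $\varphi$ then gives joint continuity of the left-hand side, and the right-hand side is obviously continuous. Hence it suffices to prove the identity for $f_k\in C_c^\infty(\HH^n)$ and $x_k = E_{\alpha_k,\beta_k}\otimes \xi_k$ with $\xi_k\in\{\mathbf{1},z\}$, a dense family by Lemma \ref{homogeneous cast algebra}.

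Second, using the fact that $|T|^{-1/2}$ commutes with every element of $\pi_{\red}(\mathcal{B}(L_2(\mathbb{R}^n))\overline{\otimes}\mathbb{C}^2)$ (being affiliated with the central $L_\infty(\mathbb{R})$-factor under $\pi$), I apply Lemma \ref{t and units commutator lemma} and its adjoint form to replace each factor $A(x_k,f_k)$ by
$$\pi_{\red}(E_{\alpha_k,0}\otimes\mathbf{1})\cdot M_{f_k}|T|^{-1/2}\pi_{\red}(E_{0,0}\otimes\xi_k)\cdot\pi_{\red}(E_{0,\beta_k}\otimes\mathbf{1})$$
modulo $(\mathcal{L}_{2n+2,\infty})_0$. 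Expanding the product of $2n+2$ such factors, every cross-term contains at least one $(\mathcal{L}_{2n+2,\infty})_0$-factor along with $(2n+1)$ factors in $\mathcal{L}_{2n+2,\infty}$, so H\"older places each cross-term in $(\mathcal{L}_{1,\infty})_0$ and $\varphi$ annihilates it. It remains to compute $\varphi$ on the resulting ``main term''. Adjacent factors $\pi_{\red}(E_{0,\beta_k}\otimes\mathbf{1})\pi_{\red}(E_{\alpha_{k+1},0}\otimes\mathbf{1})$ telescope via $E_{\alpha,\beta}E_{\gamma,\delta}=\delta_{\beta,\gamma}E_{\alpha,\delta}$, and the corresponding componentwise product of the $\xi_k$'s arising from $\pi_{\red}(E_{0,0}\otimes\xi_k)\pi_{\red}(E_{0,0}\otimes\xi_{k+1})=\pi_{\red}(E_{0,0}\otimes\xi_k\xi_{k+1})$ exactly reproduces the sum $\Sigma$. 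Multilinearity then recombines these matchings into $(\mathrm{Tr}\otimes\Sigma)(x_1^*x_2\cdots x_{2n+2})$ as a global coefficient.

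Third, after using Corollary \ref{pi_computations} to rewrite $|T|^{-1/2}\pi_{\red}(E_{0,0}\otimes\mathbf{1})$ as a scalar multiple of $(-\Delta_{\HH})^{-1/2}$ composed with a ground-state projection, what remains in the trace is a multiple of $\varphi(M_g (-\Delta_{\HH})^{-(n+1)})$, where $g=\bar f_1 f_2\bar f_3 f_4\cdots\bar f_{2n+1}f_{2n+2}$. This last trace is evaluated by a Connes-type integration identity on $\HH^n$: its value is $c_n\int_{\HH^n} g\,d\nu$ for an explicit positive constant $c_n$, which follows from Corollary \ref{tau of gDelta}, Lemma \ref{L_p_cwikel}, and the standard argument that any dilation-invariant, translation-invariant, continuous extended-limit-type functional on $L_1(\HH^n)$ is a scalar multiple of the Haar integral (as in \cite{LMSZ}). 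The main technical obstacle is the combinatorial bookkeeping in the telescoping step: one must verify that after sliding, the ground-state insertions $\pi_{\red}(E_{0,0}\otimes\xi_k)$ align to produce exactly $(\mathrm{Tr}\otimes\Sigma)(x_1^*x_2\cdots x_{2n+2})$ rather than a shifted or twisted variant, and track at each stage that all discarded operator-norm errors genuinely belong to $(\mathcal{L}_{1,\infty})_0$.
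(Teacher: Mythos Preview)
Your first two steps (density reduction, and sliding via Lemma~\ref{t and units commutator lemma} followed by matrix-unit telescoping to produce the Kronecker deltas $\delta_{\alpha_1,\alpha_2}\delta_{\beta_2,\beta_3}\cdots\delta_{\beta_{2n+2},\beta_1}$) are correct and coincide with the paper's argument. The gaps are all in your third step.

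First, the $\pi_{\red}(E_{0,0}\otimes\xi_k)$ factors are \emph{not} all adjacent after telescoping: writing $A(E_{0,0}\otimes\xi_k,f_k)=n^{1/2}M_{f_k}(-\Delta_{\HH})^{-1/2}\pi_{\red}(E_{0,0}\otimes\xi_k)$, the pairs $(\xi_{2j},\bar\xi_{2j+1})$ become adjacent but each $\bar\xi_{2j-1}$ remains separated from $\xi_{2j}$ by a block $(-\Delta_{\HH})^{-1/2}M_{\bar f_{2j-1}f_{2j}}(-\Delta_{\HH})^{-1/2}$. Collecting all the $\xi$'s together requires commuting $\pi_{\red}(E_{0,0}\otimes\xi)$ past such blocks modulo $(\mathcal{L}_{2n+2,\infty})_0$, which is the content of Lemma~\ref{commutator separable part lemma}; this is not mere bookkeeping. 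Second, even after collecting, what sits inside the trace is $\pi_{\red}(E_{0,0}\otimes\xi)$ with $\xi=\bar\xi_1\xi_2\cdots\bar\xi_{2n+1}\xi_{2n+2}\in\mathbb{C}^2$, an operator, not the scalar $\Sigma(\xi)$. Removing the ground-state projection is a genuine step (Lemma~\ref{killing e00 lemma}): one commutes $\pi_{\red}(E_{0,\alpha})$ across the product using Lemma~\ref{commutator separable part lemma}, picks up the weight $(n/(n+2|\alpha|))^{n+1}$, and sums over $\alpha\in\mathbb{Z}_+^n$ to pass from $E_{0,0}$ to a constant times the identity. Your sketch omits this entirely.

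Third, what finally remains is $\varphi\big(\pi_{\red}(1\otimes\xi)\prod_{k=1}^{n+1}(-\Delta_{\HH})^{-1/2}M_{\bar f_{2k-1}f_{2k}}(-\Delta_{\HH})^{-1/2}\big)$, not $\varphi(M_g(-\Delta_{\HH})^{-(n+1)})$. The latter expression is not even obviously well-defined, and in any case the interleaved structure cannot be collapsed without further argument. The paper evaluates this trace in Lemma~\ref{connes product lemma} by showing the multilinear functional satisfies a module property (Lemma~\ref{first multilinear lemma}) and translation invariance (Lemma~\ref{second multilinear lemma}), hence equals $c\int g$; dilation invariance is not used. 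Crucially, to get $\Sigma(\xi)$ rather than just the $\mathbf{1}$-component, one must show separately that the $z$-component contributes zero, which the paper does via the reflection $V\xi(g)=\xi(-g)$ satisfying $V\pi_{\red}(1\otimes z)=-\pi_{\red}(1\otimes z)V$. Your claim that ``the componentwise product exactly reproduces the sum $\Sigma$'' conflates the vector $\xi\in\mathbb{C}^2$ with the scalar $\Sigma(\xi)$ and misses this parity argument.
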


As a Corollary of Theorem \ref{connes product theorem}, we derive the following trace formula.
\begin{cor}\label{connes product corollary} Let $\{f_k\}_{k=0}^m\subset L_{2n+2}(\mathbb{H}^n)$ and $\{x_k\}_{k=0}^m\subset L_{2n+2}(\mathcal{B}(L_2(\mathbb{R}^{n}))\overline{\otimes}\mathbb{C}^2,{\rm Tr}\otimes\Sigma).$ Then there exists a constant $c_n>0$ depending only on $n$ such that for every normalised continuous trace $\varphi$ on $\mathcal{L}_{1,\infty}$ we have
$$\varphi\bigg(\bigg|\sum_{k=0}^mA(x_k,f_k)\bigg|^{2n+2}\bigg)=c_n\bigg\|\sum_{k=0}^mf_k\otimes x_k\bigg\|_{L_{2n+2}(L_{\infty}(\mathbb{H}^n)\bar{\otimes}\mathcal{B}(L_2(\mathbb{R}^n))\overline{\otimes}\mathbb{C}^2)}^{2n+2}.$$
\end{cor}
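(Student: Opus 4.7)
The plan is to reduce the identity to Theorem \ref{connes product theorem} via a purely algebraic expansion. Set $S:=\sum_{k=0}^m A(x_k,f_k)$. By Lemma \ref{homogeneous cwikel lemma}, each summand belongs to $\mathcal{L}_{2n+2,\infty}$, so $S\in\mathcal{L}_{2n+2,\infty}$ and $|S|^{2n+2}=(S^*S)^{n+1}\in\mathcal{L}_{1,\infty}$ by H\"older's inequality for weak Schatten classes; in particular, $\varphi(|S|^{2n+2})$ is a well-defined complex number.

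First I would expand $(S^*S)^{n+1}$ by multilinearity and apply $\varphi$ to each summand. Writing $S^*=\sum_k A(x_k,f_k)^*$ and using the linearity of $\varphi$ together with Theorem \ref{connes product theorem}, one obtains
\[
\varphi\bigl(|S|^{2n+2}\bigr)=c_n\sum_{k_1,\ldots,k_{2n+2}=0}^{m}\int_{\HH^n}\bar{f}_{k_1}f_{k_2}\cdots\bar{f}_{k_{2n+1}}f_{k_{2n+2}}\cdot(\mathrm{Tr}\otimes\Sigma)\bigl(x_{k_1}^*x_{k_2}\cdots x_{k_{2n+1}}^*x_{k_{2n+2}}\bigr).
\]

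Second, I would identify this sum with the $L_{2n+2}$-norm of the element $y:=\sum_{k=0}^m f_k\otimes x_k$ in the tensor product von Neumann algebra $L_\infty(\HH^n)\overline{\otimes}\mathcal{B}(L_2(\mathbb{R}^n))\overline{\otimes}\mathbb{C}^2$, equipped with the tensor trace $\mathrm{tr}:=\int_{\HH^n}\otimes\mathrm{Tr}\otimes\Sigma$. A direct expansion in this tensor algebra yields
\[
(y^*y)^{n+1}=\sum_{k_1,\ldots,k_{2n+2}=0}^{m}\bar{f}_{k_1}f_{k_2}\cdots\bar{f}_{k_{2n+1}}f_{k_{2n+2}}\otimes x_{k_1}^*x_{k_2}\cdots x_{k_{2n+1}}^*x_{k_{2n+2}},
\]
and applying $\mathrm{tr}$ termwise reproduces exactly the preceding sum. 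Hence $\|y\|_{L_{2n+2}}^{2n+2}=\mathrm{tr}((y^*y)^{n+1})=c_n^{-1}\varphi(|S|^{2n+2})$, which is the claimed identity.

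Since Theorem \ref{connes product theorem} supplies all the nontrivial analytic content, there is no real obstacle here; the corollary is pure algebraic bookkeeping on top of the theorem. The only items worth checking are that every product in the expansion of $(S^*S)^{n+1}$ lies in $\mathcal{L}_{1,\infty}$ so that $\varphi$ may be applied termwise (which follows from Lemma \ref{homogeneous cwikel lemma} combined with H\"older on the operator side), and that every product in the expansion of $(y^*y)^{n+1}$ is $\mathrm{tr}$-integrable (which follows from $f_k\otimes x_k\in L_{2n+2}$ and H\"older on the tensor-product side). Both verifications are routine.
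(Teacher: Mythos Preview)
Your argument is correct and follows essentially the same route as the paper: expand $(S^*S)^{n+1}$ multilinearly, apply Theorem~\ref{connes product theorem} to each summand, and recognize the resulting sum as the expansion of $(y^*y)^{n+1}$ in the tensor algebra with trace $\int_{\HH^n}\otimes\mathrm{Tr}\otimes\Sigma$. The paper phrases the expansion as a sum over functions $F:\{1,\ldots,2n+2\}\to\{0,\ldots,m\}$ rather than over tuples $(k_1,\ldots,k_{2n+2})$, but this is purely notational.
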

We will complete the proof of Corollary \ref{connes product corollary} after establishing a few important lemmas. Note that in order for Corollary \ref{connes product corollary} to follow from Theorem \ref{connes product theorem}, we will use in an essential way that $2n+2$, the homogeneous dimension on $\Heis^n,$ is even. This is a feature of the Heisenberg group which is crucial to the approach of this paper.

Precisely as with the proof of \eqref{connes_trace_formula_commutative_example} in \cite{Dao1}, the proof of Theorem \ref{connes product theorem} will be based on a symmetry and continuity
argument. The first step is the following lemma.
\begin{lem}\label{first multilinear lemma}
Let $l:L_{n+1}(\mathbb{H}^n)^{\times (n+1)}\to \mathbb{C}$ be a bounded multilinear functional satisfying
\begin{align}\label{condi}
l(ff_1,f_2,\cdots,f_{n+1})=l(f_1,\cdots,f_{k-1},ff_k,f_{k+1},\cdots,f_{n+1}),\quad 1\leq k\leq n+1,
\end{align}
whenever $f,f_1,\cdots,f_{n+1}\in C^{\infty}_c(\mathbb{H}^n),$ then there exists $h\in L_{\infty}(\mathbb{H}^n)$ such that
$$l(f_1,f_2,\cdots,f_{n+1})=\int_{\mathbb{H}^n}hf_1\cdots f_{n+1},\quad f_1,\cdots,f_{n+1}\in L_{n+1}(\mathbb{H}^n).$$
\end{lem}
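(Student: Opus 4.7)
The strategy is to show that $l$ factors through pointwise multiplication $(f_1,\ldots,f_{n+1})\mapsto f_1f_2\cdots f_{n+1}$ into a bounded linear functional on $L_1(\HH^n)$, whose Riesz representative supplies the required $h$.

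I would first define $\Lambda: C_c^\infty(\HH^n)\to\mathbb{C}$ by $\Lambda(g):=l(\phi,\phi,\ldots,\phi,g)$, where $\phi\in C_c^\infty(\HH^n)$ satisfies $\phi\equiv 1$ on $\supp(g)$, and show that $\Lambda$ is a well-defined linear functional. The shifting hypothesis \eqref{condi} is an equality, so it may be read in reverse: a common factor sitting in slot $k\ge 2$ can be pulled back into slot $1$. Given two admissible bumps $\phi,\psi$, pick $\chi\in C_c^\infty$ with $\chi\equiv 1$ on $\supp(g)$ and $\supp(\chi)\subset\{\phi=1\}\cap\{\psi=1\}$, so that $\chi g=g$ and $\chi\phi=\chi\psi=\chi$. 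Writing $g=\chi g$ in the last slot and pulling the $\chi$ back into slot $1$ gives
\[
    l(\phi,\ldots,\phi,g)=l(\phi,\ldots,\phi,\chi g)=l(\chi\phi,\phi,\ldots,\phi,g)=l(\chi,\phi,\ldots,\phi,g).
\]
Repeating the move $n-1$ further times (rewrite $g=\chi g$, pull $\chi$ back to slot $1$, then shift it forward to absorb the next $\phi$-slot using $\chi\phi=\chi$) converts $l(\phi,\ldots,\phi,g)$ into $l(\chi,\ldots,\chi,g)$, and by symmetry the same value arises from $\psi$. Linearity then follows from multilinearity of $l$ after choosing a common bump adapted to $\supp(g_1)\cup\supp(g_2)$.

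Next I would establish the factorization identity $l(f_1,\ldots,f_{n+1})=\Lambda(f_1\cdots f_{n+1})$ for $f_i\in C_c^\infty$ by iteratively moving the product into a single slot. The base step applies \eqref{condi} with $f=\phi\equiv 1$ on $\supp(f_1)$ and $k=2$: writing $f_1=\phi f_1$ and shifting $f_1$ from slot $1$ to slot $2$ gives $l(f_1,f_2,\ldots)=l(\phi,f_1f_2,f_3,\ldots)$. Comparing \eqref{condi} at $k=j$ and $k=j+1$ yields the intermediate shift $l(\ldots,ag_j,g_{j+1},\ldots)=l(\ldots,g_j,ag_{j+1},\ldots)$ for $j\ge 2$; choosing $a=g_j$ and $g_j=g_j\phi$ with $\phi\equiv 1$ on $\supp(g_j)$ produces the analogue of the base step between slots $j$ and $j+1$. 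Iterating $n$ times migrates the entire product to the final slot, yielding $\Lambda(f_1\cdots f_{n+1})$. The same iteration is valid for factors $u_i\in L_\infty\cap L_{n+1}$ of compact support, since then every intermediate product is bounded with compact support and therefore lies in $L_{n+1}$, keeping the shifting moves within the domain of $l$.

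For the $L_1$ bound I would plug in the H\"older-optimal decomposition $g=u_1u_2\cdots u_{n+1}$ with $u_1:=\sgn(g)|g|^{1/(n+1)}$ and $u_i:=|g|^{1/(n+1)}$ for $i\ge 2$. For $g\in C_c^\infty$ each $u_i$ is bounded with compact support and $\|u_i\|_{L_{n+1}(\HH^n)}=\|g\|_{L_1(\HH^n)}^{1/(n+1)}$, so the continuity of $l$ gives $|\Lambda(g)|=|l(u_1,\ldots,u_{n+1})|\le C\|g\|_{L_1(\HH^n)}$. Consequently $\Lambda$ extends uniquely to a bounded linear functional on $L_1(\HH^n)$, and the Riesz representation theorem produces $h\in L_\infty(\HH^n)$ with $\Lambda(g)=\int_{\HH^n}hg$. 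The claimed formula for $l$ then follows by combining the factorization identity with density of $C_c^\infty$ in $L_{n+1}$. The principal difficulty is the independence-from-$\phi$ step in the definition of $\Lambda$, which requires the bump-juggling through the shifting hypothesis; the remaining parts are routine H\"older and density arguments.
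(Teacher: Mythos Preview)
Your argument is correct and complete in spirit, but it takes a genuinely different route from the paper's proof.

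The paper proceeds measure-theoretically: it first shows (via approximation of characteristic functions by smooth ones and the shifting hypothesis) that $l(\chi_A,\chi_B)=0$ for disjoint Borel sets $A,B$ of finite measure, then defines a set function $\nu(A):=l(\chi_A,\ldots,\chi_A)$, checks that $\nu$ is countably additive and absolutely continuous with $|\nu(A)|\le m(A)$, and produces $h\in L_\infty$ via Radon--Nikodym. The identity is then verified on simple functions and extended by continuity. Your approach instead builds the linear functional $\Lambda$ on $C_c^\infty$ directly via bump functions, establishes the factorisation $l(f_1,\ldots,f_{n+1})=\Lambda(f_1\cdots f_{n+1})$ by iterated slot-shifting, and obtains the $L_1$ bound from the H\"older-optimal splitting $g=\prod |g|^{1/(n+1)}$; then $(L_1)^*=L_\infty$ supplies $h$. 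Your route is shorter and avoids the detour through set functions; the paper's route makes the role of disjointness (hence additivity) more transparent. Both ultimately rely on the same density/continuity mechanism to pass beyond $C_c^\infty$.

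One point deserves a sentence of justification: when you say the iteration ``is valid for factors $u_i\in L_\infty\cap L_{n+1}$ of compact support'', note that the hypothesis \eqref{condi} is stated only for $C_c^\infty$ data, so merely keeping the arguments in $L_{n+1}$ is not enough. You need the (routine) observation that \eqref{condi} extends by density: for fixed $f_1,\ldots,f_{n+1}\in L_\infty\cap L_{n+1}$ of compact support, both sides of \eqref{condi} are continuous in $f\in L_{n+1}$ (multiplication by a fixed bounded function is continuous on $L_{n+1}$), and then one extends in the remaining slots similarly. The paper faces and handles the same issue when passing to characteristic functions.
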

\begin{proof} Assume, for simplicity of notations, that $n=1.$ The argument for an arbitrary $n$ follows {\it mutatis mutandi}. For simplicity of notations, $\|l\|_{L_2\times L_2\to\mathbb{C}}=1.$
	
Let $A,B\subset\mathbb{H}^1$ be disjoint Borel sets with finite Lebesgue measure.
Let $\{f^m\}_{m\geq1},\{g^m\}_{m\geq1}\subset C^{\infty}_c(\mathbb{R})$ be sequences bounded in the uniform norm such that $f^m\to\chi_A,$ $g^m\to\chi_B$ in $L_2(\mathbb{H}^1).$ We have that $(f^m)^2\to\chi_A$ in $L_2(\mathbb{H}^1).$ Thus,
$$l(\chi_A,\chi_B)=\lim_{m\to\infty}l((f^m)^2,g^m)=\lim_{m\to\infty}l(f^m,f^mg^m).$$
By continuity, we have
$$|l(f^m,f^mg^m)|\leq\|f^m\|_2\|f^mg^m\|_2\leq\|f^m\|_2\|f^m-\chi_A\|_2\|g^m\|_{\infty}+\|f^m\|_2\|g^m-\chi_B\|_2.$$

Set
$$\nu(A)=l(\chi_A,\chi_A).$$
We claim that $\nu$ is a measure. Let $A,B\subset\mathbb{H}^1$ be disjoint Borel sets. By the preceding paragraph, we have
$$\nu(A\cup B)=\nu(A)+\nu(B)+l(\chi_A,\chi_B)+l(\chi_B,\chi_A)=\nu(A)+\nu(B).$$
Since $l$ is bounded, we have
$$|\nu(A)|\leq\|\chi_A\|_2\|\chi_A\|_2=m(A)$$
for every set of finite measure, where $m$ is the Lebesgue measure. This means $\nu$ is countably additive and absolutely continuous with respect to the Lebesgue measure. By the Radon-Nikodym theorem there exists $h \in L_{1}(\mathbb{H}^1)$ such that $d\nu=hdm.$ Since $\nu(A)\leq m(A),$ we in fact have $h \in L_{\infty}(\HH^1).$

Let $f_1,f_2\in L_2(\mathbb{H}^1)$ be supported on a set of finite measure. Suppose $f_1$ and $f_2$ are taking finitely many values. We write
$$f_1=\sum_ka_k\chi_{A_k},\quad f_2=\sum_kb_k\chi_{A_k},$$
where sets $\{A_k\}$ are pairwise disjoint. We have
$$l(f_1,f_2)=\sum_{k,l}a_kb_l l(\chi_{A_k},\chi_{A_l})=\sum_ka_kb_k l(\chi_{A_k},\chi_{A_k})=\sum_k a_kb_k\nu(A_k)=\int_{\mathbb{H}^1}hf_1f_2.$$
By continuity, the same equality holds for arbitrary $f_1,f_2\in L_2(\mathbb{H}^1).$
\end{proof}

\begin{lem}\label{second multilinear lemma} Let $l:L_{n+1}(\mathbb{H}^n)^{\times (n+1)}\to \mathbb{C}$ be a bounded multilinear functional satisfying the condition \eqref{condi} and
$$l(\tau^g f_1,\cdots,\tau^gf_{n+1})=l(f_1,\cdots,f_{n+1}),\ {\rm for}\ {\rm any}\ g\in\HH^n,$$
whenever $f_1,\cdots,f_{n+1}\in L_{n+1}(\mathbb{H}^n),$ where $\tau^g$ is the left translation operator defined by $\tau^g f(x):=f(gx)$, then there exists $c_l \in \mathbb{C}$ such that
$$l(f_1,\cdots,f_{n+1})=c_l\int_{\mathbb{H}^n}f_1\cdots f_{n+1},\quad f_1,\cdots,f_{n+1}\in L_{n+1}(\mathbb{H}^n).$$
\end{lem}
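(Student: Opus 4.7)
The plan is to leverage Lemma \ref{first multilinear lemma} (which already handles the cyclic condition \eqref{condi}) to obtain a representing $L_\infty$ density, and then use left-translation invariance to pin this density down to a constant.

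First, I would apply Lemma \ref{first multilinear lemma} to produce $h\in L_\infty(\mathbb{H}^n)$ such that
\[
l(f_1,\ldots,f_{n+1}) = \int_{\mathbb{H}^n} h\, f_1\cdots f_{n+1},\qquad f_1,\ldots,f_{n+1}\in L_{n+1}(\mathbb{H}^n).
\]
The whole remaining task is to prove that $h$ is essentially constant. For this, I would unpack the translation-invariance hypothesis: using left-invariance of Haar measure on $\mathbb{H}^n$ together with the substitution $y=gx$,
\[
l(\tau^gf_1,\ldots,\tau^gf_{n+1}) = \int_{\mathbb{H}^n} h(x)\prod_{k=1}^{n+1} f_k(gx)\,dx = \int_{\mathbb{H}^n} h(g^{-1}y)\prod_{k=1}^{n+1} f_k(y)\,dy,
\]
and since this must coincide with $\int h\prod f_k$, I obtain the identity
\[
\int_{\mathbb{H}^n}\bigl(h(g^{-1}y)-h(y)\bigr)\prod_{k=1}^{n+1} f_k(y)\,dy = 0
\]
for every $g\in\mathbb{H}^n$ and every $f_1,\ldots,f_{n+1}\in L_{n+1}(\mathbb{H}^n)$.

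Next I would feed arbitrary test functions into this identity. Given $\phi\in C_c^\infty(\mathbb{H}^n)$, pick $\psi\in C_c^\infty(\mathbb{H}^n)$ with $\psi\equiv 1$ on $\operatorname{supp}\phi$; then $\psi\in L_{n+1}(\mathbb{H}^n)$, and setting $f_1=\phi$, $f_2=\cdots=f_{n+1}=\psi$ gives $\prod_k f_k = \phi$. Hence
\[
\int_{\mathbb{H}^n}\bigl(h(g^{-1}y)-h(y)\bigr)\phi(y)\,dy = 0,\qquad \phi\in C_c^\infty(\mathbb{H}^n),
\]
which shows $h(g^{-1}\,\cdot\,)=h(\,\cdot\,)$ in the distributional sense, and therefore almost everywhere, for each $g\in\mathbb{H}^n$.

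Finally I would invoke the standard fact that an essentially bounded left-invariant function on a locally compact group is almost everywhere constant. Concretely, fixing any $\varphi\in C_c^\infty(\mathbb{H}^n)$, the (right) convolution $h\ast\varphi$ is continuous, and the invariance $h\circ L_{g^{-1}}=h$ propagates through the convolution to give $(h\ast\varphi)(g^{-1}x)=(h\ast\varphi)(x)$ for all $g,x$; thus $h\ast\varphi$ is a continuous left-invariant function on $\mathbb{H}^n$, hence constant. Taking $\varphi$ to run through an approximate identity and passing to the limit (in $L^1_{\rm loc}$) forces $h$ itself to be constant almost everywhere, say $h\equiv c_l$. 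Substituting back yields
\[
l(f_1,\ldots,f_{n+1}) = c_l\int_{\mathbb{H}^n} f_1\cdots f_{n+1},
\]
which is the claim. The only mildly delicate point is the density step — making sure that products $\prod f_k$ of $L_{n+1}$ factors exhaust a test-function class rich enough to separate $L^\infty$ elements — but this is handled by the cutoff trick above, so no serious obstacle arises.
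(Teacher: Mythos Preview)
Your proposal is correct and follows essentially the same route as the paper: apply Lemma~\ref{first multilinear lemma} to get an $L_\infty$ density $h$, then use translation invariance together with the cutoff trick (the paper takes $f_2=\cdots=f_{n+1}=\chi_K$, you take a smooth $\psi$) to show $h$ is left-invariant, hence constant. The paper is terser on the final ``left-invariant $\Rightarrow$ constant'' step, which you spell out via convolution with an approximate identity, but this is a cosmetic difference rather than a distinct approach.
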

\begin{proof}
By Lemma \ref{first multilinear lemma}, there exists $h \in L_{\infty}(\HH^n)$ such that for any $f_1,\cdots,f_{n+1}\in L_{n+1}(\mathbb{H}^n)$,
\[
    l(f_1,f_2,\cdots,f_{n+1})=\int_{\mathbb{H}^n}hf_1\cdots f_{n+1}.
\]
By assumption, for every $g \in \HH^n$ we have
$$\int_{\mathbb{H}^n}h\cdot (\tau^g f_1)\cdots (\tau^g f_{n+1})=\int_{\HH^n} hf_1\cdots f_{n+1}$$
for all $f_1,\cdots,f_{n+1}\in L_{n+1}(\mathbb{H}^n).$ If $f_1$ is supported in a compact set $K\subset \mathbb{H}^n$ and $f_2=f_3=\cdots=\chi_K,$ in particular we have
\[
\int_{\mathbb{H}^n}h\cdot (\tau^g f_1)=\int hf_1,\quad g \in \HH^n.
\]
Since the Lebesgue measure is invariant under $H$, it follows that
\[
    \int_{\HH^n} (\tau^{g^{-1}}h-h)f_1 = 0,\quad g\in \HH^n
\]
for all $f_1\in L_{2n+2}(\HH^n)$ supported in an arbitrary compact set $K$. It follows that $h\circ \tau^{g^{-1}}=h$ pointwise almost everywhere. That is, $h$ is almost everywhere
equal to a constant.
\end{proof}

\begin{lem}\label{commutator separable part lemma}    For every $f \in C^\infty_c(\Heis^n)$, $\xi \in \mathbb{C}^2$ and $\alpha \in \mathbb{Z}_+^n,$ we have
$$\pi_{{\rm red}}(E_{0,\alpha}\otimes \xi)\cdot M_f(-\Delta_{\HH})^{-\frac12}  - \Big(\frac{n}{n+2|\alpha|}\Big)^{\frac12} M_f(-\Delta_{\HH})^{-\frac12}\cdot  \pi_{{\rm red}}(E_{0,\alpha}\otimes \xi)\in(\mathcal{L}_{2n+2,\infty})_0$$
and
$$\pi_{{\rm red}}(E_{0,\alpha}\otimes \xi)\cdot (-\Delta_{\HH})^{-\frac12}M_f  - \Big(\frac{n}{n+2|\alpha|}\Big)^{\frac12} (-\Delta_{\HH})^{-\frac12}M_f\cdot  \pi_{{\rm red}}(E_{0,\alpha}\otimes \xi)\in(\mathcal{L}_{2n+2,\infty})_0.$$		
\end{lem}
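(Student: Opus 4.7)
The plan is to establish an exact algebraic commutation identity and then bootstrap it via a factorisation of $f$, in the spirit of the proof of Lemma \ref{t and units commutator lemma}.

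The starting point is the relation $\pi_{\red}(H^{-1}\otimes \mathbf{1}) = |T|(-\Delta_{\HH})^{-1}$ from \eqref{T_in_terms_of_pired}. Taking the square root and exploiting that $|T|$ commutes with every element in the image of $\pi_{\red}$, one has $(-\Delta_{\HH})^{-\frac12} = |T|^{-\frac12}\pi_{\red}(H^{-\frac12}\otimes \mathbf{1})$. Combining this with $Hh_{\alpha}=(n+2|\alpha|)h_{\alpha}$ from \eqref{landau_formula} and the definition $E_{0,\alpha}u = h_0\langle u,h_{\alpha}\rangle$ yields the eigenvalue relations $H^{-\frac12}E_{0,\alpha} = n^{-\frac12}E_{0,\alpha}$ and $E_{0,\alpha}H^{-\frac12} = (n+2|\alpha|)^{-\frac12}E_{0,\alpha}$, hence the exact commutation identity
\[
\pi_{\red}(E_{0,\alpha}\otimes\xi)\cdot(-\Delta_{\HH})^{-\frac12} = \Big(\tfrac{n}{n+2|\alpha|}\Big)^{\frac12}(-\Delta_{\HH})^{-\frac12}\cdot\pi_{\red}(E_{0,\alpha}\otimes\xi).
\]
Multiplying by $M_f$ on the left reduces the first displayed statement of the lemma to showing $[\pi_{\red}(E_{0,\alpha}\otimes\xi),M_f]\cdot(-\Delta_{\HH})^{-\frac12}\in(\mathcal{L}_{2n+2,\infty})_0$, and applying the same identity on the right of $M_f$ reduces the second statement to $(-\Delta_{\HH})^{-\frac12}\cdot[\pi_{\red}(E_{0,\alpha}\otimes\xi),M_f]\in(\mathcal{L}_{2n+2,\infty})_0$.

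To handle the first of these I would factor $f = f_1 f_2$ with $f_1,f_2 \in C_c^{\infty}(\Heis^n)$ and apply Leibniz's rule:
\[
[\pi_{\red}(E_{0,\alpha}\otimes\xi),M_f](-\Delta_{\HH})^{-\frac12} = [\pi_{\red}(E_{0,\alpha}\otimes\xi),M_{f_1}]M_{f_2}(-\Delta_{\HH})^{-\frac12} + M_{f_1}[\pi_{\red}(E_{0,\alpha}\otimes\xi),M_{f_2}](-\Delta_{\HH})^{-\frac12}.
\]
The first summand is a product of a compact operator (Lemma \ref{compact commutator lemma}) with an element of $\mathcal{L}_{2n+2,\infty}$ (Corollary \ref{specific_cwikel}), hence lies in $(\mathcal{L}_{2n+2,\infty})_0$. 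For the second summand I expand the inner commutator as a difference of two pieces; in each piece I permute $M_{f_2}$ past $(-\Delta_{\HH})^{-\frac12}$ using Lemma \ref{cwikel-like commutator lemma} (error in $(\mathcal{L}_{2n+2,\infty})_0$), apply the exact commutation identity to push $(-\Delta_{\HH})^{-\frac12}$ through $\pi_{\red}(E_{0,\alpha}\otimes\xi)$, and permute $\pi_{\red}(E_{0,\alpha}\otimes\xi)$ past $M_{f_2}$ using Lemma \ref{compact commutator lemma} (error: compact times $\mathcal{L}_{2n+2,\infty}$). After these rearrangements the two leading terms both become $(n/(n+2|\alpha|))^{\frac12} M_f(-\Delta_{\HH})^{-\frac12}\pi_{\red}(E_{0,\alpha}\otimes\xi)$ and cancel exactly, while every error term is of the form compact$\,\cdot\,\mathcal{L}_{2n+2,\infty}$ or bounded$\,\cdot\,(\mathcal{L}_{2n+2,\infty})_0$, so all errors lie in $(\mathcal{L}_{2n+2,\infty})_0$.

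The second displayed statement follows by the mirror version of the argument, applying the factorisation on the left of $[\pi_{\red}(E_{0,\alpha}\otimes\xi),M_f]$ rather than on the right, and yielding in an identical fashion that the difference belongs to $(\mathcal{L}_{2n+2,\infty})_0$. The main obstacle is that $(-\Delta_{\HH})^{-\frac12}$ itself lies in no weak Schatten ideal, so the bare composition ``compact times $(-\Delta_{\HH})^{-\frac12}$'' need not belong to $(\mathcal{L}_{2n+2,\infty})_0$. The factorisation $f = f_1 f_2$ is what rescues the argument: it allows one factor of $f$ to be attached to $(-\Delta_{\HH})^{-\frac12}$, placing that subproduct into $\mathcal{L}_{2n+2,\infty}$ by the Cwikel-type estimate of Corollary \ref{specific_cwikel}, while the other factor supplies the compact commutator needed to reach the separable part. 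The remaining technical difficulty is purely bookkeeping: each rearrangement produces an error term that must be catalogued correctly as a product belonging to $(\mathcal{L}_{2n+2,\infty})_0$.
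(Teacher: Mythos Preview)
Your proposal is correct and uses essentially the same ingredients as the paper's proof: the exact commutation identity for $\pi_{\red}(E_{0,\alpha}\otimes\xi)$ with $(-\Delta_{\HH})^{-\frac12}$, the factorisation $f=f_1f_2$, compactness of $[\pi_{\red}(E_{0,\alpha}\otimes\xi),M_{f_j}]$ from Lemma~\ref{compact commutator lemma}, the Cwikel bound from Corollary~\ref{specific_cwikel}, and Lemma~\ref{cwikel-like commutator lemma}. The paper organises things slightly differently---it works directly with the sandwiched operator $M_{f_1}(-\Delta_{\HH})^{-\frac12}M_{f_2}$ as an intermediary and shows both sides of the desired relation equal it modulo $(\mathcal{L}_{2n+2,\infty})_0$---while you first reduce to the commutator statement $[\pi_{\red}(E_{0,\alpha}\otimes\xi),M_f](-\Delta_{\HH})^{-\frac12}\in(\mathcal{L}_{2n+2,\infty})_0$; but the underlying mechanism is identical.

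One small imprecision: after your rearrangements, the leading term from the first piece is $\big(\tfrac{n}{n+2|\alpha|}\big)^{\frac12} M_{f_1}(-\Delta_{\HH})^{-\frac12}M_{f_2}\,\pi_{\red}(E_{0,\alpha}\otimes\xi)$, not $\big(\tfrac{n}{n+2|\alpha|}\big)^{\frac12} M_f(-\Delta_{\HH})^{-\frac12}\pi_{\red}(E_{0,\alpha}\otimes\xi)$, so the two leading terms do not cancel \emph{exactly} but only modulo one further application of Lemma~\ref{cwikel-like commutator lemma} (the difference being $M_{f_1}[(-\Delta_{\HH})^{-\frac12},M_{f_2}]\,\pi_{\red}(E_{0,\alpha}\otimes\xi)\in(\mathcal{L}_{2n+2,\infty})_0$). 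This is cosmetic and does not affect the validity of the argument.
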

\begin{proof} Let $f_1,f_2\in C^{\infty}_c(\mathbb{H}^n).$ Commuting $\pi_{\red}(E_{0,\alpha}\otimes \xi)$ with $M_{f_1}$ yields
\begin{equation*}
    \pi_{{\rm red}}(E_{0,\alpha}\otimes \xi)\cdot M_{f_1}(-\Delta_{\HH})^{-\frac12}M_{f_2}=[\pi_{{\rm red}}(E_{0,\alpha}\otimes \xi),M_{f_1}]\cdot (-\Delta_{\HH})^{-\frac12}M_{f_2}+M_{f_1}\cdot \pi_{{\rm red}}(E_{0,\alpha}\otimes \xi)(-\Delta_{\HH})^{-\frac12}\cdot M_{f_2}.
\end{equation*}
Similarly, commuting $\pi_{\red}(E_{0,\alpha}\otimes \xi)$ with $M_{f_2}$ leads to
\begin{equation*}
    M_{f_1}(-\Delta_{\HH})^{-\frac12}M_{f_2}\cdot \pi_{{\rm red}}(E_{0,\alpha}\otimes \xi) =M_{f_1}(-\Delta_{\HH})^{-\frac12}\cdot [M_{f_2},\pi_{{\rm red}}(E_{0,\alpha}\otimes \xi)]+M_{f_1}(-\Delta_\HH)^{-\frac{1}{2}}\pi_{{\rm red}}(E_{0,\alpha}\otimes \xi)M_{f_{2}}.
\end{equation*}
By Lemma \ref{compact commutator lemma}, the commutators $[\pi_{\red}(E_{0,\alpha}\otimes \xi),M_{f_1}]$ and $[\pi_{\red}(E_{0,\alpha}\otimes \xi),M_{f_2}]$ are compact. Since $M_{f_1}(-\Delta_{\HH})^{-\frac12},(1-\Delta_{\HH})^{-\frac12}M_{f_2} \in \mathcal{L}_{2n+2,\infty}$, it follows that
$$\pi_{{\rm red}}(E_{0,\alpha}\otimes \xi)\cdot M_{f_1}(-\Delta_{\HH})^{-\frac12}M_{f_2}\in M_{f_1}\cdot \pi_{{\rm red}}(E_{0,\alpha}\otimes \xi)(-\Delta_{\HH})^{-\frac12}\cdot M_{f_2}+(\mathcal{L}_{2n+2,\infty})_0$$
and
$$M_{f_1}(-\Delta_{\HH})^{-\frac12}M_{f_2}\cdot \pi_{{\rm red}}(E_{0,\alpha}\otimes \xi)\in M_{f_1}(-\Delta_\HH)^{-\frac{1}{2}}\pi_{{\rm red}}(E_{0,\alpha}\otimes \xi)M_{f_{2}}+(\mathcal{L}_{2n+2,\infty})_0.$$
Note that
$$\pi_{{\rm red}}(E_{0,\alpha}\otimes \xi)\cdot(-\Delta_{\HH})^{-\frac12}=\Big(\frac{n}{n+2|\alpha|}\Big)^{\frac12}(-\Delta_{\HH})^{-\frac12}\cdot\pi_{{\rm red}}(E_{0,\alpha}\otimes \xi).$$
Combining the proceeding identities, we conclude that
\[
    \pi_{{\rm red}}(E_{0,\alpha}\otimes \xi)\cdot M_{f_1}(-\Delta_{\HH})^{-\frac12}M_{f_2}\in \Big(\frac{n}{n+2|\alpha|}\Big)^{\frac12} M_{f_1}(-\Delta_{\HH})^{-\frac12}M_{f_2}\cdot  \pi_{{\rm red}}(E_{0,\alpha}\otimes \xi)+(\mathcal{L}_{2n+2,\infty})_0.
\]
	
We write $f=f_1f_2,$ where $f_1,f_2\in C^{\infty}_c(\mathbb{H}^n).$ By Lemma \ref{cwikel-like commutator lemma}, we have
$$M_f(-\Delta_{\HH})^{-\frac12}-M_{f_1}(-\Delta_{\HH})^{-\frac12}M_{f_2}\in (\mathcal{L}_{2n+2,\infty})_0,$$
$$(-\Delta_{\HH})^{-\frac12}M_f-M_{f_1}(-\Delta_{\HH})^{-\frac12}M_{f_2}\in (\mathcal{L}_{2n+2,\infty})_0.$$
The assertion follows now from the preceding paragraph.
\end{proof}

\begin{lem}\label{killing e00 lemma} There exists $c_n>0,$ depending only on $n,$ such that for all $\xi \in \mathbb{C}^2$ and $\{f_k\}_{k=1}^{2n+2} \subset C^\infty_c(\HH^n),$
\[
\varphi\bigg(\pi_{{\rm red}}(E_{0,0}\otimes \xi)\cdot \prod_{k=1}^{n+1}(-\Delta_{\HH})^{-\frac12}M_{\bar{f}_{2k-1}f_{2k}}(-\Delta_{\HH})^{-\frac12}\bigg)=c_n\varphi\bigg(\pi_{{\rm red}}(1\otimes \xi)\cdot\prod_{k=1}^{n+1}(-\Delta_{\HH})^{-\frac12}M_{\bar{f}_{2k-1}f_{2k}}(-\Delta_{\HH})^{-\frac12}\bigg).
\]
\end{lem}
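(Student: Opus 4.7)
The plan is to expand the identity on $L_2(\mathbb{R}^n)$ as the SOT-convergent sum $1=\sum_{\alpha\in\mathbb{Z}_+^n}E_{\alpha,\alpha}$, evaluate $\varphi(\pi_{\red}(E_{\alpha,\alpha}\otimes\xi)Y)$ term by term via a commutation argument, and then interchange the sum with $\varphi$. Throughout, $Y:=\prod_{k=1}^{n+1}(-\Delta_{\HH})^{-\frac12}M_{\bar f_{2k-1}f_{2k}}(-\Delta_{\HH})^{-\frac12}$ lies in $\mathcal{L}_{1,\infty}$ by H\"older together with Corollary~\ref{specific_cwikel}.

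For each $\alpha$, I would first use $E_{\alpha,\alpha}=E_{\alpha,0}E_{0,\alpha}$ and trace cyclicity of $\varphi$ to rewrite $\varphi(\pi_{\red}(E_{\alpha,\alpha}\otimes\xi)Y)=\varphi(\pi_{\red}(E_{0,\alpha}\otimes\xi)\,Y\,\pi_{\red}(E_{\alpha,0}\otimes\mathbf{1}))$, and then commute $\pi_{\red}(E_{0,\alpha}\otimes\xi)$ all the way through $Y$. Regrouping
\[
Y=\prod_{k=1}^{n+1}\bigl[(-\Delta_\HH)^{-\frac12}M_{\bar f_{2k-1}}\bigr]\bigl[M_{f_{2k}}(-\Delta_\HH)^{-\frac12}\bigr]
\]
expresses $Y$ as a product of $2(n+1)$ operators, each in $\mathcal{L}_{2n+2,\infty}$ by Corollary~\ref{specific_cwikel}. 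Lemma~\ref{commutator separable part lemma} moves $\pi_{\red}(E_{0,\alpha}\otimes\xi)$ past each factor with scalar contribution $(n/(n+2|\alpha|))^{1/2}$ modulo an error in $(\mathcal{L}_{2n+2,\infty})_0$; the scalar originates from $E_{0,\alpha}H^{-1/2}=(n+2|\alpha|)^{-1/2}E_{0,\alpha}$ and $H^{-1/2}E_{0,\alpha}=n^{-1/2}E_{0,\alpha}$. After all $2(n+1)$ commutations the scalar accumulates to $(n/(n+2|\alpha|))^{n+1}$, and iterated use of $\mathcal{L}_{p,\infty}\cdot(\mathcal{L}_{q,\infty})_0\subset(\mathcal{L}_{r,\infty})_0$ (the H\"older indices of the three groups of factors sum to $1$) places the cumulative error in $(\mathcal{L}_{1,\infty})_0$. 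Since $\pi_{\red}(E_{0,\alpha}\otimes\xi)\pi_{\red}(E_{\alpha,0}\otimes\mathbf{1})=\pi_{\red}(E_{0,0}\otimes\xi)$ and $\varphi$ vanishes on $(\mathcal{L}_{1,\infty})_0$, cyclicity then yields
\[
\varphi(\pi_{\red}(E_{\alpha,\alpha}\otimes\xi)Y)=\Bigl(\tfrac{n}{n+2|\alpha|}\Bigr)^{n+1}\varphi(\pi_{\red}(E_{0,0}\otimes\xi)Y).
\]

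The hard part will be justifying the interchange of $\varphi$ with $\sum_\alpha$, since singular traces are not SOT-continuous. I would show that $\pi_{\red}(P_N\otimes\xi)Y\to\pi_{\red}(1\otimes\xi)Y$ in the $\mathcal{L}_{1,\infty}$ quasi-norm, where $P_N:=\sum_{|\alpha|\leq N}E_{\alpha,\alpha}$; continuity of $\varphi$ would then transport the convergence to the level of traces. To obtain the quasi-norm estimate, fix any $\eta\in(0,\tfrac12)$ and factor
\[
\pi_{\red}((1-P_N)\otimes\xi)=\pi((1-P_N)H^{-\eta}\otimes\tilde\xi)\cdot\pi(H^\eta\otimes 1),
\]
with $\tilde\xi\in L_\infty(\mathbb{R})$ the image of $\xi$ under~$\iota$. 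The first bounded factor has operator norm $\leq(n+2(N+1))^{-\eta}\|\xi\|\to 0$. The second, combined with $Y$ using $(-\Delta_\HH)^{-1/2}=\pi(H^{-1/2}\otimes 1)\cdot|T|^{-1/2}$ from Corollary~\ref{pi_computations}, becomes
\[
\pi(H^\eta\otimes 1)Y=\pi(H^{\eta-1/2}\otimes 1)\cdot|T|^{-\frac12}M_{\bar f_1}\cdot M_{f_2}(-\Delta_\HH)^{-\frac12}\cdots M_{f_{2n+2}}(-\Delta_\HH)^{-\frac12},
\]
which belongs to $\mathcal{L}_{1,\infty}$: the prefactor is bounded since $\eta-\tfrac12<0$, $|T|^{-1/2}M_{\bar f_1}\in\mathcal{L}_{2n+2,\infty}$ by Lemma~\ref{homogeneous cwikel lemma}, and the remaining $2n+1$ factors lie in $\mathcal{L}_{2n+2,\infty}$ by Corollary~\ref{specific_cwikel}.

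Assembling the pieces gives $\varphi(\pi_{\red}(1\otimes\xi)Y)=C_n\cdot\varphi(\pi_{\red}(E_{0,0}\otimes\xi)Y)$ with $C_n:=\sum_{\alpha\in\mathbb{Z}_+^n}(n/(n+2|\alpha|))^{n+1}$; this sum is finite because $\#\{\alpha\in\mathbb{Z}_+^n:|\alpha|=k\}=O(k^{n-1})$ makes the summand $O(k^{-2})$. Setting $c_n:=1/C_n>0$ completes the proof.
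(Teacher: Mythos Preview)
Your overall strategy coincides with the paper's: factor $E_{\alpha,\alpha}=E_{\alpha,0}E_{0,\alpha}$, use Lemma~\ref{commutator separable part lemma} to commute $\pi_{\red}(E_{0,\alpha}\otimes\cdot)$ through the $2(n+1)$ factors of $Y$ at the cost of the scalar $(n/(n+2|\alpha|))^{n+1}$ and an $(\mathcal{L}_{1,\infty})_0$ error, and then sum over $\alpha$. The paper compresses the passage from the term-by-term identity to the sum into the phrase ``by continuity''; you go further and try to supply the missing $\mathcal{L}_{1,\infty}$-convergence $\pi_{\red}((1-P_N)\otimes\xi)Y\to 0$, which is exactly the right target.

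That said, your convergence argument contains a concrete error. You assert that $|T|^{-1/2}M_{\bar f_1}\in\mathcal{L}_{2n+2,\infty}$ by Lemma~\ref{homogeneous cwikel lemma}, but that lemma requires a factor $\pi_{\red}(x)$ with $x\in L_{2n+2}(\mathcal{B}(L_2(\mathbb{R}^n))\overline{\otimes}\mathbb{C}^2)$; here you would be taking $x=1$, which is not in any Schatten class. In fact $|T|^{-1/2}M_{\bar f_1}$ is not even bounded: $|T|^{-1/2}=\pi(1\otimes|s|^{-1/2})$ is unbounded and the multiplication operator supplies no decay in the $\mathcal{B}(L_2(\mathbb{R}^n))$ tensor factor. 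The repair is not to separate the ``bounded prefactor'' from the rest. Write instead
\[
\pi_{\red}((1-P_N)\otimes\xi)\,(-\Delta_{\HH})^{-\frac12}M_{\bar f_1}
=\pi_{\red}\bigl((1-P_N)H^{-\eta}\otimes\xi\bigr)\cdot\Bigl[\pi_{\red}(H^{\eta-\frac12}\otimes\mathbf{1})\,|T|^{-\frac12}M_{\bar f_1}\Bigr],
\]
and take $\eta\in(0,\tfrac{1}{2n+2})$ rather than merely $\eta<\tfrac12$. A direct eigenvalue count (using $\#\{|\alpha|=k\}\sim k^{n-1}$) shows $H^{\eta-1/2}\in L_{2n+2}(\mathcal{B}(L_2(\mathbb{R}^n))\overline{\otimes}\mathbb{C}^2)$ precisely for such $\eta$, so Lemma~\ref{homogeneous cwikel lemma} now applies to the bracketed factor and places it in $\mathcal{L}_{2n+2,\infty}$; the first factor still has operator norm $\le\|\xi\|(n+2N+2)^{-\eta}\to 0$, and your H\"older product with the remaining $2n+1$ factors goes through unchanged.
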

\begin{proof}
Clearly,
$$\pi_{{\rm red}}(E_{\alpha,\alpha}\otimes \xi)=\pi_{{\rm red}}(E_{\alpha,0}\otimes \xi)\cdot \pi_{{\rm red}}(E_{0,\alpha}\otimes {\bf 1}).$$
By Lemma \ref{commutator separable part lemma}, we can replace
$$\pi_{{\rm red}}(E_{\alpha,\alpha}\otimes \xi)\cdot \prod_{k=1}^{n+1}(-\Delta_{\HH})^{-\frac12}M_{\bar{f}_{2k-1}f_{2k}}(-\Delta_{\HH})^{-\frac12}$$
by
$$\Big(\frac{n}{n+2|\alpha|}\Big)^{n+1} \pi_{{\rm red}}(E_{\alpha,0}\otimes \xi)\cdot \prod_{k=1}^{n+1}(-\Delta_{\HH})^{-\frac12}M_{\bar{f}_{2k-1}f_{2k}}(-\Delta_{\HH})^{-\frac12}\cdot\pi_{{\rm red}}(E_{0,\alpha}\otimes {\bf 1})$$
modulo $(\mathcal{L}_{1,\infty})_0.$ Since $\varphi$ vanishes on $(\mathcal{L}_{1,\infty})_0,$ it follows from the tracial property that
\[
    \varphi\bigg(\pi_{{\rm red}}(E_{\alpha,\alpha}\otimes \xi)\cdot \prod_{k=1}^{n+1}(-\Delta_{\HH})^{-\frac12}M_{\bar{f}_{2k-1}f_{2k}}(-\Delta_{\HH})^{-\frac12}\bigg)
           =\Big(\frac{n}{n+2|\alpha|}\Big)^{n+1}\varphi\bigg(\pi_{{\rm red}}(E_{0,0}\otimes \xi)\cdot \prod_{k=1}^{n+1}(-\Delta_{\HH})^{-\frac12}M_{\bar{f}_{2k-1}f_{2k}}(-\Delta_{\HH})^{-\frac12}\bigg).
\]
By continuity, we have
\begin{align*}
    \varphi\bigg(\pi_{{\rm red}}(1\otimes \xi)\cdot\prod_{k=1}^{n+1}(-\Delta_{\HH})^{-\frac12}M_{\bar{f}_{2k-1}f_{2k}}(-\Delta_{\HH})^{-\frac12}\bigg)&=\sum_{\alpha\in\mathbb{Z}^n_+}\varphi\bigg(\pi_{{\rm red}}(E_{\alpha,\alpha}\otimes \xi)\cdot \prod_{k=1}^{n+1}(-\Delta_{\HH})^{-\frac12}M_{\bar{f}_{2k-1}f_{2k}}(-\Delta_{\HH})^{-\frac12}\bigg)\\
           &=\sum_{\alpha\in\mathbb{Z}^n_+}\Big(\frac{n}{n+2|\alpha|}\Big)^{n+1}\varphi\bigg(\pi_{{\rm red}}(E_{0,0}\otimes \xi)\cdot \prod_{k=1}^{n+1}(-\Delta_{\HH})^{-\frac12}M_{\bar{f}_{2k-1}f_{2k}}(-\Delta_{\HH})^{-\frac12}\bigg).
\end{align*}
This ends the proof of Lemma \ref{killing e00 lemma}.
\end{proof}

\begin{lem}\label{connes product lemma} There exists $c_n>0$ (depending only on $n$) such that for all $\{f_k\}_{k=1}^{n+1}\subset L_{n+1}(\HH^n)$ we have
\[
    \varphi\bigg(\prod_{k=1}^{n+1}(-\Delta_{\HH})^{-\frac12}M_{f_k}(-\Delta_{\HH})^{-\frac12}\bigg)=c_n\int f_1\cdots f_{n+1},
\]
and
\[
    \varphi\bigg(\pi_{{\rm red}}(1\otimes z)\cdot \prod_{k=1}^{n+1}(-\Delta_{\HH})^{-\frac12}M_{f_k}(-\Delta_{\HH})^{-\frac12}\bigg)=0.
\]
\end{lem}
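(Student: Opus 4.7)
The strategy is to apply Lemma \ref{second multilinear lemma} to the two multilinear functionals
\begin{equation*}
    l_{\mathbf{1}}(f_1,\ldots,f_{n+1}) := \varphi\Big(\prod_{k=1}^{n+1}T_{f_k}\Big),\quad l_z(f_1,\ldots,f_{n+1}) := \varphi\Big(\pi_{\red}(1\otimes z)\prod_{k=1}^{n+1}T_{f_k}\Big),
\end{equation*}
where $T_f := (-\Delta_{\HH})^{-1/2}M_f(-\Delta_{\HH})^{-1/2}$. Factoring $M_f = M_{\sgn(f)|f|^{1/2}}M_{|f|^{1/2}}$ and applying Corollary \ref{specific_cwikel} with $\beta=1$ on each side yields $\|T_f\|_{\mathcal{L}_{n+1,\infty}}\lesssim \|f\|_{L_{n+1}(\HH^n)}$; H\"older's inequality for weak Schatten ideals then gives $\prod T_{f_k}\in\mathcal{L}_{1,\infty}$ with norm $\lesssim\prod\|f_k\|_{L_{n+1}(\HH^n)}$, so both $l_{\mathbf{1}}$ and $l_z$ are bounded multilinear functionals on $L_{n+1}(\HH^n)^{\times (n+1)}$.

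For the translation-invariance hypothesis, I would use that $(-\Delta_{\HH})^{-1/2}$ commutes with each $\lambda(g)$ (because every $X_j$ is left-invariant) together with $M_{\tau^g f} = \lambda(g)^* M_f \lambda(g)$, giving $T_{\tau^g f} = \lambda(g)^* T_f \lambda(g)$. For $l_z$ one additionally needs that $\pi_{\red}(1\otimes z)$ commutes with $\lambda(g)$: by Proposition \ref{pi rigorous prop}, $\pi$ identifies $1\otimes L_{\infty}(\mathbb{R})$ with the center of $\mathrm{VN}(\HH^n)$, so $\pi_{\red}(1\otimes z)=\pi(1\otimes\sgn(s))$ is central. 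Tracial invariance of $\varphi$ under unitary conjugation then yields translation invariance. The shift condition \eqref{condi} is handled by commutator reductions: Lemma \ref{cwikel-like commutator lemma} gives $[M_f,(-\Delta_{\HH})^{-1/2}]\in(\mathcal{L}_{2n+2,\infty})_0$, from which one deduces $T_{ff_k}\equiv M_fT_{f_k}\pmod{(\mathcal{L}_{n+1,\infty})_0}$ and $[M_f,T_g]\in(\mathcal{L}_{n+1,\infty})_0$. Commuting the extra factor $M_f$ iteratively to the far left gives, modulo $(\mathcal{L}_{1,\infty})_0$,
\[
\pi_{\red}(1\otimes z)\cdot T_{f_1}\cdots T_{ff_k}\cdots T_{f_{n+1}} \equiv \pi_{\red}(1\otimes z)\cdot M_f T_{f_1}\cdots T_{f_{n+1}},
\]
and analogously for $l_{\mathbf{1}}$. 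Since every continuous trace on $\mathcal{L}_{1,\infty}$ vanishes on $(\mathcal{L}_{1,\infty})_0$, this produces the required independence from $k$.

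Lemma \ref{second multilinear lemma} therefore yields constants $c_{\mathbf{1}},c_z\in\mathbb{C}$ with $l_Y(f_1,\ldots)=c_Y\int f_1\cdots f_{n+1}$ for $Y\in\{\mathbf{1},\pi_{\red}(1\otimes z)\}$. To force $c_z=0$, I would exploit the involutive automorphism $\sigma([z,t]):=[\bar{z},-t]$ of $\HH^n$. A direct computation of pushforwards yields $\sigma_*X_j=X_j$, $\sigma_*Y_j=-Y_j$, $\sigma_*T=-T$; hence the induced unitary $U_\sigma$ on $L_2(\HH^n)$ satisfies $U_\sigma^*\Delta_{\HH}U_\sigma=\Delta_{\HH}$, $U_\sigma^*M_fU_\sigma=M_{f\circ\sigma}$, and (because $\pi_{\red}(1\otimes z)$ corresponds to $\sgn(T)$ in the Plancherel decomposition) $U_\sigma^*\pi_{\red}(1\otimes z)U_\sigma=-\pi_{\red}(1\otimes z)$. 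Conjugating inside the trace gives $l_z(f_1,\ldots,f_{n+1})=-l_z(f_1\circ\sigma,\ldots,f_{n+1}\circ\sigma)$, and the $\sigma$-invariance of Lebesgue measure forces $c_z=0$.

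Finally, to show that $c_{\mathbf{1}}$ is strictly positive and independent of the choice of $\varphi$, I would specialise to $f_1=\cdots=f_{n+1}=g$ for a fixed nonzero nonnegative $g\in C^\infty_c(\HH^n)$ and iteratively commute all $M_g$ factors together using the bound $[M_g,(-\Delta_{\HH})^{-1}]\in(\mathcal{L}_{n+1,\infty})_0$ to obtain
\[
T_g^{n+1}\equiv M_{g^{n+1}}(-\Delta_{\HH})^{-(n+1)}\pmod{(\mathcal{L}_{1,\infty})_0},
\]
reducing the computation to evaluating $\varphi(M_h(-\Delta_{\HH})^{-(n+1)})$ for $h=g^{n+1}\in L_1(\HH^n)$. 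The identity $\varphi(M_h(-\Delta_{\HH})^{-(n+1)})=c_n\int h$ with universal positive $c_n$ is the sub-Laplacian analog of Connes' trace theorem on $\HH^n$; it would be established by heat-kernel asymptotics and spectral computation (via Corollary \ref{tau of gDelta}), verifying Dixmier measurability of $M_h(-\Delta_{\HH})^{-(n+1)}$ so that every normalised continuous trace takes the same value. I expect this last step---the measurability/positivity of the base Weyl-law identity---to be the main technical obstacle.
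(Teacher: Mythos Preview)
Your verification of boundedness, the translation invariance, and the shift condition \eqref{condi} via Lemma \ref{cwikel-like commutator lemma} is essentially identical to the paper's argument. For $c_z=0$ you use the group automorphism $\sigma([z,t])=[\bar z,-t]$, while the paper uses the (non-automorphism) reflection $g\mapsto -g$; both flip the sign of $T$ and fix $\Delta_{\HH}$, so either works and the conclusion $c_z=0$ follows in the same way.

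The genuine divergence is in the last step. The paper does \emph{not} attempt the reduction you propose; it simply specialises to $f_1=\cdots=f_{n+1}=f\ge 0$ and cites the Connes-type trace theorem of \cite{MSZ_cwikel}, which computes $\varphi\big(((-\Delta_{\HH})^{-1/2}M_f(-\Delta_{\HH})^{-1/2})^{n+1}\big)$ directly and gives a positive, $\varphi$-independent constant. Your route via
\[
T_g^{\,n+1}\equiv M_{g^{n+1}}(-\Delta_{\HH})^{-(n+1)}\pmod{(\mathcal{L}_{1,\infty})_0}
\]
is problematic: $n+1$ is the \emph{critical} power ($2(n+1)$ is the homogeneous dimension of $\HH^n$), so $M_h(-\Delta_{\HH})^{-(n+1)}$ suffers an infrared divergence and is not, in general, a bounded operator, let alone an element of $\mathcal{L}_{1,\infty}$. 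Concretely, the commutator manipulations you invoke require estimates like $[M_g,(-\Delta_{\HH})^{-1}]\in(\mathcal{L}_{n+1,\infty})_0$, but writing this as $[M_g,(-\Delta_{\HH})^{-1/2}](-\Delta_{\HH})^{-1/2}+(-\Delta_{\HH})^{-1/2}[M_g,(-\Delta_{\HH})^{-1/2}]$ leaves an unbounded factor $(-\Delta_{\HH})^{-1/2}$ dangling, and no compensating localisation is available once all the $M_g$ factors have been moved away. So the obstacle you flag is real and cannot be overcome along the line you sketch; the paper's resolution is to keep the $M_g$ factors interspersed with the resolvents (so that each block $(-\Delta_{\HH})^{-1/2}M_g(-\Delta_{\HH})^{-1/2}$ stays in $\mathcal{L}_{n+1,\infty}$) and appeal to the Weyl-law asymptotics for that specific product, proved elsewhere.
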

\begin{proof}
For any $f_1,\cdots,f_{n+1}\in C^{\infty}_c(\mathbb{H}^n),$ consider multilinear functionals $l_1,l_2:L_{n+1}(\mathbb{H}^n)^{\times (n+1)}\to \mathbb{C}$ defined by
\begin{align*}
l_1(f_1,\cdots,f_{n+1}):=\varphi\bigg(\prod_{k=1}^{n+1}(-\Delta_{\HH})^{-\frac12}M_{f_k}(-\Delta_{\HH})^{-\frac12}\bigg),
\end{align*}
and
\begin{align*}
l_2(f_1,\cdots,f_{n+1}):=\varphi\bigg(\pi_{{\rm red}}(1\otimes z)\cdot \prod_{k=1}^{n+1}(-\Delta_{\HH})^{-\frac12}M_{f_k}(-\Delta_{\HH})^{-\frac12}\bigg).
\end{align*}
Now we verify that $l_1$ and $l_2$ satisfy the assumption of Lemma \ref{first multilinear lemma}. Indeed, note that for all $f \in L_{n+1}(\HH^n),$ we have $|f|^{\frac12} \in L_{2n+2}(\HH^n),$
and therefore by Corollary \ref{specific_cwikel},
\[
    M_{|f|^{\frac12}}(-\Delta_{\HH})^{-\frac12} \in \mathcal{L}_{2n+2,\infty}.
\]
From the H\"older inequality, we have
\[
    \|(-\Delta_{\HH})^{-\frac12}M_f(-\Delta_{\HH})^{-\frac12}\|_{\mathcal{L}_{n+1,\infty}} \lesssim \|M_{|f|^{\frac12}}(-\Delta_{\HH})^{-\frac12}\|_{\mathcal{L}_{2n+2,\infty}}^2 \lesssim \||f|^{\frac12}\|_{L_{2n+2}(\HH^n)}^2 = \|f\|_{L_{n+1}(\HH^n)}.
\]
It follows that if $f_1,\ldots,f_{n+1} \in L_{n+1}(\HH^n)$, then again by the H\"{o}lder inequality,
\[
    \left\|\prod_{k=1}^{n+1}(-\Delta_{\HH})^{-\frac12}M_{f_k}(-\Delta_{\HH})^{-\frac12}\right\|_{\mathcal{L}_{1,\infty}}\lesssim \prod_{k=1}^{n+1}\|(-\Delta_{\HH})^{-\frac12}M_{f_k}(-\Delta_{\HH})^{-\frac12}\|_{\mathcal{L}_{n+1,\infty}} \lesssim \prod_{k=1}^{n+1}\|f_k\|_{L_{n+1}(\HH^n)}.
\]
Since the trace $\varphi$ is continuous in the $\mathcal{L}_{1,\infty}$ quasinorm, we see that
%get
%\[
%    |l_1(f_1,\cdots,f_{n+1})|\lesssim \prod_{k=1}^{n+1} \|f_k\|_{L_{n+1}(\HH^n)},
%\]
$l_1$ and $l_2$ are bounded multilinear functionals on $L_{n+1}(\HH^n)^{\times (n+1)}.$
Next, note that
\begin{align*}
(-\Delta_{\HH})^{-\frac12}M_{ff_1}(-\Delta_{\HH})^{-\frac12}
&=(-\Delta_{\HH})^{-\frac12}M_{f_1}(-\Delta_{\HH})^{-\frac12}\cdot M_f+(-\Delta_{\HH})^{-\frac12}M_{f_1}\cdot [M_f,(-\Delta_{\HH})^{-\frac12}],
\end{align*}
and that
\begin{align*}
(-\Delta_{\HH})^{-\frac12}M_{ff_2}(-\Delta_{\HH})^{-\frac12}
&=M_f\cdot (-\Delta_{\HH})^{-\frac12}M_{f_2}\cdot (-\Delta_{\HH})^{-\frac12}+[(-\Delta_{\HH})^{-\frac12},M_f]\cdot M_{f_2}(-\Delta_{\HH})^{-\frac12}.
\end{align*}
By Lemma \ref{cwikel-like commutator lemma} and Corollary \ref{specific_cwikel}, the second term in the right hand side fall into $(\mathcal{L}_{n+1,\infty})_0.$ So, we can replace
$$(-\Delta_{\HH})^{-\frac12}M_{ff_1}(-\Delta_{\HH})^{-\frac12}\mbox{ with }(-\Delta_{\HH})^{-\frac12}M_{f_1}(-\Delta_{\HH})^{-\frac12}\cdot M_f$$
and after that replace
$$M_f\cdot (-\Delta_{\HH})^{-\frac12}M_{f_2}(-\Delta_{\HH})^{-\frac12}\mbox{ with }(-\Delta_{\HH})^{-\frac12}M_{ff_2}(-\Delta_{\HH})^{-\frac12}.$$
Hence, the functionals $l_1$ and $l_2$ satisfy the assumption of Lemma \ref{first multilinear lemma}.

We now verify that $l_1$ and $l_2$ satisfy the assumption in Lemma \ref{second multilinear lemma}. Indeed, it suffices to note that $\Delta_{\HH}$ and $\pi_{{\rm red}}(1\otimes z)$ commute with $\lambda(g)$ and
$$\lambda(g)M_f\lambda(g)^{-1}=M_{\tau^{g^{-1}} f},$$
where we recall that $\lambda(g)$ is the left regular representation defined in Section \ref{VNGdef}. Combining these with the tracial property of $\varphi$, we see that $l_1$ and $l_2$ satisfy the assumption in Lemma \ref{second multilinear lemma}.
%This follows by conjugating by the unitary operator $\lambda(g)$ of \emph{right} composition by $g\in \HH^n.$

It follows now from Lemma \ref{second multilinear lemma} that
$$\varphi\Big(\prod_{k=1}^{n+1}(-\Delta_{\HH})^{-\frac12}M_{f_k}(-\Delta_{\HH})^{-\frac12}\Big)=c_{n,\varphi}\int f_1\cdots f_{n+1},$$	
$$\varphi\Big(\pi_{{\rm red}}(1\otimes z)\cdot \prod_{k=1}^{n+1}(-\Delta_{\HH})^{-\frac12}M_{f_k}(-\Delta_{\HH})^{-\frac12}\Big)=c_{n,\varphi}'\int f_1\cdots f_{n+1}$$
whenever $\{f_k\}_{k=1}^{n+1}\subset L_{n+1}(\mathbb{H}^n).$

Let $V$ be the unitary linear map on $L_2(\HH^n)$ given by $V\xi(g) = \xi(-g).$
Observing that $V\pi(1\otimes s) = -iVT= iTV = \pi(1\otimes -s)V,$ and $V|T| = |T|V,$
it follows that
\[
    V\pi(1\otimes \sgn(s)) = -\pi(1\otimes \sgn(s))V
\]
and therefore
$$V\cdot \pi_{{\rm red}}(1\otimes z)=-\pi_{{\rm red}}(1\otimes z)\cdot V.$$

Let $f_1=f_1=\cdots = f_{n+1} = f,$ where $f\geq 0$ is a non-negative radial function. Then $VM_fV^* = M_f,$ and noting that $V(-\Delta_{\HH})^{-1/2} = (-\Delta_{\HH})^{-1/2}V,$
the unitary invariance of the trace $\varphi$ implies that
\begin{align*}
    c_{n,\varphi}'\int_{\HH^n} f^{n+1} &= \varphi\Big(\pi_{{\rm red}}(1\otimes z)\cdot \prod_{k=1}^{n+1}(-\Delta_{\HH})^{-\frac12}M_{f_k}(-\Delta_{\HH})^{-\frac12}\Big)\\
                                       &= \varphi\Big(V\pi_{{\rm red}}(1\otimes z)\cdot \prod_{k=1}^{n+1}(-\Delta_{\HH})^{-\frac12}M_{f_k}(-\Delta_{\HH})^{-\frac12}V^*\Big)\\
                                       &= \varphi\Big(V\pi_{{\rm red}}(1\otimes z)V^*\cdot \prod_{k=1}^{n+1}(-\Delta_{\HH})^{-\frac12}M_{f_k}(-\Delta_{\HH})^{-\frac12}\Big)\\
                                       &= -\varphi\Big(\pi_{{\rm red}}(1\otimes z)\cdot \prod_{k=1}^{n+1}(-\Delta_{\HH})^{-\frac12}M_{f_k}(-\Delta_{\HH})^{-\frac12}\Big)\\
                                       &= -c_{n,\varphi}'\int_{\HH^n} f^{n+1}.
\end{align*}
We conclude that $c_{n,\varphi}'=0.$

Substituting $f_1=\cdots=f_n=f\geq0,$ we obtain
$$\varphi\Big(\Big((-\Delta_{\HH})^{-\frac12}M_f(-\Delta_{\HH})^{-\frac12}\Big)^{n+1}\Big)=c_{n,\varphi}\int f^{n+1},\quad f\in L_{n+1}(\mathbb{H}^n).$$
Comparing this with \cite{MSZ_cwikel}, we conclude that $c_{n,\varphi}$ does not depend on $\varphi$ (and is strictly positive).	This ends the proof.
\end{proof}

Since $\{\mathbf{1},z\}$ is a basis for $\mathbb{C}^2,$ it follows from Lemma \ref{connes product lemma} that for all $\xi \in \mathbb{C}^2$ and $f_1,\ldots,f_{n+1}\in L_{n+1}(\HH^n)$ we have
\[
    \varphi(\pi_{\red}(1\otimes \xi)\prod_{k=1}^{n+1} (-\Delta_{\HH})^{-\frac12}M_{f_k}(-\Delta_{\HH})^{-\frac12}) = c_n\Sigma(\xi)\int_{\HH^n} f_1\cdots f_{n+1},
\]
where $c_n>0$ is independent of $\xi.$

\begin{proof}[Proof of Theorem \ref{connes product theorem}] Recall from the begining of Section 5 that we have the isomorphism
\[
    L_2(\mathbb{R}^n) \approx \ell_2(\mathbb{Z}_+^n).
\]
Then by continuity, it suffices to prove the assertion for the case when each $f_k$ is a $C_c^\infty(\HH^n)$ function and when each $x_k$ has only finitely many non-zero matrix entries. By linearity, we may assume without loss of generality that $x_k=E_{\alpha_k,\beta_k}\otimes \xi_k,$ where $\alpha_k,\beta_k \in \mathbb{Z}_+^n$ and $\xi_k\in \mathbb{C}^2.$

By Lemma \ref{t and units commutator lemma}, up to a negligible $(\mathcal{L}_{2n+2,\infty})_0$ term, we may replace
$A(E_{\alpha_k,\beta_k}\otimes \xi_k,f_k)$
by
$$\pi_{{\rm red}}(E_{\alpha_k,0}\otimes {\bf 1})\cdot A(E_{0,0}\otimes \xi_k,f_k)\cdot \pi_{{\rm red}}(E_{0,\beta_k}\otimes {\bf 1}).$$
So,
$$A(x_1,f_1)^{\ast}A(x_2,f_2)A(x_3,f_3)^{\ast}A(x_4,f_4)\cdots A(x_{2n+1},f_{2n+1})^{\ast}A(x_{2n+2},f_{2n+2})$$
becomes
\begin{align*}
&\pi_{{\rm red}}(E_{\beta_1,0}\otimes {\bf 1})\cdot A(E_{0,0}\otimes \xi_1,f_1)^{\ast}\cdot \pi_{{\rm red}}(E_{0,\alpha_1}\otimes {\bf 1})\pi_{{\rm red}}(E_{\alpha_2,0}\otimes {\bf 1})\cdot A(E_{0,0}\otimes \xi_2,f_2)\cdot \pi_{{\rm red}}(E_{0,\beta_2}\otimes {\bf 1})\\
&\quad \times\pi_{{\rm red}}(E_{\beta_3,0}\otimes {\bf 1})\cdot A(E_{0,0}\otimes \xi_3,f_3)^{\ast}\cdot \pi_{{\rm red}}(E_{0,\alpha_3}\otimes {\bf 1})\\
&\quad \times\pi_{{\rm red}}(E_{\alpha_4,0}\otimes {\bf 1})\cdot A(E_{0,0}\otimes \xi_4,f_4)\cdot \pi_{{\rm red}}(E_{0,\beta_4}\otimes {\bf 1})\\
&\quad \cdots\\
&\quad \times\pi_{{\rm red}}(E_{\beta_{2n+1},0}\otimes {\bf 1})\cdot A(E_{0,0}\otimes \xi_{2n+1},f_{2n+1})^{\ast}\cdot \pi_{{\rm red}}(E_{0,\alpha_{2n+1}}\otimes {\bf 1})\\
&\quad \times \pi_{{\rm red}}(E_{\alpha_{2n+2},0}\otimes {\bf 1})\cdot A(E_{0,0}\otimes \xi_{2n+2},f_{2n+2})\cdot \pi_{{\rm red}}(E_{0,\beta_{2n+2}}\otimes {\bf 1}).
\end{align*}
Note that $E_{0,\alpha}E_{\beta,0}=0$ whenever $\alpha\neq \beta$. Hence, if $\alpha_1\neq\alpha_2$ or $\beta_2\neq \beta_3$ or $\alpha_3\neq \alpha_4$ or ... or $\alpha_{2n+1}\neq \alpha_{2n+2},$ then the above expression vanishes. So, we re-write it as
\begin{align*}
&\delta_{\alpha_1,\alpha_2}\delta_{\beta_2,\beta_3}\delta_{\alpha_3,\alpha_4}\cdots \delta_{\alpha_{2n+1},\alpha_{2n+2}}\pi_{{\rm red}}(E_{\beta_1,0}\otimes {\bf 1})\\
&\quad \times \pi_{{\rm red}}(E_{0,0}\otimes {\bf 1})\cdot A(E_{0,0}\otimes \xi_1,f_1)^{\ast}\cdot \pi_{{\rm red}}(E_{0,0}\otimes {\bf 1})\\
&\quad \times \pi_{{\rm red}}(E_{0,0}\otimes {\bf 1})\cdot A(E_{0,0}\otimes \xi_2,f_2)\cdot \pi_{{\rm red}}(E_{0,0}\otimes {\bf 1})\\
&\quad \times\pi_{{\rm red}}(E_{\beta_3,0}\otimes {\bf 1})\cdot A(E_{0,0}\otimes \xi_3,f_3)^{\ast}\cdot \pi_{{\rm red}}(E_{0,0}\otimes {\bf 1})\\
&\quad \times\pi_{{\rm red}}(E_{\alpha_4,0}\otimes {\bf 1})\cdot A(E_{0,0}\otimes \xi_4,f_4)\cdot \pi_{{\rm red}}(E_{0,0}\otimes {\bf 1})\\
&\quad \cdots\\
&\quad \times\pi_{{\rm red}}(E_{0,0}\otimes {\bf 1})\cdot A(E_{0,0}\otimes \xi_{2n+1},f_{2n+1})^{\ast}\cdot \pi_{{\rm red}}(E_{0,0}\otimes {\bf 1})\\
&\quad \times \pi_{{\rm red}}(E_{0,0}\otimes {\bf 1})\cdot A(E_{0,0}\otimes \xi_{2n+2},f_{2n+2})\cdot \pi_{{\rm red}}(E_{0,0}\otimes {\bf 1})\\
&\quad \times \pi_{{\rm red}}(E_{0,\beta_{2n+2}}\otimes {\bf 1}).
\end{align*}
Again using Lemma \ref{t and units commutator lemma}, we replace the latter expression with
\begin{align*}
&\delta_{\alpha_1,\alpha_2}\delta_{\beta_2,\beta_3}\delta_{\alpha_3,\alpha_4}\cdots \delta_{\alpha_{2n+1},\alpha_{2n+2}}\pi_{{\rm red}}(E_{\beta_1,0}\otimes {\bf 1})\\
&\quad \times A(E_{0,0}\otimes \xi_1,f_1)^{\ast}A(E_{0,0}\otimes \xi_2,f_2)\\
&\quad \times A(E_{0,0}\otimes \xi_3,f_3)^{\ast}A(E_{0,0}\otimes \xi_4,f_4)\\
&\quad \cdots\\
&\quad \times A(E_{0,0}\otimes \xi_{2n+1},f_{2n+1})^{\ast}A(E_{0,0}\otimes \xi_{2n+2},f_{2n+2})\\
&\quad \times \pi_{{\rm red}}(E_{0,\beta_{2n+2}}\otimes {\bf 1}).
\end{align*}

By the tracial property, the trace of the latter expression vanishes when $\beta_{2n+2}\neq\beta_1.$ Thus,
\begin{align*}
    \varphi(A(x_1,f_1)^{\ast}&A(x_2,f_2)A(x_3,f_3)^{\ast}A(x_4,f_4)\cdots A(x_{2n+1},f_{2n+1})^{\ast}A(x_{2n+2},f_{2n+2}))\\
                             &=\delta_{\alpha_1,\alpha_2}\delta_{\beta_2,\beta_3}\delta_{\alpha_3,\alpha_4}\cdots \delta_{\alpha_{2n+1},\alpha_{2n+2}}\delta_{\beta_{2n+2},\beta_1} \times\varphi\Big(\pi_{{\rm red}}(E_{0,0}\otimes {\bf 1})\\
&\quad \times A(E_{0,0}\otimes \xi_1,f_1)^{\ast}A(E_{0,0}\otimes \xi_2,f_2)A(E_{0,0}\otimes \xi_3,f_3)^{\ast}A(E_{0,0}\otimes \xi_4,f_4)\\
&\quad \cdots\\
&\quad \times A(E_{0,0}\otimes \xi_{2n+1},f_{2n+1})^{\ast}A(E_{0,0}\otimes \xi_{2n+2},f_{2n+2})\pi_{{\rm red}}(E_{0,0}\otimes {\bf 1})\Big).
\end{align*}

We now write
$$A(E_{0,0}\otimes \xi_k,f_k)=n^{\frac12}\cdot M_{f_k}(-\Delta_{\HH})^{-\frac12}\cdot \pi_{{\rm red}}(E_{0,0}\otimes \xi_k).$$
Thus,
\begin{align*}
    \varphi(A(x_1,&f_1)^{\ast}A(x_2,f_2)A(x_3,f_3)^{\ast}A(x_4,f_4)\cdots A(x_{2n+1},f_{2n+1})^{\ast}A(x_{2n+2},f_{2n+2}))\\
                             &=n^{n+1}\delta_{\alpha_1,\alpha_2}\delta_{\beta_2,\beta_3}\delta_{\alpha_3,\alpha_4}\cdots \delta_{\alpha_{2n+1},\alpha_{2n+2}}\delta_{\beta_{2n+2},\beta_1}\varphi\Big(\pi_{{\rm red}}(E_{0,0}\otimes {\bf 1})\\
                        &\quad \times \pi_{{\rm red}}(E_{0,0}\otimes \overline{\xi_1})\cdot (-\Delta_{\HH})^{-\frac12}M_{\bar{f}_1f_2}(-\Delta_{\HH})^{-\frac12}\cdot \pi_{{\rm red}}(E_{0,0}\otimes \xi_2)\\
                        &\quad \times \pi_{{\rm red}}(E_{0,0}\otimes \overline{\xi_3})\cdot (-\Delta_{\HH})^{-\frac12}M_{\bar{f}_3f_4}(-\Delta_{\HH})^{-\frac12}\cdot \pi_{{\rm red}}(E_{0,0}\otimes \xi_4)\\
                        &\quad \cdots\\
                        &\quad \times \pi_{{\rm red}}(E_{0,0}\otimes \overline{\xi_{2n+1}})\cdot (-\Delta_{\HH})^{-\frac12}M_{\bar{f}_{2n+1}f_{2n+2}}(-\Delta_{\HH})^{-\frac12}\cdot \pi_{{\rm red}}(E_{0,0}\otimes \xi_{2n+2})\\
                        &\quad \times \pi_{{\rm red}}(E_{0,0}\otimes {\bf 1})\Big).
\end{align*}
By Lemma \ref{commutator separable part lemma} taken with $\alpha=0,$ $\pi_{{\rm red}}(E_{0,0}\otimes \xi)$ commutes with $M_f(-\Delta_{\HH})^{-\frac12}$ modulo $(\mathcal{L}_{2n+2,\infty})_0.$ Therefore
\begin{align*}
\varphi(A(x_1,&f_1)^{\ast}A(x_2,f_2)A(x_3,f_3)^{\ast}A(x_4,f_4)\cdots A(x_{2n+1},f_{2n+1})^{\ast}A(x_{2n+2},f_{2n+2}))\\
                         &=n^{n+1}\delta_{\alpha_1,\alpha_2}\delta_{\beta_2,\beta_3}\delta_{\alpha_3,\alpha_4}\cdots \delta_{\alpha_{2n+1},\alpha_{2n+2}}\delta_{\beta_{2n+2},\beta_1} \times\varphi\Big(\pi_{{\rm red}}\left(E_{0,0}\otimes \left(\overline{\xi_1}\xi_2\cdots \overline{\xi_{2n+1}}\xi_{2n+2}\right)\right)\cdot \prod_{k=1}^{n+1}(-\Delta_{\HH})^{-\frac12}M_{\bar{f}_{2k-1}f_{2k}}(-\Delta_{\HH})^{-\frac12}\Big).
\end{align*}
By Lemma \ref{killing e00 lemma}, we have
\begin{align*}
    \varphi(A(x_1,f_1)^{\ast}&A(x_2,f_2)A(x_3,f_3)^{\ast}A(x_4,f_4)\cdots A(x_{2n+1},f_{2n+1})^{\ast}A(x_{2n+2},f_{2n+2}))\\
                             &=n^{n+1}\delta_{\alpha_1,\alpha_2}\delta_{\beta_2,\beta_3}\delta_{\alpha_3,\alpha_4}\cdots \delta_{\alpha_{2n+1},\alpha_{2n+2}}\delta_{\beta_{2n+2},\beta_1}
\varphi\Big(\pi_{{\rm red}}\left(1\otimes \left(\overline{\xi_1}\xi_2\cdots \overline{\xi_{2n+1}}\xi_{2n+2}\right)\right)\cdot \prod_{k=1}^{n+1}(-\Delta_{\HH})^{-\frac12}M_{\bar{f}_{2k-1}f_{2k}}(-\Delta_{\HH})^{-\frac12}\Big).
\end{align*}
Finally, by Lemma \ref{connes product lemma}, we have
\begin{align*}
    \varphi(A(x_1,f_1)^{\ast}&A(x_2,f_2)A(x_3,f_3)^{\ast}A(x_4,f_4)\cdots A(x_{2n+1},f_{2n+1})^{\ast}A(x_{2n+2},f_{2n+2}))\\
        &=c_n\delta_{\alpha_1,\alpha_2}\delta_{\beta_2,\beta_3}\delta_{\alpha_3,\alpha_4}\cdots \delta_{\alpha_{2n+1},\alpha_{2n+2}}\delta_{\beta_{2n+2},\beta_1}\Sigma(\overline{\xi_1}\xi_2\cdots \overline{\xi_{2n+2}}\xi_{2n+2})\int_{\mathbb{H}^n}\bar{f}_1f_2\bar{f}_3f_4\cdots\bar{f}_{2n+1}f_{2n+2}.
\end{align*}
Note that
\begin{align*}
    ({\rm Tr}\otimes \Sigma)(x_1^*x_2\cdots x_{2n+1}^*x_{2n+2}) &= ({\rm Tr}\otimes \Sigma)(E_{\alpha_1,\beta_1}^*E_{\alpha_2,\beta_2}\cdots E_{\alpha_{2n+1},\beta_{2n+1}}^*E_{\alpha_{2n+2},\beta_{2n+2}}\otimes \overline{\xi_1}\xi_2\cdots \overline{\xi_{2n+1}}\xi_{2n+2})\\
                                                                &= {\rm Tr}(E_{\alpha_1,\beta_1}^*E_{\alpha_2,\beta_2}\cdots E_{\alpha_{2n+1},\beta_{2n+1}}^*E_{\alpha_{2n+2},\beta_{2n+2}})\Sigma(\overline{\xi_1}\xi_2\cdots \overline{\xi_{2n+1}}\xi_{2n+2})\\
                                                                &= {\rm Tr}(E_{\beta_1,\alpha_1}E_{\alpha_2,\beta_2}\cdots E_{\beta_{2n+1},\alpha_{2n+1}}E_{\alpha_{2n+2},\beta_{2n+2}})\Sigma(\overline{\xi_1}\xi_2\cdots \overline{\xi_{2n+1}}\xi_{2n+2})\\
                                                                &= \delta_{\alpha_1,\alpha_2}\delta_{\beta_2,\beta_3}\delta_{\alpha_3,\alpha_4}\cdots \delta_{\alpha_{2n+1},\alpha_{2n+2}}\delta_{\beta_{2n+2},\beta_1}\Sigma(\overline{\xi_1}\xi_2\cdots \overline{\xi_{2n+1}}\xi_{2n+2}).
\end{align*}
This exactly coincides with the right hand side and the assertion follows.
\end{proof}

\begin{proof}[Proof of Corollary \ref{connes product corollary}]
Since $2n+2$ is even, the absolute value
\[
    \left|\sum_{k=0}^m A(x_k,f_k)\right|^{2n+2}
\]
may be written as a polynomial in the variables $\{A(x_k,f_k),A(x_k,f_k)^*\}_{k=0}^m.$ More specifically, we write
\begin{align*}
\bigg|\sum_{k=0}^mA(x_k,f_k)\bigg|^{2n+2}
=\sum_{F:\{1,\cdots,2n+2\}\to{\{0,\cdots,m\}}}\prod_{l=1}^{n+1}A(x_{F(2l-1)},f_{F(2l-1)})^\ast A(x_{F(2l)},f_{F(2l)}).
\end{align*}
Here, the summation is taken over all functions $F$ from the set $\{1,2,\ldots,2n+2\}$ to $\{0,1,2,\ldots,m\}.$
Since $\varphi$ is linear,
\begin{align*}
\varphi\bigg(\bigg|\sum_{k=0}^mA(x_k,f_k)\bigg|^{2n+2}\bigg)
=\sum_{F:\{1,\cdots,2n+2\}\to{\{0,\cdots,m\}}}\varphi\bigg(\prod_{l=1}^{n+1}A(x_{F(2l-1)},f_{F(2l-1)})^\ast A(x_{F(2l)},f_{F(2l)})\bigg).
\end{align*}
By Theorem \ref{connes product theorem}, there is a constant $c_n>0$ such that
\begin{align*}
\varphi\bigg(\bigg|\sum_{k=0}^mA(x_k,f_k)\bigg|^{2n+2}\bigg)
&=c_n\sum_{F:\{1,\cdots,2n+2\}\to{\{0,\cdots,m\}}}\int_{\mathbb{H}^n}\prod_{l=1}^{n+1}\bar{f}_{F(2l-1)}f_{F(2l)}\cdot({\rm Tr}\otimes\Sigma)\bigg(\prod_{l=1}^{n+1}x_{F(2l-1)}^{\ast}x_{F(2l)}\bigg)\\
&=c_n\bigg(\int_{\mathbb{H}^n}\otimes{\rm Tr}\otimes\Sigma\bigg)\bigg(\sum_{F:\{1,\cdots,2n+2\}\to{\{0,\cdots,m\}}}\prod_{l=1}^{n+1}\bar{f}_{F(2l-1)}f_{F(2l)}\otimes \prod_{l=1}^{n+1}x_{F(2l-1)}^{\ast}x_{F(2l)}\bigg).
\end{align*}
For brevity, we denote
$B(x,f):=f\otimes x.$
Then it is immediate that
\begin{align*}
\sum_{F:\{1,\cdots,2n+2\}\to{\{0,\cdots,m\}}}\prod_{l=1}^{n+1}\bar{f}_{F(2l-1)}f_{F(2l)}\otimes \prod_{l=1}^{n+1}x_{F(2l-1)}^{\ast}x_{F(2l)}&=\sum_{F:\{1,\cdots,2n+2\}\to{\{0,\cdots,m\}}}\prod_{l=1}^{n+1}B(x_{F(2l-1)},f_{F(2l-1)})^{\ast}B(x_{F(2l)},f_{F(2l)})\\
&=\bigg|\sum_{k=0}^mB(x_k,f_k)\bigg|^{2n+2}.
\end{align*}
Thus,
$$\varphi\bigg(\bigg|\sum_{k=0}^mA(x_k,f_k)\bigg|^{2n+2}\bigg)=c_n\bigg(\int_{\mathbb{H}^n}\otimes{\rm Tr}\otimes\Sigma\bigg)\bigg(\bigg|\sum_{k=0}^mB(x_k,f_k)\bigg|^{2n+2}\bigg)$$
as required.
\end{proof}

\section{Approximation and trace formula for Riesz transform commutator}\label{Approximation and trace formula for Riesz transform commutator}
\setcounter{equation}{0}
\subsection{An approximation for $[R_\ell,M_f]$}
In this section, we first establish an approximation for $[R_\ell,M_f]$ up to a negligible error term such that the main term can be written as
$$M_{X_\ell f}(-\Delta_{\HH})^{-\frac12}+\sum_{k=1}^{2n}R_\ell a_kM_{X_kf}(-\Delta_{\HH})^{-\frac12}.$$
To be more explicit, we will prove the following proposition:
\begin{prop}\label{approximation theorem}
For every $f\in C_c^\infty(\HH^n)$ and $\ell\in\{1,2,\cdots,2n\}$, there exists $E\in (\mathcal{L}_{2n+2,\infty})_0$ such that
$$[R_\ell,M_f]= M_{X_\ell f}(-\Delta_{\HH})^{-\frac12}+\sum_{k=1}^{2n}R_\ell a_kM_{X_kf}(-\Delta_{\HH})^{-\frac12}+E,$$
where
\begin{align}\label{defofak}
a_k=T^{-\Delta_{\HH},-\Delta_{\HH}}_{\psi}\Big((-\Delta_{\HH})^{-1/4}X_k(-\Delta_{\HH})^{-1/4}\Big).
\end{align}
Here, the symbol is $\psi(\alpha_0,\alpha_1) := \frac{2\alpha_0^{\frac14}\alpha_1^{\frac14}}{\alpha_0^{\frac12}+\alpha_1^{\frac12}},\; \alpha_0,\alpha_1>0$ and this double operator
integral is bounded on $\mathcal{B}(L_2(\HH^n)),$ and also restricts to a bounded operator on $\mathrm{VN}(\HH^n)$ \cite[Lemma 9]{PS-crelle}.
%with the definition give in \eqref{double integral}.
\end{prop}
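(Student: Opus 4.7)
The natural starting point is the Leibniz decomposition used in Proposition~\ref{sufficiency theorem}:
\[
[R_\ell,M_f] = M_{X_\ell f}(-\Delta_\HH)^{-1/2} - R_\ell\,[(-\Delta_\HH)^{1/2},M_f](-\Delta_\HH)^{-1/2}.
\]
Since $R_\ell$ is bounded and $(\mathcal{L}_{2n+2,\infty})_0$ is a closed two-sided ideal, it suffices to prove
\[
-[(-\Delta_\HH)^{1/2},M_f](-\Delta_\HH)^{-1/2} \;\equiv\; \sum_{k=1}^{2n} a_k M_{X_kf}(-\Delta_\HH)^{-1/2} \pmod{(\mathcal{L}_{2n+2,\infty})_0}.
\]

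Set $B := -\Delta_\HH$, $D := B^{1/2}$, and $T_\phi := T^{B,B}_\phi$. The divided-difference DOI formula applied to $g(x)=x^{1/2}$ gives $[D,M_f] = T_\phi([B,M_f])$ with $\phi(\lambda,\mu)=(\lambda^{1/2}+\mu^{1/2})^{-1}$, while Leibniz on $B=-\sum_k X_k^2$ yields $[B,M_f] = -\sum_{k=1}^{2n}(X_k M_{X_kf} + M_{X_kf} X_k)$. The identity $\lambda^{-1/4}\psi(\lambda,\mu)\mu^{-1/4} = 2\phi(\lambda,\mu)$ allows the factors $B^{-1/4}$ to be absorbed into the DOI, rewriting $a_k = 2\,T_\phi(X_k)$. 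Writing $g:=X_kf$ and using $[X_k,M_g]=M_{X_kg}$, so that $X_k M_g + M_g X_k = 2X_k M_g - M_{X_kg}$, the target reduces term-by-term to the two inclusions
\[
T_\phi(X_k M_g)\,D^{-1} - T_\phi(X_k)\,M_g\,D^{-1} \in (\mathcal{L}_{2n+2,\infty})_0, \qquad T_\phi(M_{X_kg})\,D^{-1} \in (\mathcal{L}_{2n+2,\infty})_0.
\]

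The second inclusion follows from the pull-out identity $T_\phi(Y)\,D^{-1} = T_\phi(YD^{-1})$, which gives $T_\phi(M_{X_kg})\,D^{-1} = T_\phi(M_{X_kg}D^{-1})$. Because $M_{X_kg}D^{-1}\in\mathcal{L}_{2n+2,\infty}$ by Corollary~\ref{specific_cwikel}, and because the DOI with symbol $\phi$ imports an additional order $D^{-1}$ on its argument (morally $T_\phi(Z)\sim D^{-1}Z$ when $Z$ commutes with $B$), this output lies in $\mathcal{L}_{n+1,\infty}\subset(\mathcal{L}_{2n+2,\infty})_0$. The rigorous execution uses the $\mathcal{L}_{p,\infty}$-boundedness of the DOI with symbol $\phi$ from~\cite[Lemma 9]{PS-crelle} together with Lemma~\ref{cwikel-like commutator lemma} to absorb commutator remainders between $M_{X_kg}$ and $D^{-1}$.

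The first inclusion is the principal obstacle. The discrepancy $T_\phi(X_k M_g)-T_\phi(X_k)M_g$ measures the failure of $M_g$ to commute with the spectral data of $B$ on the right, and is naturally recast as a triple operator integral with kernel $(\phi(\lambda,\mu)-\phi(\lambda,\nu))/(\mu-\nu)$ applied to $X_k\otimes[B,M_g]$. Expanding $[B,M_g]=-\sum_j(X_jM_{X_jg}+M_{X_jg}X_j)$ and invoking once more the $\mathcal{L}_{p,\infty}$-boundedness of the relevant DOIs, Corollary~\ref{specific_cwikel}, and Lemma~\ref{cwikel-like commutator lemma}, every resulting error term is absorbed into $(\mathcal{L}_{2n+2,\infty})_0$. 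The main difficulty lies in the careful operator-ideal bookkeeping required at this step rather than in any new conceptual device, and it is precisely the choice of the symbol $\psi$ that makes the symmetrisation $a_k = 2T_\phi(X_k)$ compatible with pushing $M_g$ to the right.
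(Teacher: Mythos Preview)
Your outline captures the correct reduction and the key algebraic identity $a_k=2T^{B,B}_\phi(X_k)$ with $\phi(\lambda,\mu)=(\lambda^{1/2}+\mu^{1/2})^{-1}$, but there is a genuine gap in the execution that is not mere bookkeeping. The function $g(x)=x^{1/2}$ is not Lipschitz on $[0,\infty)$, and $0\in\sigma(-\Delta_{\HH})$, so the symbol $\phi$ is \emph{unbounded} near the origin; the DOI $T^{B,B}_\phi$ is therefore not bounded on $\mathcal{B}(L_2)$, and the expressions $T_\phi([B,M_f])$, $T_\phi(X_k)$, $T_\phi(X_kM_g)$ are applied to \emph{unbounded} operators. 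Neither the divided-difference identity $[D,M_f]=T_\phi([B,M_f])$ nor the pull-out manipulations you invoke are covered by the standard DOI framework in this regime, and your treatment of the ``principal obstacle'' via a triple operator integral is only a sketch: you would need to supply boundedness of the relevant trilinear transformer on the correct weak ideals, with unbounded entries, which is exactly the hard part.

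The paper's proof is organised precisely to avoid these issues. It first perturbs $-\Delta_{\HH}$ to $1-\Delta_{\HH}$ (Lemma~\ref{pertubationlemma0}), making all resolvents bounded; it then uses the explicit representation $(1-\Delta_{\HH})^{1/2}u=\frac{1}{\pi}\int_{\mathbb R}\frac{1-\Delta_{\HH}}{1+\lambda^2-\Delta_{\HH}}u\,d\lambda$ (Lemma~\ref{preli1}) and iterates the commutator identity at the resolvent level (Lemmas~\ref{commutator1}--\ref{commutator2}) to produce a main term $-\sum_k A_kM_{X_kf}(-\Delta_{\HH})^{-1/2}$ with $A_k$ an explicit $\lambda$-integral, plus three error integrals $\mathcal{E}_1,\mathcal{E}_2,\mathcal{E}_3$ that are bounded \emph{pointwise in $\lambda$} in $\mathcal{L}_{2n+1,\infty}$ or $\mathcal{L}_{n+1,\infty}$ via Cwikel estimates (Lemmas~\ref{appro3}--\ref{appro1}). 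Only then is $A_k$ identified as the DOI $T^{1-\Delta_{\HH},1-\Delta_{\HH}}_\psi((1-\Delta_{\HH})^{-1/4}X_k(1-\Delta_{\HH})^{-1/4})$ (Lemma~\ref{welldefine}), and a second perturbation argument (Lemmas~\ref{incursionlemma}--\ref{ak def lemma}) shows $A_k-a_k\in L_{n+1,\infty}({\rm VN}(\HH^n))$. Your route could in principle be salvaged---for instance, inclusion (B) can be made rigorous by writing $T_\phi(M_{X_kg})D^{-1}=T^{D,D}_{\lambda/(\lambda+\mu)}(D^{-1}M_{X_kg}D^{-1})$ and invoking Lemma~\ref{fraction schur lemma}---but inclusion (A) and the very definition of $T_\phi$ on unbounded inputs require exactly the regularisation that the paper performs.
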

\begin{remark}
The above form is significant for us in the next section to find an appropriate sequence $\{f_k^{\ell}\}_{k=0}^{2n}\subset L_p(\mathbb{H}^n)$ and $\{y_k^{\ell}\}_{k=0}^{2n}\subset L_p(\mathcal{B}(L_2(\mathbb{R}^n))\overline{\otimes}\mathbb{C}^2,{\rm Tr}\otimes\Sigma)$ such that the main term can be further written as (up to a negligible error term)
$$\sum_{k=0}^{2n}A(y_k^{\ell},f_k^{\ell})^{\ast},\ 1\leq \ell\leq 2n,$$
 where $A(x,f)$ is the mapping defined in Section \ref{tracelemmasection}.
 \end{remark}

To establish the approximation in Proposition \ref{approximation theorem}, we first note that by equality \eqref{Rieszdecompose}, we have
\begin{align*}
    [R_\ell,M_f]= M_{X_\ell f}(-\Delta_\HH)^{-\frac12}-R_\ell[(-\Delta_\HH)^{\frac12},M_f](-\Delta_\HH)^{-\frac12}.
\end{align*}
Then, we will use a perturbation argument (in Lemma \ref{pertubationlemma0} below) to show that
$$R_{\ell}[(1-\Delta_{\HH})^{\frac12},M_f](-\Delta_{\HH})^{-\frac12}-R_{\ell}[(-\Delta_{\HH})^{\frac12},M_f](-\Delta_{\HH})^{-\frac12}\in(\mathcal{L}_{2n+2,\infty})_0.$$
Our next step is to prove that
\begin{align}\label{expression via Ak}
R_{\ell}[(1-\Delta_\HH)^{\frac12},M_f](-\Delta_\HH)^{-\frac12}=-\sum_{k=1}^{2n}R_\ell A_kM_{X_kf}(-\Delta_{\HH})^{-\frac12}+\mathcal{E}_0,
\end{align}
%modulo a small term in $(\mathcal{L}_{2n+2,\infty})_0$,
where $\mathcal{E}_0\in (\mathcal{L}_{2n+2,\infty})_0$ and
\begin{align}\label{defofAk}
A_k:=\frac{2}{\pi}\int_{\mathbb{R}}\frac1{1+\lambda^2-\Delta_{\HH}}X_k\frac{\lambda^2 d\lambda}{1+\lambda^2-\Delta_{\HH}}.
\end{align}
The above steps will be presented in Lemmas \ref{preli1}-\ref{appro1} and be summarized in Corollary \ref{summuarize} with more details.
Then, it remains to show that \eqref{expression via Ak}  holds with the operator $A_k$ replaced by $a_k$ as in \eqref{defofak}  such that one can find $\{x_k\}_{k=0}^{2n}\subset \mathcal{B}(L_2(\mathbb{R}^n))\overline{\otimes} \mathbb{C}^2$ such that $a_k=\pi_{{\rm red}}(x_k)$. This will be given in the rest of this subsection.

Now we present our argument in more details by beginning with a perturbation argument.
\begin{lem}\label{pertubationlemma0}
For any $f\in C_c^{\infty}(\HH^n)$ and $\ell\in\{1,2,\cdots,2n\}$,  we have
$$R_\ell[(1-\Delta_{\HH})^{\frac12},M_f](-\Delta_{\HH})^{-\frac12}-R_\ell[(-\Delta_{\HH})^{\frac12},M_f](-\Delta_{\HH})^{-\frac12}\in(\mathcal{L}_{2n+2,\infty})_0.$$
\end{lem}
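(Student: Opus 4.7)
The plan is to reduce the claim to the Cwikel-type estimate of Lemma \ref{L_p_cwikel} together with the commutator estimate of Lemma \ref{cwikel-like commutator lemma}. Abbreviate $B:=(1-\Delta_{\HH})^{1/2}$ and $C:=(-\Delta_{\HH})^{1/2}$. Since $R_\ell$ is bounded on $L_2(\HH^n)$, it suffices to prove
\[
    [B-C,M_f]\,C^{-1}\in(\mathcal{L}_{2n+2,\infty})_0.
\]
The useful observation is that $B-C=h(-\Delta_{\HH})$ with $h(s)=(\sqrt{1+s}+\sqrt{s})^{-1}\leq 1$, so $B-C$ is a bounded element of ${\rm VN}(\HH^n)$; moreover $B-C$ and $C^{-1}$ mutually commute as functions of $-\Delta_{\HH}$.

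The key auxiliary object is the (unbounded) operator $G:=(B-C)C^{-1}=g(-\Delta_{\HH})$ affiliated with ${\rm VN}(\HH^n)$, where
\[
    g(s)=\frac{1}{\sqrt{s}\,(\sqrt{1+s}+\sqrt{s})}.
\]
An easy asymptotic analysis shows $g(s)\sim s^{-1/2}$ as $s\to 0$ and $g(s)\sim (2s)^{-1}$ as $s\to\infty$, placing $g\in L_p(\mathbb{R}_+,s^n\,ds)$ for every $n+1<p<2n+2$. Using that $B-C$ commutes with $C^{-1}$, a short algebraic computation on the dense domain of $C^{-1}$ yields the identity
\[
    [B-C,M_f]\,C^{-1}=(B-C)\,[M_f,C^{-1}]+[G,M_f].
\]

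I would then handle the two summands separately. The first lies in $(\mathcal{L}_{2n+2,\infty})_0$ because $B-C$ is bounded, $[M_f,C^{-1}]\in(\mathcal{L}_{2n+2,\infty})_0$ by Lemma \ref{cwikel-like commutator lemma}, and $(\mathcal{L}_{2n+2,\infty})_0$ is a two-sided ideal. For the second, fix any $p\in(n+1,2n+2)$ (automatically $p>2$ since $n\geq 1$, so Lemma \ref{L_p_cwikel} applies). Combining Lemma \ref{L_p_cwikel} with $f\in C_c^\infty(\HH^n)\subset L_p(\HH^n)$ and $g\in L_p(\mathbb{R}_+,s^n\,ds)\subset L_{p,\infty}(\mathbb{R}_+,s^n\,ds)$ gives $M_fG\in\mathcal{L}_{p,\infty}$; the adjoint argument (valid since $g$ is real-valued) yields $GM_f\in\mathcal{L}_{p,\infty}$ as well. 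Since $\mathcal{L}_{p,\infty}\subset(\mathcal{L}_{2n+2,\infty})_0$ whenever $p<2n+2$, both operators lie in $(\mathcal{L}_{2n+2,\infty})_0$, and hence so does $[G,M_f]$.

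The only real subtlety is the unboundedness of $G$. This is harmless: the algebraic identity is first verified on the dense domain of $C^{-1}$ by elementary manipulations, and the Cwikel estimates above guarantee that both sides extend uniquely to bounded operators lying in strictly smaller Schatten--Lorentz ideals than $\mathcal{L}_{2n+2,\infty}$, so the identity propagates globally and delivers the desired inclusion.
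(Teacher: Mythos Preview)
Your argument is correct. Both you and the paper reduce to the observation that $(1-\Delta_{\HH})^{1/2}-(-\Delta_{\HH})^{1/2}=((1-\Delta_{\HH})^{1/2}+(-\Delta_{\HH})^{1/2})^{-1}$ is bounded and that $G=(B-C)C^{-1}=g(-\Delta_{\HH})$ lies in the right Cwikel class, but the regroupings differ. The paper expands $[B-C,M_f]C^{-1}$ directly as $(B-C)M_fC^{-1}-M_fG$ and estimates each piece separately via Corollary~\ref{specific_cwikel}: the first term is pushed into $\mathcal{L}_{n+1,\infty}$ by writing it as a bounded operator times $(-\Delta_{\HH})^{-1/2}M_f(-\Delta_{\HH})^{-1/2}$, and the second into $\mathcal{L}_{2n+1,\infty}$ by the Cwikel estimate with $\beta=\tfrac{2n+2}{2n+1}$. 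You instead regroup as $(B-C)[M_f,C^{-1}]+[G,M_f]$, which lets you invoke the already-established commutator Lemma~\ref{cwikel-like commutator lemma} for the first term and the plain Cwikel Lemma~\ref{L_p_cwikel} (with an integrability check on $g$) for both halves of the second. Your decomposition is perhaps tidier in that it recycles Lemma~\ref{cwikel-like commutator lemma} rather than redoing a H\"older-type splitting; the paper's version avoids any discussion of the unbounded $G$ by never writing $GM_f$ on its own. Either way the substance is the same Cwikel mechanism, and the integrability window $n+1<p<2n+2$ for $g$ that you identify matches exactly the exponents $n+1$ and $2n+1$ appearing in the paper's two estimates.
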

\begin{proof}
Since $R_\ell$ is bounded on $L_2(\HH^{n})$, it suffices to show that
$$[(1-\Delta_{\HH})^{\frac12},M_f](-\Delta_{\HH})^{-\frac12}-[(-\Delta_{\HH})^{\frac12},M_f](-\Delta_{\HH})^{-\frac12}\in(\mathcal{L}_{2n+2,\infty})_0.$$
To see this we write
\begin{align}\label{dfdfss}
&[(1-\Delta_{\HH})^{\frac12},M_f](-\Delta_{\HH})^{-\frac12}-[(-\Delta_{\HH})^{\frac12},M_f](-\Delta_{\HH})^{-\frac12}\nonumber\\
&=((1-\Delta_{\HH})^{\frac12}-(-\Delta_{\HH})^{\frac12})M_f(-\Delta_{\HH})^{-\frac12}-M_f((1-\Delta_{\HH})^{\frac12}-(-\Delta_{\HH})^{\frac12})(-\Delta_{\HH})^{-\frac12}\nonumber\\
&=((1-\Delta_{\HH})^{\frac12}+(-\Delta_{\HH})^{\frac12})^{-1}M_f(-\Delta_{\HH})^{-\frac12}-M_f((1-\Delta_{\HH})^{\frac12}+(-\Delta_{\HH})^{\frac12})^{-1}(-\Delta_{\HH})^{-\frac12}.
\end{align}
For the first term in the right-hand side of \eqref{dfdfss}, we conclude that
\begin{align*}
\Big\|((1-\Delta_{\HH})^{\frac12}+(-\Delta_{\HH})^{\frac12})^{-1}M_f(-\Delta_{\HH})^{-\frac12}\Big\|_{\mathcal{L}_{n+1,\infty}}&=\Big\|\Big(((1-\Delta_{\HH})^{\frac12}+(-\Delta_{\HH})^{\frac12})^{-1}(-\Delta_{\HH})^{\frac12} \Big)\,  (-\Delta_{\HH})^{-\frac12}M_f(-\Delta_{\HH})^{-\frac12}\Big\|_{\mathcal{L}_{n+1,\infty}}\\
&\leq\Big\|(-\Delta_{\HH})^{-\frac12}M_f(-\Delta_{\HH})^{-\frac12}\Big\|_{\mathcal{L}_{n+1,\infty}}\\
&\lesssim\|M_{|f|^{\frac12}}(-\Delta_{\HH})^{-\frac12}\|_{\mathcal{L}_{2n+2,\infty}}^2\lesssim\||f|^{\frac12}\|_{L_{2n+2}(\HH^n)}^2=c_n\|f\|_{L_{n+1}(\HH^n)},
\end{align*}
where the second inequality follows from H\"older's inequality and the last follows from Corollary \ref{specific_cwikel}.
For the second term in the right-hand side of \eqref{dfdfss}, we have
 that
\begin{align*}
\Big\|M_f((1-\Delta_{\HH})^{\frac12}+(-\Delta_{\HH})^{\frac12})^{-1}(-\Delta_{\HH})^{-\frac12}\Big\|_{\mathcal{L}_{2n+1,\infty}}
&=\Big\|M_f \, (-\Delta_{\HH})^{-\frac{n+1}{2n+1}}\ \Big( (-\Delta_{\HH})^{\frac{n+1}{2n+1}} ((1-\Delta_{\HH})^{\frac12}+(-\Delta_{\HH})^{\frac12})^{-1}(-\Delta_{\HH})^{-\frac12} \Big)\Big\|_{\mathcal{L}_{2n+1,\infty}}\\
&\leq\Big\|M_f(-\Delta_{\HH})^{-\frac{n+1}{2n+1}}\Big\|_{\mathcal{L}_{2n+1,\infty}}\leq c_n\|f\|_{L_{2n+1}(\HH^n)},
\end{align*}
where first inequality follows from H\"older's inequality and the last from Corollary \ref{specific_cwikel} with $\beta$ being chosen to be $\frac{2n+2}{2n+1}$.

Combining the above two inequalities with equality \eqref{dfdfss} and taking into account that $\mathcal{L}_{2n+1,\infty}\subset (\mathcal{L}_{2n+2,\infty})_0$ and $\mathcal{L}_{n+1,\infty}\subset (\mathcal{L}_{2n+2,\infty})_0$, we finishes the proof of Lemma \ref{pertubationlemma0}.
\end{proof}

\begin{lem}\label{preli1}
For any $f\in C_c^{\infty}(\HH^n)$ and for all $u\in \mathcal{S}(\HH^n)$ we have
\begin{align}\label{preequ}
[(1-\Delta_{\HH})^{\frac12},M_f](-\Delta_{\HH})^{-\frac12}u=-\frac1{\pi}\int_{\mathbb{R}}\Big[\frac1{1+\lambda^2-\Delta_{\HH}},M_f\Big](-\Delta_{\HH})^{-\frac12}u\cdot \lambda^2d\lambda.
\end{align}
The integral here is understood as a Bochner integral in the space $L_2(\HH^n).$
\end{lem}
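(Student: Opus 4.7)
The proof rests on the elementary scalar representation
\[
\sqrt{1+t}=\frac{2}{\pi}\int_0^\infty\frac{1+t}{1+t+\lambda^2}\,d\lambda,\qquad t\geq 0,
\]
applied via the spectral theorem to the non-negative self-adjoint operator $-\Delta_{\HH}$. Dominated convergence in the spectral measure yields, for every $\phi\in\mathrm{Dom}((1-\Delta_{\HH})^{1/2})=H^1(\HH^n)$, the strong $L_2$-limit
\[
(1-\Delta_{\HH})^{1/2}\phi=\lim_{R\to\infty}\frac{2}{\pi}\int_0^R\frac{1-\Delta_{\HH}}{1+\lambda^2-\Delta_{\HH}}\phi\,d\lambda.
\]
I would apply this to both $\phi=v:=(-\Delta_{\HH})^{-1/2}u$ and $\phi=M_fv$, subtract, and interchange the commutator with the integral to express $[(1-\Delta_{\HH})^{1/2},M_f]v$ as a strong-$L_2$ limit of truncated integrals. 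That both $v$ and $M_fv$ lie in $H^1(\HH^n)$ follows for $v$ from $(-\Delta_{\HH})^{1/2}v=u\in L_2$ together with $v\in L_{(2n+2)/n}$ by Hardy--Littlewood--Sobolev on $\HH^n$, and for $M_fv$ from $f\in C_c^\infty(\HH^n)$ and local smoothness of $v$.

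Next I would substitute the algebraic identity $\frac{1-\Delta_{\HH}}{1+\lambda^2-\Delta_{\HH}}=1-\lambda^2(1+\lambda^2-\Delta_{\HH})^{-1}$. The identity operator commutes with $M_f$ and so drops out of the commutator, giving
\[
\Big[(1-\Delta_{\HH})^{1/2},M_f\Big]v=-\lim_{R\to\infty}\frac{2}{\pi}\int_0^R\Big[\frac{1}{1+\lambda^2-\Delta_{\HH}},M_f\Big]v\cdot\lambda^2\,d\lambda,
\]
and evenness of the integrand in $\lambda$ symmetrizes this to $-\frac{1}{\pi}\int_{\mathbb{R}}\cdots d\lambda$, matching the claimed identity.

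The final step is to upgrade the improper integral to a Bochner integral in $L_2(\HH^n)$, i.e.\ to bound the integrand uniformly by an $L^1(\mathbb{R},d\lambda)$ function of $\lambda$. Using $[A^{-1},M_f]=A^{-1}[\Delta_{\HH},M_f]A^{-1}$ with $A=1+\lambda^2-\Delta_{\HH}$, the integrand factors as $\bigl(\lambda^2 A^{-1}\bigr)\cdot[\Delta_{\HH},M_f]\cdot A^{-1}v$. The first factor is a uniform contraction on $L_2(\HH^n)$ since $\sup_{t\geq 0}\lambda^2/(1+\lambda^2+t)\leq 1$. Expanding $[\Delta_{\HH},M_f]=M_{\Delta_{\HH}f}+2\sum_k M_{X_k f}X_k$, the zero-order piece is bounded in $L_2$ by $\|\Delta_{\HH}f\|_\infty \|A^{-1}v\|_{L_2(\mathrm{supp}\,f)}\lesssim (1+\lambda^2)^{-1}\|u\|_{L_2}$ using a local version of Hardy--Littlewood--Sobolev for $(-\Delta_{\HH})^{-1/2}$, while the first-order piece exploits the crucial commutation
\[
X_k A^{-1}v = X_k(-\Delta_{\HH})^{-1/2}A^{-1}u = R_k A^{-1}u,
\]
whose $L_2$-norm is $\leq \|R_k\|_{2\to 2}(1+\lambda^2)^{-1}\|u\|_{L_2}$. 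Combining these yields $\|\mathrm{integrand}\|_{L_2(\HH^n)}\lesssim_{f,u}(1+\lambda^2)^{-1}$, which is integrable on $\mathbb{R}$ and upgrades the symmetric limit to a Bochner integral.

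\textbf{Main obstacle.} A naive operator-norm estimate of the integrand decays only like $\lambda^{-1}$ at infinity, failing Bochner integrability logarithmically. The extra half-derivative of smoothness required to achieve absolute convergence comes precisely from the special form $v=(-\Delta_{\HH})^{-1/2}u$: the identity $X_k(-\Delta_{\HH})^{-1/2}=R_k$ converts the unbounded derivative $X_k$ into the $L_2$-bounded Riesz transform, and this gain is what makes the whole scheme work. A secondary technical point is the justification of the spectral-theorem interchange at the first step, which is routine by dominated convergence in the spectral measure once domain membership has been checked.
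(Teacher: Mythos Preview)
Your approach is correct and essentially the same as the paper's: both use the spectral integral representation of $(1-\Delta_{\HH})^{1/2}$ together with the algebraic identity $\frac{1-\Delta_{\HH}}{1+\lambda^2-\Delta_{\HH}}=1-\frac{\lambda^2}{1+\lambda^2-\Delta_{\HH}}$ to reduce the commutator to the resolvent commutator. You supply considerably more detail on the Bochner integrability step than the paper does (in particular your use of $X_k(-\Delta_{\HH})^{-1/2}=R_k$ to recover the missing decay is exactly the right mechanism), and your choice to work directly with $v=(-\Delta_{\HH})^{-1/2}u$ rather than with a generic Schwartz vector is arguably cleaner, since $v\notin\mathcal{S}(\HH^n)$ in general.
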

\begin{proof}
We begin with the integral formula
$$(1-\Delta_{\HH})^{-\frac12}=\frac{1}{\pi}\int_{\mathbb{R}}\frac{d\lambda}{1+\lambda^2-\Delta_{\HH}}.$$
Here, the integral is interpreted in the sense that for every $u \in \mathcal{S}(\mathbb{H}^n)$ we have
\[
    (1-\Delta_{\HH})^{-\frac12}u = \frac{1}{\pi}\int_{\mathbb{R}}(1+\lambda^2-\Delta_{\HH})^{-1}u\,d\lambda
\]
and the integral converges as an $L_2(\HH^n)$-valued Bochner integral. Replacing $u$ with $(1-\Delta_{\HH})u,$ it follows that
$$(1-\Delta_{\HH})^{\frac12}u=\frac1{\pi}\int_{\mathbb{R}}\frac{1-\Delta_{\HH}}{1+\lambda^2-\Delta_{\HH}}ud\lambda,$$
where again the integral converges in the $L_2(\HH^n)$-valued Bochner sense.
A direct calculation yields
$$\frac{1-\Delta_{\HH}}{1+\lambda^2-\Delta_{\HH}}=1-\frac{\lambda^2}{1+\lambda^2-\Delta_{\HH}}.$$
Hence,
\begin{align}\label{rhs111}
[(1-\Delta_{\HH})^{\frac12},M_f]u=-\frac1{\pi}\int_{\mathbb{R}}\Big[\frac1{1+\lambda^2-\Delta_{\HH}},M_f\Big]u\cdot \lambda^2d\lambda,
\end{align}
where as before, the integral is convergent in the $L_2(\HH^n)$-valued Bochner sense for any $u \in \mathcal{S}(\HH^n).$
\end{proof}

To continue, we calculate the commutator in the integrand in the right hand side of \eqref{rhs111}.
\begin{lem}\label{commutator1}
For any $f\in C_c^{\infty}(\HH^n)$ and $\lambda\in\mathbb{R}$, we have
\begin{align}\label{changeorder}
\Big[\frac1{1+\lambda^2-\Delta_{\HH}},M_f\Big]=2\sum_{k=1}^{2n}\frac1{1+\lambda^2-\Delta_{\HH}}X_kM_{X_kf}\frac1{1+\lambda^2-\Delta_{\HH}}-\frac1{1+\lambda^2-\Delta_{\HH}}M_{\Delta_{\HH} f}\frac1{1+\lambda^2-\Delta_{\HH}}.
\end{align}
\end{lem}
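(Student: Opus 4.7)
The plan is to apply the standard resolvent-commutator identity $[A^{-1},B] = -A^{-1}[A,B]A^{-1}$ (the same formula \eqref{commutator} already used in the paper) with $A := 1+\lambda^2-\Delta_{\HH}$ (a positive, invertible self-adjoint operator on $L_2(\HH^n)$) and $B := M_f$. Since $[1+\lambda^2-\Delta_{\HH},M_f] = -[\Delta_{\HH},M_f]$, this reduces the claim to
\[
    \bigl[(1+\lambda^2-\Delta_{\HH})^{-1},M_f\bigr] = (1+\lambda^2-\Delta_{\HH})^{-1}[\Delta_{\HH},M_f](1+\lambda^2-\Delta_{\HH})^{-1},
\]
and so the whole statement will follow once $[\Delta_{\HH},M_f]$ is expressed as $2\sum_{k=1}^{2n} X_kM_{X_kf} - M_{\Delta_{\HH}f}$.

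The computation of $[\Delta_{\HH},M_f]$ is a direct Leibniz calculation. Each $X_k$ is a left-invariant vector field, so acts as a derivation on smooth functions; in particular $[X_k,M_f] = M_{X_kf}$. Applying the derivation rule to $X_k^2$ yields
\[
    [X_k^2,M_f] = X_k[X_k,M_f] + [X_k,M_f]X_k = X_kM_{X_kf} + M_{X_kf}X_k.
\]
Rewriting the second term via $M_{X_kf}X_k = X_kM_{X_kf} - [X_k,M_{X_kf}] = X_kM_{X_kf} - M_{X_k^2f}$, we obtain $[X_k^2,M_f] = 2X_kM_{X_kf} - M_{X_k^2f}$. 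Summing over $k=1,\ldots,2n$ and using $\Delta_{\HH} = \sum_k X_k^2$ gives exactly
\[
    [\Delta_{\HH},M_f] = 2\sum_{k=1}^{2n} X_kM_{X_kf} - M_{\Delta_{\HH}f},
\]
and substituting this into the resolvent formula above produces the identity claimed in Lemma \ref{commutator1}.

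The only mildly delicate point is justifying these algebraic manipulations as bounded-operator identities. Since $f \in C_c^\infty(\HH^n)$, the operators $M_f$, $M_{X_kf}$, and $M_{\Delta_{\HH}f}$ preserve $\mathcal{S}(\HH^n)$, and by hypoellipticity of $1-\Delta_{\HH}$ the resolvent $(1+\lambda^2-\Delta_{\HH})^{-1}$ sends $\mathcal{S}(\HH^n)$ into itself. Hence all commutators above are well-defined on the dense core $\mathcal{S}(\HH^n)$, and since each side of \eqref{changeorder} is a bounded operator on $L_2(\HH^n)$ (the left side because both terms are bounded, the right side because $\frac{X_k}{1+\lambda^2-\Delta_{\HH}}$ and $\frac{1}{1+\lambda^2-\Delta_{\HH}}$ are bounded), agreement on $\mathcal{S}(\HH^n)$ extends to an operator identity. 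I expect no real obstacle: the argument is essentially the derivation-of-a-square plus the resolvent identity; the bookkeeping around domains is the only thing worth saying carefully.
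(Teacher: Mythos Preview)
Your proposal is correct and follows essentially the same route as the paper: apply the resolvent identity $[A^{-1},B]=-A^{-1}[A,B]A^{-1}$ with $A=1+\lambda^2-\Delta_{\HH}$, then compute $[\Delta_{\HH},M_f]=2\sum_k X_kM_{X_kf}-M_{\Delta_{\HH}f}$ via the Leibniz rule exactly as you do. The only difference is that you add a short domain justification, which the paper omits.
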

\begin{proof}
To begin with, we apply the commutator formula \eqref{commutator} to write
\begin{align}\label{aaaaa1}
\Big[\frac1{1+\lambda^2-\Delta_{\HH}},M_f\Big]=\frac1{1+\lambda^2-\Delta_{\HH}}[\Delta_{\HH},M_f]\frac1{1+\lambda^2-\Delta_{\HH}}.
\end{align}
Next we observe that
\begin{align*}
[\Delta_{\HH},M_f]&=\sum_{k=1}^{2n}[X_k^2,M_f]=\sum_{k=1}^{2n}X_k[X_k,M_f]+[X_k,M_f]X_k\\
&=\sum_{k=1}^{2n}X_kM_{X_kf}+M_{X_kf}X_k=\sum_{k=1}^{2n}2X_kM_{X_kf}-[X_k,M_{X_kf}]\\
&=\sum_{k=1}^n2X_kM_{X_kf}-\sum_{k=1}^{2n}M_{X_k^2f}=\sum_{k=1}^{2n}X_kM_{X_kf}-M_{\Delta_{\HH} f}.
\end{align*}
Combining this with \eqref{aaaaa1} yields the proof of Lemma \ref{commutator1}.
\end{proof}
By changing the order of $M_{X_k f}$ and $(1+\lambda^2-\Delta_\HH)^{-1}$ in the first term of equality \eqref{changeorder}, we deduce the following lemma.
\begin{lem}\label{commutator2}
For any $f\in C_c^{\infty}(\HH^n)$ and $\lambda\in\mathbb{R}$, we have
\begin{align*}
\Big[\frac1{1+\lambda^2-\Delta_{\HH}},M_f\Big]&=2\sum_{k=1}^{2n}\frac1{1+\lambda^2-\Delta_{\HH}}X_k\frac1{1+\lambda^2-\Delta_{\HH}}M_{X_kf}\\
&\qquad-4\sum_{k,l=1}^{2n}\frac1{1+\lambda^2-\Delta_{\HH}}X_k\frac1{1+\lambda^2-\Delta_{\HH}}X_lM_{X_lX_kf}\frac1{1+\lambda^2-\Delta_{\HH}}\\
&\qquad+2\sum_{k=1}^{2n}\frac1{1+\lambda^2-\Delta_{\HH}}X_k\frac1{1+\lambda^2-\Delta_{\HH}}M_{\Delta_{\HH} X_kf}\frac1{1+\lambda^2-\Delta_{\HH}}\\
&\qquad-\frac1{1+\lambda^2-\Delta_{\HH}}M_{\Delta_{\HH} f}\frac1{1+\lambda^2-\Delta_{\HH}}.
\end{align*}
\end{lem}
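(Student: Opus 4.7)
The plan is to start from the identity provided by Lemma \ref{commutator1} and apply a commutator-swap to its first term so as to move the multiplication operator $M_{X_k f}$ past the rightmost resolvent $(1+\lambda^2-\Delta_{\HH})^{-1}$. Concretely, I would write
\[
    M_{X_kf}\frac{1}{1+\lambda^2-\Delta_{\HH}} = \frac{1}{1+\lambda^2-\Delta_{\HH}}M_{X_kf} + \left[M_{X_kf},\frac{1}{1+\lambda^2-\Delta_{\HH}}\right],
\]
and then evaluate the residual commutator by \emph{reapplying Lemma \ref{commutator1}} with the symbol $f$ replaced by $X_kf$ (and flipping the sign).

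Applying Lemma \ref{commutator1} with $X_kf$ in place of $f$ gives
\[
    \left[\frac{1}{1+\lambda^2-\Delta_{\HH}},M_{X_kf}\right] = 2\sum_{l=1}^{2n}\frac{1}{1+\lambda^2-\Delta_{\HH}}X_lM_{X_lX_kf}\frac{1}{1+\lambda^2-\Delta_{\HH}} - \frac{1}{1+\lambda^2-\Delta_{\HH}}M_{\Delta_{\HH}X_kf}\frac{1}{1+\lambda^2-\Delta_{\HH}}.
\]
Substituting this (with a sign change) into the swap formula, then multiplying on the left by $(1+\lambda^2-\Delta_{\HH})^{-1}X_k$, yields
\begin{align*}
\frac{1}{1+\lambda^2-\Delta_{\HH}}X_kM_{X_kf}\frac{1}{1+\lambda^2-\Delta_{\HH}} &= \frac{1}{1+\lambda^2-\Delta_{\HH}}X_k\frac{1}{1+\lambda^2-\Delta_{\HH}}M_{X_kf} \\
&\quad - 2\sum_{l=1}^{2n}\frac{1}{1+\lambda^2-\Delta_{\HH}}X_k\frac{1}{1+\lambda^2-\Delta_{\HH}}X_lM_{X_lX_kf}\frac{1}{1+\lambda^2-\Delta_{\HH}} \\
&\quad + \frac{1}{1+\lambda^2-\Delta_{\HH}}X_k\frac{1}{1+\lambda^2-\Delta_{\HH}}M_{\Delta_{\HH}X_kf}\frac{1}{1+\lambda^2-\Delta_{\HH}}.
\end{align*}
Multiplying by $2$, summing over $k = 1,\ldots,2n$, and inserting the resulting expression into the first term on the right-hand side of Lemma \ref{commutator1} (while retaining the second term $-(1+\lambda^2-\Delta_{\HH})^{-1}M_{\Delta_{\HH}f}(1+\lambda^2-\Delta_{\HH})^{-1}$ verbatim) produces exactly the four-term expansion asserted in Lemma \ref{commutator2}.

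The argument is essentially algebraic, so no serious analytic obstacle is expected; one only needs to verify that each identity is valid on the dense subspace of smooth, sufficiently decaying functions so that the unbounded operators $X_k, X_lX_k, \Delta_{\HH}f$ act without domain issues. Since $f \in C_c^\infty(\HH^n)$ and $(1+\lambda^2-\Delta_{\HH})^{-1}$ maps $L_2(\HH^n)$ into the domain of any finite word in the vector fields $X_j$, each rearrangement is justified on $\mathcal{S}(\HH^n)$ and extends by boundedness. The only minor point of care is the bookkeeping of the sign and factor of $2$ when re-invoking Lemma \ref{commutator1}, which delivers the coefficients $-4$ for the double sum and $+2$ for the $M_{\Delta_{\HH}X_kf}$ term in the final formula.
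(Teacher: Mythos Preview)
Your proposal is correct and follows essentially the same route as the paper: both start from Lemma~\ref{commutator1}, commute $M_{X_kf}$ past the rightmost resolvent, and evaluate the resulting commutator by reapplying Lemma~\ref{commutator1} with $X_kf$ in place of $f$. The paper's presentation differs only cosmetically in that it writes the swap as $\frac{1}{1+\lambda^2-\Delta_{\HH}}X_k\bigl[\frac{1}{1+\lambda^2-\Delta_{\HH}},M_{X_kf}\bigr]$ directly rather than invoking the $[M_{X_kf},\cdot]$ sign convention, but the algebra and the resulting coefficients are identical.
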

\begin{proof}
We write each summand in the first term of the right-hand side of \eqref{changeorder} as
\begin{align}\label{commutator 33}
&\frac1{1+\lambda^2-\Delta_{\HH}}X_kM_{X_kf}\frac1{1+\lambda^2-\Delta_{\HH}}\\
&=\frac1{1+\lambda^2-\Delta_{\HH}}X_k\frac1{1+\lambda^2-\Delta_{\HH}}M_{X_kf}-\frac1{1+\lambda^2-\Delta_{\HH}}X_k\Big[\frac1{1+\lambda^2-\Delta_{\HH}},M_{X_kf}\Big].\nonumber
\end{align}
Applying Lemma \ref{commutator1} to the function $X_kf,$ we see that
\begin{align}\label{commutator 44}
\frac1{1+\lambda^2-\Delta_{\HH}}X_k\Big[\frac1{1+\lambda^2-\Delta_{\HH}},M_{X_kf}\Big]
&=2\sum_{l=1}^{2n}\bigg(\frac1{1+\lambda^2-\Delta_{\HH}}X_k\frac1{1+\lambda^2-\Delta_{\HH}}X_lM_{X_lX_kf}\frac1{1+\lambda^2-\Delta_{\HH}}\\
&\qquad\qquad-\frac1{1+\lambda^2-\Delta_{\HH}}X_k\frac1{1+\lambda^2-\Delta_{\HH}}M_{\Delta_{\HH} X_kf}\frac1{1+\lambda^2-\Delta_{\HH}}\bigg).\nonumber
\end{align}
Substituting \eqref{commutator 33} and \eqref{commutator 44} into the right-hand side of \eqref{changeorder}, we complete the proof.
\end{proof}

Combining Lemma \ref{preli1} and Lemma \ref{commutator2}, for any $u \in \mathcal{S}(\HH^n)$ we write
\begin{align*}
[(1-\Delta_{\HH})^{\frac12},M_f](-\Delta_{\HH})^{-\frac12}u
&=-\frac2{\pi}\sum_{k=1}^{2n}\int_{\mathbb{R}}\frac1{1+\lambda^2-\Delta_{\HH}}X_k\frac{1}{1+\lambda^2-\Delta_{\HH}} M_{X_kf}(-\Delta_{\HH})^{-\frac12}u\lambda^2d\lambda\nonumber\\
&\quad+\frac{4}{\pi}\sum_{k,l=1}^{2n}\int_{\mathbb{R}}\frac1{1+\lambda^2-\Delta_{\HH}}X_k\frac1{1+\lambda^2-\Delta_{\HH}}X_lM_{X_lX_kf}\frac1{1+\lambda^2-\Delta_{\HH}}(-\Delta_{\HH})^{-\frac12}u\lambda^2d\lambda\nonumber\\
&\quad-\frac{2}{\pi}\sum_{k=1}^{2n}\int_{\mathbb{R}}\frac1{1+\lambda^2-\Delta_{\HH}}X_k\frac1{1+\lambda^2-\Delta_{\HH}}M_{\Delta_{\HH} X_kf}\frac1{1+\lambda^2-\Delta_{\HH}}(-\Delta_{\HH})^{-\frac12}u\lambda^2d\lambda\nonumber\\
&\quad+\frac1{\pi}\int_{\mathbb{R}}\frac1{1+\lambda^2-\Delta_{\HH}}M_{\Delta_{\HH} f}\frac1{1+\lambda^2-\Delta_{\HH}}(-\Delta_{\HH})^{-\frac12}u\lambda^2d\lambda\nonumber\\
&=:\sum_{k=1}^{2n}A_{k}M_{X_kf}(-\Delta_\HH)^{-\frac{1}{2}}u+\mathcal{E}_1u+\mathcal{E}_2u+\mathcal{E}_3u,
\end{align*}
where $A_k$ is as in \eqref{defofAk}. We show in Lemma \ref{welldefine} that $A_k$ defines a bounded operator. In Lemmas \ref{appro3}--\ref{appro1} we will show that the integrals
$\mathcal{E}_1$, $\mathcal{E}_2$ and $\mathcal{E}_3$ converge in the Bochner sense in $(\mathcal{L}_{2n+2,\infty})_0$.
Granted this, it will follow that we have the equality of operators
\begin{equation}\label{keydecom}
    [(1-\Delta_{\HH})^{\frac12},M_f](-\Delta_{\HH})^{-\frac12}=\sum_{k=1}^{2n}A_{k}M_{X_kf}(-\Delta_\HH)^{-\frac{1}{2}}+\mathcal{E}_1+\mathcal{E}_2+\mathcal{E}_3.
\end{equation}

%and therefore {\color{blue}the corresponding integrals converges (in weak operator topology) as improper integral}.
Since $\mathcal{L}_{2n+1,\infty}\subset (\mathcal{L}_{2n+2,\infty})_0$, the following Lemma in fact shows a stronger statement than $\mathcal{E}_1\in (\mathcal{L}_{2n+2,\infty})_0.$

\begin{lem}\label{appro3}
 For any  $f\in C_c^{\infty}(\HH^n)$ and $k,\ell\in \{1,2,\cdots,n\}$, we have
$$\int_{\mathbb{R}}\frac1{1+\lambda^2-\Delta_{\HH}}X_k\frac1{1+\lambda^2-\Delta_{\HH}}X_lM_{X_lX_kf}\frac1{1+\lambda^2-\Delta_{\HH}}(-\Delta_{\HH})^{-\frac12}\lambda^2d\lambda\in\mathcal{L}_{2n+1,\infty}.$$
\end{lem}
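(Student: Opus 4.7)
The plan is to bound the $\mathcal{L}_{2n+1,\infty}$-quasinorm of the integrand pointwise in $\lambda$, and then integrate. Set $g := X_l X_k f \in C^\infty_c(\HH^n)$ and factor the integrand as
$$F(\lambda) := \bigl[(1+\lambda^2-\Delta_{\HH})^{-1}X_k\bigr] \cdot \bigl[(1+\lambda^2-\Delta_{\HH})^{-1}X_l\bigr] \cdot \bigl[M_g\, (1+\lambda^2-\Delta_{\HH})^{-1}(-\Delta_{\HH})^{-1/2}\bigr].$$
I would treat the first two bracketed factors as bounded operators (Factors A, B) and the third as a weak Schatten class operator built from a Cwikel-type estimate (Factor C), and then use the ideal property of $\mathcal{L}_{2n+1,\infty}$ together with integrability of the $\lambda$-weight.

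For Factors A and B, since $X_j^{\ast}=-X_j$ on $L_2(\HH^n)$ and $-X_j^2 \leq -\Delta_{\HH}$ as quadratic forms, a standard polarization gives $\|(1+\lambda^2-\Delta_{\HH})^{-1/2} X_j\|_{\mathcal{L}_{\infty}} \leq 1$, whence by functional calculus
$$\|(1+\lambda^2-\Delta_{\HH})^{-1}X_j\|_{\mathcal{L}_{\infty}} \leq \|(1+\lambda^2-\Delta_{\HH})^{-1/2}\|_{\mathcal{L}_{\infty}}\lesssim (1+\lambda^2)^{-1/2},\quad j=k,l.$$
For Factor C, set $h_\lambda(s):=(1+\lambda^2+s)^{-1}s^{-1/2}$. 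Since $g\in L_{2n+1}(\HH^n)$ and $2n+1>2$, Lemma \ref{L_p_cwikel} yields
$$\|M_g\, h_\lambda(-\Delta_{\HH})\|_{\mathcal{L}_{2n+1,\infty}} \lesssim \|g\|_{L_{2n+1}(\HH^n)}\cdot\|h_\lambda\|_{L_{2n+1}(\mathbb{R}_+,\,s^nds)}.$$
A direct calculation, after the substitution $s=(1+\lambda^2)u$ in the integral $\int_0^\infty s^{-1/2}(1+\lambda^2+s)^{-(2n+1)}ds$, shows $\|h_\lambda\|_{L_{2n+1}(s^nds)}\lesssim (1+\lambda^2)^{-(4n+1)/(4n+2)}$.

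Assembling these via the Hölder/ideal inequality in $\mathcal{L}_{2n+1,\infty}$ gives
$$\|F(\lambda)\|_{\mathcal{L}_{2n+1,\infty}} \lesssim (1+\lambda^2)^{-1-(4n+1)/(4n+2)} = (1+\lambda^2)^{-(8n+3)/(4n+2)}.$$
Hence for large $|\lambda|$, $\lambda^2\|F(\lambda)\|_{\mathcal{L}_{2n+1,\infty}}\sim |\lambda|^{-(4n+1)/(2n+1)}$, and since $(4n+1)/(2n+1)=2-1/(2n+1)>1$ for every $n\geq 1$, the scalar function $\lambda\mapsto \lambda^2\|F(\lambda)\|_{\mathcal{L}_{2n+1,\infty}}$ is integrable on $\mathbb{R}$. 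Because $2n+1>1$, the quasi-norm on $\mathcal{L}_{2n+1,\infty}$ is equivalent to a genuine Banach-space norm, so the $\mathcal{L}_{2n+1,\infty}$-valued Bochner integral converges, giving the claimed membership.

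The main point requiring care is the Cwikel weight: one must squeeze the sharp decay $(1+\lambda^2)^{-(4n+1)/(4n+2)}$ out of the function $h_\lambda$, so that the total decay $(1+\lambda^2)^{-(8n+3)/(4n+2)}$ just barely beats the $\lambda^2$ factor at infinity. Any less decay would force us into a larger, useless ideal; any more would be unavailable. The rest of the argument is routine operator-norm estimation combined with the ideal structure of weak Schatten classes.
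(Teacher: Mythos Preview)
Your argument is correct and is essentially the same as the paper's. The only cosmetic difference is that the paper groups the first two resolvents as $(1+\lambda^2-\Delta_{\HH})^{-1}\cdot X_k(1+\lambda^2-\Delta_{\HH})^{-1}X_l$ (yielding $(1+\lambda^2)^{-1}\cdot O(1)$) rather than your $[(1+\lambda^2-\Delta_{\HH})^{-1}X_k]\cdot[(1+\lambda^2-\Delta_{\HH})^{-1}X_l]$ (yielding $(1+\lambda^2)^{-1/2}\cdot(1+\lambda^2)^{-1/2}$), and for Factor~C the paper splits off $(-\Delta_{\HH})^{-(n+1)/(2n+1)}$ and invokes Corollary~\ref{specific_cwikel} instead of computing $\|h_\lambda\|_{L_{2n+1}(s^n ds)}$ directly via Lemma~\ref{L_p_cwikel}; both routes give the identical decay $(1+\lambda^2)^{-(4n+1)/(4n+2)}$ and hence the same integrable total $(1+\lambda^2)^{-(8n+3)/(4n+2)}$.
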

\begin{proof}
By H\"older's inequality, we have
\begin{align*}
&\Big\|\frac1{1+\lambda^2-\Delta_{\HH}}X_k\frac1{1+\lambda^2-\Delta_{\HH}}X_lM_{X_lX_kf}\frac1{1+\lambda^2-\Delta_{\HH}}(-\Delta_{\HH})^{-\frac12}\Big\|_{\mathcal{L}_{2n+1,\infty}}\\
&\leq\Big\|\frac1{1+\lambda^2-\Delta_{\HH}}\Big\|_{\mathcal{L}_{\infty}}\Big\|X_k\frac1{1+\lambda^2-\Delta_{\HH}}X_l\Big\|_{\mathcal{L}_{\infty}}\Big\|M_{X_lX_kf}\frac{(-\Delta_{\HH})^{-\frac12}}{1+\lambda^2-\Delta_{\HH}}\Big\|_{\mathcal{L}_{2n+1,\infty}}\\
&\lesssim\frac1{1+\lambda^2}\Big\|M_{X_lX_kf}\frac{(-\Delta_{\HH})^{-\frac12}}{1+\lambda^2-\Delta_{\HH}}\Big\|_{\mathcal{L}_{2n+1,\infty}}.
\end{align*}
Again using H\"older's inequality, we have
\begin{align*}
\Big\|M_{X_lX_kf}\frac{(-\Delta_{\HH})^{-\frac12}}{1+\lambda^2-\Delta_{\HH}}\Big\|_{\mathcal{L}_{2n+1,\infty}}
&\leq \Big\|M_{X_lX_kf}(-\Delta_{\HH})^{-\frac{n+1}{2n+1}}\Big\|_{\mathcal{L}_{2n+1,\infty}}\Big\|\frac{(-\Delta_{\HH})^{\frac1{4n+2}}}{1+\lambda^2-\Delta_{\HH}}\Big\|_{\mathcal{L}_{\infty}}\\
&\lesssim\frac1{(1+\lambda^2)^{\frac{4n+1}{4n+2}}}\Big\|M_{X_lX_kf}(-\Delta_{\HH})^{-\frac{n+1}{2n+1}}\Big\|_{\mathcal{L}_{2n+1,\infty}}.
\end{align*}
where in the last inequality we applied the spectral theorem and the elementary fact that
$$\sup\limits_{x\geq 0}\frac{x^{\frac{1}{2n+1}}}{1+\lambda^2+x^2}\lesssim \frac{1}{(1+\lambda^{2})^{\frac{4n+1}{4n+2}}},\ \ \lambda\geq 0.$$
Then applying Corollary \ref{specific_cwikel} with $f$ and $\beta$ being chosen to be $X_\ell X_k f$ and $\frac{2n+2}{2n+1}$, respectively, we conclude that
$$\Big\|\frac1{1+\lambda^2-\Delta_{\HH}}X_k\frac1{1+\lambda^2-\Delta_{\HH}}X_lM_{X_lX_kf}\frac1{1+\lambda^2-\Delta_{\HH}}(-\Delta_{\HH})^{-\frac12}\Big\|_{\mathcal{L}_{2n+1,\infty}}\leq\frac{c_n}{(1+\lambda^2)^{2-\frac1{4n+2}}}\|X_lX_kf\|_{L_{2n+1}(\HH^n)}.$$
Thus, the integrand is absolutely integrable in $\mathcal{L}_{2n+1,\infty}$. By \cite[Proposition 1.2.3]{HNVWbook},  this ends the proof of Lemma \ref{appro3}.
\end{proof}
The estimate of term $\mathcal{E}_2$ is the following.
\begin{lem}\label{appro2}
%{\color{red}If $\Delta_{\HH} X_kf\in L_{n+1}(\mathbb{H}^n)\cap L_{2n+2}(\mathbb{H}^n),$}
 For any  $f\in C_c^{\infty}(\HH^n)$ and $k\in\{1,2,\cdots,n\}$,
we have
$$\int_{\mathbb{R}}\frac1{1+\lambda^2-\Delta_{\HH}}X_k\frac1{1+\lambda^2-\Delta_{\HH}}M_{\Delta_{\HH} X_kf}\frac1{1+\lambda^2-\Delta_{\HH}}(-\Delta_{\HH})^{-\frac12}\lambda^2d\lambda\in(\mathcal{L}_{2n+2,\infty})_0.$$
\end{lem}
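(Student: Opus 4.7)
The plan is to follow the template laid down in Lemma \ref{appro3}: establish that the integrand defines an $\mathcal{L}_{2n+1,\infty}$-valued function of $\lambda$ whose quasi-norm is absolutely integrable, so that the Bochner integral lands in $\mathcal{L}_{2n+1,\infty}\subset(\mathcal{L}_{2n+2,\infty})_0$. The key structural difference from the previous lemma is that there is only one $X_k$-factor flanking the multiplier (rather than two flanking $M_{X_\ell X_k f}$), but the multiplier now carries the extra Laplacian $\Delta_{\HH}$. This suggests splitting the integrand as $A(\lambda)\cdot B(\lambda)$ with
\[
A(\lambda):=\lambda^2\,\tfrac{1}{1+\lambda^2-\Delta_{\HH}}X_k\tfrac{1}{1+\lambda^2-\Delta_{\HH}},\qquad B(\lambda):=M_{\Delta_{\HH} X_k f}\tfrac{1}{1+\lambda^2-\Delta_{\HH}}(-\Delta_{\HH})^{-\frac12},
\]
estimating $A(\lambda)$ in the operator norm and $B(\lambda)$ in $\mathcal{L}_{2n+1,\infty}$, and then using H\"older.

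For $A(\lambda)$, I would factor $X_k=R_k(-\Delta_{\HH})^{\frac12}$ and use spectral calculus on the commuting pair $(-\Delta_{\HH},(1+\lambda^2-\Delta_{\HH})^{-1})$: the bound $\sup_{x\geq0}\tfrac{x^{\frac12}}{1+\lambda^2+x}\lesssim(1+\lambda^2)^{-\frac12}$ together with the $L_2$-boundedness of $R_k$ yields $\|A(\lambda)\|_{\mathcal{L}_\infty}\lesssim \lambda^2(1+\lambda^2)^{-\frac32}$. For $B(\lambda)$, pick the exponent $\beta=\frac{2n+2}{2n+1}$ and rewrite
\[
B(\lambda)=\bigl[M_{\Delta_{\HH} X_k f}(-\Delta_{\HH})^{-\frac{n+1}{2n+1}}\bigr]\cdot\bigl[(-\Delta_{\HH})^{\frac{1}{4n+2}}\tfrac{1}{1+\lambda^2-\Delta_{\HH}}\bigr].
\]
Corollary \ref{specific_cwikel} applied to $\Delta_{\HH} X_k f\in C^\infty_c(\HH^n)\subset L_{2n+1}(\HH^n)$ controls the first bracket in $\mathcal{L}_{2n+1,\infty}$, while the operator-norm bound $\sup_{x\geq 0}\tfrac{x^{1/(4n+2)}}{1+\lambda^2+x}\lesssim(1+\lambda^2)^{\frac{1}{4n+2}-1}$ handles the second. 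H\"older's inequality then gives
\[
\|A(\lambda)B(\lambda)\|_{\mathcal{L}_{2n+1,\infty}}\lesssim \|\Delta_{\HH} X_k f\|_{L_{2n+1}(\HH^n)}\cdot\frac{\lambda^2}{(1+\lambda^2)^{\frac52-\frac{1}{4n+2}}},
\]
and the $\lambda$-factor is integrable on $\mathbb{R}$ because its large-$|\lambda|$ decay is $|\lambda|^{-3+1/(2n+1)}$.

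Absolute Bochner integrability in $\mathcal{L}_{2n+1,\infty}$ (which is a closed quasi-Banach ideal) then places the integral in $\mathcal{L}_{2n+1,\infty}$, and since $2n+1<2n+2$ the inclusion $\mathcal{L}_{2n+1,\infty}\subset(\mathcal{L}_{2n+2,\infty})_0$ (verified by $k\mu(k;\cdot)^{2n+2}\lesssim k^{-1/(2n+1)}\to 0$) yields the claim. The only subtle point is that $X_k$ does not commute with $-\Delta_{\HH}$, but this is sidestepped by isolating $X_k$ through the bounded Riesz transform $R_k$ so that all remaining operators are functions of $-\Delta_{\HH}$; consequently spectral calculus applies cleanly. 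The main potential obstacle is calibrating the exponent $\beta$ so that simultaneously the Cwikel bound $\beta<n+1$, the separability condition $\beta>1$, and the integrability condition $\beta<3$ are all met—this is precisely the reason for the choice $\beta=(2n+2)/(2n+1)$, which works uniformly for all $n\geq 1$.
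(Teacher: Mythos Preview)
Your argument is correct. It differs from the paper's proof in that you carry out a single estimate in $\mathcal{L}_{2n+1,\infty}$ and then invoke the inclusion $\mathcal{L}_{2n+1,\infty}\subset(\mathcal{L}_{2n+2,\infty})_0$, exactly parallel to the proof of Lemma~\ref{appro3}. The paper instead proceeds in two steps: first it shows that for each fixed $\lambda$ the integrand lies in $\mathcal{L}_{n+1,\infty}$ (using the decomposition $(-\Delta_{\HH})^{-\frac12}M_{\Delta_{\HH}X_kf}(-\Delta_{\HH})^{-\frac12}$ and the Cwikel estimate with $\beta=1$), hence in $(\mathcal{L}_{2n+2,\infty})_0$; second, it establishes absolute integrability in the larger space $\mathcal{L}_{2n+2,\infty}$ with the bound $\lesssim(1+\lambda^2)^{-5/2}$, and then appeals to the closedness of $(\mathcal{L}_{2n+2,\infty})_0$ inside $\mathcal{L}_{2n+2,\infty}$ to conclude that the Bochner integral lands there. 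Your route is shorter and more uniform with Lemma~\ref{appro3}; the paper's route avoids having to track the exponent $\frac{1}{4n+2}$ in the integrability check (it gets a clean $(1+\lambda^2)^{-5/2}$), at the price of the extra pointwise $\mathcal{L}_{n+1,\infty}$ estimate. Both arguments ultimately rest on the same Cwikel-type bounds from Corollary~\ref{specific_cwikel}.
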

\begin{proof}
By H\"older's inequality, we have
\begin{align*}
&\Big\|\frac1{1+\lambda^2-\Delta_{\HH}}X_k\frac1{1+\lambda^2-\Delta_{\HH}}M_{\Delta_{\HH} X_kf}\frac1{1+\lambda^2-\Delta_{\HH}}(-\Delta_{\HH})^{-\frac12}\Big\|_{\mathcal{L}_{n+1,\infty}}\\
&\leq\Big\|\frac1{1+\lambda^2-\Delta_{\HH}}X_k\frac{(-\Delta_{\HH})^{\frac12}}{1+\lambda^2-\Delta_{\HH}}\Big\|_{\mathcal{L}_{\infty}}\Big\|(-\Delta_{\HH})^{-\frac12}M_{\Delta_{\HH} X_k f}(-\Delta_{\HH})^{-\frac12}\Big\|_{\mathcal{L}_{n+1,\infty}}\Big\|\frac1{1+\lambda^2-\Delta_{\HH}}\Big\|_{\mathcal{L}_{\infty}}\\
&\lesssim\Big\|(-\Delta_{\HH})^{-\frac12}M_{\Delta_{\HH} X_kf}(-\Delta_{\HH})^{-\frac12}\Big\|_{\mathcal{L}_{n+1,\infty}}\\
&\lesssim\Big\|M_{|\Delta_{\HH} X_kf|^{\frac12}}(-\Delta_{\HH})^{-\frac12}\|_{\mathcal{L}_{2n+2,\infty}}^2.
\end{align*}
Applying Corollary \ref{specific_cwikel} with $f$ and $\beta$ being chosen to be $|\Delta_\HH X_k f|^{\frac{1}{2}}$ and $1$, respectively, we conclude that
\begin{align*}
\Big\|\frac1{1+\lambda^2-\Delta_{\HH}}X_k\frac1{1+\lambda^2-\Delta_{\HH}}M_{\Delta_{\HH} X_kf}\frac1{1+\lambda^2-\Delta_{\HH}}(-\Delta_{\HH})^{-\frac12}\Big\|_{\mathcal{L}_{n+1,\infty}}
\lesssim\||\Delta_{\HH} X_kf|^{\frac12}\|_{L_{2n+2}(\HH^n)}^2=c_n\|\Delta_{\HH} X_kf\|_{L_{n+1}(\HH^n)}.
\end{align*}
Hence, the integrand belongs to $\mathcal{L}_{n+1,\infty}$, and therefore belongs to $(\mathcal{L}_{2n+2,\infty})_0.$

It remains to show absolute integrability in $\mathcal{L}_{2n+2,\infty}.$ By H\"older's inequality, we have
\begin{align}\label{ergvv}
&\Big\|\frac1{1+\lambda^2-\Delta_{\HH}}X_k\frac1{1+\lambda^2-\Delta_{\HH}}M_{\Delta_{\HH} X_kf}\frac1{1+\lambda^2-\Delta_{\HH}}(-\Delta_{\HH})^{-\frac12}\Big\|_{\mathcal{L}_{2n+2,\infty}}\nonumber\\
&\leq\Big\|\frac1{1+\lambda^2-\Delta_{\HH}}X_k\frac1{1+\lambda^2-\Delta_{\HH}}\Big\|_{\mathcal{L}_{\infty}}\Big\|M_{\Delta_{\HH} X_k f}(-\Delta_{\HH})^{-\frac12}\Big\|_{\mathcal{L}_{2n+2,\infty}}\Big\|\frac1{1+\lambda^2-\Delta_{\HH}}\Big\|_{\mathcal{L}_{\infty}}\nonumber\\
&\leq \Big\|\frac{1}{1+\lambda^{2}-\Delta_\HH}\Big\|_{\mathcal{L}_\infty}^2\|X_k(-\Delta_\HH)^{-\frac{1}{2}}\|_{\mathcal{L}_{\infty}}\Big\|\frac{(-\Delta_\HH)^{\frac{1}{2}}}{1+\lambda^{2}-\Delta_\HH}\Big\|_{\mathcal{L}_{\infty}}\|M_{\Delta_\HH X_kf}(-\Delta_\HH)^{-\frac{1}{2}}\|_{\mathcal{L}_{2n+2,\infty}}\nonumber\\
&\lesssim \frac{1}{(1+\lambda^{2})^{\frac{5}{2}}}\|M_{\Delta_\HH X_kf}(-\Delta_\HH)^{-\frac{1}{2}}\|_{\mathcal{L}_{2n+2,\infty}},
\end{align}
where in the last inequality we applied the spectral theorem and the elementary facts that
$$\sup\limits_{x\geq 0}\frac{1}{1+\lambda^{2}+x^2}\leq \frac{1}{1+\lambda^2},\ \ \sup\limits_{x\geq 0}\frac{x}{1+\lambda^{2}+x^{2}}\lesssim \frac{1}{(1+\lambda^2)^{\frac{1}{2}}}.$$
Applying Corollary \ref{specific_cwikel} with $f$ and $\beta$ being chosen to be $\Delta_\HH X_k f$ and $1$, respectively, we see that
\begin{align}\label{werwerew}
\|M_{\Delta_\HH X_kf}(-\Delta_\HH)^{-\frac{1}{2}}\|_{\mathcal{L}_{2n+2,\infty}}\lesssim \|\Delta_\HH X_k f\|_{L_{2n+2}(\HH^n)}.
\end{align}
Combining estimates \eqref{ergvv} and \eqref{werwerew} together, we see that
\begin{align*}
\Big\|\frac1{1+\lambda^2-\Delta_{\HH}}X_k\frac1{1+\lambda^2-\Delta_{\HH}}M_{\Delta_{\HH} X_kf}\frac1{1+\lambda^2-\Delta_{\HH}}(-\Delta_{\HH})^{-\frac12}\Big\|_{\mathcal{L}_{2n+2,\infty}}
\lesssim\frac{1}{(1+\lambda^2)^{\frac52}}\|\Delta_{\HH} X_kf\|_{L_{2n+2}(\HH^n)}.
\end{align*}
Thus, the integrand is absolutely integrable in $\mathcal{L}_{2n+2,\infty}$. By \cite[Proposition 1.2.3]{HNVWbook}, this ends the proof of Lemma \ref{appro2}.
\end{proof}
The estimate of term $\mathcal{E}_3$ is the following.
\begin{lem}\label{appro1}
%{\color{red}If $\Delta_{\HH} f\in L_{n+1}(\mathbb{H}^n)\cap L_{2n+2}(\mathbb{H}^n)$}, then
 For any  $f\in C_c^{\infty}(\HH^n)$, we have
$$\int_{\mathbb{R}}\frac1{1+\lambda^2-\Delta_{\HH}}M_{\Delta_{\HH} f}\frac1{1+\lambda^2-\Delta_{\HH}}(-\Delta_{\HH})^{-\frac12}\cdot \lambda^2d\lambda\in(\mathcal{L}_{2n+2,\infty})_0.$$
\end{lem}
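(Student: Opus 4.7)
The plan is to follow the same two-step strategy used for Lemma \ref{appro2}: first show that the integrand lies in $\mathcal{L}_{n+1,\infty}\subset(\mathcal{L}_{2n+2,\infty})_0$ for every fixed $\lambda\in\mathbb{R}$, and then show absolute integrability of the integrand (scaled by $\lambda^2$) in $\mathcal{L}_{2n+2,\infty}$. Once both are established, the Bochner integral is a limit of Riemann sums of elements of $(\mathcal{L}_{2n+2,\infty})_0$, and by closedness of $(\mathcal{L}_{2n+2,\infty})_0$ in $\mathcal{L}_{2n+2,\infty}$ the integral itself belongs to $(\mathcal{L}_{2n+2,\infty})_0$.

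The key simplifying observation is that $\tfrac{1}{1+\lambda^2-\Delta_{\HH}}$ and $(-\Delta_{\HH})^{-1/2}$ are both functions of $-\Delta_{\HH}$, so they commute. Hence the integrand may be rewritten as
\[
B(\lambda):=\frac{1}{1+\lambda^2-\Delta_{\HH}}M_{\Delta_{\HH}f}\,(-\Delta_{\HH})^{-\tfrac12}\,\frac{1}{1+\lambda^2-\Delta_{\HH}},
\]
which is now a product of three factors whose Schatten properties are easy to bound in two different ways.

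For membership in $\mathcal{L}_{n+1,\infty}$, I would factor the leftmost resolvent as $\frac{(-\Delta_{\HH})^{1/2}}{1+\lambda^2-\Delta_{\HH}}\cdot(-\Delta_{\HH})^{-1/2}$, so that
\[
B(\lambda)=\frac{(-\Delta_{\HH})^{\tfrac12}}{1+\lambda^2-\Delta_{\HH}}\cdot(-\Delta_{\HH})^{-\tfrac12}M_{\Delta_{\HH}f}(-\Delta_{\HH})^{-\tfrac12}\cdot\frac{1}{1+\lambda^2-\Delta_{\HH}},
\]
and then apply H\"{o}lder. The functional-calculus bounds $\bigl\|\tfrac{(-\Delta_{\HH})^{1/2}}{1+\lambda^2-\Delta_{\HH}}\bigr\|_{\infty}\lesssim(1+\lambda^2)^{-1/2}$ and $\bigl\|\tfrac{1}{1+\lambda^2-\Delta_{\HH}}\bigr\|_{\infty}\lesssim(1+\lambda^2)^{-1}$ handle the resolvent factors, while the middle factor is estimated by writing $\Delta_{\HH}f=|\Delta_{\HH}f|^{1/2}\cdot\sgn(\Delta_{\HH}f)|\Delta_{\HH}f|^{1/2}$ and applying Corollary \ref{specific_cwikel} (with $\beta=1$) twice to get $\bigl\|(-\Delta_{\HH})^{-1/2}M_{\Delta_{\HH}f}(-\Delta_{\HH})^{-1/2}\bigr\|_{\mathcal{L}_{n+1,\infty}}\lesssim\|\Delta_{\HH}f\|_{L_{n+1}(\HH^n)}$. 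This gives $\|B(\lambda)\|_{\mathcal{L}_{n+1,\infty}}\lesssim(1+\lambda^2)^{-3/2}\|\Delta_{\HH}f\|_{L_{n+1}(\HH^n)}$, so $B(\lambda)\in\mathcal{L}_{n+1,\infty}\subset(\mathcal{L}_{2n+2,\infty})_0$.

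For absolute integrability in $\mathcal{L}_{2n+2,\infty}$, I apply H\"{o}lder directly to $B(\lambda)$ without the extra factorization, using $\|M_{\Delta_{\HH}f}(-\Delta_{\HH})^{-1/2}\|_{\mathcal{L}_{2n+2,\infty}}\lesssim\|\Delta_{\HH}f\|_{L_{2n+2}(\HH^n)}$ (again Corollary \ref{specific_cwikel} with $\beta=1$) and $\|\tfrac{1}{1+\lambda^2-\Delta_{\HH}}\|_{\infty}\lesssim(1+\lambda^2)^{-1}$ for each resolvent, yielding $\|B(\lambda)\|_{\mathcal{L}_{2n+2,\infty}}\lesssim(1+\lambda^2)^{-2}\|\Delta_{\HH}f\|_{L_{2n+2}(\HH^n)}$. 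Since $\int_{\mathbb{R}}\lambda^{2}(1+\lambda^2)^{-2}\,d\lambda<\infty$, the integrand is absolutely integrable in $\mathcal{L}_{2n+2,\infty}$ by \cite[Proposition 1.2.3]{HNVWbook}, and the conclusion follows. There is no serious obstacle here: the present lemma is structurally easier than Lemma \ref{appro2} precisely because no $X_k$ sits between the two resolvents, so commutativity of the resolvent with $(-\Delta_{\HH})^{-1/2}$ lets us dispose of the unbounded factor cleanly before applying Cwikel.
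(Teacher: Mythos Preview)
Your proposal is correct and follows essentially the same approach as the paper: both first show the integrand lies in $\mathcal{L}_{n+1,\infty}\subset(\mathcal{L}_{2n+2,\infty})_0$ via the factorization $\frac{(-\Delta_{\HH})^{1/2}}{1+\lambda^2-\Delta_{\HH}}\cdot(-\Delta_{\HH})^{-1/2}M_{\Delta_{\HH}f}(-\Delta_{\HH})^{-1/2}\cdot\frac{1}{1+\lambda^2-\Delta_{\HH}}$ and Corollary~\ref{specific_cwikel}, and then establish absolute integrability in $\mathcal{L}_{2n+2,\infty}$ with the bound $(1+\lambda^2)^{-2}\|\Delta_{\HH}f\|_{L_{2n+2}(\HH^n)}$. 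The only cosmetic difference is that you make the commutation of $(-\Delta_{\HH})^{-1/2}$ with the resolvent explicit before factoring, whereas the paper absorbs it directly into the H\"older step.
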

\begin{proof}
By H\"older's inequality, we have
\begin{align*}
\Big\|\frac1{1+\lambda^2-\Delta_{\HH}}M_{\Delta_{\HH} f}\frac1{1+\lambda^2-\Delta_{\HH}}(-\Delta_{\HH})^{-\frac12}\Big\|_{\mathcal{L}_{n+1,\infty}}
&\leq\Big\|\frac{(-\Delta_{\HH})^{\frac12}}{1+\lambda^2-\Delta_{\HH}}\Big\|_{\mathcal{L}_{\infty}}\Big\|(-\Delta_{\HH})^{-\frac12}M_{\Delta_{\HH} f}(-\Delta_{\HH})^{-\frac12}\Big\|_{\mathcal{L}_{n+1,\infty}}\Big\|\frac1{1+\lambda^2-\Delta_{\HH}}\Big\|_{\mathcal{L}_{\infty}}\\
&\lesssim\Big\|(-\Delta_{\HH})^{-\frac12}M_{\Delta_{\HH} f}(-\Delta_{\HH})^{-\frac12}\Big\|_{\mathcal{L}_{n+1,\infty}}\\&\lesssim\Big\|M_{|\Delta_{\HH} f|^{\frac12}}(-\Delta_{\HH})^{-\frac12}\|_{\mathcal{L}_{2n+2,\infty}}^2.
\end{align*}
Applying Corollary \ref{specific_cwikel} with $f$ and $\beta$ being chosen to be $|\Delta_\HH f|^{\frac{1}{2}}$ and $1$, we have
$$\Big\|\frac1{1+\lambda^2-\Delta_{\HH}}M_{\Delta_{\HH} f}\frac1{1+\lambda^2-\Delta_{\HH}}(-\Delta_{\HH})^{-\frac12}\Big\|_{\mathcal{L}_{n+1,\infty}}\lesssim\||\Delta_{\HH} f|^{\frac12}\|_{L_{2n+2}(\HH^n)}^2=c_n\|\Delta_{\HH} f\|_{L_{n+1}(\HH^n)}.$$
Hence, the integrand belongs to $\mathcal{L}_{n+1,\infty}$, and therefore belongs to $(\mathcal{L}_{2n+2,\infty})_0.$

It remains to show absolute integrability in $\mathcal{L}_{2n+2,\infty}.$ By H\"older's inequality, we have
$$\Big\|\frac1{1+\lambda^2-\Delta_{\HH}}M_{\Delta_{\HH} f}\frac1{1+\lambda^2-\Delta_{\HH}}(-\Delta_{\HH})^{-\frac12}\Big\|_{\mathcal{L}_{2n+2,\infty}}\leq\Big\|\frac1{1+\lambda^2-\Delta_{\HH}}\Big\|_{\mathcal{L}_{\infty}}^2\Big\|M_{\Delta_{\HH} f}(-\Delta_{\HH})^{-\frac12}\Big\|_{\mathcal{L}_{2n+2,\infty}}.$$
Applying Corollary \ref{specific_cwikel} again, we obtain that
\begin{align*}
\Big\|\frac1{1+\lambda^2-\Delta_{\HH}}M_{\Delta_{\HH} f}\frac1{1+\lambda^2-\Delta_{\HH}}(-\Delta_{\HH})^{-\frac12}\Big\|_{\mathcal{L}_{2n+2,\infty}}\leq \frac{c_n}{(1+\lambda^2)^2}\|\Delta_{\HH} f\|_{L_{2n+2}(\HH^n)}.
\end{align*}
Thus, the integrand is absolutely integrable in $\mathcal{L}_{2n+2,\infty}.$ By \cite[Proposition 1.2.3]{HNVWbook}, this ends the proof of Lemma \ref{appro1}.
\end{proof}
Summarising the results in Lemmas \ref{appro3}--\ref{appro1} together with equality \eqref{keydecom} and the $L_2(\HH^n)$-boundedness of Riesz transform, we have
\begin{cor}\label{summuarize}
For any $f\in C_c^{\infty}(\HH^n)$ and $\ell\in\{1,2,\cdots,2n\}$,  we have
\begin{align}
R_{\ell}[(1-\Delta_{\HH})^{\frac12},M_f](-\Delta_{\HH})^{-\frac12}
&=-\sum_{k=0}^{2n}R_{\ell}A_{k}M_{X_kf}(-\Delta_\HH)^{-\frac{1}{2}}+\mathcal{E}_0,
\end{align}
where $A_k$ is defined in \eqref{defofAk} and $\mathcal{E}_0=R_{\ell}\mathcal{E}_1+R_{\ell}\mathcal{E}_2+R_{\ell}\mathcal{E}_3\in (\mathcal{L}_{2n+2,\infty})_{0}$.
\end{cor}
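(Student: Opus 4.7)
The plan is straightforward: the corollary follows by multiplying equation \eqref{keydecom} on the left by $R_\ell$ and then observing that the three resulting error terms stay within the ideal $(\mathcal{L}_{2n+2,\infty})_0$.

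First I would invoke \eqref{keydecom} as a bounded operator identity on $L_2(\HH^n)$ and apply the $L_2(\HH^n)$-bounded Riesz transform $R_\ell$ from the left, producing
$$R_{\ell}[(1-\Delta_{\HH})^{\frac12},M_f](-\Delta_{\HH})^{-\frac12}=-\sum_{k=1}^{2n}R_\ell A_{k}M_{X_kf}(-\Delta_\HH)^{-\frac{1}{2}}+R_\ell\mathcal{E}_1+R_\ell\mathcal{E}_2+R_\ell\mathcal{E}_3.$$
The sign in front of the first sum is dictated by the $-\tfrac{1}{\pi}$ prefactor in Lemma \ref{preli1} together with the definition \eqref{defofAk} of $A_k$.

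Next I would invoke the three preceding lemmas to control the error terms: Lemma \ref{appro3} gives $\mathcal{E}_1\in\mathcal{L}_{2n+1,\infty}\subset(\mathcal{L}_{2n+2,\infty})_0$ (the inclusion follows since $\mu(k;A)\lesssim(k+1)^{-1/(2n+1)}$ implies $(k+1)\mu(k;A)^{2n+2}\to 0$), while Lemmas \ref{appro2} and \ref{appro1} directly yield $\mathcal{E}_2,\mathcal{E}_3\in(\mathcal{L}_{2n+2,\infty})_0$. Since $R_\ell$ is bounded on $L_2(\HH^n)$ and $(\mathcal{L}_{2n+2,\infty})_0$ is a norm-closed two-sided ideal of $\mathcal{B}(L_2(\HH^n))$, each $R_\ell\mathcal{E}_i$ remains in $(\mathcal{L}_{2n+2,\infty})_0$; hence the finite sum $\mathcal{E}_0:=R_\ell\mathcal{E}_1+R_\ell\mathcal{E}_2+R_\ell\mathcal{E}_3$ does as well, which is the assertion.

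Because the statement is a summarization, there is no substantive obstacle; the only point requiring a little care is the interpretation of \eqref{keydecom} as an operator identity on all of $L_2(\HH^n)$ rather than merely as a pointwise equality on $\mathcal{S}(\HH^n)$. This upgrade is secured by the absolute convergence of the three Bochner integrals defining $\mathcal{E}_1,\mathcal{E}_2,\mathcal{E}_3$ in the respective Schatten--Lorentz quasi-norms, as already established in the proofs of Lemmas \ref{appro3}--\ref{appro1}; in particular, each $\mathcal{E}_i$ is a well-defined element of $(\mathcal{L}_{2n+2,\infty})_0\subset\mathcal{B}(L_2(\HH^n))$, so that composition with $R_\ell$ is unambiguous.
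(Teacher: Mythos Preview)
Your proposal is correct and follows exactly the paper's approach: the paper simply states that the corollary summarises Lemmas \ref{appro3}--\ref{appro1} together with equality \eqref{keydecom} and the $L_2(\HH^n)$-boundedness of the Riesz transform, and you have spelled out precisely those ingredients (including the ideal property of $(\mathcal{L}_{2n+2,\infty})_0$ under left multiplication by the bounded operator $R_\ell$). Your remark about upgrading \eqref{keydecom} from a pointwise identity on $\mathcal{S}(\HH^n)$ to a bounded-operator identity is a welcome clarification that the paper leaves implicit.
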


Now we claim that
%{\color{blue}the integral in the expression of $A_k$  exists (in weak operator topology) as improper integral} and
$A_k$ can be written as $T^{1-\Delta_{\HH},1-\Delta_{\HH}}_{\psi}((1-\Delta_{\HH})^{-\frac14}X_k(1-\Delta_{\HH})^{-\frac14})$, which has a similar expression as $a_k$. Here we denote $\psi(\alpha_0,\alpha_1):=\frac{\alpha_0^{1/4}\alpha_1^{1/4}}{\alpha_0^{1/2}+\alpha_1^{1/2}}$, for $\alpha_0$, $\alpha_1\geq0$. After that, we will use a perturbation argument again to show Corollary \ref{summuarize} still holds with the expression $A_k$ being replaced by $a_k$. To show the claim, we first recall that for a Lipschitz function $f$ defined on $\mathbb{R}^{+}$ and two distinct points $x_1$ and $x_2$, the first order divided difference of $f$ is defined by
%$$ f^{[1]}(x_1,x_2)=\left\{\begin{array}{ll}\frac{f(x_1)-f(x_2)}{x_1-x_2}, &x_1\neq x_2\\0, &x_1=x_2\end{array}\right.$$
$$ f^{[1]}(x_1,x_2)=\frac{f(x_1)-f(x_2)}{x_1-x_2}.$$

We need the following two auxiliary lemmas.

\begin{lem} Let $A\geq 0$ be a (possibly unbounded) linear operator on $L_2(\HH^n)$ and $F(\alpha)=\alpha\tan^{-1}(\alpha).$ %We have $F'\in W^{1,2}({\color{red}{\mathbb{R_{+}}}})$ and
Then
$$\|T^{A,A}_{F^{[1]}}\|_{\mathcal{L}_{\infty}\to\mathcal{L}_{\infty}}\leq c_{{\rm abs}}.$$
\end{lem}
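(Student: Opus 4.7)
My plan is to reduce the symbol to a form for which a standard Schur-multiplier boundedness criterion applies, after first handling the linear growth of $F$ at infinity. The key point is that $F'(\alpha) = \tan^{-1}(\alpha) + \alpha/(1+\alpha^2)$ is smooth and uniformly bounded on $\mathbb{R}$, with $F'(\alpha)\to \pi/2$ as $\alpha\to +\infty$. Consequently $F$ grows linearly and is not globally Besov-regular in its own right.

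To remove the linear part, I would set $\tilde F(\alpha) := F(\alpha) - \tfrac\pi2\alpha$, so that $\tilde F^{[1]}(\alpha_0,\alpha_1) = F^{[1]}(\alpha_0,\alpha_1) - \tfrac\pi2$. The constant contributes a scalar multiple of the identity on $\mathcal L_\infty$, which is trivially bounded, so it suffices to bound $T^{A,A}_{\tilde F^{[1]}}$. Since $A\ge 0$, only the restriction of $\tilde F$ to $[0,\infty)$ affects this double operator integral. I would therefore extend $\tilde F$ to $(-\infty,0]$ via a smooth cutoff supported in $[-1,\infty)$ to produce a globally defined $\hat F\in C^\infty(\mathbb R)$ with $\hat F^{[1]}=\tilde F^{[1]}$ on $[0,\infty)^2$. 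Because $\tilde F'(\alpha) = \tan^{-1}(\alpha)-\tfrac\pi2+\alpha/(1+\alpha^2) = O(1/\alpha^2)$ at $+\infty$ with correspondingly fast decay of all higher derivatives, $\hat F'$ would lie in the homogeneous Besov space $B^0_{\infty,1}(\mathbb R)$ with norm controlled by an absolute constant.

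With these preparations I would invoke Peller's classical theorem on DOIs (\cite{Peller-moi-2016}): if $g\in C^1(\mathbb R)$ with $g'\in B^0_{\infty,1}(\mathbb R)$, then for every self-adjoint operator $A$ the double operator integral $T^{A,A}_{g^{[1]}}$ extends to a bounded map on $\mathcal B(H)$ with norm bounded by $C\|g'\|_{B^0_{\infty,1}}$. Applying this to $g=\hat F$ and combining with the trivial contribution from the constant $\pi/2$ yields the claimed absolute bound $\|T^{A,A}_{F^{[1]}}\|_{\mathcal L_\infty\to\mathcal L_\infty}\leq c_{\mathrm{abs}}$.

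The main obstacle is the verification of the Besov estimate on $\hat F'$, which requires controlling the Littlewood--Paley pieces of $\hat F'$ uniformly. This reduces to standard bounds on $\hat F^{(k)}$ for small $k$, obtainable from the explicit decay of $\tilde F^{(k)}$ together with care at the transition between the original function and the cutoff region, but is somewhat tedious. An alternative approach I would try as a backup is to work directly from the integral representation $\alpha\tan^{-1}(\alpha) = \int_0^1 \alpha^2(1+t^2\alpha^2)^{-1}\,dt$, which yields
\[
F^{[1]}(\alpha_0,\alpha_1) = \int_0^1 \frac{\alpha_0+\alpha_1}{(1+t^2\alpha_0^2)(1+t^2\alpha_1^2)}\,dt,
\]
and attempt to bound the resulting operator integral $\int_0^1 \frac{A}{1+t^2A^2}X\frac{1}{1+t^2A^2}\,dt$ via an integration by parts in $t$ combined with a spectral dyadic decomposition of $A$; however this route runs into the difficulty that the naive operator-norm bound on the integrand diverges like $1/t$ at $t=0$, which is precisely why the Besov/Peller approach seems cleaner.
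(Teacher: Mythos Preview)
Your approach is correct and matches the paper's strategy: subtract the linear part $\tfrac{\pi}{2}\alpha$ (the paper writes $F_0(\alpha)=\alpha(\tan^{-1}\alpha-\tfrac{\pi}{2})$, your $\tilde F$), observe that the constant divided difference contributes only a scalar multiple of the identity, and then invoke a standard Schur-multiplier criterion for the remainder.

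The only difference is the criterion used for the remainder. You invoke Peller's Besov condition $\hat F'\in B^{0}_{\infty,1}$, which is correct but, as you note, requires some bookkeeping with the Littlewood--Paley pieces and the cutoff. The paper instead observes directly that $F_0',F_0''\in L_2(\mathbb{R}_+)$ --- indeed $F_0''(\alpha)=2(1+\alpha^2)^{-2}$ --- and appeals to the elementary $W^{2,2}$-based criterion of \cite[Lemma~2.2]{ACDS} and \cite[Lemma~7]{PS-crelle}, which avoids the Besov computation entirely. Your decay estimate $\tilde F'(\alpha)=O(\alpha^{-2})$ is actually conservative; one has $\tilde F'(\alpha)=O(\alpha^{-3})$, which makes either verification immediate.
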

\begin{proof}
    We write $F(\alpha) = \frac{\pi}{2}\alpha+F_0(\alpha),$ where
    \[
        F_0(\alpha) := \alpha\Bigg(\tan^{-1}(\alpha)-\frac{\pi}{2}\Bigg). % = \alpha\tan^{-1}\Big(\frac{1}{\alpha}\Big).
    \]
    A direct calculation verified that $F_0', F_0'' \in L_2(\mathbb{R}_+),$ and hence \cite[Lemma 2.2]{ACDS}, \cite[Lemma 7]{PS-crelle} imply that
    \[
        \|T^{A,A}_{F_0^{[1]}}\|_{\mathcal{L}_{\infty}\to \mathcal{L}_{\infty}} < \infty.
    \]
    Since $(F-F_0)^{[1]} = {\pi\over2}$ and $T^{A,A}_1$ is the identity operator, this completes the proof.
%
% The first assertion is elementary. It follows from \cite[Lemma ???]{PSW} that
% $$\|T^{A,A}_{F^{[1]}}\|_{\mathcal{L}_{\infty}\to\mathcal{L}_{\infty}}\leq c_{{\rm abs}}\|F'\|_{W^{1,2}}.$$
% This proves the second assertion.
\end{proof}

\begin{lem}\label{weakconverge}
Let $W$ be a bounded linear operator on $L_2(\HH^n)$, and $A\geq 0$ be a (possibly unbounded) linear operator on $L_2(\HH^n)$, then
$$\lim_{n\rightarrow \infty}T^{\frac1nA,\frac1nA}_{F^{[1]}}(W)=0$$
in weak operator topology.
\end{lem}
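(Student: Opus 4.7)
The plan is to reduce the WOT convergence to a dominated convergence argument, using a Daletskii--Krein type integral representation for the DOI with symbol $F_0^{[1]}$ which sits behind the proof of the previous lemma.

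First I would split $F(\alpha)=\frac{\pi}{2}\alpha + F_0(\alpha)$ with $F_0(\alpha)=\alpha(\tan^{-1}\alpha-\frac{\pi}{2})$, exactly as in the preceding lemma. Since the divided difference of the linear term is the constant $1$ and $T^{B,B}_{1}=\mathrm{id}$, one obtains
\[
T^{\frac{1}{n}A,\frac{1}{n}A}_{F^{[1]}}(W)=\frac{\pi}{2}W+T^{\frac{1}{n}A,\frac{1}{n}A}_{F_0^{[1]}}(W),
\]
so the problem reduces to showing that $T^{\frac{1}{n}A,\frac{1}{n}A}_{F_0^{[1]}}(W)\to -\frac{\pi}{2}W$ in WOT. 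This target is natural given that $F_0^{[1]}$ is continuous with $F_0^{[1]}(0,0)=F_0'(0)=-\frac{\pi}{2}$ and the spectrum of $\frac{1}{n}A$ concentrates at $0$.

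Next, extend $F_0$ oddly to $\mathbb{R}$ and note that $F_0',F_0''\in L_2(\mathbb{R})$, so $\widehat{F_0'}\in L_1(\mathbb{R})$ by Cauchy--Schwarz. Combining the elementary identity $\frac{e^{i\xi s}-e^{i\xi t}}{i\xi(s-t)}=\int_0^1 e^{i\xi(rs+(1-r)t)}\,dr$ with Fourier inversion for $F_0'$ produces a scalar formula for $F_0^{[1]}$ that lifts through the spectral theorem to a Bochner integral representation
\[
T^{B,B}_{F_0^{[1]}}(W)=\frac{1}{2\pi}\int_{\mathbb{R}}\int_0^1 \widehat{F_0'}(\xi)\,e^{i\xi rB}\,W\,e^{i\xi(1-r)B}\,dr\,d\xi
\]
valid for any self-adjoint $B\geq 0$ (cf.\ \cite[Lemma 7]{PS-crelle}, \cite[Lemma 2.2]{ACDS}); the integrand has operator norm bounded by $|\widehat{F_0'}(\xi)|\|W\|$ since the exponentials are unitary, so the integral converges absolutely in $\mathcal{B}(L_2(\HH^n))$.

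Finally, take $B=\frac{1}{n}A$ and test against arbitrary $u,v\in L_2(\HH^n)$. For each fixed $\xi\in\mathbb{R}$ and $r\in[0,1]$, strong continuity of the unitary group $\{e^{itA}\}_{t\in\mathbb{R}}$ gives $e^{i\xi rA/n}\to I$ and $e^{i\xi(1-r)A/n}\to I$ in SOT, whence $\langle e^{i\xi rA/n}We^{i\xi(1-r)A/n}u,v\rangle\to\langle Wu,v\rangle$. Since the scalar integrand is dominated by $|\widehat{F_0'}(\xi)|\|W\|\|u\|\|v\|\in L_1(\mathbb{R}\times[0,1])$, dominated convergence yields
\[
\langle T^{\frac{1}{n}A,\frac{1}{n}A}_{F_0^{[1]}}(W)u,v\rangle\to F_0'(0)\langle Wu,v\rangle=-\frac{\pi}{2}\langle Wu,v\rangle,
\]
which, together with the $\frac{\pi}{2}W$ term from the splitting, produces the claimed WOT limit of $0$. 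The main obstacle is justifying the Bochner-integral lift for a possibly unbounded $B\geq 0$: one must transfer the scalar identity for $F_0^{[1]}$ to the operator-valued DOI in a way that handles the unboundedness of $B$, which is done by combining the $\mathcal{L}_2$-definition of the DOI with a density/approximation argument and the uniform $\mathcal{L}_\infty\to\mathcal{L}_\infty$ bound from the previous lemma.
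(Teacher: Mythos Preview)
Your argument is correct but follows a different route from the paper's. The paper does not use the split $F=\frac{\pi}{2}\alpha+F_0$ in this lemma; instead it truncates spectrally via $P_k=\chi_{[0,k]}(A)$, observes that on $[0,1]$ one can write $F(\alpha)=\Psi(\alpha^2)$ for some $\Psi\in C_c^\infty$, and computes
\[
P_k\cdot T^{\frac1nA,\frac1nA}_{F^{[1]}}(W)\cdot P_k=\frac1nAP_k\cdot T^{n^{-2}A^2P_k,n^{-2}A^2P_k}_{\Psi^{[1]}}(W)+T^{n^{-2}A^2P_k,n^{-2}A^2P_k}_{\Psi^{[1]}}(W)\cdot\frac1nAP_k,
\]
yielding the quantitative bound $\|P_k T^{\frac1nA,\frac1nA}_{F^{[1]}}(W)P_k\|_{\mathcal{L}_\infty}\lesssim \frac{k}{n}\|W\|_{\mathcal{L}_\infty}$. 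The proof is then finished by an $\varepsilon/3$ argument: given $\xi,\eta$ and $\varepsilon>0$, choose $k$ so that $\|(1-P_k)\xi\|,\|(1-P_k)\eta\|<\varepsilon$, and for $n>k\varepsilon^{-1}$ the truncated piece is also $O(\varepsilon)$ while the tails are controlled by the uniform $\mathcal{L}_\infty\to\mathcal{L}_\infty$ bound of the preceding lemma.

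Your approach is arguably more direct: it recycles the very decomposition $F=\frac{\pi}{2}\alpha+F_0$ that drives the preceding lemma, writes out the Daletskii--Krein integral representation explicitly, and reduces everything to SOT continuity of $t\mapsto e^{itA}$ plus dominated convergence. This avoids the auxiliary $\Psi$ and the truncation entirely, and in fact yields SOT (not just WOT) convergence with no extra work. The paper's approach, by contrast, produces an explicit rate $O(k/n)$ on spectrally bounded pieces and does not require unpacking the Bochner-integral identity for unbounded $B$; it is also more robust to symbols that do not admit such a clean Fourier decomposition. One small point to make precise in your write-up: the natural extension of $F_0(\alpha)=\alpha(\tan^{-1}\alpha-\frac{\pi}{2})$ to all of $\mathbb{R}$ has $F_0'(\alpha)\to -\pi$ as $\alpha\to-\infty$, so it is the \emph{odd} extension from $[0,\infty)$ (equivalently, the even extension of $F_0'$) that puts $F_0',F_0''\in L_2(\mathbb{R})$ and makes $\widehat{F_0'}\in L_1$; since $\mathrm{spec}(A)\subset[0,\infty)$ this does not affect the DOI.
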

\begin{proof} Let $P_k=\chi_{[0,k]}(A).$ We claim that
$$\|P_k\cdot T^{\frac1nA,\frac1nA}_{F^{[1]}}(W)\cdot P_k\|_{\mathcal{L}_\infty}\leq c_{{\rm abs}}\frac{k}{n}\|W\|_{\mathcal{L}_\infty},\quad n\geq k.$$
Note that
$$P_k\cdot T^{\frac1nA,\frac1nA}_{F^{[1]}}(W)\cdot P_k=T^{\frac1nAP_k,\frac1nAP_k}_{F^{[1]}}(W).$$

Let $\Psi\in C^{\infty}_c(\mathbb{R})$ be such that $\Psi(\alpha^2)=F(\alpha)$ on $[0,1]$, then we have
$$F^{[1]}(\alpha_0,\alpha_1)=(\alpha_0+\alpha_1)\cdot\Psi^{[1]}(\alpha_0^2,\alpha_1^2),\quad \alpha_0,\alpha_1\in[0,\frac{k}{n}],\quad k\leq n.$$
Thus,
\begin{align*}
T^{\frac1nAP_k,\frac1nAP_k}_{F^{[1]}}(W)&=\frac1nAP_k\cdot T^{\frac1nAP_k,\frac1nAP_k}_{\Psi^{[1]}(\alpha_0^2,\alpha_1^2)}(W)+T^{\frac1nAP_k,\frac1nAP_k}_{\Psi^{[1]}(\alpha_0^2,\alpha_1^2)}(W)\cdot\frac1nAP_k\\
&=\frac1nAP_k\cdot T^{n^{-2}A^2P_k,n^{-2}A^2P_k}_{\Psi^{[1]}}(W)+T^{n^{-2}A^2P_k,n^{-2}A^2P_k}_{\Psi^{[1]}}(W)\cdot\frac1nAP_k.
\end{align*}
By H\"older's inequality, we have
$$\|P_k\cdot T^{\frac1nA,\frac1nA}_{F^{[1]}}(W)\cdot P_k\|_{\mathcal{L}_\infty}\leq2\|\frac1nAP_k\|_{\mathcal{L}_\infty}\| T^{n^{-2}A^2P_k,n^{-2}A^2P_k}_{\Psi^{[1]}}\|_{\mathcal{L}_{\infty}\to\mathcal{L}_{\infty}}\|W\|_{\mathcal{L}_\infty}\leq\frac{k}{n}c_{{\rm abs}}\|\Psi'\|_{W^{1,2}}\|W\|_{\mathcal{L}_\infty}.$$
This proves the claim.

Let $\xi,\eta\in L_2(\HH^n)$ be normalised vectors. Fix $\epsilon\in(0,1)$ and choose $k\in\mathbb{N}$ such that $\|(1-P_k)\xi\|_{L_2(\HH^{n})}<\epsilon$ and $\|(1-P_k)\eta\|_{L_{2}(\HH^{n})}<\epsilon.$ For $n>k\epsilon^{-1},$ we write
$$\langle T^{\frac1nA,\frac1nA}_{F^{[1]}}(W) \xi,\eta\rangle=\langle T^{\frac1nA,\frac1nA}_{F^{[1]}}(W) P_k\xi,P_k\eta\rangle+\langle T^{\frac1nA,\frac1nA}_{F^{[1]}}(W) (1-P_k)\xi,P_k\eta\rangle+\langle T^{\frac1nA,\frac1nA}_{F^{[1]}}(W) \xi,(1-P_k)\eta\rangle.$$
Now we estimate these terms separately:
\begin{align*}
|\langle T^{\frac1nA,\frac1nA}_{F^{[1]}}(W) (1-P_k)\xi,P_k\eta\rangle|&\leq \|T^{\frac1nA,\frac1nA}_{F^{[1]}}(W)\|_{\mathcal{L}_\infty}\| (1-P_k)\xi\|_{L_2(\HH^n)}\|P_k\eta\|_{L_2(\HH^n)}\\
&\leq \|T^{\frac1nA,\frac1nA}_{F^{[1]}}\|_{\mathcal{L}_{\infty}\to\mathcal{L}_{\infty}}\|W\|_{\mathcal{L}_\infty}\cdot\epsilon\leq c_{{\rm abs}}\|W\|_{\mathcal{L}_\infty}\epsilon,
\end{align*}
\begin{align*}
|\langle T^{\frac1nA,\frac1nA}_{F^{[1]}}(W) \xi,(1-P_k)\eta\rangle|&\leq\|T^{\frac1nA,\frac1nA}_{F^{[1]}}(W)\|_{\mathcal{L}_\infty}\|\xi\|_{L_2(\HH^n)}\|(1-P_k)\eta\|_{L_2(\HH^n)}\\
&\leq \|T^{\frac1nA,\frac1nA}_{F^{[1]}}\|_{\mathcal{L}_{\infty}\to\mathcal{L}_{\infty}}\|W\|_{\mathcal{L}_\infty}\cdot\epsilon\leq c_{{\rm abs}}\|W\|_{\mathcal{L}_\infty}\epsilon,
\end{align*}
\begin{align*}
|\langle T^{\frac1nA,\frac1nA}_{F^{[1]}}(W) P_k\xi,P_k\eta\rangle|&=|\langle P_k(T^{\frac1nA,\frac1nA}_{F^{[1]}}(W) P_k\xi),P_k\eta\rangle|\leq\|P_k\cdot T^{\frac1nA,\frac1nA}_{F^{[1]}}(W) \cdot P_k\|_{\mathcal{L}_\infty}\\&\leq c_{{\rm abs}}\frac{k}{n}\|W\|_{\mathcal{L}_\infty}\leq c_{{\rm abs}}\|W\|_{\mathcal{L}_\infty}\epsilon.
\end{align*}
So, for $n>k\epsilon^{-1},$ we have
$$|\langle T^{\frac1nA,\frac1nA}_{F^{[1]}}(W) \xi,\eta\rangle|\leq c_{{\rm abs}}\|W\|_{\mathcal{L}_\infty}\epsilon.$$
Since $\epsilon$ is arbitrarily small, the assertion follows.
\end{proof}

\begin{lem}\label{welldefine}
Let $V$ be a bounded linear operator on $L_2(\HH^n)$, and $A\geq 1$ be a (possibly unbounded) linear operator on $L_2(\HH^n)$, then we have
$$\lim_{n\to\infty}\int_{-n}^n\frac{\lambda A^{1/4}}{\lambda^2+{A}}V\frac{\lambda A^{1/4}}{\lambda^2+{A}}d\lambda=\frac{\pi}{2} T^{A,A}_{\psi}(V)$$
in weak operator topology, where we denote $\psi(\alpha_0,\alpha_1):=\frac{2\alpha_0^{1/4}\alpha_1^{1/4}}{\alpha_0^{1/2}+\alpha_1^{1/2}}$, for $\alpha_0$, $\alpha_1\geq0$.

Consequently, for any $k\in\{1,2,\cdots,2n\}$, the operator  $A_k$ defined in \eqref{defofAk} is well defined and can be further written as
\begin{align}\label{Ak double integral}
A_k=T^{1-\Delta_{\HH},1-\Delta_{\HH}}_{\psi}((1-\Delta_{\HH})^{-\frac14}X_k(1-\Delta_{\HH})^{-\frac14}).
\end{align}
\end{lem}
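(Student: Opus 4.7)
The plan is to recognize the integrand as a double operator integral with separable symbol, evaluate the $\lambda$-integral of that symbol in closed form, and then move the limit inside via the spectral theorem for $A$. For each fixed $\lambda\in\mathbb{R}$, since $\alpha\mapsto \tfrac{\lambda\alpha^{1/4}}{\lambda^2+\alpha}$ is a bounded Borel function on $\sigma(A)\subseteq[1,\infty)$, the integrand is just the operator product
\[
\frac{\lambda A^{1/4}}{\lambda^2+A}\,V\,\frac{\lambda A^{1/4}}{\lambda^2+A}=T^{A,A}_{\phi_\lambda}(V),\qquad \phi_\lambda(\alpha_0,\alpha_1):=\frac{\lambda\alpha_0^{1/4}}{\lambda^2+\alpha_0}\cdot\frac{\lambda\alpha_1^{1/4}}{\lambda^2+\alpha_1}.
\]
A partial-fraction computation based on $\int_{-\infty}^{\infty}(\lambda^2+c)^{-1}\,d\lambda=\pi/\sqrt{c}$ gives
\[
\int_{-\infty}^{\infty}\phi_\lambda(\alpha_0,\alpha_1)\,d\lambda=\frac{\pi\alpha_0^{1/4}\alpha_1^{1/4}}{\sqrt{\alpha_0}+\sqrt{\alpha_1}}=\frac{\pi}{2}\psi(\alpha_0,\alpha_1),
\]
and since $\phi_\lambda\geq 0$, the truncated integrals $\int_{-n}^{n}\phi_\lambda\,d\lambda$ increase monotonically to $\tfrac{\pi}{2}\psi$, with a uniform majorant $\tfrac{\pi}{2}\psi\leq\tfrac{\pi}{2}$ (AM--GM applied to the denominator of $\psi$).

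Next, to upgrade pointwise convergence of symbols to weak operator convergence of the associated DOIs, I fix $\xi,\eta\in L_2(\HH^n)$ and use the spectral measure $E^A$ to represent
\[
\langle T^{A,A}_{\phi_\lambda}(V)\xi,\eta\rangle=\iint_{[1,\infty)^2}\phi_\lambda(\alpha_0,\alpha_1)\,d\mu_{\xi,\eta,V}(\alpha_0,\alpha_1),
\]
where $\mu_{\xi,\eta,V}(B_0\times B_1):=\langle E^A(B_0)VE^A(B_1)\xi,\eta\rangle$ extends to a complex Borel measure on $[1,\infty)^2$ of total variation at most $\|V\|\|\xi\|\|\eta\|$. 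Fubini (permissible thanks to the boundedness of $\phi_\lambda$ and the finiteness of $\mu_{\xi,\eta,V}$) will then interchange $\int_{-n}^{n}d\lambda$ with the double integral, after which the dominated convergence theorem, applied with dominant $\tfrac{\pi}{2}$, delivers the claimed WOT limit. For the consequence concerning $A_k$, I take $A=1-\Delta_{\HH}\geq 1$ and $V=A^{-1/4}X_kA^{-1/4}$; this $V$ is bounded on $L_2(\HH^n)$ by Stein (three-lines) interpolation between the two manifestly bounded endpoints $X_kA^{-1/2}$ and $A^{-1/2}X_k$, both of norm $\leq 1$ since $-X_k^2\leq-\Delta_{\HH}\leq A$. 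Combining the identity $\tfrac{\lambda A^{1/4}}{\lambda^2+A}\,V\,\tfrac{\lambda A^{1/4}}{\lambda^2+A}=\tfrac{\lambda}{\lambda^2+A}X_k\tfrac{\lambda}{\lambda^2+A}$ with the definition \eqref{defofAk} and the prefactor $\tfrac{2}{\pi}\cdot\tfrac{\pi}{2}=1$ yields \eqref{Ak double integral}.

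The main obstacle I anticipate is purely measure-theoretic: promoting the bi-measure $(B_0,B_1)\mapsto\langle E^A(B_0)VE^A(B_1)\xi,\eta\rangle$ to a genuine countably additive complex Borel measure on $[1,\infty)^2$. This is straightforward when $V$ is Hilbert--Schmidt, since then $E^A(B_0)VE^A(B_1)$ is trace-class and the measure is supplied by the operator trace; for general bounded $V$ one either invokes the standard DOI framework for Schur multipliers or approximates $V$ by finite-rank operators in the strong operator topology and pushes the measure-theoretic assertion through the approximation. Once this foundational step is in place, the remaining Fubini/dominated-convergence manipulations are routine.
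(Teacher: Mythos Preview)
Your argument is correct in outline and takes a genuinely different route from the paper. The paper does not appeal to a scalar product measure $\mu_{\xi,\eta,V}$ at all; instead it computes the truncated symbol in closed form,
\[
\phi_n(\alpha_0,\alpha_1)=\psi(\alpha_0,\alpha_1)\Big(\tfrac{\pi}{2}-F^{[1]}\big(\tfrac{1}{n}\alpha_0^{1/2},\tfrac{1}{n}\alpha_1^{1/2}\big)\Big),\qquad F(\alpha)=\alpha\arctan\alpha,
\]
and then, setting $W=T^{A,A}_\psi(V)$, factors $T^{A,A}_{\phi_n}(V)=\tfrac{\pi}{2}W-T^{n^{-1}A^{1/2},\,n^{-1}A^{1/2}}_{F^{[1]}}(W)$ and invokes the two preceding lemmas (uniform $\mathcal{L}_\infty$-bound on $T_{F^{[1]}}$ and its WOT decay under rescaling). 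That machinery is of independent operator-Lipschitz interest, which is why the authors develop it. Your approach avoids those lemmas entirely and is more elementary, but the obstacle you flag is real: the bi-measure $(B_0,B_1)\mapsto\langle E^A(B_0)VE^A(B_1)\xi,\eta\rangle$ does \emph{not} in general extend to a countably additive product measure, so dominated convergence cannot be applied to it directly. Your finite-rank workaround does close this gap, but you should make the uniformity step explicit: with $g_\lambda(\alpha)=\lambda\alpha^{1/4}/(\lambda^2+\alpha)$ one has $\int_{\mathbb R}\|g_\lambda(A)\xi\|^2\,d\lambda=\tfrac{\pi}{2}\|\xi\|^2$ by the spectral theorem, so Cauchy--Schwarz in $L_2(d\lambda)$ gives
\[
\sup_{n}\Big|\int_{-n}^{n}\langle(V-V_m)g_\lambda(A)\xi,g_\lambda(A)\eta\rangle\,d\lambda\Big|\leq\Big(\int_{\mathbb R}\|(V-V_m)g_\lambda(A)\xi\|^2\,d\lambda\Big)^{1/2}\Big(\tfrac{\pi}{2}\Big)^{1/2}\|\eta\|\to0
\]
by dominated convergence, which makes the double limit exchange legitimate; on the other side, $T^{A,A}_\psi$ is weak$^*$-continuous on $\mathcal{B}(L_2(\HH^n))$ (it is a Schur multiplier, cf.\ Lemma~\ref{doilemma}), so $T^{A,A}_\psi(V_m)\to T^{A,A}_\psi(V)$ in WOT as well.
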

\begin{proof}
Be the definition of double operator integral, we see that
$$\int_{-n}^n\frac{\lambda A^{1/4}}{\lambda^2+A}V\frac{\lambda A^{1/4}}{\lambda^2+A}d\lambda=T^{A,A}_{\phi_n}(V),$$
where
\begin{align*}
\phi_n(\alpha_0,\alpha_1)
&=\int_{-n}^n\frac{\lambda \alpha_0^{1/4}}{\lambda^2+\alpha_0}\frac{\lambda \alpha_1^{1/4}}{\lambda^2+\alpha_1}d\lambda\\
%&=\frac{\alpha_0^{\frac14}\alpha_1^{\frac14}}{\alpha_0-\alpha_1}\Big(\int_{-n}^n\frac{\lambda^2}{\lambda^2+\alpha_1}d\lambda-\int_{-n}^n\frac{\lambda^2}{\lambda^2+\alpha_0}d\lambda\Big)\\
%&=\frac{\alpha_0^{\frac14}\alpha_1^{\frac14}}{\alpha_0-\alpha_1}\Big(\alpha_0\int_{-n}^n\frac1{\lambda^2+\alpha_0}d\lambda-\alpha_1\int_{-n}^n\frac1{\lambda^2+\alpha_1}d\lambda\Big)\\
&=\frac{2\alpha_0^{1/4}\alpha_1^{1/4}}{\alpha_0-\alpha_1}\bigg(\alpha_0^{1/2}\tan^{-1}\Big(\frac{n}{\alpha_0^{1/2}}\Big)-\alpha_1^{1/2}\tan^{-1}\Big(\frac{n}{\alpha_1^{1/2}}\Big)\bigg)\\
&=\psi(\alpha_0,\alpha_1)\bigg(\frac{\pi}{2}-F^{[1]}\Big(\frac1n\alpha_0^{1/2},\frac1n\alpha_1^{1/2}\Big)\bigg),
\end{align*}
where $F=\alpha\tan^{-1}(\alpha)$. We write
$W=T^{A,A}_{\psi}(V)$,
then
$$T^{A,A}_{\phi_n}(V)=\frac{\pi}{2} W-T^{\frac1nA^{1/2},\frac1nA^{1/2}}_{F^{[1]}}(W).$$
This, in combination with Lemma \ref{weakconverge}, ends the proof of the first statement.  To show the second one, we simply apply the first statement for $V = (1-\Delta_{\HH})^{-\frac14}X_k(1-\Delta_{\HH})^{-\frac14}$ and $A= 1-\Delta_{\HH}.$
% we first apply H\"{o}lder's inequality to see that
% \begin{align}\label{changeRiesz}
% \|(1-\Delta_{\HH})^{-\frac14}X_k(1-\Delta_{\HH})^{-\frac14}\|_{\mathcal{L}_\infty}\leq \|(-\Delta_{\HH})^{-\frac14}X_k(-\Delta_{\HH})^{-\frac14}\|_{\mathcal{L}_\infty}=\|R_{k}\|_{\mathcal{L}_\infty}\leq C,
% \end{align}
% for some constant $C>0$,
% %where in the last equality we used the fact that   $(-\Delta_{\HH})^{-1/4}$ is a convolution operator with convolution kernel
% %\begin{align*}
% %K_{(-\Delta_\HH)^{-1/4}}(x,y)=\frac{1}{\Gamma(1/4)}\int_{0}^{\infty}t^{-\frac{3}{4}}K_{e^{t\Delta_\HH}}(y^{-1}x)dt,
% %\end{align*}
% %and therefore it commutes with $X_k$.
% Therefore, the boundedness condition in the first statement is verified with the choices of $A=(1-\Delta_\HH)^{\frac{1}{4}}$ and $V=(1-\Delta_{\HH})^{-\frac14}X_k(1-\Delta_{\HH})^{-\frac14}$. Thus,
% \begin{align}
% A_k&=\frac{2}{\pi}\int_{\mathbb{R}}\frac{\lambda(1-\Delta_\HH)^{\frac{1}{4}}}{\lambda^{2}+1-\Delta_\HH}(1-\Delta_\HH)^{-\frac{1}{4}}X_{k}(1-\Delta_\HH)^{-\frac{1}{4}}\frac{\lambda(1-\Delta_\HH)^{\frac{1}{4}}}{\lambda^{2}+1-\Delta_\HH}(1-\Delta_\HH)^{-\frac{1}{4}}d\lambda\nonumber\\
% &=T^{1-\Delta_{\HH},1-\Delta_{\HH}}_{\psi}((1-\Delta_{\HH})^{-\frac14}X_k(1-\Delta_{\HH})^{-\frac14}),
% \end{align}
% This ends the proof of the second statement, and therefore Lemma \ref{welldefine}.
\end{proof}

\begin{lem}\label{incursionlemma}
The following inclusion holds:
\begin{align}\label{firstincursion}
(1-(-\Delta_{\HH})^{1/4}(1-\Delta_{\HH})^{-1/4})\in L_{n+1,\infty}({\rm VN}(\mathbb{H}^n),\tau).
\end{align}
%\begin{align}\label{secondincursion}
%\big((1-\Delta_{\HH})^{1/2}-(-\Delta_{\HH})^{1/2}\big)\chi_{(1,\infty)}(-\Delta_\HH)\in L_{2n+2,\infty}({\rm VN}(\mathbb{H}^n),\tau).
%\end{align}
\end{lem}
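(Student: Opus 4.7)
The plan is to reduce the claim to the well-known spectral estimate $(1-\Delta_{\HH})^{-1}\in L_{n+1,\infty}({\rm VN}(\HH^n),\tau)$ via a pointwise functional-calculus majorant. Since $-\Delta_{\HH}\geq 0$ is affiliated with ${\rm VN}(\HH^n)$, joint functional calculus gives
\[
1-(-\Delta_{\HH})^{1/4}(1-\Delta_{\HH})^{-1/4}=g(-\Delta_{\HH}),\qquad g(s):=\frac{(1+s)^{1/4}-s^{1/4}}{(1+s)^{1/4}},
\]
which is in particular a bounded, positive element of ${\rm VN}(\HH^n)$.

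First I would establish the pointwise bound $0\leq g(s)\leq (1+s)^{-1}$ on $[0,\infty)$. Setting $a=(1+s)^{1/4}$ and $b=s^{1/4}$, the identity $a-b=(a^4-b^4)/(a^3+a^2b+ab^2+b^3)$ yields
\[
(1+s)^{1/4}-s^{1/4}=\frac{1}{(1+s)^{3/4}+(1+s)^{1/2}s^{1/4}+(1+s)^{1/4}s^{1/2}+s^{3/4}}.
\]
Bounding the denominator from below by its first summand $(1+s)^{3/4}$ shows $(1+s)^{1/4}-s^{1/4}\leq (1+s)^{-3/4}$, hence $g(s)\leq (1+s)^{-1}$; positivity is automatic. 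Functional calculus then promotes this to the operator inequality $0\leq g(-\Delta_{\HH})\leq (1-\Delta_{\HH})^{-1}$, so the generalised singular values satisfy $\mu(t;g(-\Delta_{\HH}))\leq \mu(t;(1-\Delta_{\HH})^{-1})$, and the desired inclusion will follow once the right-hand side is placed in $L_{n+1,\infty}$.

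For the remaining spectral estimate I would appeal to Corollary \ref{tau of gDelta} applied to the indicator $\chi_{[0,1/u-1)}$: for every $u\in(0,1)$,
\[
\tau\bigl(\chi_{(u,\infty)}((1-\Delta_{\HH})^{-1})\bigr)=\int_0^{1/u-1}s^n\,ds=\frac{(1/u-1)^{n+1}}{n+1}\leq \frac{u^{-(n+1)}}{n+1}.
\]
Combining with the standard formula $\mu(t;X)=\inf\{u>0:\tau(\chi_{(u,\infty)}(|X|))\leq t\}$ forces $\mu(t;(1-\Delta_{\HH})^{-1})\lesssim t^{-1/(n+1)}$, giving $(1-\Delta_{\HH})^{-1}\in L_{n+1,\infty}({\rm VN}(\HH^n),\tau)$ and hence the claim. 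I do not anticipate any real obstacle: the only non-routine step is recognising that rationalising $(1+s)^{1/4}-s^{1/4}$ produces both positivity and the sharp $(1+s)^{-1}$ majorant simultaneously, after which the trace computation on ${\rm VN}(\HH^n)$ is immediate from the Plancherel-type identity of Lemma \ref{Christlm}.
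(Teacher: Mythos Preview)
Your proposal is correct. Both your argument and the paper's proceed by writing the operator as $g(-\Delta_{\HH})$ for the scalar function $g(\lambda)=1-\lambda^{1/4}(1+\lambda)^{-1/4}$ and then invoking Corollary~\ref{tau of gDelta} to control the distribution function. The only difference is tactical: the paper inverts $g$ explicitly, solving $g(\lambda)>s$ to obtain the exact level set $\lambda<(1-s)^4/(1-(1-s)^4)$ and computing $\tau(\chi_{(s,\infty)}(g(-\Delta_{\HH})))$ directly, whereas you interpose the pointwise majorant $g(s)\leq(1+s)^{-1}$ (via the factorisation of $a^4-b^4$) and instead bound the distribution of the simpler operator $(1-\Delta_{\HH})^{-1}$. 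Your route avoids inverting $g$ at the cost of one extra inequality; the paper's route gives the sharp distribution but requires a small algebraic inversion. Both are equally elementary and yield the same $L_{n+1,\infty}$ bound.
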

\begin{proof}
For simplicity, we denote $g(\lambda)=1-\lambda^{1/4}(1+\lambda)^{-1/4}$, $\lambda\geq 0$, and note that
%\begin{align}\label{claim000}
%g(-\Delta_{\HH})\in L_{n+1,\infty}({\rm VN}(\mathbb{H}^n),\tau).
%\end{align}
for any $0<s<1$, we have
$$\chi_{(s,\infty)}(g(-\Delta_{\HH}))=\chi_{\big(0,\frac{(1-s)^{4}}{1-(1-s)^{4}}\big)}(-\Delta_\HH).$$
Combining this with Corollary \ref{tau of gDelta}, we see that
\begin{align*}
\tau\big(\chi_{(s,\infty)}(g(-\Delta_{\HH}))\big)=\tau\big(\chi_{\big(0,\frac{(1-s)^{4}}{1-(1-s)^{4}}\big)}(-\Delta_\HH)\big)=\int_{0}^{\frac{(1-s)^{4}}{1-(1-s)^{4}}}t^{n}dt=\frac{1}{n+1}\left(\frac{(1-s)^{4}}{1-(1-s)^{4}}\right)^{n+1}.
\end{align*}
From this it is easily to verify that
\begin{align*}
\mu(t;|g(-\Delta_\HH)|)=\inf\left\{s\geq 0:\tau(\chi_{(s,\infty)}(g(-\Delta_\HH)))\leq t\right\}\leq Ct^{-\frac{1}{n+1}}.
\end{align*}%
%Therefore, \eqref{firstincursion} holds.
%Next, we verify the second incursion. For simplicity, we denote $h(\lambda)=\big((1+\lambda)^{1/2}-\lambda^{1/2}\big)\chi_{(1,\infty)}(\lambda)$, $\lambda\geq 0$.
%Note that
%$$\chi_{(s,\infty)}(h(-\Delta_{\HH}))=\chi_{\big(0,\big(\frac{s^{2}-1}{2s}\big)^{2}\big)}(-\Delta_\HH).$$
%Combining this with Lemma \ref{tau of gDelta}, we see that
%\begin{align*}
%\tau(\chi_{(s,\infty)}(h(-\Delta_{\HH})))=\tau(\chi_{\big(0,\big(\frac{s^{2}-1}{2s}\big)^{2}\big)}(-\Delta_\HH))=\int_0^{\big(\frac{s^{2}-1}{2s}\big)^{2}}t^ndt=\frac{1}{n+1}\bigg(\frac{s^{2}-1}{2s}\bigg)^{2(n+1)}.
%\end{align*}
%From this it is easily to verify that
%\begin{align*}
%\mu(t;|h(-\Delta_\HH)|)=\inf\left\{s\geq 0:\tau(\chi_{(s,\infty)}(h(-\Delta_\HH)))\leq t\right\}\leq Ct^{-\frac{1}{2n+2}}.
%\end{align*}
Thus, we end the proof of \eqref{firstincursion}, and then Lemma \ref{incursionlemma}.
\end{proof}

\begin{lem}\label{doilemma}
Let $A_1,A_2\geq 0$ be (possibly unbounded) linear operators on $L_2(\HH^n)$ affiliated with ${\rm VN}(\HH^n).$ For any $1< p\leq\infty$, the double operator integral
$$T^{A_1,A_2}_{\frac{\alpha_0^{1/4}\alpha_1^{1/4}}{\alpha_0^{1/2}+\alpha_1^{1/2}}}$$
is bounded on $L_{p,\infty}({\rm VN}(\HH^n),\tau)$.
\end{lem}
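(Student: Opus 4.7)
The plan is to reduce the claimed boundedness, via a $2\times 2$-matrix trick followed by two spectral substitutions, to the classical bound for double operator integrals whose symbol depends only on the difference of the variables.

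First, I would reduce to the case $A_1=A_2$. In the $2\times 2$-ampliation $M_2(\mathrm{VN}(\HH^n))$, set $A = A_1\oplus A_2$; any $V\in L_{p,\infty}(\mathrm{VN}(\HH^n),\tau)$ lifts to the off-diagonal element $\widetilde V = \left(\begin{smallmatrix}0 & V\\0 & 0\end{smallmatrix}\right)$, whose $L_{p,\infty}$-quasinorm agrees with that of $V$ up to a universal constant. A block calculation on the spectral projections of $A$ yields
\begin{equation*}
T^{A,A}_{\psi}(\widetilde V)=\begin{pmatrix}0 & T^{A_1,A_2}_{\psi}(V)\\ 0 & 0\end{pmatrix},
\end{equation*}
so it suffices to bound $T^{B,B}_{\psi}$ on $L_{p,\infty}$ for a single $B\ge 0$. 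Since $\psi$ vanishes whenever either coordinate is zero, I may further compress to $\mathrm{ran}\,B$ and assume $\ker B=0$.

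Next, I perform two spectral substitutions. Functoriality of the double operator integral under functional calculus gives
\begin{equation*}
T^{B,B}_{\psi}=T^{B^{1/2},B^{1/2}}_{\widetilde\psi},\qquad \widetilde\psi(x,y):=\frac{\sqrt{xy}}{x+y}.
\end{equation*}
Writing $B^{1/2}=e^{C}$ for a self-adjoint $C$ (possible because $\ker B=0$) and using the identity
\begin{equation*}
\widetilde\psi(e^u,e^v)=\frac{1}{2\cosh((u-v)/2)}=:g(u-v),
\end{equation*}
another application of functoriality yields $T^{B^{1/2},B^{1/2}}_{\widetilde\psi}=T^{C,C}_{g(u-v)}$, whose symbol is a function of $u-v$ alone.

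For such difference-type symbols, one has the classical Fourier representation
\begin{equation*}
T^{C,C}_{g(u-v)}(V)=\frac{1}{2\pi}\int_{-\infty}^{\infty}\widehat g(\xi)\,e^{i\xi C}\,V\,e^{-i\xi C}\,d\xi,
\end{equation*}
which converges as a Bochner integral in $L_{p,\infty}$ for every $1<p\le\infty$ because conjugation by the unitary $e^{i\xi C}$ is an isometry of $L_{p,\infty}$. A standard Fourier computation gives $\widehat g(\xi)=\pi/\cosh(\pi\xi)\in L_1(\mathbb{R})$, whence $\|T^{C,C}_{g(u-v)}\|_{L_{p,\infty}\to L_{p,\infty}}\lesssim\|\widehat g\|_{L_1(\mathbb{R})}<\infty$, which gives the lemma. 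The main technical obstacle is the rigorous justification of the substitution $B^{1/2}=e^{C}$ when $B$ is unbounded with possibly dense range; this is handled by approximation with bounded spectral truncations $B\cdot\chi_{[1/n,n]}(B)$ together with dominated convergence for the Bochner integral in the quasi-Banach space $L_{p,\infty}$.
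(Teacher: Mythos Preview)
Your proof is correct and follows essentially the same strategy as the paper's: both exploit that the symbol is homogeneous of degree zero, rewrite it via a logarithmic change of spectral variables as a function of the difference $u-v$, and then use that the resulting function has integrable Fourier transform (your $g(t)=\tfrac{1}{2}\mathrm{sech}(t/2)$ is, up to a rescaling from your extra $B\mapsto B^{1/2}$ step, exactly the paper's $g=\phi\circ\exp$ with $\phi(\lambda)=\lambda^{1/4}/(1+\lambda^{1/2})$). The only presentational differences are that you carry out the $2\times2$ reduction from $(A_1,A_2)$ to a single operator and the Fourier representation explicitly, whereas the paper invokes \cite[Lemmas~7 and~9]{PS-crelle} for the $L_\infty$ bound and then passes to $L_{p,\infty}$ by duality and interpolation; your direct Bochner-integral argument on $L_{p,\infty}$ is a clean alternative to that last step.
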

\begin{proof}
%{\color{blue}We can actually show a stronger statement of the boundedness for $T^{A_1,A_2}_{\frac{\alpha_0^{1/4}\alpha_1^{1/4}}{\alpha_0^{1/2}+\alpha_1^{1/2}}}$. That is, $T^{A_1,A_2}_{\frac{\alpha_0^{1/4}\alpha_1^{1/4}}{\alpha_0^{1/2}+\alpha_1^{1/2}}}$ is bounded on $\mathcal{L}_\infty$.}
Consider $$\phi(\lambda):=\frac{\lambda^{1/4}}{1+\lambda^{1/2}}\ {\rm and}\ f(\alpha_0,\alpha_1):=\frac{\alpha_0^{1/4}\alpha_1^{1/4}}{\alpha_0^{1/2}+\alpha_1^{1/2}},$$
then
$$f(\alpha_0,\alpha_1)=\phi\Big(\frac{\alpha_1}{\alpha_0}\Big).$$
By \cite[Lemma 7 and Lemma 9]{PS-crelle},
\begin{align*}
\bigg\|T^{A_1,A_2}_{\frac{\alpha_0^{1/4}\alpha_1^{1/4}}{\alpha_0^{1/2}+\alpha_1^{1/2}}}\bigg\|_{L_{\infty}({\rm VN}(\HH^n),\tau)\rightarrow L_{\infty}({\rm VN}(\HH^n),\tau)}\lesssim \|\hat{g}\|_{L_{1}(\mathbb{R})}\lesssim 1,
\end{align*}
where $g:=\phi\circ \exp\in \mathcal{S}(\mathbb{R}).$ With the $L_{\infty}({\rm VN}(\HH^n),\tau)$-boundedness of $T^{A_1,A_2}_{\frac{\alpha_0^{1/4}\alpha_1^{1/4}}{\alpha_0^{1/2}+\alpha_1^{1/2}}}$, the $L_{p,\infty}({\rm VN}(\HH^n),\tau)$-boundedness follows from duality and interpolation immediately.
\end{proof}

\begin{lem}\label{VNpertubation}
Let $V \in {\rm VN}(\HH^n)$. We have
\begin{align}\label{doubleperturbation}
T^{1-\Delta_{\HH},1-\Delta_{\HH}}_{\frac{\alpha_0^{1/4}\alpha_1^{1/4}}{\alpha_0^{1/2}+\alpha_1^{1/2}}}(V)-T^{-\Delta_{\HH},-\Delta_{\HH}}_{\frac{\alpha_0^{1/4}\alpha_1^{1/4}}{\alpha_0^{1/2}+\alpha_1^{1/2}}}(V)\in L_{n+1,\infty}({\rm VN}(\mathbb{H}^n),\tau).
\end{align}
\end{lem}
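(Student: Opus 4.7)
The plan is to factor the difference through Lemma \ref{incursionlemma} by exploiting the multiplicative structure of $\phi$. Write $B:=1-\Delta_\HH$, $C:=-\Delta_\HH$, and $U:=C^{1/4}B^{-1/4}\in\mathrm{VN}(\HH^n)$, so that $B^{1/4}-C^{1/4}=(1-U)B^{1/4}$ and, by Lemma \ref{incursionlemma}, $1-U\in L_{n+1,\infty}(\mathrm{VN}(\HH^n),\tau)$. With $\psi(\alpha_0,\alpha_1):=(\alpha_0^{1/2}+\alpha_1^{1/2})^{-1}$, the factorisation property $T^{A,A}_\phi(V)=A^{1/4}T^{A,A}_\psi(V)A^{1/4}$ and insertion of the intermediate mixed DOI $T^{C,B}_\phi(V)$ yield
\begin{align*}
T^{B,B}_\phi(V)-T^{C,C}_\phi(V) &= (1-U)T^{B,B}_\phi(V) + C^{1/4}\bigl[T^{B,B}_\psi(V)-T^{C,B}_\psi(V)\bigr]B^{1/4} \\
&\quad + T^{C,B}_\phi(V)(1-U) + C^{1/4}\bigl[T^{C,B}_\psi(V)-T^{C,C}_\psi(V)\bigr]C^{1/4}.
\end{align*}
The two corner terms $(1-U)T^{B,B}_\phi(V)$ and $T^{C,B}_\phi(V)(1-U)$ lie in $L_{n+1,\infty}(\mathrm{VN}(\HH^n),\tau)$ by Lemma \ref{doilemma} and the ideal property.

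For the two middle terms, the spectral shift $dE^B(\lambda)=dE^C(\lambda-1)$ rewrites each as a single DOI. The first equals $T^{C,B}_{\sigma_1}(V)$, where $\sigma_1(\alpha_0,\alpha_1):=\alpha_0^{1/4}\alpha_1^{1/4}\bigl(\psi(\alpha_0+1,\alpha_1)-\psi(\alpha_0,\alpha_1)\bigr)$, and the second equals $T^{C,C}_{\sigma_2}(V)$, where $\sigma_2(\alpha_0,\alpha_1):=\alpha_0^{1/4}\alpha_1^{1/4}\bigl(\psi(\alpha_0,\alpha_1+1)-\psi(\alpha_0,\alpha_1)\bigr)$. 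The elementary identity $(1+x)^{1/2}-x^{1/2}=\bigl((1+x)^{1/2}+x^{1/2}\bigr)^{-1}$ produces
\[
\psi(\alpha_0+1,\alpha_1)-\psi(\alpha_0,\alpha_1) = -\bigl[((\alpha_0+1)^{1/2}+\alpha_0^{1/2})((\alpha_0+1)^{1/2}+\alpha_1^{1/2})(\alpha_0^{1/2}+\alpha_1^{1/2})\bigr]^{-1},
\]
and an analogous expression for the $\alpha_1$-shift. Extracting $(1+\alpha_0)^{-1}$ (respectively $(1+\alpha_1)^{-1}$) as a left (respectively right) factor gives $\sigma_1 = (1+\alpha_0)^{-1}\tilde g_1$ and $\sigma_2 = (1+\alpha_1)^{-1}\tilde g_2$, with $\tilde g_j$ bounded and asymptotically degree-zero homogeneous on $(0,\infty)^2$. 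The DOIs $T^{C,B}_{\tilde g_1}$ and $T^{C,C}_{\tilde g_2}$ are then bounded on $\mathrm{VN}(\HH^n)$ by the Schur multiplier estimates in \cite[Lemmas 7 \& 9]{PS-crelle} underlying Lemma \ref{doilemma}. Hence the two middle terms equal $B^{-1}\,T^{C,B}_{\tilde g_1}(V)$ and $T^{C,C}_{\tilde g_2}(V)\,B^{-1}$, respectively. A direct computation via Corollary \ref{tau of gDelta} gives $\mu(t;B^{-1})\asymp t^{-1/(n+1)}$, so $B^{-1}\in L_{n+1,\infty}(\mathrm{VN}(\HH^n),\tau)$; combining all four contributions using the ideal property of $L_{n+1,\infty}(\mathrm{VN}(\HH^n),\tau)$ completes the proof.

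The main obstacle in implementing this plan is the verification that the symbols $\tilde g_j$ are genuine Schur multipliers whose DOIs are bounded on $\mathrm{VN}(\HH^n)$; this is a delicate point because $\tilde g_j$ is only asymptotically homogeneous of degree zero, and a naive telescoping of the original difference produces tautological cancellations rather than useful smallness. The remedy is the explicit split via the intermediate operator $T^{C,B}_\phi(V)$ together with the concrete algebraic identity above, which exposes the $(1+\alpha_0)^{-1}$ and $(1+\alpha_1)^{-1}$ factors that feed into the $L_{n+1,\infty}$ bound on $B^{-1}$.
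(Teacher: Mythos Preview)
Your decomposition into four pieces via the intermediate term $T^{C,B}_\phi(V)$ is correct, and the two ``corner terms'' with the factor $(1-U)$ are handled exactly as in the paper. The gap is in the two middle terms. You assert that $T^{C,B}_{\tilde g_1}$ and $T^{C,C}_{\tilde g_2}$ are bounded on $\mathrm{VN}(\HH^n)=L_\infty$, citing \cite[Lemmas~7 \& 9]{PS-crelle}. Those lemmas, however, apply only to symbols of the form $g(\alpha_1/\alpha_0)$ with $\widehat{g\circ\exp}\in L_1(\mathbb{R})$, i.e.\ to genuinely degree-zero homogeneous symbols. Your $\tilde g_j$ are merely \emph{asymptotically} homogeneous, and if one attempts to factor them into homogeneous pieces one inevitably produces a factor of the type $\dfrac{\beta_0^{1/2}}{\beta_0^{1/2}+\beta_1^{1/2}}$, i.e.\ the symbol $\dfrac{\lambda}{\lambda+\mu}$ from Lemma~\ref{fraction schur lemma}. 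The corresponding DOI is the (smoothed) triangular truncation, which is bounded on $L_{p,\infty}$ for $1<p<\infty$ but \emph{not} on $L_\infty$. So the step ``$T_{\tilde g_j}(V)\in L_\infty$, then multiply by $B^{-1}\in L_{n+1,\infty}$'' does not go through as written.

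The paper's proof avoids this by reversing the order: rather than pulling the $L_{n+1,\infty}$ factor outside the DOI and then needing $L_\infty$-boundedness of the residual symbol, it writes each of the analogous terms (III) and (IV) as a composition of DOIs with symbols $\phi$ and $\tfrac{\lambda}{\lambda+\mu}$ applied to an element that is \emph{already} in $L_{n+1,\infty}(\mathrm{VN}(\HH^n),\tau)$ (namely $V$ multiplied on one side by an operator of the type treated in Lemma~\ref{incursionlemma}). One then invokes Lemma~\ref{doilemma} and Lemma~\ref{fraction schur lemma} on $L_{n+1,\infty}$, where both DOIs \emph{are} bounded. Your argument can be repaired along these lines: for the first middle term, factor $\sigma_1$ so that the piece depending only on $\alpha_0$ (which lies in $L_{n+1,\infty}$ as a function of $-\Delta_\HH$) multiplies $V$ \emph{inside} the DOI, and then apply the $\phi$- and $\tfrac{\lambda}{\lambda+\mu}$-DOIs to that product in $L_{n+1,\infty}$.
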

\begin{proof} By Definition \ref{double integral} and the linearity property of double operator integrals, we write
\begin{align*}
T^{1-\Delta_{\HH},1-\Delta_{\HH}}_{\frac{\alpha_0^{1/4}\alpha_1^{1/4}}{\alpha_0^{1/2}+\alpha_1^{1/2}}}(V)-T^{-\Delta_{\HH},-\Delta_{\HH}}_{\frac{\alpha_0^{1/4}\alpha_1^{1/4}}{\alpha_0^{1/2}+\alpha_1^{1/2}}}(V)
&=T^{-\Delta_{\HH},-\Delta_{\HH}}_{\frac{(\alpha_0+1)^{1/4}(\alpha_1+1)^{1/4}}{(\alpha_0+1)^{1/2}+(\alpha_1+1)^{1/2}}-\frac{\alpha_0^{1/4}\alpha_1^{1/4}}{\alpha_0^{1/2}+\alpha_1^{1/2}}}(V)\\
&=T^{-\Delta_{\HH},-\Delta_{\HH}}_{\frac{(\alpha_0+1)^{1/4}(\alpha_1+1)^{1/4}}{(\alpha_0+1)^{1/2}+(\alpha_1+1)^{1/2}}-\frac{\alpha_0^{1/4}(\alpha_1+1)^{1/4}}{(\alpha_0+1)^{1/2}+(\alpha_1+1)^{1/2}}}(V)+T^{-\Delta_{\HH},-\Delta_{\HH}}_{\frac{\alpha_0^{1/4}(\alpha_1+1)^{1/4}}{(\alpha_0+1)^{1/2}+(\alpha_1+1)^{1/2}}-\frac{\alpha_0^{1/4}\alpha_1^{1/4}}{(\alpha_0+1)^{1/2}+(\alpha_1+1)^{1/2}}}(V)\\
&\quad+T^{-\Delta_{\HH},-\Delta_{\HH}}_{\frac{\alpha_0^{1/4}\alpha_1^{1/4}}{(\alpha_0+1)^{1/2}+(\alpha_1+1)^{1/2}}-\frac{\alpha_0^{1/4}\alpha_1^{1/4}}{\alpha_0^{1/2}+(\alpha_1+1)^{1/2}}}(V)
+T^{-\Delta_{\HH},-\Delta_{\HH}}_{\frac{\alpha_0^{1/4}\alpha_1^{1/4}}{\alpha_0^{1/2}+(\alpha_1+1)^{1/2}}-\frac{\alpha_0^{1/4}\alpha_1^{1/4}}{\alpha_0^{1/2}+\alpha_1^{1/2}}}(V)\\
&=:{\rm (I)}+{\rm (II)}+{\rm (III)}+{\rm (IV)}.
\end{align*}

Now we handle these double operator integrals separately.
$${\rm (I)}=T^{-\Delta_{\HH},-\Delta_{\HH}}_{\frac{(\alpha_0+1)^{1/4}(\alpha_1+1)^{1/4}}{(\alpha_0+1)^{1/2}+(\alpha_1+1)^{1/2}}}\Big(T^{-\Delta_{\HH},-\Delta_{\HH}}_{1-\frac{\alpha_0^{1/4}}{(\alpha_0+1)^{1/4}}}(V)\Big)=T^{1-\Delta_{\HH},1-\Delta_{\HH}}_{\frac{\alpha_0^{1/4}\alpha_1^{1/4}}{\alpha_0^{1/2}+\alpha_1^{1/2}}}\Big(\frac{(1-\Delta_{\HH})^{1/4}-(-\Delta_{\HH})^{1/4}}{(1-\Delta_{\HH})^{1/4}}V\Big).$$
Its follows from the inclusion \eqref{firstincursion} that
\begin{align*}
\frac{(1-\Delta_{\HH})^{1/4}-(-\Delta_{\HH})^{1/4}}{(1-\Delta_{\HH})^{1/4}}V\in L_{n+1,\infty}({\rm VN}(\mathbb{H}^n),\tau).
\end{align*}
This, in combination with Lemma \ref{doilemma}, concludes that
the first integral ${\rm (I)}$ falls into $L_{n+1,\infty}({\rm VN}(\mathbb{H}^n),\tau).$

For the term ${\rm (II)}$, we write
$${\rm (II)}=T^{-\Delta_{\HH},-\Delta_{\HH}}_{\frac{(\alpha_0+1)^{1/4}(\alpha_1+1)^{1/4}}{(\alpha_0+1)^{1/2}+(\alpha_1+1)^{1/2}}}\Big(T^{-\Delta_{\HH},-\Delta_{\HH}}_{\frac{\alpha_0^{1/4}}{(\alpha_0+1)^{1/4}}(1-\frac{\alpha_1^{1/4}}{(\alpha_1+1)^{1/4}})}(V)\Big)=T^{1-\Delta_{\HH},1-\Delta_{\HH}}_{\frac{\alpha_0^{1/4}\alpha_1^{1/4}}{\alpha_0^{1/2}+\alpha_1^{1/2}}}\Big(\frac{(-\Delta_{\HH})^{\frac14}}{(1-\Delta_{\HH})^{\frac14}}V\frac{(1-\Delta_{\HH})^{1/4}-(-\Delta_{\HH})^{1/4}}{(1-\Delta_{\HH})^{1/4}}\Big).$$
Then similarly to first integral ${\rm (I)}$, the term ${\rm (II)}$ also falls into  $L_{n+1,\infty}({\rm VN}(\mathbb{H}^n),\tau).$

For the term ${\rm (III)}$, we write
$${\rm (III)}=T^{-\Delta_{\HH},-\Delta_{\HH}}_{\frac{\alpha_0^{1/4}(\alpha_1+1)^{1/4}}{\alpha_0^{1/2}+(\alpha_1+1)^{1/2}}}\bigg(T^{-\Delta_{\HH},-\Delta_{\HH}}_{\frac{(\alpha_0+1)^{1/4}(\alpha_1+1)^{1/4}}{(\alpha_0+1)^{1/2}+(\alpha_1+1)^{1/2}}}\Big(T^{-\Delta_{\HH},-\Delta_{\HH}}_{\frac{\alpha_1^{1/4}(\alpha_0^{1/2}-(\alpha_0+1)^{1/2})}{(\alpha_0+1)^{1/4}(\alpha_1+1)^{1/2}}}(V)\Big)\bigg)=$$
$$=T^{-\Delta_{\HH},1-\Delta_{\HH}}_{\frac{\alpha_0^{1/4}\alpha_1^{1/4}}{\alpha_0^{1/2}+\alpha_1^{1/2}}}\bigg(T^{1-\Delta_{\HH},1-\Delta_{\HH}}_{\frac{\alpha_0^{1/4}\alpha_1^{1/4}}{\alpha_0^{1/2}+\alpha_1^{1/2}}}\Big(\frac{(-\Delta_{\HH})^{1/2}-(1-\Delta_{\HH})^{1/2}}{(1-\Delta_{\HH})^{1/4}}V\frac{(-\Delta_{\HH})^{1/4}}{(1-\Delta_{\HH})^{1/2}}\Big)\bigg).
$$
Then similarly to first integral ${\rm (I)}$, the term ${\rm (III)}$ also falls into  $L_{n+1,\infty}({\rm VN}(\mathbb{H}^n),\tau).$

For the term ${\rm (IV)}$, we write
$${\rm (IV)}=T^{-\Delta_{\HH},-\Delta_{\HH}}_{\frac{\alpha_0^{1/4}\alpha_1^{1/4}}{\alpha_0^{1/2}+\alpha_1^{1/2}}}\bigg(T^{-\Delta_{\HH},-\Delta_{\HH}}_{\frac{(\alpha_1+1)^{1/2}}{(\alpha_0+1)^{1/2}+(\alpha_1+1)^{1/2}}}\Big(T^{-\Delta_{\HH},-\Delta_{\HH}}_{\frac{\alpha_1^{1/2}-(\alpha_1+1)^{1/2}}{(\alpha_1+1)^{1/2}}}(V)\Big)\bigg)=$$
$$=T^{-\Delta_{\HH},-\Delta_{\HH}}_{\frac{\alpha_0^{1/4}\alpha_1^{1/4}}{\alpha_0^{1/2}+\alpha_1^{1/2}}}\bigg(T^{(1-\Delta_{\HH})^{1/2},(1-\Delta_{\HH})^{1/2}}_{\frac{\alpha_0}{\alpha_0+\alpha_1}}\Big(V\frac{(-\Delta_{\HH})^{1/2}-(1-\Delta_{\HH})^{1/2}}{(1-\Delta_{\HH})^{1/2}}\Big)\bigg).$$
Then similarly to first integral ${\rm (I)}$ and Lemma \ref{fraction schur lemma}, the term ${\rm (IV)}$ also falls into  $L_{n+1,\infty}({\rm VN}(\mathbb{H}^n),\tau).$
\end{proof}

\begin{lem}\label{ak def lemma} Let $a_k$ and $A_k$ be the operators defined as in \eqref{defofak} and \eqref{Ak double integral}, respectively.
%\begin{align*}
%a_k=T^{-\Delta_{\HH},-\Delta_{\HH}}_{\frac{2\alpha_0^{1/4}\alpha_1^{1/4}}{\alpha_0^{1/2}+\alpha_1^{1/2}}}((-\Delta_{\HH})^{-1/4}X_k(-\Delta_{\HH})^{-1/4}).
%\end{align*}
%and
%\begin{align*}
%A_k=T^{1-\Delta_{\HH},1-\Delta_{\HH}}_{\frac{2\alpha_0^{1/4}\alpha_1^{1/4}}{\alpha_0^{1/2}+\alpha_1^{1/2}}}((1-\Delta_{\HH})^{-\frac14}X_k(1-\Delta_{\HH})^{-\frac14}).
%\end{align*}
Then $$A_k-a_k\in L_{n+1,\infty}({\rm VN}(\mathbb{H}^n),\tau).$$
\end{lem}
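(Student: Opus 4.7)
The plan is to use the factorisation
\[(1-\Delta_\HH)^{-1/4} = P(-\Delta_\HH)^{-1/4} = (-\Delta_\HH)^{-1/4}P,\]
where $P := (-\Delta_\HH)^{1/4}(1-\Delta_\HH)^{-1/4}$ (valid since both factors are bounded functional calculus of the single operator $-\Delta_\HH$, hence commute). Writing $V_1 := (1-\Delta_\HH)^{-1/4}X_k(1-\Delta_\HH)^{-1/4}$ and $V_2 := (-\Delta_\HH)^{-1/4}X_k(-\Delta_\HH)^{-1/4}$, this identity immediately gives $V_1 = P V_2 P$, and therefore
\[V_1-V_2 = PV_2P - V_2 = (P-1)V_2 P + V_2(P-1).\]
By Lemma \ref{incursionlemma} the element $P-1 = -\bigl(1-(-\Delta_\HH)^{1/4}(1-\Delta_\HH)^{-1/4}\bigr)$ belongs to $L_{n+1,\infty}(\mathrm{VN}(\HH^n),\tau)$, so once I know $V_2$ and $P$ are bounded the ideal property of $L_{n+1,\infty}$ yields $V_1-V_2 \in L_{n+1,\infty}(\mathrm{VN}(\HH^n),\tau)$.

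To justify that $V_2$ (and hence $V_1 = PV_2P$) is bounded, I will transfer the computation to the Schr\"odinger side via the isomorphism $\pi$ of Proposition \ref{pi rigorous prop}. Using the relations recalled before Proposition \ref{pi rigorous prop}, $X_j$ corresponds to $ip_j\otimes |s|^{1/2}$ and $Y_j$ to $iq_j\otimes \sgn(s)|s|^{1/2}$, while $(-\Delta_\HH)^{-1/4}$ corresponds to $H^{-1/4}\otimes |s|^{-1/4}$ (by Corollary \ref{pi_computations}); the $|s|$ factors cancel exactly, so $V_2$ is carried to an operator of the form $i H^{-1/4}p_kH^{-1/4}\otimes 1$ (if $1\le k\le n$) or $iH^{-1/4}q_{k-n}H^{-1/4}\otimes \sgn(s)$ (if $n+1\le k\le 2n$). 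The matrix of $H^{-1/4}p_kH^{-1/4}$ in the Hermite basis is supported on the two off-diagonals $\alpha\leftrightarrow \alpha\pm e_k$ with entries of modulus $\le ((\alpha_k+1)/(|\alpha|+1))^{1/2}\le 1$; Schur's test then gives an $L_2(\mathbb{R}^n)$-bounded operator, and the same holds for $H^{-1/4}q_kH^{-1/4}$. Consequently $V_2, V_1\in\mathrm{VN}(\HH^n)$.

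With these ingredients in hand I will split
\[A_k-a_k \;=\; \bigl[T^{1-\Delta_\HH,1-\Delta_\HH}_\psi(V_1)-T^{-\Delta_\HH,-\Delta_\HH}_\psi(V_1)\bigr] + T^{-\Delta_\HH,-\Delta_\HH}_\psi(V_1-V_2).\]
The first bracket is in $L_{n+1,\infty}(\mathrm{VN}(\HH^n),\tau)$ by Lemma \ref{VNpertubation} applied to the element $V_1\in\mathrm{VN}(\HH^n)$. For the second term, Lemma \ref{doilemma} guarantees that $T^{-\Delta_\HH,-\Delta_\HH}_\psi$ is bounded on $L_{n+1,\infty}(\mathrm{VN}(\HH^n),\tau)$, and the already-established inclusion $V_1-V_2\in L_{n+1,\infty}$ closes the argument.

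The main obstacle is the boundedness of $V_2$: formally it is a zero-order operator in the Heisenberg pseudodifferential calculus, but since that machinery is not invoked elsewhere in the paper, the Schr\"odinger-side Schur-test reduction sketched above must be carried out explicitly (taking some care with the $\sgn(s)$ factor to handle $k\le n$ and $k>n$ uniformly). Once this algebraic reduction is in place, the remainder of the proof consists only of assembling Lemmas \ref{incursionlemma}, \ref{doilemma} and \ref{VNpertubation}, and no further analytic estimates are needed.
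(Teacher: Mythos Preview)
Your argument is correct and follows essentially the same route as the paper: show $V_1-V_2\in L_{n+1,\infty}$ via Lemma \ref{incursionlemma} and the ideal property, then split $A_k-a_k$ into a piece handled by Lemma \ref{VNpertubation} and a piece handled by Lemma \ref{doilemma}. The only cosmetic difference is that the paper uses $V_2$ rather than $V_1$ as the pivot (defining $B_k:=T^{1-\Delta_\HH,1-\Delta_\HH}_\psi(V_2)$ and writing $A_k-a_k=(A_k-B_k)+(B_k-a_k)$), which is an entirely symmetric choice. Your Schur-test argument for the boundedness of $V_2$ is fine but unnecessary: the paper simply observes that $(-\Delta_\HH)^{-1/4}X_k(-\Delta_\HH)^{-1/4}$ is the Riesz transform $R_k$, whose $L_2$-boundedness is already part of the standing assumptions.
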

\begin{proof} Note that
\begin{align}\label{pertubation1212}
&(1-\Delta_{\HH})^{-1/4}X_k(1-\Delta_{\HH})^{-1/4}-(-\Delta_{\HH})^{-1/4}X_k(-\Delta_{\HH})^{-1/4}\nonumber\\
&=((1-\Delta_{\HH})^{-1/4}(-\Delta_{\HH})^{1/4}-1)\cdot (-\Delta_{\HH})^{-1/4}X_k(-\Delta_{\HH})^{-1/4}\cdot (-\Delta_{\HH})^{1/4}(1-\Delta_{\HH})^{-1/4}\nonumber\\
&\quad+(-\Delta_{\HH})^{-1/4}X_k(-\Delta_{\HH})^{-1/4}\cdot ((1-\Delta_{\HH})^{-1/4}(-\Delta_{\HH})^{1/4}-1).
\end{align}

%We denote $g(\lambda)=(1+\lambda)^{-1/4}\lambda^{1/4}-1$ for $\lambda\geq 0$ and claim that
%\begin{align}\label{claim000}
%g(-\Delta_{\HH})\in L_{n+1,\infty}({\rm VN}(\mathbb{H}^n),\tau).
%\end{align}
%Indeed, note that for any $0<s<1$, we have
%$$\chi_{(s,\infty)}(g(-\Delta_{\HH}))=\chi_{\big(0,\frac{(1-s)^{4}}{1-(1-s)^{4}}\big)}(-\Delta_\HH).$$
%Combining this with Lemma \ref{tau of gDelta}, we see that
%\begin{align*}
%\tau\big(\chi_{(s,\infty)}(g(-\Delta_{\HH}))\big)=\tau\big(\chi_{\big(0,\frac{(1-s)^{4}}{1-(1-s)^{4}}\big)}(-\Delta_\HH)\big)=\int_{0}^{\frac{(1-s)^{4}}{1-(1-s)^{4}}}t^{n}dt=\frac{1}{n+1}\left(\frac{(1-s)^{4}}{1-(1-s)^{4}}\right)^{n+1}.
%\end{align*}
%From this it is easily to verify that
%\begin{align*}
%\mu(t;|g(-\Delta_\HH)|)=\inf\left\{s\geq 0:\tau(\chi_{(s,\infty)}(|g(-\Delta_\HH)|))\leq t\right\}\leq Ct^{-\frac{1}{n+1}}.
%\end{align*}
%Therefore, \eqref{claim000} holds.
 Recall that
%in the course of the proof of \eqref{changeRiesz}, we had established that
 $(-\Delta_{\HH})^{-1/4}X_k(-\Delta_{\HH})^{-1/4}=R_{k}$ is bounded on $L_2(\HH^n)$. This, in combination with the boundedness of the operator $(-\Delta_{\HH})^{1/4}(1-\Delta_{\HH})^{-1/4}$ and inclusion \eqref{firstincursion} as well as equality \eqref{pertubation1212}, yields that
$$(1-\Delta_{\HH})^{-\frac14}X_k(1-\Delta_{\HH})^{-\frac14}-(-\Delta_{\HH})^{-\frac14}X_k(-\Delta_{\HH})^{-\frac14}\in L_{n+1,\infty}({\rm VN}(\mathbb{H}^n),\tau).$$

Set
$$B_k:=T^{1-\Delta_{\HH},1-\Delta_{\HH}}_{\frac{2\alpha_0^{1/4}\alpha_1^{1/4}}{\alpha_0^{1/2}+\alpha_1^{1/2}}}((-\Delta_{\HH})^{-1/4}X_k(-\Delta_{\HH})^{-1/4}).$$
By Lemma \ref{doilemma},  $T^{1-\Delta_{\HH},1-\Delta_{\HH}}_{\frac{\alpha_0^{1/4}\alpha_1^{1/4}}{\alpha_0^{1/2}+\alpha_1^{1/2}}}$ is bounded on $L_{n+1,\infty}({\rm VN}(\mathbb{H}^n),\tau)$,
which implies that
$$A_k-B_k\in L_{n+1,\infty}({\rm VN}(\mathbb{H}^n),\tau),$$
where $A_k$ is as in \eqref{Ak double integral}.
On the other hand, by Lemma \ref{VNpertubation}, we have
$$B_k-a_k\in L_{n+1,\infty}({\rm VN}(\mathbb{H}^n),\tau).$$
A combination of the last two inclusions ends the proof of Lemma \ref{ak def lemma}.
\end{proof}

%\begin{lem}\label{appr1}
%{\color{red}For any $f\in C_c^\infty(\HH^n)$}, we have
%$$[(-\Delta_{\HH})^{\frac12},M_f](-\Delta_{\HH})^{-\frac12}+\sum_{k=1}^{2n}A_kM_{X_kf}(-\Delta_{\HH})^{-\frac12}\in(\mathcal{L}_{2n+2,\infty})_0.$$
%\end{lem}
%\begin{proof} Combining Lemmas \ref{appro3}-\ref{appro1} with equality \eqref{keydecom}, we conclude that
%$$[(1-\Delta_{\HH})^{\frac12},M_f](-\Delta_{\HH})^{-\frac12}+\sum_{k=1}^{2n}A_kM_{X_kf}(-\Delta_{\HH})^{-\frac12}\in(\mathcal{L}_{2n+2,\infty})_0.$$
%
%
%A combination of these estimates proves the claim and then Lemma \ref{appr1}.
%\end{proof}

\begin{cor}\label{perturbation111}
For any $k,\ell\in\{1,2,\cdots,2n\}$ and  $f\in C_c^{\infty}(\HH^n)$,  we have
\begin{align}
R_{\ell}(A_{k}-a_{k})M_{X_kf}(-\Delta_\HH)^{-\frac{1}{2}}\in \mathcal{L}_{n+1,\infty},
\end{align}
where $a_k$ and $A_k$ are defined in \eqref{defofak} and \eqref{defofAk} {\rm (see also \eqref{Ak double integral})}, respectively.
\end{cor}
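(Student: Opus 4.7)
The plan is to combine Lemma \ref{ak def lemma}, which places $A_k-a_k$ in $L_{n+1,\infty}({\rm VN}(\HH^n),\tau)$, with the polar decomposition, noncommutative H\"older's inequality, and the Cwikel-type estimates of Theorem \ref{L_2_L_infty_cwikel_estimate} and Lemma \ref{L_p_cwikel}. Since $R_\ell$ is bounded on $L_2(\HH^n)$, the ideal property of $\mathcal{L}_{n+1,\infty}$ reduces the task to showing $BM_{X_kf}(-\Delta_\HH)^{-1/2}\in\mathcal{L}_{n+1,\infty}$, where $B:=A_k-a_k$. Via the polar decomposition $B=U|B|$ and the factorization $|B|=|B|^{1/2}\cdot|B|^{1/2}$, I would write $B=U|B|^{1/2}\cdot|B|^{1/2}$ with both factors bounded and lying in $L_{2n+2,\infty}({\rm VN}(\HH^n),\tau)$. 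It then suffices to show $|B|^{1/2}M_{X_kf}(-\Delta_\HH)^{-1/2}\in\mathcal{L}_{n+1,\infty}$, which I would do by splitting
\[
|B|^{1/2}M_{X_kf}(-\Delta_\HH)^{-1/2}=|B|^{1/2}(-\Delta_\HH)^{-1/2}M_{X_kf}+|B|^{1/2}\bigl[M_{X_kf},(-\Delta_\HH)^{-1/2}\bigr].
\]

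For the first summand, Corollary \ref{tau of gDelta} (applied to $g(s)=s^{-1/2}$) shows $(-\Delta_\HH)^{-1/2}\in L_{2n+2,\infty}({\rm VN}(\HH^n),\tau)$, so noncommutative H\"older gives $(-\Delta_\HH)^{-1/2}|B|^{1/2}\in L_{n+1,\infty}({\rm VN}(\HH^n),\tau)$; taking adjoints and applying Theorem \ref{L_2_L_infty_cwikel_estimate} at $p=n+1$ to $M_{\overline{X_kf}}(-\Delta_\HH)^{-1/2}|B|^{1/2}$ places the first summand in $\mathcal{L}_{n+1,\infty}$. For the commutator summand, I would sharpen Lemma \ref{cwikel-like commutator lemma} by writing $(-\Delta_\HH)^{-1/2}=(1-\Delta_\HH)^{-1/2}+g(-\Delta_\HH)$ with $g(s):=s^{-1/2}-(1+s)^{-1/2}$: the piece $[M_{X_kf},(1-\Delta_\HH)^{-1/2}]$ lies in $\mathcal{L}_{n+1,\infty}$ by \cite[Theorem 4.1(iv)]{MSZ_cwikel}, while a direct spectral computation (using the tails $g(s)\sim s^{-1/2}$ as $s\downarrow 0$ and $g(s)\sim\tfrac{1}{2}s^{-3/2}$ as $s\to\infty$, which yield $\mu(t;g)\sim t^{-1/(2n+2)}$ for small $t$ and $\sim t^{-3/(2n+2)}$ for large $t$) shows $g\in L_{n+1,\infty}(\mathbb{R}_+,s^n\,ds)$, whence Lemma \ref{L_p_cwikel} at $p=n+1$ places $M_{X_kf}g(-\Delta_\HH)$ (and its adjoint $g(-\Delta_\HH)M_{X_kf}$) in $\mathcal{L}_{n+1,\infty}$. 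Multiplying on the left by the bounded operators $|B|^{1/2}$ and then $U|B|^{1/2}$ preserves the class, completing the proof.

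The principal obstacle is precisely the endpoint $\mathcal{L}_{n+1,\infty}$-bound for $[M_{X_kf},(-\Delta_\HH)^{-1/2}]$, which is stronger than the $(\mathcal{L}_{2n+2,\infty})_0$-membership recorded in Lemma \ref{cwikel-like commutator lemma}: it rests on verifying that $g(s)=s^{-1/2}-(1+s)^{-1/2}$ lies in $L_{p,\infty}(\mathbb{R}_+,s^n\,ds)$ precisely for $(2n+2)/3\leq p\leq 2n+2$, so that $p=n+1$ is just inside the admissible range—a genuine endpoint computation. A secondary caveat is that Theorem \ref{L_2_L_infty_cwikel_estimate} and Lemma \ref{L_p_cwikel} require $p>2$, so the argument as written needs $n\geq 2$; the $n=1$ case would require a minor modification, e.g.\ interpolating between estimates at exponents slightly above $2$.
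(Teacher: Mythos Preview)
Your argument is essentially correct for $n\geq 2$, but it takes a considerably more roundabout path than the paper's and leaves the case $n=1$ genuinely open. The paper's proof avoids both the commutator detour and the endpoint Cwikel issue by splitting the \emph{multiplication operator} rather than $B=A_k-a_k$: writing $M_{X_kf}=M_{|X_kf|^{1/2}}\cdot M_{\mathrm{phase}(X_kf)}\cdot M_{|X_kf|^{1/2}}$, one factors
\[
(A_k-a_k)M_{X_kf}(-\Delta_{\HH})^{-1/2}
= \bigl[(A_k-a_k)M_{|X_kf|^{1/2}}\bigr]\cdot M_{\mathrm{phase}}\cdot\bigl[M_{|X_kf|^{1/2}}(-\Delta_{\HH})^{-1/2}\bigr],
\]
and then applies the Cwikel estimate (Theorem~\ref{L_2_L_infty_cwikel_estimate} together with Lemma~\ref{ak def lemma}) and Corollary~\ref{specific_cwikel} to the two bracketed factors, each at the exponent $p=2n+2$. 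H\"older then gives the $\mathcal{L}_{n+1,\infty}$ bound. This is a two-line argument with no commutators and no sharpening of Lemma~\ref{cwikel-like commutator lemma}; moreover, since $2n+2>2$ for every $n\geq 1$, the restriction $p>2$ in the Cwikel estimates is never an obstacle.

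By contrast, your route forces Cwikel at $p=n+1$, which is why you run into the $n=1$ wall. The fix you propose---``interpolating between estimates at exponents slightly above $2$''---does not close this gap: interpolation at exponents $p>2$ yields membership in $\mathcal{L}_{p,\infty}$ for $p>2$, not in $\mathcal{L}_{2,\infty}$, and the Corollary requires exactly $\mathcal{L}_{n+1,\infty}=\mathcal{L}_{2,\infty}$ when $n=1$. Your sharper commutator estimate $[M_{X_kf},(-\Delta_{\HH})^{-1/2}]\in\mathcal{L}_{n+1,\infty}$ is correct and of independent interest (your analysis of $g(s)=s^{-1/2}-(1+s)^{-1/2}$ in $L_{n+1,\infty}(\mathbb{R}_+,s^n\,ds)$ is right), but it is simply not needed here: the paper's factorization of $M_{X_kf}$ sidesteps the whole issue and treats all $n\geq 1$ uniformly.
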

\begin{proof}
Since $R_{\ell}$ is a bounded operator on $L_2(\HH^n)$, it suffices to show that
\begin{align}\label{arrive}
(A_{k}-a_{k})M_{X_kf}(-\Delta_\HH)^{-\frac{1}{2}}\in \mathcal{L}_{n+1,\infty}.
\end{align}
Indeed, by H\"older's inequality, we have
\begin{align}\label{abc1}
\Big\|(A_k-a_k)M_{X_kf}(-\Delta_{\HH})^{-\frac12}\Big\|_{\mathcal{L}_{n+1,\infty}}&\leq \Big\|(A_k-a_k)M_{|X_kf|^{\frac12}}\Big\|_{\mathcal{L}_{2n+2,\infty}}\Big\|M_{|X_kf|^{\frac12}}(-\Delta_{\HH})^{-\frac12}\Big\|_{\mathcal{L}_{2n+2,\infty}}.
\end{align}
For the first term, we apply  Theorem \ref{L_2_L_infty_cwikel_estimate} and then Lemma \ref{ak def lemma} to see that
\begin{align}\label{abc2}
\Big\|(A_k-a_k)M_{|X_kf|^{\frac12}}\Big\|_{\mathcal{L}_{2n+2,\infty}}\lesssim \|A_k-a_k\|_{L_{2n+2,\infty}({\rm VN}(\HH^{n}),\tau)}\||X_kf|^{\frac12}\|_{L_{2n+2}(\HH^n)}\lesssim\||X_kf|^{\frac12}\|_{L_{2n+2}(\HH^n)}=\|X_kf\|_{L_{n+1}(\HH^n)}^{1/2}.
\end{align}
For the second term, we apply Corollary \ref{specific_cwikel} with $f$ and $\beta$ being chosen to be $|X_k f|^{\frac12}$ and $1$, respectively, to see that
\begin{align}\label{abc3}
\Big\|M_{|X_kf|^{\frac12}}(-\Delta_{\HH})^{-\frac12}\Big\|_{\mathcal{L}_{2n+2,\infty}}\lesssim\||X_kf|^{\frac12}\|_{L_{2n+2}(\HH^n)}=\|X_kf\|_{L_{n+1}(\HH^n)}^{1/2}.
\end{align}
Combining estimates \eqref{abc1}, \eqref{abc2} and \eqref{abc3}, we arrive at \eqref{arrive} and then complete the proof of Corollary \ref{perturbation111}.
\end{proof}

Combining all the results in this section, we can now show Proposition \ref{approximation theorem}.

\noindent{\it Proof of Proposition \ref{approximation theorem}.}
Recall from \eqref{Rieszdecompose} that for any $\ell\in\{1,2,\cdots,2n\}$,
\begin{align*}
    [R_\ell,M_f]&= M_{X_\ell f}(-\Delta_\HH)^{-\frac12}-R_\ell[(-\Delta_\HH)^{\frac12},M_f](-\Delta_\HH)^{-\frac12}\\
    &=M_{X_\ell f}(-\Delta_\HH)^{-\frac12}-R_{\ell}[(1-\Delta_{\HH})^{\frac12},M_f](-\Delta_{\HH})^{-\frac12}+\left(R_{\ell}[(1-\Delta_{\HH})^{\frac12},M_f](-\Delta_{\HH})^{-\frac12}-R_{\ell}[(-\Delta_{\HH})^{\frac12},M_f](-\Delta_{\HH})^{-\frac12}\right).
\end{align*}
By Lemma \ref{pertubationlemma0}, the third term in the bracket belongs to $(\mathcal{L}_{2n+2,\infty})_{0}$. By Corollary \ref{summuarize} the second term can be written as
\begin{align}\label{opsss}
R_{\ell}[(1-\Delta_{\HH})^{\frac12},M_f](-\Delta_{\HH})^{-\frac12}
&=-\sum_{k=0}^{2n}R_{\ell}A_{k}M_{X_kf}(-\Delta_\HH)^{-\frac{1}{2}}+\mathcal{E}_0\nonumber\\
&=-\sum_{k=0}^{2n}R_{\ell}a_{k}M_{X_kf}(-\Delta_\HH)^{-\frac{1}{2}}-\sum_{k=0}^{2n}R_{\ell}(A_{k}-a_{k})M_{X_kf}(-\Delta_\HH)^{-\frac{1}{2}}+\mathcal{E}_0,
\end{align}
where $A_k$ is defined in \eqref{defofAk} and $\mathcal{E}_0=R_{\ell}\mathcal{E}_1+R_{\ell}\mathcal{E}_2+R_{\ell}\mathcal{E}_3\in (\mathcal{L}_{2n+2,\infty})_{0}$. Besides, applying Corollary \ref{perturbation111} and taking into account that $\mathcal{L}_{n+1,\infty}\subset (\mathcal{L}_{2n+2,\infty})_0$, we see that the second term in \eqref{opsss} also belongs to $(\mathcal{L}_{2n+2,\infty})_0$. This completes the proof of Proposition \ref{approximation theorem}.
\hfill $\square$

%This, together with Lemma  yields that there is a function $g\in (\mathcal{L}_{2n+2,\infty})_0$ such that
%\begin{align}\label{Rieszdecompose2}
%    [R_\ell,M_f]= M_{X_\ell f}(-\Delta_\HH)^{-\frac12}-\sum_{k=1}^{2n}R_\ell a_k M_{X_k f}(-\Delta_\HH)^{-\frac{1}{2}}-\sum_{k=1}^{2n}R_\ell (A_k- a_k) M_{X_k f}(-\Delta_\HH)^{-\frac{1}{2}}-R_\ell g.
%\end{align}
%Since $R_{\ell}$ is a bounded it follows that $R_{\ell}g\in (\mathcal{L}_{2n+2,\infty})_0$}, therefore it  remains to show that
%$$(A_k-a_k)M_{X_kf}(-\Delta_{\HH})^{-\frac12}\in (\mathcal{L}_{2n+2,\infty})_0.$$
%We can actually show a stronger statement. That is,
%$$(A_k-a_k)M_{X_kf}(-\Delta_{\HH})^{-\frac12}\in \mathcal{L}_{n+1,\infty}.$$
%Indeed, by H\"older inequality and Corollary \ref{specific_cwikel}, we have
%\begin{align*}
%\Big\|(A_k-a_k)M_{X_kf}(-\Delta_{\HH})^{-\frac12}\Big\|_{\mathcal{L}_{n+1,\infty}}&\leq \Big\|(A_k-a_k)M_{|X_kf|^{\frac12}}\Big\|_{\mathcal{L}_{2n+2,\infty}}\Big\|M_{|X_kf|^{\frac12}}(-\Delta_{\HH})^{-\frac12}\Big\|_{\mathcal{L}_{2n+2,\infty}}\\&{\color{red}\lesssim\||X_kf|^{\frac12}\|_{L_{2n+2}}^2}=\|X_kf\|_{L_{n+1}}.
%\end{align*}

\subsection{Trace formula for Riesz transform commutator}
At the end of this section, we combine the general trace formula proven in Section \ref{tracelemmasection} with the approximation of Riesz transform commutator in Proposition \ref{approximation theorem} to give a more explicit trace formula for Riesz transform commutator.
\begin{prop}\label{keytraceformula}
For any $f\in L_{\infty}(\Heis^n)\cap \dot{W}^{1,2n+2}(\HH^n)$, $\ell\in\{1,2,\cdots,2n\}$ and normalised continuous trace $\varphi$ on $\mathcal{L}_{1,\infty}$,  we have
\begin{align}\label{traceformula00000}
\varphi(|[R_\ell,M_f]|^{2n+2})=c_n\bigg\|\sum_{k=0}^{2n}f_k^{\ell}\otimes y_k^{\ell}\bigg\|_{L_{2n+2}(L_{\infty}(\mathbb{H}^n)\bar{\otimes}\mathcal{B}(L_2(\mathbb{R}^n))\overline{\otimes}\mathbb{C}^2)}^{2n+2},
\end{align}
where $\{f_k^{\ell}\}_{k=0}^{2n}\in L_{2n+2}(\mathbb{H}^n)$ and $\{y_k^{\ell}\}_{k=0}^{2n}\in L_{2n+2}(\mathcal{B}(L_2(\mathbb{R}^n))\overline{\otimes}\mathbb{C}^2,{\rm Tr}\otimes\Sigma)$  satisfying
$$ f_k^{\ell}=\left\{\begin{array}{ll}\overline{X_\ell f}, &k=0\\ \overline{X_kf}, &k=1,2,\cdots,2n\end{array}\right.\ {\rm and}\ \ \pi_{\rm red}(y_k^{\ell})=\left\{\begin{array}{ll}(-\Delta_{\HH})^{-\frac{1}{2}}|T|^{\frac{1}{2}}, &k=0 \\(-\Delta_{\HH})^{-\frac{1}{2}}|T|^{\frac{1}{2}}(R_\ell a_k)^*, &k=1,2,\cdots,2n.\end{array}\right.$$
\end{prop}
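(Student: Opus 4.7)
The plan is to combine the approximation of $[R_\ell, M_f]$ in Proposition \ref{approximation theorem} with the general trace formula of Corollary \ref{connes product corollary}, plus a density argument to pass from $f \in C_c^\infty(\HH^n)$ to $f \in L_\infty(\HH^n) \cap \dot{W}^{1,2n+2}(\HH^n)$.

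First I would restrict to $f \in C_c^\infty(\HH^n)$. Lemma \ref{density2} supplies $\{f_m\} \subset C_c^\infty(\HH^n)$ and a constant $c$ with $f_m \to f - c$ in $\dot W^{1,2n+2}(\HH^n)$ and $M_{f_m} \to M_{f-c}$ in the strong operator topology; Proposition \ref{sufficiency theorem} then gives $[R_\ell, M_{f_m}] \to [R_\ell, M_f]$ in $\mathcal{L}_{2n+2,\infty}$, which forces $|[R_\ell, M_{f_m}]|^{2n+2} \to |[R_\ell, M_f]|^{2n+2}$ in $\mathcal{L}_{1,\infty}$ (the map $A\mapsto |A|^{2n+2}$ is Lipschitz on bounded sets of $\mathcal{L}_{2n+2,\infty}$). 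Continuity of $\varphi$ on $\mathcal{L}_{1,\infty}$, together with continuity of the right-hand side of \eqref{traceformula00000} in $X_k f_m \to X_k f$ in $L_{2n+2}(\HH^n)$, reduces the problem to $f \in C_c^\infty(\HH^n)$.

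Next, starting from Proposition \ref{approximation theorem},
$$[R_\ell, M_f] = M_{X_\ell f}(-\Delta_\HH)^{-1/2} + \sum_{k=1}^{2n} R_\ell a_k M_{X_k f}(-\Delta_\HH)^{-1/2} + E, \qquad E \in (\mathcal{L}_{2n+2,\infty})_0,$$
I would take adjoints and then move each $M_{\overline{X_k f}}$ past $(-\Delta_\HH)^{-1/2}$: Lemma \ref{cwikel-like commutator lemma} ensures $[(-\Delta_\HH)^{-1/2}, M_{\overline{X_k f}}] \in (\mathcal{L}_{2n+2,\infty})_0$, and multiplication by the bounded factor $(R_\ell a_k)^*$ preserves that ideal. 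The factorisation $(-\Delta_\HH)^{-1/2} = \pi_{\rm red}(H^{-1/2}\otimes \mathbf{1})\,|T|^{-1/2}$ (from Corollary \ref{pi_computations}), combined with the fact that $|T|^{-1/2}$ commutes with the image of $\pi_{\rm red}$ (both act on separate tensor factors under $\pi$), lets me rewrite
$$[R_\ell, M_f]^* = \sum_{k=0}^{2n} M_{f_k^\ell}\,\pi_{\rm red}(y_k^\ell)\,|T|^{-1/2} + E' = \sum_{k=0}^{2n} A(y_k^\ell, f_k^\ell) + E', \qquad E' \in (\mathcal{L}_{2n+2,\infty})_0,$$
with the stated $f_k^\ell$ and $y_k^\ell$; by construction, $\pi_{\rm red}(y_0^\ell) = (-\Delta_\HH)^{-1/2}|T|^{1/2}$ and $\pi_{\rm red}(y_k^\ell) = (-\Delta_\HH)^{-1/2}|T|^{1/2}(R_\ell a_k)^*$ for $k \geq 1$. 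Each $y_k^\ell$ lies in $L_{2n+2}(\mathcal{B}(L_2(\mathbb{R}^n))\overline\otimes\mathbb{C}^2, \mathrm{Tr}\otimes \Sigma)$ because $H^{-1/2}\otimes\mathbf{1}$ does (eigenvalues of $H$ grow like $2|\alpha|+n$ with multiplicity polynomial of degree $n-1$, so $\mathrm{Tr}(H^{-(n+1)}) < \infty$) and the remaining factor is bounded.

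The last step invokes Corollary \ref{connes product corollary} on $C := \sum_{k=0}^{2n} A(y_k^\ell, f_k^\ell)$, which lies in $\mathcal{L}_{2n+2,\infty}$ by Lemma \ref{homogeneous cwikel lemma}, and absorbs the error $E'$. Expanding $|C+E'|^{2n+2} = ((C+E')^*(C+E'))^{n+1}$ into a polynomial in $C, C^*, E', (E')^*$, every term other than $(C^*C)^{n+1}=|C|^{2n+2}$ contains a factor of $E'$ or $(E')^*$, hence lies in $(\mathcal{L}_{1,\infty})_0$ by the H\"older-type ideal lemma in Section \ref{Spdef}, and is annihilated by $\varphi$. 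Together with the cyclicity identity $\varphi(|[R_\ell,M_f]|^{2n+2})=\varphi(|[R_\ell,M_f]^*|^{2n+2})$, Corollary \ref{connes product corollary} identifies the trace with $c_n\|\sum_k f_k^\ell \otimes y_k^\ell\|_{L_{2n+2}}^{2n+2}$, yielding \eqref{traceformula00000}.

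The main obstacle is the bookkeeping in the middle step: one must verify that after taking adjoints, the cascade of commutators ``$(-\Delta_\HH)^{-1/2}$ past $M_{\overline{X_k f}}$" produces errors strictly in $(\mathcal{L}_{2n+2,\infty})_0$ (not merely in $\mathcal{L}_{2n+2,\infty}$), and that the coefficients $y_k^\ell$ genuinely live in the image of $\pi_{\rm red}$ multiplied by a single factor of $|T|^{-1/2}$, so that the formalism of the bilinear map $A(x,f)$ from Section \ref{tracelemmasection} applies. The explicit decomposition via $\pi_{\rm red}$ and the centrality of $|T|^{-1/2}$ within the image of $\pi$ make this manipulation tractable.
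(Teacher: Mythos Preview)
Your proposal is correct and follows essentially the same approach as the paper: reduce to $f\in C_c^\infty(\HH^n)$ via Lemma \ref{density2} and Proposition \ref{sufficiency theorem}, apply Proposition \ref{approximation theorem}, take adjoints and commute $(-\Delta_\HH)^{-1/2}$ past $M_{\overline{X_kf}}$ using Lemma \ref{cwikel-like commutator lemma} to reach $\sum_k A(y_k^\ell,f_k^\ell)+E'$ with $E'\in(\mathcal{L}_{2n+2,\infty})_0$, then invoke Corollary \ref{connes product corollary} after absorbing $E'$ via the ideal property and the vanishing of $\varphi$ on $(\mathcal{L}_{1,\infty})_0$. The paper's proof is organized in the same order (smooth case first, then density), and your additional remarks on the $L_{2n+2}$-membership of $y_k^\ell$ and the Lipschitz continuity of $A\mapsto|A|^{2n+2}$ on bounded sets of $\mathcal{L}_{2n+2,\infty}$ are exactly the ingredients the paper uses (the latter via \cite[formula (21)]{LMSZ}).
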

\begin{proof}
We first show the conclusion holds for $f \in C^\infty_c(\Heis^n)$. To begin with, by Proposition \ref{approximation theorem}, there exists $E\in (\mathcal{L}_{2n+2,\infty})_0$ such that
$$[R_\ell,M_f]= M_{X_\ell f}(-\Delta_{\HH})^{-\frac12}+\sum_{k=1}^{2n}R_\ell a_kM_{X_kf}(-\Delta_{\HH})^{-\frac12}+E,$$
where
\begin{align}
a_k=T^{-\Delta_{\HH},-\Delta_{\HH}}_{\frac{2\alpha_0^{\frac14}\alpha_1^{\frac14}}{\alpha_0^{\frac12}+\alpha_1^{\frac12}}}((-\Delta_{\HH})^{-1/4}X_k(-\Delta_{\HH})^{-1/4}).
\end{align}
For any $f\in L_{2n+2}(\mathbb{H}^n)$ and $x\in L_{2n+2}(\mathcal{B}(L_2(\mathbb{R}^n))\overline{\otimes}\mathbb{C}^2,{\rm Tr}\otimes\Sigma)$, we let $$A(x,f)=M_f\pi_{{\rm red}}(x)|T|^{-\frac12}$$ be the map defined in Section \ref{tracelemmasection}.
%Now we choose {\color{red}$\{f_k\}_{k=0}^{2n}\in L_p(\mathbb{H}^n)$ and $\{x_k\}_{k=0}^{2n}\in L_p(B(l_2(\mathbb{Z}^n_+))\otimes\mathbb{C}^2,{\rm Tr}\otimes\Sigma)$} satisfying
%$$f_0=\overline{X_\ell f},\quad {\color{red}\pi_{{\color{blue}{\rm red}}}(x_0)=(-\Delta_{\HH})^{-\frac{1}{2}}|T|^{\frac{1}{2}}},$$
%$$f_k=\overline{X_kf},\quad {\color{red}\pi_{{\color{blue}{\rm red}}}(x_k)= (-\Delta_{\HH})^{-\frac{1}{2}}|T|^{\frac{1}{2}}(R_\ell a_k)^*} ,\quad 1\leq k\leq 2n.$$
Then we write
\begin{align}\label{adjointRiesz}
[R_\ell,M_f]^*&= (-\Delta_{\HH})^{-\frac12}M_{\overline{X_\ell f}}+\sum_{k=1}^{2n}(-\Delta_{\HH})^{-\frac12}M_{\overline{X_kf}}(R_\ell a_k)^*+E^*\nonumber\\
&= \big(M_{\overline{X_\ell f}}(-\Delta_{\HH})^{-\frac12}|T|^{\frac12}\big)|T|^{-\frac12}+\sum_{k=1}^{2n}\big(M_{\overline{X_kf}}(-\Delta_{\HH})^{-\frac12}|T|^{\frac12}\big)|T|^{-\frac12}(R_\ell a_k)^*\nonumber\\
&\quad+[(-\Delta_{\HH})^{-\frac12},M_{\overline{X_\ell f}}]+\sum_{k=1}^{2n}[(-\Delta_{\HH})^{-\frac12},M_{\overline{X_kf}}](R_\ell a_k)^*+E^*\nonumber\\
&=\sum_{k=0}^{2n}A(y_k^{\ell},f_k^{\ell})+[(-\Delta_{\HH})^{-\frac12},M_{\overline{X_\ell f}}]+\sum_{k=1}^{2n}[(-\Delta_{\HH})^{-\frac12},M_{\overline{X_kf}}](R_\ell a_k)^*+E^*,
\end{align}
where in the last equality we used the fact that $|T|^{-\frac12}$ commutes with $(R_{\ell}a_k)^*$. From the proof of Lemma \ref{doilemma}, we see that
$T^{-\Delta_{\HH},-\Delta_{\HH}}_{\frac{2\alpha_0^{1/4}\alpha_1^{1/4}}{\alpha_0^{1/2}+\alpha_1^{1/2}}}:\mathcal{L}_{\infty}\rightarrow \mathcal{L}_{\infty}$
is bounded.  This, in combination with the $L_2(\HH^n)$-boundedness of $R_\ell$, implies the $L_2(\HH^n)$-boundedness of $(R_\ell a_k)^*$. Besides, it follows from Lemma \ref{cwikel-like commutator lemma} that for any $k\in\{1,2,\cdots,2n\}$, the commutator $[(-\Delta_{\HH})^{-\frac12},M_{\overline{X_kf}}]$ belongs to $(\mathcal{L}_{2n+2,\infty})_0$. Therefore, we have
 $$[(-\Delta_{\HH})^{-\frac12},M_{\overline{X_\ell f}}]+\sum_{k=1}^{2n}[(-\Delta_{\HH})^{-\frac12},M_{\overline{X_kf}}](R_\ell a_k)^*+E^*\in (\mathcal{L}_{2n+2,\infty})_0.$$
This, together with Corollary \ref{specific_cwikel}, equality \eqref{adjointRiesz} and H\"{o}lder's inequality, implies that
\begin{align*}
|[R_\ell,M_f]^*|^{2n+2}=\left|\sum_{k=0}^{2n}A(y_k^{\ell},f_k^{\ell})\right|^{2n+2}+\mathcal{E},
\end{align*}
for some $\mathcal{E}\in (\mathcal{L}_{1,\infty})_0$.

Since $\varphi$ is a normalised continuous trace on $\mathcal{L}_{1,\infty}$, it vanishes on  $(\mathcal{L}_{1,\infty})_0$ (see for example \cite[Corollary 5.7.7]{LSZ1}). Therefore,
$$\varphi(|[R_\ell,M_f]^{\ast}|^{2n+2})=\varphi\bigg(\bigg|\sum_{k=0}^{2n}A(y_k^{\ell},f_k^{\ell})\bigg|^{2n+2}\bigg).$$
Combining this with Corollary \ref{connes product corollary}, we conclude that
$$\varphi(|[R_\ell,M_f]|^{2n+2})=\varphi(|[R_\ell,M_f]^{\ast}|^{2n+2})=c_n\bigg\|\sum_{k=0}^{2n}f_k^{\ell}\otimes y_k^{\ell}\bigg\|_{L_{2n+2}(L_{\infty}(\mathbb{H}^n)\bar{\otimes}\mathcal{B}(L_2(\mathbb{R}^n))\overline{\otimes}\mathbb{C}^2)}^{2n+2}.$$

Now we apply an approximation argument to remove the smooth assumption   $f \in C^\infty_c(\Heis^n)$. To this end, we suppose $f\in  L_{\infty}(\Heis^n)\cap \dot{W}^{1,2n+2}(\HH^n)$ and let $\{f_m\}_{m\geq 1}$ be the sequence chosen in Lemma \ref{density2}, then $f_m\in C_c^\infty(\HH^n)$ for $m\geq 1$ and $f_m\rightarrow f$ in $\dot{W}^{1,2n+2}(\HH^n)$. From this we conclude that $f_{m,k}^{\ell}\rightarrow f_k^\ell$ in $L_{2n+2}(\HH^n)$ and that
\begin{align*}
\bigg\|\sum_{k=0}^{2n}f_{m,k}^{\ell}\otimes y_k^{\ell}-\sum_{k=0}^{2n}f_k^{\ell}\otimes y_k^{\ell}\bigg\|_{L_{2n+2}(L_{\infty}(\mathbb{H}^n)\overline{\otimes}\mathcal{B}(L_2(\mathbb{R}^n))\overline{\otimes}\mathbb{C}^2)}&\leq \sum_{k=0}^{2n}\|(f_{m,k}^{\ell}-f_k^{\ell})\otimes y_k^{\ell}\|_{L_{2n+2}(L_{\infty}(\mathbb{H}^n)\overline{\otimes}\mathcal{B}(L_2(\mathbb{R}^n))\overline{\otimes}\mathbb{C}^2)}\\
&= \sum_{k=0}^{2n}\|f_{m,k}^{\ell}-f_k^{\ell}\|_{L_{2n+2}(\HH^n)}\|y_k^{\ell}\|_{L_{2n+2}(\mathcal{B}(L_2(\mathbb{R}^n))\overline{\otimes}\mathbb{C}^2)}\\
&\leq \|f_{m}-f\|_{\dot{W}^{1,2n+2}(\HH^n)}\sum_{k=0}^{2n}\|y_k^{\ell}\|_{L_{2n+2}(\mathcal{B}(L_2(\mathbb{R}^n))\overline{\otimes}\mathbb{C}^2)}\rightarrow 0,
\end{align*}
as $m\rightarrow\infty$, where for simplicity we denote that
$$ f_{m,k}^{\ell}=\left\{\begin{array}{ll}\overline{X_\ell f_m}, &k=0\\ \overline{X_k{f_m}}, &k=1,2,\cdots,2n.\end{array}\right.$$

We recall from the proof of Proposition \ref{sufficiency theorem} that $[R_\ell, M_{f_m}]\rightarrow [R_\ell, M_f]$  in $\mathcal{L}_{2n+2,\infty}$. In particular, $\sup\limits_{m\geq 1}\|[R_\ell,M_{f_m}]\|_{\mathcal{L}_{2n+2,\infty}}<+\infty$. By taking the $(2n+2)$-power and applying formula (21) in \cite{LMSZ}, we show that $|[R_\ell, M_{f_m}]|^{2n+2}\rightarrow |[R_\ell, M_f]|^{2n+2}$ in $\mathcal{L}_{1,\infty}$. Therefore, $\varphi(|[R_\ell,M_{f_m}]|^{2n+2})\rightarrow \varphi(|[R_\ell,M_{f}]|^{2n+2})$ as $m\rightarrow \infty$. Since we have already shown that \eqref{traceformula00000} holds for $f\in C_c^\infty(\HH^n)$, one deduce that
\begin{align*}
\varphi(|[R_\ell,M_{f}]|^{2n+2})&=\lim_{m\rightarrow \infty}\varphi(|[R_\ell,M_{f_m}]|^{2n+2})\\&=\lim_{m\rightarrow \infty}c_n\bigg\|\sum_{k=0}^{2n}f_{m,k}^{\ell}\otimes y_k^{\ell}\bigg\|_{L_{2n+2}(L_{\infty}(\mathbb{H}^n)\bar{\otimes}\mathcal{B}(L_2(\mathbb{R}^n))\overline{\otimes}\mathbb{C}^2)}^{2n+2}\\&=c_n\bigg\|\sum_{k=0}^{2n}f_k^{\ell}\otimes y_k^{\ell}\bigg\|_{L_{2n+2}(L_{\infty}(\mathbb{H}^n)\bar{\otimes}\mathcal{B}(L_2(\mathbb{R}^n))\overline{\otimes}\mathbb{C}^2)}^{2n+2}.
\end{align*}

This complete the proof of Proposition \ref{keytraceformula}.
\end{proof}

\begin{remark}
The constant $c_n$ in Proposition \ref{keytraceformula} is independent of the continuous normalised trace $\varphi$. This implies that the operator $|[R_\ell,M_f]|^{2n+2}\in\mathcal{L}_{1,\infty}$ is a measurable operator in the sense of Conne's quantised calculus (see \cite[Chapter 10]{LSZ1}). It also implies convergence properties of the ordered eigenvalues of $|[R_\ell,M_f]|^{2n+2}$ (see \cite{SSUZ1}).
\end{remark}

\begin{remark}
       Theorem \ref{keytraceformula1} follows immediately
       from Proposition \ref{keytraceformula}, on taking
       \[
          x_k^{\ell} = \begin{cases}
                          y_k^{\ell},\quad k\neq \ell,\\
                          y_\ell^\ell+y_0^{\ell},\quad k=\ell.
                        \end{cases}
       \]
       It is immediate from the linear independence of $\{y_k^{\ell}\}_{k=0}^{2n},$ established in Lemma \ref{independent} below,
       that $\{x_{k}^{\ell}\}_{k=1}^{2n}$ are also linearly independent.
\end{remark}

%Recall that
%$$A(x,f)=M_f\pi_{{\rm red}}(x)|T|^{-\frac12}.$$
%	
%By Proposition \ref{approximation theorem}, we have
%$$[R_\ell,M_f]\in\sum_{k=0}^{2n}A(x_k,f_k)^{\ast}+(\mathcal{L}_{2n+2,\infty})_0.$$
%Here,
%
%Thus,
%$$\varphi(|[R_\ell,M_f]|^{2n+2})=\varphi(|[R_\ell,M_f]^{\ast}|^{2n+2})=\varphi(|\sum_{k=0}^{2n}A(x_k,f_k)|^{2n+2}).$$

\section{Proof of lower bound}\label{Proof of necessity direction}
\setcounter{equation}{0}
This section is devoted to giving a proof of the lower bound in Theorem \ref{main}. That is to show the following proposition.
\begin{prop}\label{necessary theorem}
Let $f\in L_\infty(\Heis^n)$. If we have $[R_{\ell},M_f]\in\mathcal{L}_{2n+2,\infty}$  for some $\ell\in\{1,2,\cdots,2n\}$, then $f\in \dot{W}^{1,2n+2}(\HH^n)$ and there exists a positive constant $c_n>0$ such that
\begin{align*}
\|[R_{\ell},M_f]\|_{\mathcal{L}_{2n+2,\infty}}\geq c_n\|f\|_{\dot{W}^{1,2n+2}(\HH^n)}.
\end{align*}
\end{prop}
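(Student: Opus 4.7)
The argument is a direct application of the trace formula, Theorem~\ref{keytraceformula1}, combined with an approximation argument to remove the Sobolev regularity hypothesis.

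First, for $f\in L_\infty(\HH^n)\cap\dot{W}^{1,2n+2}(\HH^n)$, I invoke Theorem~\ref{keytraceformula1} and the continuity of any normalised trace $\varphi$ on $\mathcal{L}_{1,\infty}$: $|\varphi(A)|\lesssim\|A\|_{\mathcal{L}_{1,\infty}}$ applied to $A=|[R_\ell,M_f]|^{2n+2}$ yields, via the identity $\||T|^{2n+2}\|_{\mathcal{L}_{1,\infty}}\asymp\|T\|_{\mathcal{L}_{2n+2,\infty}}^{2n+2}$,
\[
    \Bigl\|\sum_{k=1}^{2n}X_kf\otimes x_k^\ell\Bigr\|_{L_{2n+2}(L_\infty(\HH^n)\overline{\otimes}\mathcal{B}(L_2(\mathbb{R}^n))\overline{\otimes}\mathbb{C}^2)}\lesssim\|[R_\ell,M_f]\|_{\mathcal{L}_{2n+2,\infty}}.
\]

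Second, I extract individual $\|X_kf\|_{L_{2n+2}(\HH^n)}$. Since $\{x_k^\ell\}_{k=1}^{2n}$ are linearly independent in the reflexive noncommutative $L_{2n+2}$-space, Hahn--Banach produces a biorthogonal dual family $\{b_k\}_{k=1}^{2n}\subset L_{(2n+2)/(2n+1)}(\mathcal{B}(L_2(\mathbb{R}^n))\overline{\otimes}\mathbb{C}^2,{\rm Tr}\otimes\Sigma)$ with $({\rm Tr}\otimes\Sigma)(x_j^\ell b_k)=\delta_{jk}$. The slice map ${\rm id}\otimes({\rm Tr}\otimes\Sigma)(\,\cdot\,(1\otimes b_k))$, being bounded, sends $\sum_jX_jf\otimes x_j^\ell$ to $X_kf$ in $L_{2n+2}(\HH^n)$. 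Summing in $k$ gives $\|f\|_{\dot{W}^{1,2n+2}(\HH^n)}\lesssim\|[R_\ell,M_f]\|_{\mathcal{L}_{2n+2,\infty}}$ for every $f\in L_\infty\cap\dot{W}^{1,2n+2}$.

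Third, I remove the Sobolev hypothesis. Given $f\in L_\infty(\HH^n)$ with $[R_\ell,M_f]\in\mathcal{L}_{2n+2,\infty}$, let $\phi_{1/m}\in C_c^\infty(\HH^n)$ be a non-negative approximate identity and set $f_m:=\phi_{1/m}*f$. Since $R_\ell$ is left-invariant, $M_{f_m}=\int_{\HH^n}\phi_{1/m}(y)\,\lambda(y)M_f\lambda(y^{-1})\,dy$, so
\[
    [R_\ell,M_{f_m}]=\int_{\HH^n}\phi_{1/m}(y)\,\lambda(y)[R_\ell,M_f]\lambda(y^{-1})\,dy,
\]
and unitary invariance combined with convexity of the $\mathcal{L}_{2n+2,\infty}$ quasi-norm yields $\|[R_\ell,M_{f_m}]\|_{\mathcal{L}_{2n+2,\infty}}\leq\|[R_\ell,M_f]\|_{\mathcal{L}_{2n+2,\infty}}$. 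To apply the first two steps I still need $f_m\in\dot{W}^{1,2n+2}$; for this I further truncate to $f_{m,R}:=(f_m-c_{m,R})\eta_R\in C_c^\infty(\HH^n)$ as in Lemma~\ref{density2}, apply the first two steps to each $f_{m,R}$, and pass $R\to\infty$ via weak-$\ast$ compactness of bounded balls in $L_{2n+2}(\HH^n)$ and lower semi-continuity of the norm. A final limit $m\to\infty$, using $X_kf_m\rightharpoonup X_kf$ distributionally and again lower semi-continuity, delivers the claim.

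The main obstacle lies in controlling, uniformly in $R$, the Leibniz expansion
\[
    [R_\ell,M_{f_{m,R}}]=[R_\ell,M_{f_m}]M_{\eta_R}+M_{f_m-c_{m,R}}[R_\ell,M_{\eta_R}].
\]
The first term behaves harmlessly, but for the second, Proposition~\ref{sufficiency theorem} gives $\|[R_\ell,M_{\eta_R}]\|_{\mathcal{L}_{2n+2,\infty}}\lesssim\|\nabla\eta_R\|_{L_{2n+2}}\asymp 1$ which does not decay as $R\to\infty$---a direct manifestation of the scale invariance of $\dot{W}^{1,2n+2}(\HH^n)$ at the critical exponent. To close the loop I combine the explicit approximation of Proposition~\ref{approximation theorem} applied to $\eta_R$, which shows the leading piece of $[R_\ell,M_{\eta_R}]$ is $M_{X_\ell\eta_R}(-\Delta_\HH)^{-1/2}$ modulo separable-part pieces, with the Cwikel estimate (Corollary~\ref{specific_cwikel}) and the Poincaré inequality of Lemma~\ref{Poin} on the annulus $B(0,2R)\setminus B(0,R)$, so that $\|(f_m-c_{m,R})X_\ell\eta_R\|_{L_{2n+2}}$ is controlled by $\|\nabla f_m\|_{L_{2n+2}(B(0,2R)\setminus B(0,R))}$. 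A bootstrap between the global $\dot{W}^{1,2n+2}$ bound on $f_{m,R}$ obtained in Part~2 and these local annular estimates then forces this tail to vanish as $R\to\infty$.
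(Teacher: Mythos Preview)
Your first two steps are correct and match the paper: invoke the trace formula, use continuity of $\varphi$ on $\mathcal L_{1,\infty}$, and extract $\sum_k\|X_kf\|_{L_{2n+2}}$ from the linear independence of the $x_k^\ell$. The paper carries out this last extraction by writing the tensor $L_{2n+2}$-norm as a Bochner integral (Lemma~\ref{lowerbd}) rather than via a biorthogonal system, but the two arguments are interchangeable.

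Where you go astray is in the third step. You correctly observe that in the Leibniz split the term $M_{f_m-c_{m,R}}[R_\ell,M_{\eta_R}]$ is only $O(1)$ in $\mathcal L_{2n+2,\infty}$, but you then try to force it to vanish via a Poincar\'e-based bootstrap. As sketched this loop does not close: Poincar\'e on the annulus bounds the offending term by $\|\nabla f_m\|_{L_{2n+2}(B(0,2R)\setminus B(0,R))}$, and there is no mechanism to make that quantity small until you already know $\nabla f_m\in L_{2n+2}(\HH^n)$---which is precisely what you are trying to prove. More importantly, the bootstrap is unnecessary. The approximation is only needed to establish the \emph{qualitative} statement $f\in\dot W^{1,2n+2}(\HH^n)$; for that a uniform (not sharp) bound suffices. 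Since $\|f_m-c_{m,R}\|_{L_\infty}\leq 2\|f\|_{L_\infty}$ and $\|[R_\ell,M_{\eta_R}]\|_{\mathcal L_{2n+2,\infty}}\lesssim 1$ uniformly in $R$, Steps~1--2 applied to $f_{m,R}\in C_c^\infty(\HH^n)$ already give
\[
\|f_{m,R}\|_{\dot W^{1,2n+2}(\HH^n)}\ \lesssim\ \|[R_\ell,M_f]\|_{\mathcal L_{2n+2,\infty}}+\|f\|_{L_\infty(\HH^n)}\ <\ \infty,
\]
uniformly in $m,R$. Weak compactness in $L_{2n+2}$ and distributional convergence then yield $\nabla f\in L_{2n+2}(\HH^n)$. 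Once $f\in L_\infty\cap\dot W^{1,2n+2}$ is known, Steps~1--2 apply \emph{directly to $f$} and deliver the clean inequality $c_n\|f\|_{\dot W^{1,2n+2}}\leq\|[R_\ell,M_f]\|_{\mathcal L_{2n+2,\infty}}$ with no $\|f\|_{L_\infty}$ contamination. This two-stage structure---approximate only to obtain membership, then apply the sharp estimate to $f$ itself---is exactly the paper's route and eliminates the difficulty you were trying to bootstrap around.

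One minor technical point: on a non-abelian group the convolution identity you write requires $R_\ell$ to commute with the translations you use. Elements of ${\rm VN}(\HH^n)=\lambda(\HH^n)''$ commute with the \emph{right} regular representation, so the mollification identity should be phrased with right translations (and a right-sided mollifier) rather than $\lambda(y)$. This is a cosmetic fix and does not affect the structure of the argument.
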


The following Lemma is crucial for us to obtain the weak-Schatten lower bound of Riesz transform commutator.
\begin{lem}\label{lowerbd}
Let $\{f_k\}_{k=0}^m\subset L_p(\mathbb{H}^n)$ and $\{x_k\}_{k=0}^m\subset L_p(\mathcal{B}(L_2(\mathbb{R}^n))\overline{\otimes}\mathbb{C}^2,{\rm Tr}\otimes\Sigma)$ for some $1<p<\infty$.  If the sequence $\{x_k\}_{k=0}^m$ is linearly independent in $L_p(\mathcal{B}(L_2(\mathbb{R}^n)\overline{\otimes}\mathbb{C}^2),\mathrm{Tr}\otimes \Sigma)$, then there exists a constant $c(p,\{x_k\}_{k=0}^m)$ such that
$$\bigg\|\sum_{k=0}^mf_k\otimes x_k\bigg\|_{L_p(L_\infty(\mathbb{H}^n)\overline{\otimes} \mathcal{B}(L_2(\mathbb{R}^n))\overline{\otimes} \mathbb{C}^2)} \geq c(p,\{x_k\}_{k=0}^m)\sum_{k=0}^m\|f_k\|_{L_p(\Heis^n)}.$$
\end{lem}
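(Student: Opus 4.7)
The plan is to reduce the statement to a pointwise-in-$g$ scalar inequality by identifying the noncommutative tensor-product $L_p$-space with a Bochner space, and then to exploit linear independence via the equivalence of norms on the finite-dimensional span of $\{x_k\}_{k=0}^m$.

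Write $\mathcal{N}:=\mathcal{B}(L_2(\mathbb{R}^n))\overline{\otimes}\mathbb{C}^2$, equipped with the trace $\mathrm{Tr}\otimes \Sigma$. Since the first tensor factor $L_\infty(\HH^n)$ is abelian, the noncommutative $L_p$-space admits the standard identification
$$L_p\big(L_\infty(\HH^n)\overline{\otimes}\mathcal{N}\big) \;\cong\; L_p\big(\HH^n;L_p(\mathcal{N})\big),$$
under which $F:=\sum_{k=0}^m f_k\otimes x_k$ corresponds to the vector-valued function $g\mapsto\sum_k f_k(g)\,x_k$ and
$$\|F\|_{L_p(L_\infty(\HH^n)\overline{\otimes}\mathcal{N})}^p \;=\; \int_{\HH^n}\bigg\|\sum_{k=0}^m f_k(g)\,x_k\bigg\|_{L_p(\mathcal{N})}^p\,dg.$$

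Next I would exploit the hypothesis that $\{x_k\}_{k=0}^m$ is linearly independent in $L_p(\mathcal{N})$. The map $\mathbb{C}^{m+1}\to \mathrm{span}\{x_k\}\subset L_p(\mathcal{N})$, $(a_k)\mapsto \sum_k a_k x_k$, is then a linear bijection between finite-dimensional normed spaces, so all norms on the domain and image are equivalent. Hence there exists a constant $c>0$, depending only on $p$ and $\{x_k\}_{k=0}^m$, such that
$$\sum_{k=0}^m |a_k|\;\leq\;c^{-1}\bigg\|\sum_{k=0}^m a_k x_k\bigg\|_{L_p(\mathcal{N})},\qquad (a_0,\ldots,a_m)\in\mathbb{C}^{m+1}.$$
Applying this with $a_k=f_k(g)$ pointwise gives, for a.e.\ $g\in\HH^n$,
$$\sum_{k=0}^m |f_k(g)|\;\leq\;c^{-1}\bigg\|\sum_{k=0}^m f_k(g)\,x_k\bigg\|_{L_p(\mathcal{N})}.$$
Raising to the $p$-th power and integrating over $\HH^n$, together with the Bochner identification above, yields $\|\sum_j |f_j|\|_{L_p(\HH^n)}\leq c^{-1}\|F\|_{L_p}$. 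Since $\|f_k\|_{L_p(\HH^n)}\leq \|\sum_j|f_j|\|_{L_p(\HH^n)}$ for each $k$, summing over $k$ delivers the claim with constant $(m+1)c^{-1}$.

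The only substantive technical point is the Bochner identification in the first step; this is standard for noncommutative $L_p$-spaces over a tensor product with an abelian factor (see, e.g., Pisier--Xu). An alternative that avoids this identification is to use finite-dimensional Hahn--Banach to choose biorthogonal functionals $\phi_j\in L_{p'}(\mathcal{N})$ with $\phi_j(x_k)=\delta_{jk}$, and then apply the bounded slice maps $\mathrm{id}\otimes \phi_j:L_p(L_\infty(\HH^n)\overline{\otimes}\mathcal{N})\to L_p(\HH^n)$, obtaining $(\mathrm{id}\otimes\phi_j)(F)=f_j$ and $\|f_j\|_{L_p(\HH^n)}\leq \|\phi_j\|\cdot\|F\|_{L_p}$. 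Either route yields the desired inequality.
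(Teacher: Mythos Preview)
Your proposal is correct and follows essentially the same approach as the paper: identify the tensor $L_p$-norm with a Bochner integral over $\HH^n$, then use equivalence of norms on the finite-dimensional span of $\{x_k\}$ pointwise and integrate. The paper compares against the $\ell^p$-norm on $\mathbb{C}^{m+1}$ rather than your $\ell^1$-norm, but this is a cosmetic difference; your alternative via biorthogonal functionals and slice maps is a nice variant not mentioned in the paper.
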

\begin{proof}
    Write the tensor product $L_p$ norm as a Bochner norm, i.e.
    \[
        \bigg\|\sum_{k=0}^mf_k\otimes x_k\bigg\|_{L_p(L_\infty(\mathbb{H}^n)\overline{\otimes} \mathcal{B}(L_2(\mathbb{R}^n))\overline{\otimes} \mathbb{C}^2)}^p = \int_{\mathbb{H}^n} \bigg\|\sum_{k=1}^m f_k(g)x_k\bigg\|_{L_p(\mathcal{B}(L_2(\mathbb{R}^n))\overline{\otimes}\mathbb{C}^2)}^p \,dg.
    \]
    This is possible due to the separability of $L_p(\mathcal{B}(L_2(\mathbb{R}^n))\overline{\otimes} \mathbb{C}^2)$, see \cite[Lemma 6.2]{BerksonGillespieMuhly1986}.
    Since the elements $\{x_k\}_{k=1}^m$ are linearly independent in $L_p(\mathcal{B}(L_2(\mathbb{R}^n)\overline{\otimes}\mathbb{C}^2),\mathrm{Tr}\otimes \Sigma),$ there exists a constant $C(p,\{x_k\}_{k=1}^m)$ such that
    \[
        \bigg\|\sum_{k=1}^m \lambda_k x_k\bigg\|_{L_p(\mathcal{B}(L_2(\mathbb{R}^n))\overline{\otimes}\mathbb{C}^2)}^p\geq C(p,\{x_{k}\}_{k=1}^m)\bigg(\sum_{k=1}^m |\lambda_k|^p\bigg),\quad \lambda_k\in \mathbb{C}.
    \]
    Hence,
    \[
        \int_{\mathbb{H}^n} \bigg\|\sum_{k=1}^m f_k(g)x_k\bigg\|_{L_p(\mathcal{B}(L_2(\mathbb{R}^n))\overline{\otimes}\mathbb{C}^2)}^p \,dg \geq C(p,\{x_k\}_{k=1}^m)\int_{\Heis^n} \sum_{k=1}^m |f_k(g)|^p\,dg.
    \]
    That is,
    \[
        \bigg\|\sum_{k=0}^mf_k\otimes x_k\bigg\|_{L_p(L_\infty(\mathbb{H}^n)\overline{\otimes} \mathcal{B}(L_2(\mathbb{R}^n))\overline{\otimes} \mathbb{C}^2)} \geq C(p,\{x_k\}_{k=1}^m)^{\frac1p}\left(\sum_{k=1}^m \|f_k\|_{L_p(\Heis^n)}^p\right)^{\frac1p}.
    \]
    Taking $c(p,\{x_k\}_{k=1}^m) = C(p,\{x_k\}_{k=1}^m)^{\frac1p}m^{\frac1p-1}$ completes the proof.
\end{proof}

To continue, for any $\theta\in [0,2\pi]$ and $k\in\{1,2,\cdots,n\}$, we define a rotation map by the usual formula
$$(U_{k,\theta}\xi)(x,y,t)=\xi(x_1,y_1,\cdots,x_{k-1},y_{k-1},x_k\cos\theta-y_k\sin\theta,x_k\sin\theta+y_k\cos\theta,x_{k+1},y_{k+1},\cdots,x_n,y_n,t),$$
where $\xi$ is a function on $\mathbb{R}^{2n+1}$ and $(x,y,t)=(x_1,y_1,\cdots,x_{2n},y_{2n},t)\in \mathbb{R}^{2n+1}.$

The following lemma can be shown by a direct calculation, we omit the details and leave it to the readers.
\begin{lem}\label{rotation}
For any $\theta\in [0,2\pi]$ and $k\in\{1,2,\cdots,n\}$, we have
$$U_{k,\theta}^{-1}X_kU_{k,\theta}=\cos\theta X_k+\sin\theta Y_k,\quad U_{k,\theta}^{-1}Y_kU_{k,\theta}=-\sin\theta X_k+\cos\theta Y_k,$$
$$U_{k,\theta}^{-1}X_lU_{k,\theta}=X_l,\quad U_{k,\theta}^{-1}Y_lU_{k,\theta}=Y_l,\quad l\neq k.$$
\end{lem}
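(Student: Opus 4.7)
The plan is to prove Lemma \ref{rotation} by direct calculation, tracking how the coordinate rotation $\Phi_{k,\theta}(x,y,t) := (x_1,y_1,\ldots,x_k\cos\theta-y_k\sin\theta,\,x_k\sin\theta+y_k\cos\theta,\ldots,x_n,y_n,t)$ interacts with the partial derivatives and the multiplication operators appearing in the definitions $X_k=\partial_{x_k}-M_{y_k}\partial_t$ and $Y_k=\partial_{y_k}+M_{x_k}\partial_t$. By definition $U_{k,\theta}\xi=\xi\circ\Phi_{k,\theta}$, and hence $U_{k,\theta}^{-1}\eta=\eta\circ\Phi_{k,\theta}^{-1}$ with $\Phi_{k,\theta}^{-1}=\Phi_{k,-\theta}$.

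First I would handle the partial derivative part. The chain rule applied to $\partial_{x_k}(U_{k,\theta}\xi)$ and $\partial_{y_k}(U_{k,\theta}\xi)$ gives
\begin{align*}
U_{k,\theta}^{-1}\partial_{x_k}U_{k,\theta} &= \cos\theta\,\partial_{x_k}+\sin\theta\,\partial_{y_k},\\
U_{k,\theta}^{-1}\partial_{y_k}U_{k,\theta} &= -\sin\theta\,\partial_{x_k}+\cos\theta\,\partial_{y_k},
\end{align*}
while $\partial_{x_\ell}$, $\partial_{y_\ell}$ (for $\ell\neq k$) and $\partial_t$ all commute with $U_{k,\theta}$ since $\Phi_{k,\theta}$ leaves the corresponding coordinates untouched. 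For the multiplication operators, a direct evaluation shows that the $y_k$-coordinate of $\Phi_{k,\theta}^{-1}(x,y,t)$ equals $-x_k\sin\theta+y_k\cos\theta$ and its $x_k$-coordinate equals $x_k\cos\theta+y_k\sin\theta$, so
\begin{align*}
U_{k,\theta}^{-1}M_{x_k}U_{k,\theta} &= \cos\theta\,M_{x_k}+\sin\theta\,M_{y_k},\\
U_{k,\theta}^{-1}M_{y_k}U_{k,\theta} &= -\sin\theta\,M_{x_k}+\cos\theta\,M_{y_k},
\end{align*}
while $M_{x_\ell},M_{y_\ell}$ for $\ell\neq k$ commute with $U_{k,\theta}$.

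Substituting these two groups of identities into the definitions of $X_k$ and $Y_k$ and collecting the $\cos\theta$ and $\sin\theta$ terms yields, for instance,
\[
U_{k,\theta}^{-1}X_kU_{k,\theta}=\cos\theta(\partial_{x_k}-M_{y_k}\partial_t)+\sin\theta(\partial_{y_k}+M_{x_k}\partial_t)=\cos\theta\,X_k+\sin\theta\,Y_k,
\]
and analogously for $U_{k,\theta}^{-1}Y_kU_{k,\theta}$. For the $\ell\neq k$ statements, since neither $\partial_{x_\ell},\partial_{y_\ell},\partial_t$ nor $M_{x_\ell},M_{y_\ell}$ is affected by the conjugation, one immediately obtains $U_{k,\theta}^{-1}X_\ell U_{k,\theta}=X_\ell$ and $U_{k,\theta}^{-1}Y_\ell U_{k,\theta}=Y_\ell$.

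There is no genuine obstacle in this computation; the only item requiring care is the bookkeeping of signs when inverting the planar rotation (equivalently, the complex coordinate transformation $z_k\mapsto e^{i\theta}z_k$, $z_\ell\mapsto z_\ell$ for $\ell\neq k$). As a conceptual sanity check one may observe that $\Phi_{k,\theta}$ preserves the symplectic form $\mathrm{Im}\sum_j z_j\overline{z_j'}$ since $|e^{i\theta}|=1$, so $\Phi_{k,\theta}$ is a Lie group automorphism of $\HH^n$ fixing the identity; the identities in the lemma then express the fact that the induced pushforward on left-invariant vector fields is determined by its value at the identity $0$, where $X_\ell(0)=\partial_{x_\ell}$, $Y_\ell(0)=\partial_{y_\ell}$, $T(0)=\partial_t$, and $d\Phi_{k,\theta}(0)$ acts by the prescribed rotation on $(\partial_{x_k},\partial_{y_k})$ and trivially elsewhere.
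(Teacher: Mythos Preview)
Your proof is correct and is exactly the direct calculation the paper has in mind; the paper itself omits the details entirely, stating only that the lemma ``can be shown by a direct calculation'' and leaving it to the reader. Your chain-rule computation for the conjugated partials and multiplication operators, followed by substitution into $X_k=\partial_{x_k}-M_{y_k}\partial_t$ and $Y_k=\partial_{y_k}+M_{x_k}\partial_t$, is precisely the intended argument.
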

%\begin{proof} Recall that
%	$$X=\frac{d}{dx}-2M_y\frac{d}{dt},\quad Y=\frac{d}{dy}+2M_x\frac{d}{dt}.$$
%	We, therefore, have
%	$$(XU_{\theta}\xi)(x,y,t)=\cos\theta \xi_1(x\cos\theta-y\sin\theta,x\sin\theta+y\cos\theta,t)+$$
%	$$+\sin\theta \xi_2(x\cos\theta-y\sin\theta,x\sin\theta+y\cos\theta,t)-2y\xi_3(x\cos\theta-y\sin\theta,x\sin\theta+y\cos\theta,t).$$
%	Clearly,
%	$$(U_{\theta}^{-1}XU_{\theta}\xi)(x,y,t)=(XU_{\theta}\xi)(x\cos\theta+y\sin\theta,-x\sin\theta+y\cos\theta,t)=$$
%	$$=\cos\theta\xi_1(x,y,t)+\sin\theta\xi_2(x,y,t)-2(-x\sin\theta+y\cos\theta)\xi_3(x,y,t)=$$
%	$$=\cos\theta(\xi_1(x,y,t)-2y\xi_3(x,y,t))+\sin\theta(\xi_2(x,y,t)+2x\xi_3(x,y,t)).$$
%	
%	Similarly, we have
%	$$(YU_{\theta}\xi)(x,y,t)=-\sin\theta \xi_1(x\cos\theta-y\sin\theta,x\sin\theta+y\cos\theta,t)+$$
%	$$+\cos\theta \xi_2(x\cos\theta-y\sin\theta,x\sin\theta+y\cos\theta,t)+2x\xi_3(x\cos\theta-y\sin\theta,x\sin\theta+y\cos\theta,t).$$
%	Clearly,
%	$$(U_{\theta}^{-1}YU_{\theta}\xi)(x,y,t)=(YU_{\theta}\xi)(x\cos\theta+y\sin\theta,-x\sin\theta+y\cos\theta,t)=$$
%	$$=-\sin\theta\xi_1(x,y,t)+\cos\theta\xi_2(x,y,t)+2(x\cos\theta+y\sin\theta)\xi_3(x,y,t)=$$
%	$$=-\sin\theta(\xi_1(x,y,t)-2y\xi_3(x,y,t))+\cos\theta(\xi_2(x,y,t)+2x\xi_3(x,y,t)).$$
%\end{proof}

In the following lemma we use that fact that for all $1\leq \ell\leq 2n,$ the Riesz transform $R_{\ell}$ does not admit a right inverse. In the Euclidean case,
the same fact about Riesz transforms is an immediate consequence of their representation as Fourier multipliers. However, in the Heisenberg case this can instead be seen via $\pi_{\red}.$
Observe from the explicit matrix representation of $p_j$ and $q_j$, $1\leq j\leq n$ in Section \ref{auxiliarysection} that
the operators
\[
    p_jH^{-\frac12},\quad q_jH^{-\frac12},\quad 1\leq j\leq n.
\]
have trivial kernel (that is, ker $p_jH^{-\frac12}$=ker $q_jH^{-\frac12}$=$0$), and do not have bounded right inverse. It follows that
\[
    p_jH^{-\frac12}\otimes {\bf 1},\quad q_jH^{-\frac12}\otimes z,\quad 1\leq j\leq n
\]
also have trivial kernel and do not admit a bounded inverse.

It follows from this that the Riesz transforms $R_1,R_2,\ldots,R_{2n}$ have trivial kernel.
Taking $R_1$ for example, we have
\[
    R_1 = \pi_{\red}(ip_1H^{-\frac12}\otimes {\bf 1} ).
\]
From this it follows that the support projection of $R_1$ is equal to $1.$ Indeed, the support projection of $R_1$
is the image under $\pi_{\red}$ of the support projection of $ip_1H^{-\frac12}\otimes {\bf 1},$ however this operator has trivial kernel
and hence has support projection equal to $1.$ Thus $R_1$ has trivial kernel, and in particular any operator $S$ such that $R_1S = 0$ is zero.

We may also deduce from this that every Riesz transform $R_{\ell},$ $1\leq \ell \leq 2n$ does not admit a bounded right-inverse.
To see this in the case of $R_1,$ observe that $R_1$ is invariant under conjugation by the unitary semigroup $r\mapsto \sigma_r$ \eqref{unitary_dilation_definition}. If $R_1$
has it has a bounded right-inverse $A$ then
\[
    R_1\sigma_r A\sigma_{r^{-1}} = 1
\]
and therefore $\sigma_r A\sigma_{r^{-1}}$ is also a right-inverse for $R_1.$ Considering that the difference $S = A-\sigma_r A\sigma_{r^{-1}}$
satisfies $R_1S = 0,$ it follows that
\[
    \sigma_rA\sigma_{r^{-1}}=A,\quad r \in \mathbb{R}.
\]
Hence $A$ is dilation-invariant, and thus $A$ belongs to the image of $\pi_{\red}.$ It follows that if $R_1$ has a right-inverse in $\Bc(L_2(\HH^n)),$
then $p_1H^{-\frac12}\otimes {\bf 1}$ has a right-inverse in $\Bc(L_2(\mathbb{R}^n))\otimes \mathbb{C}^2,$ which is impossible.

Similarly, we may prove that $R_{\ell}$ does not have a bounded right-inverse for any $1\leq \ell\leq 2n.$

\begin{lem}\label{independent}
 Let $\{a_k\}_{k=1}^{2n}$ be the operators defined in \eqref{defofak}. Then for any $1\leq \ell\leq 2n$, the sequence
$$\{1,R_\ell a_1,R_\ell a_2,\cdots,R_\ell a_{2n}\}$$
is linearly independent.
\end{lem}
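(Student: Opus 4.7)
The plan is to assume a linear dependence $c_0\cdot 1 + \sum_{k=1}^{2n}c_k R_\ell a_k = 0$ and derive $c_0 = c_1 = \cdots = c_{2n} = 0$ in stages, peeling off $c_0$ first, then the remaining coefficients. If $c_0 \neq 0$, one may rearrange to obtain $R_\ell\bigl(-c_0^{-1}\sum_k c_k a_k\bigr) = 1,$ exhibiting a bounded right inverse for $R_\ell$, which contradicts the observation immediately preceding this lemma (no Riesz transform on $\HH^n$ admits a bounded right inverse). Hence $c_0 = 0$, and the equation reduces to $R_\ell\bigl(\sum_{k=1}^{2n}c_k a_k\bigr)=0.$ Since $R_\ell$ is bounded with trivial kernel (also from the discussion preceding the lemma), we conclude $\sum_{k=1}^{2n} c_k a_k = 0.$

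The main work is then to show the $a_k$ are themselves linearly independent in $\mathrm{VN}(\HH^n)$. The key idea is to push the whole calculation through $\pi_{\red}$. Using Corollary \ref{pi_computations} and the identities $X_j = R_j(-\Delta_{\HH})^{1/2}$, one verifies that
\[
(-\Delta_{\HH})^{-1/4}X_j(-\Delta_{\HH})^{-1/4} = \pi\bigl(iH^{-1/4}p_jH^{-1/4}\otimes 1\bigr),\quad 1\leq j\leq n,
\]
and analogously with $p_j$ replaced by $q_j$ and $1$ by $\sgn(s)$ when $k = n+j$. The symbol $\psi(\alpha_0,\alpha_1)=\frac{2\alpha_0^{1/4}\alpha_1^{1/4}}{\alpha_0^{1/2}+\alpha_1^{1/2}}$ is homogeneous of degree zero, which, combined with the factorization $-\Delta_{\HH} = \pi(H\otimes|s|)$ and the fact that the spectral measure of $H\otimes|s|$ weights the $(\alpha,\beta)$ Hermite matrix element at $((2|\alpha|+n)|s|,(2|\beta|+n)|s|)$, yields the crucial simplification
\[
T^{H\otimes|s|,H\otimes|s|}_{\psi}(V\otimes f(s)) \;=\; T^{H,H}_{\psi}(V)\otimes f(s).
\]
Applying this to the above expressions produces $a_j = \pi_{\red}(W_j\otimes \mathbf{1})$ for $1\leq j\leq n$ and $a_{n+j} = \pi_{\red}(W_{n+j}\otimes z)$, where $W_j := iT^{H,H}_{\psi}(H^{-1/4}p_jH^{-1/4})$ and $W_{n+j} := iT^{H,H}_{\psi}(H^{-1/4}q_jH^{-1/4})$.

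Since $\pi_{\red}$ is injective and $\{\mathbf{1},z\}$ is a basis for $\mathbb{C}^2$, the relation $\sum_k c_k a_k = 0$ splits as two independent equations
\[
\sum_{j=1}^n c_j W_j = 0,\qquad \sum_{j=1}^n c_{n+j}W_{n+j} = 0
\]
in $\mathcal{B}(L_2(\mathbb{R}^n))$. The symbol $\psi$ is strictly positive on $(0,\infty)\times (0,\infty)$, so in the Hermite basis $T^{H,H}_{\psi}$ acts on matrix entries by multiplication by nonzero scalars $\psi(2|\alpha|+n,2|\beta|+n)$; hence $T^{H,H}_{\psi}$ is injective. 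By linearity, the first equation yields $H^{-1/4}\bigl(\sum_j c_j p_j\bigr)H^{-1/4} = 0$, hence $\sum c_j p_j = 0$, and independence of $p_1,\ldots,p_n$ gives $c_j = 0$. The same argument with $q_j$'s disposes of $c_{n+1},\ldots,c_{2n}$.

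The step most likely to require care is the DOI factorization $T^{H\otimes|s|,H\otimes|s|}_{\psi}(V\otimes f) = T^{H,H}_{\psi}(V)\otimes f$, because the second tensor factor $|s|$ has continuous spectrum. The argument rests on the tensor product structure of the spectral measure together with the degree-zero homogeneity of $\psi$; making it rigorous will likely need a smoothing/approximation of $\psi$ or a verification via matrix elements using the explicit form of the spectral measure of $H\otimes |s|$. Once this factorization is in place, the remainder of the proof is clean.
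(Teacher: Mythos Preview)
Your proof is correct and shares the opening moves with the paper's argument---disposing of $c_0$ via the non-existence of a bounded right inverse for $R_\ell$, then using the trivial kernel of $R_\ell$ to reduce to $\sum_{k=1}^{2n}c_k a_k=0$---but the paper takes a genuinely different route from that point on. Rather than computing $\pi^{-1}(a_k)$ explicitly, the paper argues by symmetry: it conjugates $\sum_k c_k a_k$ by the rotation unitaries $U_{k,\theta}$ (which commute with $-\Delta_{\HH}$) to isolate the pair $(a_k,a_{n+k})$ via the linear independence of $1,\cos\theta,\sin\theta$, establishes that each $a_k$ lies in the image of $\pi_{\red}$ by checking dilation invariance $\sigma_r a_k\sigma_{r^{-1}}=a_k$ (using only the degree-zero homogeneity of $\psi$), and separates the $\mathbf{1}$ versus $z$ components by a reflection unitary $W$ rather than by explicit formula. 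This symmetry approach never needs the DOI factorization $T^{H\otimes|s|,H\otimes|s|}_{\psi}(V\otimes f)=T^{H,H}_{\psi}(V)\otimes f$ that you correctly flag as the delicate step in your argument. What your approach buys is explicitness: you obtain concrete operators $W_j=iT^{H,H}_{\psi}(H^{-1/4}p_jH^{-1/4})$ and then finish by the clean injectivity of $T^{H,H}_{\psi}$ (a Schur multiplier with strictly positive entries) together with the obvious independence of $p_1,\dots,p_n$ and $q_1,\dots,q_n$, bypassing the rotation machinery entirely. What the paper's approach buys is robustness: it avoids having to make the direct-integral DOI identity fully rigorous, and its symmetry framework would transfer more readily to other stratified groups where an explicit harmonic-oscillator model may not be available.
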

\begin{proof} Fix $1\leq \ell\leq 2n$ and let
$$c_0+\sum_{j=1}^{2n}c_jR_\ell a_j=0.$$
If $c_0\neq0,$ then $R_\ell$ admits bounded right inverse, which is not the case. Hence, $c_0=0.$ Thus,
$$\sum_{j=1}^{2n}c_jR_\ell a_j=0.$$
Since the right support of $R_\ell$ equals $1,$ it follows that
\begin{align}\label{equalzero}
\sum_{j=1}^{2n}c_ja_j=0.
\end{align}

It can be verified directly from the Lemma \ref{rotation} that for any $\theta\in [0,2\pi]$ and $k\in\{1,2,\cdots,n\}$, $U_{k,\theta}$ commutes with $-\Delta_\HH$ and $X_j$, where $j\notin \{k,k+n\}$, and therefore commutes with $(-\Delta_\HH)^{-\frac14}X_j(-\Delta_\HH)^{-\frac14}$. Thus, $U_{k,\theta}$ commutes with $a_j,$ where $j\notin \{k,k+n\}.$ From Lemma \ref{rotation} and the definition of double operator integral we see that
\begin{align*}
U_{k,\theta}^{-1}a_kU_{k,\theta}=T^{-\Delta_{\HH},-\Delta_{\HH}}_{\frac{2\alpha_0^{1/4}\alpha_1^{1/4}}{\alpha_0^{1/2}+\alpha_1^{1/2}}}\Big((-\Delta_{\HH})^{-1/4}U_{k,\theta}^{-1}X_kU_{k,\theta}(-\Delta_{\HH})^{-1/4}\Big)=\cos\theta a_k+\sin\theta a_{k+n},
\end{align*}
\begin{align*}
U_{k,\theta}^{-1}a_{k+n}U_{k,\theta}=T^{-\Delta_{\HH},-\Delta_{\HH}}_{\frac{2\alpha_0^{1/4}\alpha_1^{1/4}}{\alpha_0^{1/2}+\alpha_1^{1/2}}}\Big((-\Delta_{\HH})^{-1/4}U_{k,\theta}^{-1}Y_kU_{k,\theta}(-\Delta_{\HH})^{-1/4}\Big)=-\sin\theta a_k+\cos\theta a_{k+n}.
\end{align*}
Therefore,
\begin{align}\label{firstequation}
U_{k,\theta}^{-1}\cdot\bigg(\sum_{j=1}^{2n}c_ja_j\bigg)\cdot U_{k,\theta}=\sum_{\substack{1\leq j\leq 2n\\ j\neq k,k+n}}c_ja_j+c_k(\cos\theta a_k+\sin\theta a_{k+n})+c_{n+k}(-\sin\theta a_k+\cos\theta a_{k+n}).
\end{align}
On the other hand, by equation \eqref{equalzero},
\begin{align}\label{secondequation}
U_{k,\theta}^{-1}\cdot\bigg(\sum_{j=1}^{2n}c_ja_j\bigg)\cdot U_{k,\theta}=0.
\end{align}
Combining equations \eqref{firstequation} and \eqref{secondequation} with the fact that the functions $1$, $\cos\theta$, $\sin\theta$ are linearly independent,  we see that the coefficients in front of $1,\cos\theta,\sin\theta$ are zeroes. Thus, we have
\begin{equation}\label{linear_relations_for_a_k}
    \sum_{\substack{1\leq j\leq 2n\\ l\neq k,k+n}}c_ja_j=0,\quad c_ka_k+c_{n+k}a_{k+n}=0,\quad c_ka_{k+n}-c_{n+k}a_k=0.
\end{equation}

We will show the crucial fact that each $\{a_k\}_{k=1}^{2n}$ belongs to the image of $\pi_{\red}.$ To see this, it suffices to verify that $\sigma_r a_k\sigma_{r^{-1}} = a_k$ for all $r>0.$
By definition, we have
\[
    a_k = T^{-\Delta_{\HH},-\Delta_{\HH}}_{\psi}((-\Delta_{\HH})^{-1/4}X_k(-\Delta_{\HH})^{-1/4}).
\]
Since
\[
    \sigma_r (-\Delta_{\HH}) \sigma_{r^{-1}} = r^{-2}(-\Delta_{\HH}), \quad r > 0
\]
and $\sigma_r X_k\sigma_{r^{-1}} = r^{-1}X_k,$
it follows that for all $r>0$ we have
\begin{align*}
    \sigma_r a_k\sigma_{r^{-1}} &= T^{-r^{-2}\Delta_{\HH},-r^{-2}\Delta_{\HH}}_{\psi}(\sigma_r(-\Delta_{\HH})^{-1/4}X_k(-\Delta_{\HH})^{-1/4}\sigma_{r^{-1}})\\
                                &= T^{-r^{-2}\Delta_{\HH},-r^{-2}\Delta_{\HH}}_{\psi}((-\Delta_{\HH})^{-1/4}X_k(-\Delta_{\HH})^{-1/4}).
\end{align*}
Since the function $\psi$ is dilation invariant, we have
\[
    T^{-r^{-2}\Delta_{\HH},-r^{-2}\Delta_{\HH}}_{\psi} = T^{-\Delta_{\HH},-\Delta_{\HH}}_{\psi},\quad r > 0
\]
and hence for every $r>0$
\[
    \sigma_ra_k\sigma_{r^{-1}} = a_k,\quad 1\leq k\leq 2n.
\]
It follows that $a_k\in \pi_{\red}(\mathcal{B}(L_2(\mathbb{R}^n))\otimes \mathbb{C}^2).$

In fact, we have
\begin{equation}\label{pi_red_description_of_a_k}
    a_k \in \begin{cases} \pi_{\red}(\mathcal{B}(L_2(\mathbb{R}^n))\otimes {\bf 1}),\quad 1\leq k\leq n,\\
                          \pi_{\red}(\mathcal{B}(L_2(\mathbb{R}^n))\otimes z),\quad n+1\leq k\leq 2n.
            \end{cases}
\end{equation}
That is, $a_k=\pi_{{\rm red}}(u_k\otimes {\bf 1})$ and $a_{n+k}=\pi_{{\rm red}}(v_k\otimes z)$ for some $u_k,v_k\in \mathcal{B}(L_2(\mathbb{R}^n)).$
To see this, let $W$ denote the unitary linear map on $L_2(\HH^n)$ given by
\[
    W\xi([x+iy,t]) = \xi([x-iy,-t]),\quad [x+iy,t] \in \HH^n,\; \xi \in L_2(\HH^n).
\]
Then $WX_kW^* = X_k$ for $1\leq k\leq n$ and $WX_kW^* = -X_k$ for $n+1\leq k\leq 2n.$ Since $W\Delta_{\HH}W^* = \Delta_{\HH},$ we have
\[
    Wa_kW^* = \begin{cases}
                a_k,\quad 1\leq k\leq n,\\
                -a_k,\quad n+1\leq k\leq 2n.
              \end{cases}
\]
Given that $W\pi_{\red}(x\otimes \mathbf{1})W^* = \pi_{\red}(x\otimes \mathbf{1})$ and $W\pi_{\red}(x\otimes z)W^* = -\pi_{\red}(x\otimes z),$
we deduce \eqref{pi_red_description_of_a_k}.

Hence, for $1\leq k\leq n,$ $a_k = \pi_{{\rm red}}(u_k\otimes \mathbf{1})$ for some $u_k \in \mathcal{B}(L_2(\mathbb{R}^n))$ and for $n+1\leq k\leq 2n,$ $a_k =\pi_{{\rm red}} (v_k\otimes z)$ for some
$v_k \in \mathcal{B}(L_2(\mathbb{R}^n)).$ Since $\pi_{\red}$ is injective, it follows from \eqref{linear_relations_for_a_k} that
$$c_ku_k\otimes {\bf 1}+c_{n+k}v_k\otimes z=0,\quad c_kv_k\otimes z-c_{n+k}u_k\otimes {\bf 1}=0.$$
Thus, $c_ku_k\otimes {\bf 1}=0$ and $c_{n+k}v_k\otimes z=0.$ Thus, $c_ka_k=0$ and $c_{n+k}a_{n+k}=0.$ Since none of $a_j$'s is $0,$ it follows that $c_j=0$ for $1\leq j\leq 2n.$ This ends the proof of Lemma \ref{independent}.
\end{proof}
%Repeating the argument, we obtain
%$$c_ka_k+c_{n+k}a_{k+n}=0,\quad c_ka_{k+n}-c_{n+k}a_k=0,\quad 1\leq k\leq n.$$

\begin{cor}\label{xkindependent}
Let $\{a_k\}_{k=1}^{2n}$ be the operators defined in \eqref{defofak} and for any $1\leq \ell\leq 2n$, $\{y_k^{\ell}\}_{k=0}^{2n}\subset L_{2n+2}(\mathcal{B}(L_2(\mathbb{R}^n))\overline{\otimes}\mathbb{C}^2,{\rm Tr}\otimes\Sigma)$ be the sequence satisfying
$$\pi_{\rm red}(y_k^{\ell})=\left\{\begin{array}{ll}(-\Delta_{\HH})^{-\frac{1}{2}}|T|^{\frac{1}{2}}, &k=0 \\(-\Delta_{\HH})^{-\frac{1}{2}}|T|^{\frac{1}{2}}(R_\ell a_k)^*, &k=1,2,\cdots,2n.\end{array}\right.$$
Then the sequence $\{y_k^{\ell}\}_{k=1}^{2n}$ is linearly independent.
\end{cor}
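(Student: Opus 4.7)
The approach is a direct reduction to Lemma \ref{independent} via the injectivity of $\pi_{\rm red}$ together with the cancellation of the common factor $(-\Delta_\HH)^{-1/2}|T|^{1/2}$. Suppose that scalars $c_1,\ldots,c_{2n}\in \mathbb{C}$ satisfy $\sum_{k=1}^{2n}c_k y_k^\ell = 0$ in $L_{2n+2}(\mathcal{B}(L_2(\mathbb{R}^n))\overline{\otimes}\mathbb{C}^2,\mathrm{Tr}\otimes\Sigma)$. Since $\pi_{\rm red}$ is an injective $*$-homomorphism and is linear, applying $\pi_{\rm red}$ to both sides and invoking the defining formula for $y_k^\ell$ yields
$$\sum_{k=1}^{2n}c_k (-\Delta_\HH)^{-1/2}|T|^{1/2}(R_\ell a_k)^* = 0$$
as an identity of bounded operators on $L_2(\HH^n)$.

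The next step is to cancel the common left factor $(-\Delta_\HH)^{-1/2}|T|^{1/2}$. To justify this, I would use the description of this operator given by Corollary \ref{pi_computations}: since $-\Delta_\HH$ corresponds to $H\otimes|s|$ and $|T|$ corresponds to $1\otimes|s|$ under $\pi$, one has
$$(-\Delta_\HH)^{-1/2}|T|^{1/2}=\pi_{\rm red}(H^{-1/2}\otimes\mathbf{1}).$$
As $H\geq n$ on $L_2(\mathbb{R}^n)$, the operator $H^{-1/2}\otimes\mathbf{1}$ is bounded and injective, and because $\pi_{\rm red}$ is an injective $*$-homomorphism, $\pi_{\rm red}(H^{-1/2}\otimes\mathbf{1})$ is injective on $L_2(\HH^n)$. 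This legitimises the cancellation and produces $\sum_{k=1}^{2n}c_k(R_\ell a_k)^* = 0$; taking adjoints then gives $\sum_{k=1}^{2n}\overline{c_k}\,R_\ell a_k = 0$.

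By Lemma \ref{independent}, the family $\{1,R_\ell a_1,\ldots,R_\ell a_{2n}\}$ is linearly independent, hence in particular $\{R_\ell a_k\}_{k=1}^{2n}$ is, and so $\overline{c_k}=0$ for every $k$. This gives the desired linear independence. There is no substantial obstacle: the only delicate point is the legitimacy of removing $(-\Delta_\HH)^{-1/2}|T|^{1/2}$ from the left, which reduces via $\pi_{\rm red}$ to the trivial fact that $H^{-1/2}\otimes\mathbf{1}$ has zero kernel.
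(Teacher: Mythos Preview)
Your proof is correct and follows essentially the same route as the paper: apply $\pi_{\rm red}$, cancel the common left factor $(-\Delta_\HH)^{-1/2}|T|^{1/2}=\pi_{\rm red}(H^{-1/2}\otimes\mathbf{1})$, take adjoints, and invoke Lemma \ref{independent}. Note that the paper actually establishes the slightly stronger linear independence of the full family $\{y_k^\ell\}_{k=0}^{2n}$ (including $k=0$), which is what is used downstream; this follows by the identical argument with the $c_0$ term retained, since Lemma \ref{independent} already gives the independence of $\{1,R_\ell a_1,\ldots,R_\ell a_{2n}\}$.
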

\begin{proof}
 To begin with, we suppose that
$\sum_{k=0}^{2n}c_k y_k^{\ell}=0,$
which implies that
$\sum_{k=0}^{2n}c_k \pi_{{\rm red}}(y_k^{\ell})=0.$ That is
$$c_0(-\Delta_\HH)^{-1/2}|T|^{-1/2}+\sum_{k=1}^{2n}c_k(-\Delta_\HH)^{-1/2}|T|^{-1/2}(R_\ell a_k)^*=0.$$
Therefore,
$$c_0+\sum_{k=1}^{2n}c_k(R_\ell a_k)^*=0.$$
By Lemma \ref{independent}, for any $1\leq \ell\leq 2n$, the sequence
$$\{1,R_\ell a_1,R_\ell a_2,\cdots,R_\ell a_{2n}\}$$
is linearly independent, which implies that $c_k=0$, $k=0,1,\cdots,2n$, and thus $\{y_k^{\ell}\}_{k=0}^{2n}$ is linearly independent.
\end{proof}

\begin{proof}[Proof of Proposition \ref{necessary theorem}]
%
%We first assume that $f\in C_c^{\infty}(\HH^n).$

We first show the conclusion holds for $f\in L_\infty(\Heis^n)\cap \dot{W}^{1,2n+2}(\HH^n).$ By Proposition \ref{keytraceformula}, for every normalised continuous trace $\varphi$ on $\mathcal{L}_{1,\infty}$, we have $$\varphi(|[R_\ell,M_f]|^{2n+2})=c_n\bigg\|\sum_{k=0}^{2n}f_k^{\ell}\otimes y_k^{\ell}\bigg\|_{L_{2n+2}(L_\infty(\mathbb{H}^n)\overline{\otimes} \mathcal{B}(L_2(\mathbb{R}^n))\overline{\otimes} \mathbb{C}^2)}^{2n+2},$$
where $\{f_k^{\ell}\}_{k=0}^{2n}\in L_{2n+2}(\mathbb{H}^n)$ and $\{y_k^{\ell}\}_{k=0}^{2n}\subset L_{2n+2}(\mathcal{B}(L_2(\mathbb{R}^n))\overline{\otimes}\mathbb{C}^2,{\rm Tr}\otimes\Sigma)$ are such that
$$ f_k^{\ell}=\left\{\begin{array}{ll}\overline{X_\ell f}, &k=0\\ \overline{X_kf}, &k=1,2,\cdots,2n\end{array}\right.\ {\rm and}\ \ \pi_{\rm red}(y_k^{\ell})=\left\{\begin{array}{ll}(-\Delta_{\HH})^{-\frac{1}{2}}|T|^{\frac{1}{2}}, &k=0 \\(-\Delta_{\HH})^{-\frac{1}{2}}|T|^{\frac{1}{2}}(R_\ell a_k)^*, &k=1,2,\cdots,2n.\end{array}\right.$$
Therefore,
\begin{align}\label{almostfinal}
\big\|[R_\ell,M_f]\big\|_{\mathcal{L}_{2n+2,\infty}}^{2n+2}=\big\||[R_\ell,M_f]|^{2n+2}\big\|_{\mathcal{L}_{1,\infty}}\gtrsim \bigg\|\sum_{k=0}^{2n}f_k^{\ell}\otimes y_k^{\ell}\bigg\|_{L_{2n+2}(L_\infty(\mathbb{H}^n)\overline{\otimes} \mathcal{B}(L_2(\mathbb{R}^n))\overline{\otimes} \mathbb{C}^2)}^{2n+2}.
\end{align}
%Now we claim that the sequence $\{x_k\}_{k=0}^{2n}$ is linearly independent.
Recall from Corollary \ref{xkindependent} that $\{y_k^{\ell}\}_{k=1}^{2n}$ is linearly independent. Combining this with Lemma \ref{lowerbd} conclude that
\begin{align*}
\big\|[R_\ell,M_f]\big\|_{\mathcal{L}_{2n+2,\infty}}\gtrsim \bigg\|\sum_{k=0}^{2n}f_k^{\ell}\otimes y_k^{\ell}\bigg\|_{L_{2n+2}(L_\infty(\mathbb{H}^n)\overline{\otimes} \mathcal{B}(L_2(\mathbb{R}^n))\overline{\otimes} \mathbb{C}^2)}\gtrsim \sum_{j=1}^{2n}\|X_{j}f\|_{L_{2n+2}(\Heis^{n})}.
\end{align*}
Next, we applied an approximation argument to show that under the assumption of Proposition \ref{necessary theorem}, one has  $f \in \dot{W}^{1,2n+2}(\HH^n)$, and therefore the priori assumption $f \in \dot{W}^{1,2n+2}(\HH^n)$ can be removed. To this end, we suppose that $f\in L^\infty(\HH^n)$ and consider a sequence of $C_c^\infty(\HH^n)$ functions defined by $f_m=(f\ast \phi_m)\eta_m$, where $\phi_m$ and $\eta_m$ are defined in the proof of Lemma \ref{predensity} and \ref{density2}, respectively. By the previous discussion, one has
\begin{align*}
\|f_m\|_{\dot{W}^{1,2n+2}(\HH^n)}\lesssim \|[R_{\ell},M_{f_m}]\|_{\mathcal{L}_{2n+2,\infty}}.
\end{align*}
We claim that
\begin{align}\label{ccclaim}
\sup\limits_{m\geq 1}\|[R_{\ell},M_{f_m}]\|_{\mathcal{L}_{2n+2,\infty}}\lesssim \|f\|_{L_\infty(\HH^n)}+\|[R_\ell,M_f]\|_{\mathcal{L}_{2n+2,\infty}}.
\end{align}
To show this, we first apply Leibniz's rule to see that
$$[R_{\ell},M_{f_m}]=M_{f\ast \phi_m}[R_{\ell},M_{\eta_m}]+[R_{\ell},M_{f\ast \phi_m}]M_{\eta_m}.$$
To deal with the first term, we apply Proposition \ref{sufficiency theorem} to see that
$$\|M_{f\ast \phi_m}[R_{\ell},M_{\eta_m}]\|_{\mathcal{L}_{2n+2,\infty}}\leq \|M_{f\ast \phi_m}\|_{\mathcal{L}_{\infty}}\|[R_\ell,M_{\eta_m}]\|_{\mathcal{L}_{2n+2,\infty}}\lesssim \|f\ast\phi_m\|_{L_\infty(\HH^n)}\lesssim \|f\|_{L_\infty(\HH^n)}.$$
For the second term, applying \cite[Lemma 18]{LMSZ}, one has
$$\|[R_{\ell},M_{f\ast \phi_m}]M_{\eta_m}\|_{\mathcal{L}_{2n+2,\infty}}\lesssim \|[R_{\ell},M_{f\ast \phi_m}]\|_{\mathcal{L}_{2n+2,\infty}}\|M_{\eta_m}\|_{\mathcal{L}_\infty}\lesssim \|[R_{\ell},M_f]\|_{\mathcal{L}_{2n+2,\infty}}.$$
Thus, our claim \eqref{ccclaim} holds. In particular, one has
\begin{align}\label{uniformlybd}
\sup\limits_{m\geq 1}\|f_m\|_{\dot{W}^{1,2n+2}(\HH^n)}<+\infty.
\end{align}
Next we show that the condition $f\in L^\infty(\HH^n)$ and inequality \eqref{uniformlybd} imply that $f\in \dot{W}^{1,2n+2}(\HH^n)$. Indeed, by the reflexivity of $L_{2n+2}(\HH^n,\mathbb{C}^{2n})$, there is a subsequence of $\{\nabla f_m\}_{m=1}^\infty$ converging weakly to some $g\in L_{2n+2}(\HH^n,\mathbb{C}^{2n})$. Note that
\begin{align*}
\langle f_m,\psi \rangle=\langle f\ast\phi_m,\eta_m \psi\rangle=\langle f,(\eta_m \psi)\ast\phi_m\rangle,\quad {\rm for}\ {\rm any}\ \psi\in \mathcal{S}(\HH^n).
\end{align*}
It is easy to verify that $(\eta_m \psi)\ast\phi_m\rightarrow \psi$ in $L_1(\HH^n)$. Thus,
\begin{align*}
|\langle f_m,\psi \rangle-\langle f,\psi \rangle|=|\langle f,(\eta_m \psi)\ast\phi_m-\psi\rangle|\leq \|f\|_{L_\infty(\HH^n)}\|(\eta_m \psi)\ast\phi_m-\psi\|_{L_1(\HH^n)}\rightarrow 0,
\end{align*}
as $m\rightarrow +\infty$. In other words, the sequence $\{f_m\}_{m=1}^\infty$ converges to $f$ in $(\mathcal S(\HH^n))'$. Consequently, the sequence $\{\nabla f_m\}_{m=1}^\infty$ converges to $\nabla f$ in $(\mathcal S(\HH^n))'$. Since the weak convergence in $L_{2n+2}(\HH^n,\mathbb{C}^{2n})$ implies the convergence in $(\mathcal S(\HH^n))'$ (see for example \cite[Chapter 1]{FoSt}), it follows that $g=\nabla f$ and so $\nabla f\in L_{2n+2}(\HH^n,\mathbb{C}^{2n})$. Equivalently, $f\in  \dot{W}^{1,2n+2}(\HH^n)$.
%{\color{red}Now it remains to apply an approximation argument to remove the smooth assumption   $f \in C^\infty_c(\Heis^n)$.}

This completes the proof of Proposition \ref{necessary theorem}.
\end{proof}

\bigskip
\bigskip

% \bibliographystyle{plain}
% \bibliography{references}

\end{document}